\begin{document}

\date{}
\title[The moduli stack of parabolic bundles over $\mathbb{P}^1$]{The moduli stack of parabolic bundles over $\mathbb{P}^1$, quiver representations, and the Deligne-Simpson problem}
\author{Alexander Soibelman}\thanks{This work was partially supported by NSF grant DMS 1101558.}
\address{Max Planck Institute for Mathematics, Vivatsgasse 7,
53111 Bonn, Germany}
\email{asoibel@mpim-bonn.mpg.de}

\newtheorem{thm}{Theorem}[subsection]
\newtheorem{defn}[thm]{Definition}
\newtheorem{lmm}[thm]{Lemma}
\newtheorem{prp}[thm]{Proposition}
\newtheorem{conj}[thm]{Conjecture}
\newtheorem{exa}[thm]{Example}
\newtheorem{cor}[thm]{Corollary}
\newtheorem{que}[thm]{Question}
\newtheorem{ack}[thm]{Acknowledgments}
\newtheorem{clm}[thm]{Claim}
\newtheorem*{Kac}{Kac's Theorem}
\newtheorem*{DS}{The Deligne-Simpson Problem}
\newtheorem*{aDS}{The Additive Deligne-Simpson Problem}
\newtheorem*{mDS}{The Multiplicative Deligne-Simpson Problem}

\theoremstyle{definition}
\newtheorem{rmk}[thm]{Remark}

\newcommand{\dimn}{\operatorname{dim}}

\begin{abstract}
In ``Quantization of Hitchin's Integrable System and Hecke Eigensheaves", Beilinson and Drinfeld introduced the ``very good" property for a smooth complex equidimensional stack.  They prove that for a semisimple group $G$ over $\mathbb{C}$, the moduli stack $\text{Bun}_G(X)$ of G-bundles over a smooth complex projective curve $X$ is ``very good", as long as $X$ has genus $g > 1$.  In the case of the projective line, when $g = 0$, this is not the case.  However, the result can sometimes be extended to the projective line by introducing additional parabolic structure at a collection of marked points and slightly modifying the definition of a ``very good'' stack.  We provide a sufficient condition for the moduli stack of parabolic vector bundles over $\mathbb{P}^1$ to be very good.  We then use this property to study the space of solutions to the Deligne-Simpson problem.
\end{abstract}

\maketitle

\tableofcontents

\section{Introduction}
\subsection{The Very Good Property}
In \cite{BD1991} Beilinson and Drinfeld introduced the notion of a ``very good" stack. They require this property in order to avoid using derived categories in their study of D-modules on the moduli stack $\textrm{Bun}_G(X)$ of $G$-bundles over $X$, where $G$ is a semisimple algebraic group and $X$ is a smooth complex projective curve. 

A smooth complex equidimensional stack $\mathcal{Y}$ will be called \textit{very good} if
$$
\textrm{codim} \{y \in \mathcal{Y}| \textrm{dim Aut}(y)  = n\} > n, \textrm{for } n>0,
$$
where $\textrm{Aut}(y)$ is the automorphism group of $y \in \mathcal{Y}$.  If $\textrm{dim Aut}(y) > 0$ for all $y \in \mathcal{Y}$, then the stack $\mathcal{Y}$ cannot be very good.  In this situation, $\mathcal{Y}$ will be called \textit{almost very good} if
$$
\textrm{codim} \{y \in \mathcal{Y}| \textrm{dim Aut}(y) - m  = n\} > n, \textrm{for } n>0,
$$
where $m = \textrm{min dim Aut}(y)$.  Beilinson and Drinfeld demonstrate that $\textrm{Bun}_G(X)$ is very good when $X$ has genus $g>1$.  However, in the $g = 0$ case, when $X = \mathbb{P}^1$, this is no longer true.  

We approach the very good property in the genus $g = 0$ case, for $G = \textrm{GL} (n,\mathbb{C})$, by introducing additional parabolic structure at a finite collection of marked points.  Since the reductive group $\textrm{GL} (n,\mathbb{C})$ has a one-dimensional central subgroup $\mathbb{C}^*$ that acts by dilation on the fibers, the automorphism group of any parabolic bundle has a one-dimensional subgroup.  It follows that the moduli stack of parabolic bundles can never be very good. 

It turns out, however, that a sufficiently elaborate parabolic structure on a vector bundle is enough to make the corresponding moduli stack of parabolic bundles over $\mathbb{P}^1$ almost very good.  This is equivalent to showing that the quotient of the moduli stack by the classifying stack of $\mathbb{C}^*$ is very good. 
\subsection{The Very Good Property for Moduli of Parabolic Bundles}

Seshadri introduced the notion of a parabolic structure on a vector bundle in \cite{Se1977}, furnishing parabolic bundles with a stability condition analogous to the usual one for vector bundles.  Expanding upon this, Mehta and Seshadri proved the existence of a moduli space of semistable parabolic bundles on a smooth projective curve of genus $g\ge 2$ in \cite{MS1980}.  

Parabolic bundles over an algebraic curve generalize vector bundles by defining additional structure in the fibers over specified points. Namely, let $X$ be a smooth complex projective curve (in the future, we restrict ourselves to the case when $X = \mathbb{P}^1$).  A \textit{parabolic bundle} $\mathbf{E}$ over $X$ consists of a vector bundle $E$ over $X$, a collection of distinct points $(x_1, \dots, x_k)$ on $X$, and a flag $E_{x_i} = E_{i0} \supseteq E_{i1} \supseteq E_{iw_{i-1}}  \supseteq E_{iw_i} = 0$ in the fiber over each such point $x_i$.  

If $D = (x_1, \dots, x_k)$ and $w = (w_1, \dots, w_k)$, we say that the parabolic bundle $\mathbf{E}$ has \textit{weight type} $(D,w)$.  If $\alpha_0 = \textrm{rk } E$ and $\alpha_{ij} = \textrm{dim } E_{ij}$, for $1 \le i \le k$ and $ 1 \le j \le w_{i-1}$, we say that $\mathbf{E}$ has \textit{dimension vector} $\alpha = (\alpha_0,\alpha_{ij})$.   

Note that one possible way of introducing stability and semistability for parabolic bundles, is by defining a \textrm{parabolic degree}.  To do this, additional numbers called \textit{weights} are assigned to each subspace in each flag.  Since we do not limit ourselves to stable or semistable parabolic bundles, we do not require weights to be part of the definition.  Parabolic bundles without weights are sometimes referred to as ``quasi-parabolic" bundles.

In order to formulate our main result, we need to specify which dimension vectors give rise to very good parabolic bundles.  Let $I = \{0\} \cup \{(i,j)|1 \le i \le k, 1 \le j \le w_{i-1} \}$. For a dimension vector $\alpha \in \mathbb{Z}^I$, we define the \textit{Tits quadratic form} as:
$$
q(\alpha) = \sum_{i\in I} \alpha_i^2 -\sum_{i\in I} \alpha_i\alpha_{i+1},  
$$
where $\alpha_{w_i} = 0$.  Let $p(\alpha) = 1 - q(\alpha)$.  We write:
$\delta(\alpha) = -2\alpha_0 + \sum_i \alpha_{i1}$.
We say that $\alpha$ is in the \textit{fundamental region} if
\begin{align*}
&\delta(\alpha) \ge 0\\
-2\alpha_{ij} + \alpha_{ij-1} + \alpha_{ij+1} \ge 0, & \textrm{ for } 1 \le i \le k \textrm{ and } 1 \le j \le w_{i-1} 
\end{align*}
(note that we assume $\alpha_{i0} = \alpha_0$, for all $i$).  We now introduce our main result.

\begin{thm}
The moduli stack $\textrm{Bun}_{D,w,\alpha}(\mathbb{P}^1)$ of parabolic bundles over $\mathbb{P}^1$  of weight type $(D,w)$ and dimension vector $\alpha$ is almost very good if $\alpha$ is in the fundamental region and $\delta(\alpha) > 0$.
\end{thm}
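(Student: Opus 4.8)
The plan is to convert ``almost very good'' into a family of dimension bounds on the loci of parabolic bundles with extra endomorphisms, to cut those bounds along the splitting type of the underlying bundle, and to settle each piece using the combinatorics of the star-shaped quiver $Q$ attached to $(D,w)$ (central vertex $0$, with $i$th arm $(i,1)\to\cdots\to(i,w_{i-1})$). First I would record the basic geometry. Forgetting the flags gives a smooth fibration $\operatorname{Bun}_{D,w,\alpha}(\mathbb{P}^1)\to\operatorname{Bun}_{\mathrm{GL}_{\alpha_0}}(\mathbb{P}^1)$ with fibre the product $\prod_{i=1}^k\operatorname{Fl}_i$ of partial flag varieties, so the stack is smooth and each connected component (one per degree of $E$) has dimension $-\alpha_0^2+\sum_i\dim\operatorname{Fl}_i=-q(\alpha)$. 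For a parabolic bundle $\mathbf E$, $\operatorname{Aut}(\mathbf E)$ is the unit group of the algebra $\operatorname{End}_{\mathrm{par}}(\mathbf E)$ of flag-preserving endomorphisms, so $\dim\operatorname{Aut}(\mathbf E)=\dim\operatorname{End}_{\mathrm{par}}(\mathbf E)\ge1$, with equality exactly when $\mathbf E$ is Schurian. One must check that, under the hypotheses, the minimal value $m=1$ is attained on a dense open substack: the generic bundle is balanced, and $\delta(\alpha)>0$ — together with the fundamental-region inequalities — is precisely what makes the evaluation map $H^0(\mathcal{E}nd\,E)\to\bigoplus_i\mathcal{E}nd(E_{x_i})/\mathfrak p_i$ injective modulo scalars for generic flags. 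Granting this, ``almost very good'' is equivalent to the assertion that for every $r\ge2$,
\[
\dim\bigl\{[\mathbf E]\in\operatorname{Bun}_{D,w,\alpha}(\mathbb{P}^1):\dim\operatorname{End}_{\mathrm{par}}(\mathbf E)\ge r\bigr\}\ \le\ -q(\alpha)-r,
\]
i.e.\ that the quotient of $\operatorname{Bun}_{D,w,\alpha}(\mathbb{P}^1)$ by $B\mathbb{C}^*$ is very good.

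Next I would stratify $\operatorname{Bun}_{\mathrm{GL}_{\alpha_0}}(\mathbb{P}^1)$ by the splitting type $\vec d$, the stratum $BG_{\vec d}$ (with $G_{\vec d}=\operatorname{Aut}(\bigoplus_a\mathcal{O}(d_a))$) having codimension $h^1(\mathcal{E}nd\,E)=\dim G_{\vec d}-\alpha_0^2$. The corresponding stratum of $\operatorname{Bun}_{D,w,\alpha}(\mathbb{P}^1)$ is the quotient stack $[\prod_i\operatorname{Fl}_i/G_{\vec d}]$, where $G_{\vec d}$ acts through the evaluation homomorphism $G_{\vec d}\to\prod_i\mathrm{GL}(E_{x_i})$ (whose image in each factor is the parabolic subgroup determined by the multiplicities of $\vec d$), and the $G_{\vec d}$-stabilizer of a flag configuration is $\operatorname{Aut}(\mathbf E)$. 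Using $\dim[W/G_{\vec d}]=\dim W-\dim G_{\vec d}$ and the dimension bookkeeping above, the inequality to be proved becomes: for every $\vec d$ and every $r\ge2$,
\[
\operatorname{codim}_{\prod_i\operatorname{Fl}_i}\bigl\{\text{flag configurations with }\dim\operatorname{Stab}_{G_{\vec d}}\ge r\bigr\}\ \ge\ r-\operatorname{codim}BG_{\vec d}.
\]
The balanced bundle (codimension $0$) is the essential case; for more unbalanced $\vec d$ one is allowed to surrender a unit of flag-codimension per unit of bundle-codimension, but $G_{\vec d}$ — and hence the locus of flag configurations with large stabilizer — grows at the same time, and both effects have to be controlled together.

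The heart of the argument is this last inequality. I would use the Krull--Schmidt decomposition $\mathbf E=\bigoplus_\ell\mathbf E_\ell^{\oplus m_\ell}$ of a parabolic bundle into indecomposables, so that $\dim\operatorname{End}_{\mathrm{par}}(\mathbf E)$ is the evident quadratic form in the $m_\ell$ and the numbers $\dim\operatorname{Hom}_{\mathrm{par}}(\mathbf E_\ell,\mathbf E_{\ell'})$, and stratify the locus $\{\dim\operatorname{End}_{\mathrm{par}}\ge r\}$ by decomposition type. Passing through the dictionary between flag configurations with all successive quotients nonzero and representations of $Q$ with injective arrows — under which $\operatorname{End}_{\mathrm{par}}$ matches $\operatorname{End}_Q$ and $-q(\alpha)=-\langle\alpha,\alpha\rangle_Q$ — each such stratum is bounded above in dimension by a Crawley--Boevey/Kac-type count in terms of the dimension vectors $\beta_\ell$ of the summands. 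The hypothesis that $\alpha$ lies in the fundamental region is exactly what constrains the ways the $\beta_\ell$ can add up to $\alpha$ (each is a positive root, and the relevant counts are governed by $p(\beta_\ell)$), and it is this constraint that makes the stratum-dimension upper bound and the $\dim\operatorname{End}_{\mathrm{par}}$ lower bound fit together to give codimension $\ge r$; the strict inequality $\delta(\alpha)>0$ supplies the extra unit of codimension, which is morally the one absorbed by rigidifying the central $\mathbb{C}^*$. Finally, flag configurations with a vanishing successive quotient (equivalently, quiver representations with a non-injective arrow) are controlled by a parabolic bundle of strictly smaller dimension vector, and these are dispatched by induction on $\alpha$; assembling the stratum-wise bounds over all $\vec d$ and all degrees gives the theorem.

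The step I expect to be the main obstacle is the quiver-theoretic codimension count of the previous paragraph, and in particular making it both sharp and uniform over all splitting types $\vec d$: unbalanced underlying bundles have large automorphism groups $G_{\vec d}$ and correspondingly large loci of flag configurations with extra automorphisms, but they sit in deep strata of $\operatorname{Bun}_{\mathrm{GL}_{\alpha_0}}(\mathbb{P}^1)$, and it is the fundamental-region hypothesis that holds this accounting in balance, while the strict inequality $\delta(\alpha)>0$ is exactly what is needed to soak up the ever-present central $\mathbb{C}^*$ and obtain the strict codimension bound that ``almost very good'' — equivalently, very-goodness of the quotient by $B\mathbb{C}^*$ — requires.
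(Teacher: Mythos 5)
Your overall framework---the inertia-stack reformulation of ``almost very good,'' and the combinatorial inequality $p(\alpha)>\sum_i p(\beta^{(i)})$ forced by the fundamental region with $\delta(\alpha)>0$ supplying the extra strict unit---matches the paper's, but the step you yourself single out as the main obstacle is a genuine gap, and the route you propose for it would fail. The dictionary you invoke (flag configurations with injective successive maps $\leftrightarrow$ representations of the star-shaped quiver, with $\mathscr{E}nd_{\textrm{Par}}$ matching quiver endomorphisms and the loci of large stabilizers counted by a Kac-type formula in terms of $p(\beta_\ell)$ of indecomposable summands) is only valid when the underlying vector bundle is trivial; this is exactly how the paper proves Theorem 1.2.2 in Section 6.3, not Theorem 1.2.1. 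On a stratum of nontrivial splitting type $\vec d$, the group acting on the product of flag varieties is $G_{\vec d}=\textrm{Aut}(\bigoplus_a\mathcal{O}(d_a))$, not $\textrm{GL}(\alpha_0)\times\prod\textrm{GL}(\alpha_{ij})$, and the stabilizer of a configuration is $\textrm{Aut}(\mathbf{E})$ of the actual parabolic bundle; Kac's theorem gives no parameter count for such loci. The quiver-type object that does encode nontrivial bundles is the squid $S_{D,w}$, an algebra with relations, to which Kac's theorem does not apply, and there is no a priori bound by $p(\beta_\ell)$ for the number of parameters of indecomposable parabolic bundles with nontrivial underlying bundle. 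The paper is explicit on this point: ``Kac's theorem is inapplicable, and we replace it with an algebro-geometric result.'' Your splitting-type bookkeeping (trading flag-codimension against $\textrm{codim}\,BG_{\vec d}$) is also only asserted, not carried out.

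For comparison, the paper never stratifies by splitting type. It bounds the stack $\mathcal{N}(D,w,\alpha)$ of pairs (parabolic bundle, nilpotent endomorphism) by $\dimn\mathcal{N}(D,w,\alpha)\le-\tilde{q}(\alpha)$, with $\tilde{q}(\alpha)=\min\sum_i q(\gamma_i)$ over positive decompositions (Theorem 4.0.1), arguing by induction on rank: for $(\mathbf{E},f)$ it passes to $\mathbf{F}=\mathbf{ker}\,f$ and $\mathbf{G}=\mathbf{E}/\mathbf{F}$, and the fibers of $(\mathbf{E},f)\mapsto(\mathbf{G},f|_{\mathbf{G}})$ are controlled by hypercohomology of two-term complexes on $\mathbb{P}^1$ (the vanishing $\mathbb{H}^2=0$ via Serre duality, Lemmas 4.3.2--4.3.4); this deformation-theoretic estimate is precisely the replacement for the Kac-type count you are missing. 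General endomorphisms are then handled through the characteristic-polynomial map (Lemma 4.3.6), whose fibers split $\mathbf{E}$ into generalized eigen-subbundles and give $\dimn\mathcal{P}_{\mathcal{B}}=r+\sum_{i}\dimn\mathcal{N}(D,w,\beta^{(i)})$; for $r\ge2$ Proposition 3.0.2 yields $p(\alpha)>\sum_i p(\beta^{(i)})$, and the $r=1$ case is settled by Corollary 4.3.5 together with Corollary 2.2.4. To rescue your plan you would need an analogue of Kac's theorem for parabolic bundles (equivalently, for squid representations) over $\mathbb{P}^1$, which is exactly what the paper's hypercohomology argument is designed to avoid.
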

Note that in this case $m = 1$. The vector $\alpha$ can be used to define a product of partial flag varieties 
$$Fl(\alpha) = \\ \prod_i Fl(\alpha_0, \alpha_{i1}, \dots, \alpha_{iw_i}).$$  
That is, $\alpha_0$ is the dimension of the ambient space $\mathbb{C}^{\alpha_0}$, and for a fixed $1 \le i \le k$, each $\alpha_{ij}$ is the dimension of the $j$-th subspace in the flag.  The group $\textrm{PGL}(\alpha_0)$ acts diagonally on $Fl(\alpha)$, so it makes sense to discuss the very good property of the resulting quotient stack.  Indeed, when the underlying vector bundle is trivial, we can use Theorem 1.2.1 to obtain:

\begin{thm}
The quotient stack $\textrm{PGL}(\alpha_0)\backslash Fl(\alpha)$ is very good, if $\alpha$ is in the fundamental region and $\delta(\alpha) > 0$.
\end{thm}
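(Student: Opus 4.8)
The plan is to identify $\mathrm{PGL}(\alpha_0)\backslash Fl(\alpha)$ with a $\mathbb{C}^*$-gerbe quotient of the open substack of $\mathrm{Bun}_{D,w,\alpha}(\mathbb{P}^1)$ parametrizing parabolic bundles with trivial underlying bundle, and then to transport the conclusion of Theorem 1.2.1 across this identification, losing exactly the one unit of automorphism dimension that separates ``almost very good'' from ``very good.'' First fix the marked points $D$. If a parabolic bundle has underlying bundle $E\cong\mathcal{O}_{\mathbb{P}^1}^{\alpha_0}$, then choosing a trivialization identifies each fiber $E_{x_i}$ with $\mathbb{C}^{\alpha_0}$ and the flags with a point of $Fl(\alpha)$; two trivializations differ by an element of $\mathrm{Aut}(\mathcal{O}_{\mathbb{P}^1}^{\alpha_0})=\mathrm{GL}(\alpha_0,\mathbb{C})$ acting by its constant value on all fibers simultaneously, i.e. diagonally on $Fl(\alpha)$. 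Hence the substack $\mathrm{Bun}^{\mathrm{triv}}_{D,w,\alpha}(\mathbb{P}^1)$ of parabolic bundles with trivial underlying bundle is canonically $\mathrm{GL}(\alpha_0)\backslash Fl(\alpha)$, with the automorphism group of a point equal to the simultaneous $\mathrm{GL}(\alpha_0)$-stabilizer of the corresponding flags.

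Next I would check that $\mathrm{Bun}^{\mathrm{triv}}_{D,w,\alpha}(\mathbb{P}^1)$ is a nonempty open substack of $\mathrm{Bun}_{D,w,\alpha}(\mathbb{P}^1)$: it is the preimage of the open point $[\mathcal{O}_{\mathbb{P}^1}^{\alpha_0}]$ under the morphism to $\mathrm{Bun}_{\alpha_0,0}(\mathbb{P}^1)$ that forgets the flags, and by Grothendieck's theorem the trivial bundle is the generic rank-$\alpha_0$, degree-$0$ bundle on $\mathbb{P}^1$. Since the relevant component of $\mathrm{Bun}_{D,w,\alpha}(\mathbb{P}^1)$ is irreducible (it fibers in flag varieties over the irreducible stack $\mathrm{Bun}_{\alpha_0,0}(\mathbb{P}^1)$), its minimal-automorphism locus is open dense and therefore meets $\mathrm{Bun}^{\mathrm{triv}}_{D,w,\alpha}(\mathbb{P}^1)$; an open substack of a smooth equidimensional almost very good stack that meets the minimal-automorphism locus is itself smooth, equidimensional and almost very good with the same $m$, because restricting the automorphism-jump strata to an open substack cannot lower their codimension. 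With Theorem 1.2.1 and the remark $m=1$ following it, this shows $\mathrm{GL}(\alpha_0)\backslash Fl(\alpha)$ is almost very good with $\min\dim\mathrm{Aut}=1$.

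Finally I would descend to $\mathrm{PGL}(\alpha_0)$. The central $\mathbb{C}^*\subseteq\mathrm{GL}(\alpha_0)$ fixes every flag, so the action on $Fl(\alpha)$ factors through $\mathrm{PGL}(\alpha_0)$ and, for each flag configuration $F$, the natural surjection $\mathrm{Stab}_{\mathrm{GL}(\alpha_0)}(F)\to\mathrm{Stab}_{\mathrm{PGL}(\alpha_0)}(F)$ has kernel exactly $\mathbb{C}^*$. Therefore $\mathrm{GL}(\alpha_0)\backslash Fl(\alpha)\to\mathrm{PGL}(\alpha_0)\backslash Fl(\alpha)$ is a $\mathbb{C}^*$-gerbe, smooth of relative dimension $-1$, and the automorphism dimension of a point upstairs exceeds that of its image by exactly $1$. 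Hence $\{\dim\mathrm{Aut}=n+1\}$ upstairs is the preimage of $\{\dim\mathrm{Aut}=n\}$ downstairs, with the same codimension; since the minimum is $1$ upstairs it is $0$ downstairs, and the almost-very-good inequality $\mathrm{codim}\{\dim\mathrm{Aut}=n+1\}>n$ for $n>0$ on $\mathrm{GL}(\alpha_0)\backslash Fl(\alpha)$ becomes verbatim the very-good inequality $\mathrm{codim}\{\dim\mathrm{Aut}=n\}>n$ for $n>0$ on $\mathrm{PGL}(\alpha_0)\backslash Fl(\alpha)$.

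The only real obstacle is this last step: confirming that the dimension of automorphisms supplied by the scalar subgroup $\mathbb{C}^*$ is precisely the defect $m=1$ of Theorem 1.2.1, so that passing from $\mathrm{GL}(\alpha_0)$ to $\mathrm{PGL}(\alpha_0)$ upgrades ``almost very good'' to ``very good''; everything else is bookkeeping with quotient-stack presentations. If one wishes to avoid gerbe language, the same argument can be phrased by observing that for every $s\ge0$ the codimension in $Fl(\alpha)$ of $\{F:\dim\mathrm{Stab}_{\mathrm{GL}(\alpha_0)}(F)=s+1\}$ simultaneously controls the $(s+1)$-st automorphism stratum of $\mathrm{GL}(\alpha_0)\backslash Fl(\alpha)$ and the $s$-th automorphism stratum of $\mathrm{PGL}(\alpha_0)\backslash Fl(\alpha)$.
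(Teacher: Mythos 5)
Your argument is correct, and it is essentially the route the paper itself sketches in the opening paragraph of Section 6.3 (and in the introduction): identify the locus of parabolic bundles with trivial underlying bundle with $\textrm{GL}(\alpha_0)\backslash Fl(\alpha)$, observe that $\textrm{PGL}(\alpha_0)\backslash Fl(\alpha)$ is very good if and only if $\textrm{GL}(\alpha_0)\backslash Fl(\alpha)$ is almost very good, and invoke Theorem 1.2.1 with $m=1$. What you add is exactly the bookkeeping the paper leaves implicit: that the trivial-bundle locus is a nonempty open substack of the irreducible, equidimensional degree-$0$ component, so the codimension inequalities of the almost very good property survive restriction, and that the $\mathbb{C}^*$-gerbe $\textrm{GL}(\alpha_0)\backslash Fl(\alpha)\rightarrow \textrm{PGL}(\alpha_0)\backslash Fl(\alpha)$ shifts automorphism dimensions by exactly one, turning the offset-by-one inequalities into the very good inequalities. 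Note, however, that the paper's formal proof of Theorem 1.2.2 takes the other route it advertises: Lemma 6.3.1 and Lemma 6.3.2 relate $Fl(\alpha)$ to the space $RI(Q^{st}_{D,w},\alpha)$ of star-shaped quiver representations with injective arrows, and Theorem 3.6.1 (via Kac's Theorem and Proposition 3.0.2, following Crawley-Boevey) gives the very good property for $G(\alpha)\backslash \textrm{Rep}(Q^{st}_{D,w},\alpha)$. That quiver argument is self-contained and independent of the parabolic-bundle machinery of Section 4, whereas your argument is shorter but inherits Theorem 1.2.1 as a black box, including the assertion that its minimal automorphism dimension is $m=1$; both are legitimate, and the paper explicitly endorses both.
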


Theorem 1.2.2 may also be obtained from Crawley-Boevey's results in \cite{CB2001}, after noticing that $Fl(\alpha)$ is the quotient of the space of star-shaped quiver representations of dimension $\alpha$ with injective arrows by the group $H(\alpha) = \prod_{i,j} \textrm{GL}(\alpha_{ij})$, acting by conjugation on the arrows.  In this case, the very good property is equivalent to Crawley-Boevey's inequality $p(\alpha) > \sum_i p(\beta_i)$ (see \cite{CB2001}), for any decomposition $\alpha = \sum_i \beta_i$ into the sum of positive roots corresponding to the star-shaped quiver (see Sections 3.3 and 3.4 below).  The condition that $\alpha$ is in the fundamental region and $\delta(\alpha) > 0$ implies this inequality. 

\subsection{The Deligne-Simpson Problem}

Let $D = (x_1, \dots, x_k)$ be a collection of distinct points on a Riemann surface $X$.  Let $\Omega_{X}^1(\textrm{log } D)$ be the sheaf of \textit{logarithmic differential forms} on $X$.  That is, the sections of $\Omega_{X}^1(\textrm{log } D)$ are differential forms that have a pole of order at most one at each point in $D$.  A \textit{logarithmic connection} or a \textit{connection with regular singularities} (in $D$) on a vector bundle $E$ over $X$ is a $\mathbb{C}$-linear morphism
\begin{align*}
&\nabla: E \rightarrow E \otimes \Omega_{X}^1(\textrm{log } D) \textrm{ such that }\\
&\nabla(fs) = s \otimes df + f\nabla(s) \textrm{ for } f \in \mathcal{O}_{X},  s \in E.
\end{align*}
Note that the connection $\nabla$ has residues at the points of $D$, so that there exists $\textrm{Res}_{x_i} \nabla \in \textrm{End}(E_{x_i})$, for each fiber $E_{x_i}$ over $x_i \in D$.

Let $C_1, \dots, C_k$ be conjugacy classes of complex, linear endomorphisms of vector spaces of dimension $n$.  We can formulate the following:
\begin{DS}
Does there exist (for some $D$ and vector bundle $E$) a connection $\nabla$ on $\mathbb{P}^1$ with regular singularities such that $\textrm{Res}_{x_i} \nabla \in C_i$?
\end{DS}
We will use this formulation of the Deligne-Simpson problem instead of the ones given below, as it is easier to generalize to the case of connections with irregular singularities (see Section 1.5).


The \textit{Riemann-Hilbert correspondence} provides an equivalence between the category of connections $\nabla$ with regular singularities in $D$ on vector bundles over $\mathbb{P}^1$ and the category of representations of the fundamental group of $\mathbb{P}^1-D$ by way of the monodromy representation of $\nabla$ (see \cite{De1970}).  This provides the following reformulation of the Deligne-Simpson problem:
\begin{mDS}
Given $k$ conjugacy classes  $C_1,$ $\dots, C_k$ of complex matrices in $\textrm{GL}(n,\mathbb{C})$, do there exist $A_1 \in C_1, \dots, A_k \in C_k$ such that $A_1 \cdot A_2 \cdots  A_k = \textrm{Id}$?
\end{mDS}
This was the original version of the Deligne-Simpson problem, suggested in a letter from Deligne to Simpson, who considered it in his paper \cite{Si1991}.  

By considering connections on trivial (and trivialized) vector bundles over $\mathbb{P}^1$ we get another version of the Deligne-Simpson problem:
\begin{aDS}
Given $k$ conjugacy classes  $C_1, \dots, C_k$ of complex matrices in $\mathfrak{gl}_n(\mathbb{C})$, do there exist $A_1 \in C_1, \dots, A_k \in C_k$ such that  
$A_1 + \cdots +  A_k = 0$?
\end{aDS}
The multiplicative Deligne-Simpson problem and its additive analogue were studied by Crawley-Boevey in \cite{CB2003}-\cite{CB2004}, Katz in \cite{Katz1996}, Kostov in \cite{Ko1999}, \cite{Ko2001}, \cite{Ko2002}, \cite{Ko2003}, \cite{Ko22004}, \cite{Ko12004}, \cite{Ko2005}, and Simpson in \cite{Si2009}, among others.




There are several approaches to solving the Deligne-Simpson problem. In \cite{Katz1996}, Katz describes an algorithm for the existence of rigid local systems, which Kostov applies in \cite{Ko1999}-\cite{Ko2005} to determine when solutions to various cases of the Deligne-Simpson problems exist.  The algorithm, called the \textit{middle convolution algorithm},  proceeds by changing the rank of the local system by a number $\delta$, called the \textit{defect}, dependent on $C_1, \dots, C_k$.  Solutions exist for the original rank, as long as they exist for the altered rank.  This continues until $\delta \ge 0$, in which case there are solutions by a nontrivial existence theorem, or until one arrives at a situation when solutions cannot exist.  

In \cite{CB2003}, Crawley-Boevey proposes another approach to solving the additive version of the Deligne-Simpson problem by examining fibers of the moment map on the cotangent bundle to the space of representations of the star-shaped quiver and the representations of the deformed preprojective algebra associated to this quiver.  This gives him a necessary and sufficient condition for the existence of solutions in the additive case.  In \cite{CBS2006}, he and Shaw provide a sufficient condition for the existence of solutions of the multiplicative Deligne-Simpson problem using a multiplicative analogue of the preprojective algebra.  This condition is also necessary (\cite{CB2013}).  A multiplicative analogue of the moment map approach of \cite{CB2003} may be found in \cite{Y2008}.  

\subsection{The Deligne-Simpson problem and the very good property}

Let $\mathbf{E}$ be a parabolic bundle over $\mathbb{P}^1$ of weight type $(D,w)$.  Let $\zeta=(\zeta_{ij})_{1\le i\le k, 1\le j\le w_i}$.  A \textit{$\zeta$-parabolic connection} on $\mathbf{E}$ is a connection $\nabla$ on the underlying vector bundle $E$ with regular singularities in $D$, such that 
$$
(\textrm{Res}_{x_i} \nabla - \zeta_{ij})(E_{ij-1}) \subset E_{ij},
$$
for all $1 \le i \le k$ and $1 \le j \le w_i$.

Given semisimple conjugacy classes $C_1, \dots, C_k$ of $n$-dimensional complex vector space endomorphisms and an ordering on the eigenvalues of these conjugacy classes, one can write a dimension vector $\alpha$, where $\alpha_0 = n$ and $\alpha_{ij}$ is the dimension of the direct sum of the first $j$ eigenspaces, for the above ordering on the eigenvalues.  One can also obtain a complex vector $\zeta = (\zeta_{ij})$ simply as the vector of eigenvalues for $C_1, \dots, C_k$, counting multiplicity.  For these $\zeta$ and $\alpha$, the $\zeta$-parabolic connections on parabolic bundles with dimension vector $\alpha$ over $\mathbb{P}^1$ will have residues in the conjugacy classes $C_1, \dots, C_k$. 

Conversely, a $\zeta$-parabolic connection on a parabolic bundle with dimension vector $\alpha$ over $\mathbb{P}^1$ determines semisimple conjugacy classes $C_1, \dots, C_k$, with $\zeta$ being the vector of eigenvalues (counting multiplicity), and  $\alpha_{ij}-\alpha_{ij+1}$ being the dimension of the eigenspace for $\zeta_{ij}$.

Given the situation described in the previous two paragraphs, it follows that semisimple conjugacy classes may be used to determine (not uniquely) a moduli stack of parabolic bundles $\textrm{Bun}_{D,w,\alpha}(\mathbb{P}^1)$.  Furthermore,  the moduli stack of solutions of the Deligne-Simpson problem may be defined as $\textrm{Conn}_{D,w,\alpha,\zeta}(\mathbb{P}^1)$, the moduli stack of $\zeta$-parabolic connections on parabolic bundles over $\mathbb{P}^1$ of weight type $(D,w)$ and dimension vector $\alpha$.  By presenting $\textrm{Conn}_{D,w,\alpha,\zeta}(\mathbb{P}^1)$ as a twisted cotangent bundle over the moduli stack of parabolic bundles $\textrm{Bun}_{D,w,\alpha}(\mathbb{P}^1)$, we prove the following theorem:
\begin{thm}
If $\textrm{Bun}_{D,w,\alpha}(\mathbb{P}^1)$ is almost very good and $\sum_{i=1}^k \sum_{j=1}^{w_i} \zeta_{ij}(\alpha_{ij-1} - \alpha_{ij})$ is an integer, then $\textrm{Conn}_{D,w,\alpha,\zeta}(\mathbb{P}^1)$ is a nonempty, irreducible, locally complete intersection of dimension $2p(\alpha)-1$.
\end{thm}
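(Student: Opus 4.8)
The plan is to realize $\textrm{Conn}_{D,w,\alpha,\zeta}(\mathbb{P}^1)$ as a twisted cotangent bundle over a moduli stack of parabolic bundles and to read off the asserted properties from the almost very good hypothesis. First note that if $\nabla$ is a $\zeta$-parabolic connection on $\mathbf{E}$, then $\textrm{Res}_{x_i}\nabla$ preserves the flag at $x_i$ and acts on its $j$-th graded piece by the scalar $\zeta_{ij}$, so $\textrm{tr}\,\textrm{Res}_{x_i}\nabla = \sum_{j}\zeta_{ij}(\alpha_{ij-1}-\alpha_{ij})$; summing over $i$ and using the residue theorem, $\deg E = -\sum_{i,j}\zeta_{ij}(\alpha_{ij-1}-\alpha_{ij})$. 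Hence $\textrm{Conn}_{D,w,\alpha,\zeta}(\mathbb{P}^1)$ lives over the single degree-$d$ component $\mathcal{B} := \textrm{Bun}^{d}_{D,w,\alpha}(\mathbb{P}^1)$ with $d = -\sum_{i,j}\zeta_{ij}(\alpha_{ij-1}-\alpha_{ij})$, and the hypothesis that this sum be an integer is exactly what makes such a degree occur (otherwise the stack is empty). Write $\mathcal{C} = \textrm{Conn}_{D,w,\alpha,\zeta}(\mathbb{P}^1)$. Since the scalars $\mathbb{C}^*$ sit centrally in every automorphism group and act trivially on deformations and on connections, $\mathcal{B}$ is a $\mathbb{C}^*$-gerbe over a stack $\mathcal{B}'$ and $\mathcal{C}$ a $\mathbb{C}^*$-gerbe over a stack $\mathcal{C}'$ over $\mathcal{B}'$; by the reformulation recalled in the Introduction, $\mathcal{B}$ almost very good means $\mathcal{B}'$ is very good.

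Next comes the twisted-cotangent identification. For a parabolic bundle $\mathbf{E}$, the difference of two $\zeta$-parabolic connections on $E$ is an $\mathcal{O}_{\mathbb{P}^1}$-linear map $E\to E\otimes\Omega^1_{\mathbb{P}^1}(\log D)$ that at each $x_i$ is strongly parabolic for the flag, i.e.\ a parabolic Higgs field, and by Serre duality on $\mathbb{P}^1$ the space of such Higgs fields is the dual of $H^1$ of the sheaf of parabolic endomorphisms of $\mathbf{E}$, which is the tangent complex of $\mathcal{B}'$ at $\mathbf{E}$. Making this precise with families identifies $\mathcal{C}'$ with a twisted cotangent bundle $T^*_{c(\zeta)}\mathcal{B}'$, a torsor over the cotangent stack $T^*\mathcal{B}'$, whose twisting class $c(\zeta)$ is assembled from $\zeta$ and the determinant-of-cohomology line bundle and whose integrality obstruction is precisely the sum above. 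To see $\mathcal{C}\neq\varnothing$ I would exhibit a point directly: take $E=\bigoplus_l L_l$, a sum of line bundles carrying logarithmic connections whose residues at the $x_i$ realize each $\zeta_{ij}$ with multiplicity $\alpha_{ij-1}-\alpha_{ij}$ and satisfy $\deg L_l=-\sum_i(\text{residue at }x_i)$ (so $\sum_l\deg L_l=d$), with the flag at each $x_i$ given by the partial sums of the (generalized) eigenspaces of the total residue; one checks directly that this is a $\zeta$-parabolic connection of dimension vector $\alpha$.

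It remains to deduce the structural properties for $T^*\mathcal{B}'$ and transport them to $\mathcal{C}$. The stack $\mathcal{B}'$ is smooth (obstructions lie in $H^2$ of a coherent sheaf on a curve) and irreducible ($\textrm{Bun}^{d}_{\alpha_0}(\mathbb{P}^1)$ is irreducible and $\mathcal{B}$ fibers over it with fibers the irreducible variety $Fl(\alpha)$), and a Riemann--Roch computation gives $\dimn\mathcal{B} = -q(\alpha) = p(\alpha)-1$, hence $\dimn\mathcal{B}' = p(\alpha)$. By the Beilinson--Drinfeld argument, for a very good stack $\mathcal{Y}$ the cotangent stack $T^*\mathcal{Y}$ is an irreducible, reduced, locally complete intersection of dimension $2\dimn\mathcal{Y}$: one stratifies $\mathcal{Y}$ by $\dimn\textrm{Aut}$; over the open dense stratum $T^*\mathcal{Y}$ has dimension $2\dimn\mathcal{Y}$, while over the locus where $\dimn\textrm{Aut} = n>0$ a local chart bounds the dimension of $T^*\mathcal{Y}$ by $2\dimn\mathcal{Y}-\textrm{codim}+n$, strictly smaller by the very good inequality, so the open part is dense and the moment-map equations cut out a complete intersection of the expected codimension. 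Applying this to $\mathcal{B}'$: the cotangent stack $T^*\mathcal{B}'$, and hence its torsor $\mathcal{C}' = T^*_{c(\zeta)}\mathcal{B}'$, is an irreducible locally complete intersection of dimension $2p(\alpha)$; since $\mathcal{C}\to\mathcal{C}'$ is a $\mathbb{C}^*$-gerbe, $\mathcal{C}$ is an irreducible locally complete intersection of dimension $2p(\alpha)-1$, and it is nonempty by the construction above.

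The main obstacle I anticipate is the deformation theory in the second step done uniformly over the stack rather than fiberwise: describing the cotangent complex of $\mathcal{B}'$ through (strongly) parabolic endomorphism sheaves in families, where the relevant cohomology jumps, and checking that the twisted cotangent bundle attached to $\zeta$ is exactly $\mathcal{C}'$ with integrality obstruction $\sum_{i,j}\zeta_{ij}(\alpha_{ij-1}-\alpha_{ij})$. A secondary point is that the stratawise dimension estimate must control the ``bad'' cotangent directions over a jump stratum by the action of $\textrm{Aut}$ modulo its central $\mathbb{C}^*$, which is precisely why the hypothesis is phrased via the almost very good property (equivalently, via very goodness of $\mathcal{B}' = \mathcal{B}/B\mathbb{C}^*$).
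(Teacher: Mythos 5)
Your overall strategy (realize $\textrm{Conn}_{D,w,\alpha,\zeta}$ as a twisted cotangent bundle over the moduli of parabolic bundles and exploit the almost very good hypothesis) is the same as the paper's, but the two steps that carry the actual content have genuine gaps. The most serious one is nonemptiness. Your explicit point, a direct sum $E=\bigoplus_l L_l$ of line bundles with diagonal logarithmic connections, does not exist in general: the residue theorem applies to each summand separately, so each $L_l$ needs its own residues to sum to an integer, while the hypothesis only makes the total weighted sum $\sum_{i,j}\zeta_{ij}(\alpha_{ij-1}-\alpha_{ij})$ integral. For example, take rank $2$, five marked points with full flags and eigenvalues $\pm c_i$ at $x_i$ with the $c_i$ generic; then $\alpha$ is in the fundamental region, $\delta(\alpha)=1>0$, so the hypotheses of the theorem hold, yet no choice of signs makes $\sum_i \epsilon_i c_i$ an integer, so no diagonal (direct-sum) solution exists even though the theorem guarantees $\textrm{Conn}\neq\varnothing$. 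In the paper nonemptiness is not obtained by exhibiting a point at all: it comes from Lemma \ref{2.4.1} and Theorem \ref{2.4.2}/Corollary \ref{2.4.3} applied to the squid presentation $KS(D,w,\alpha^N)$ of $\textrm{Bun}^d_{D,w,\alpha}$ — over points with minimal stabilizer the moment map restricted to a cotangent fiber surjects onto the annihilator of the stabilizer algebra, and the integrality hypothesis is exactly what puts $\theta^N$ into $\textrm{Mat}(\alpha^N)_0\cong \textrm{Lie}(G(\alpha^N))^*$ (trace zero), so $\mu^{-1}(\theta^N)$ meets the good locus.

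The second gap is the transfer ``$T^*\mathcal{B}'$ is an irreducible lci of dimension $2p(\alpha)$, hence so is its torsor $\mathcal{C}'=T^*_{c(\zeta)}\mathcal{B}'$.'' The twisted cotangent stack is only a pseudo-torsor over $\mathcal{B}'$: its fiber over a parabolic bundle can be empty (this is the whole difficulty — not every bundle carries a $\zeta$-parabolic connection, just as a line bundle of nonzero degree carries no holomorphic connection), and the fibers jump in dimension exactly where $\textrm{Aut}$ jumps. So nonemptiness, irreducibility, dimension and the lci property do not descend formally from $T^*\mathcal{B}'$; one has to redo the stratified dimension estimate and the fiberwise surjectivity directly for the twisted object. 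That is precisely what the paper does by presenting $\textrm{Conn}_{D,w,\alpha,\zeta}$ as the moment-map fiber $\mu_{G(\alpha)}^{-1}(\theta^N)\subset T^*KS(D,w,\alpha^N)$ (Theorem \ref{7.1.4}) and invoking Corollary \ref{2.4.3}, after noting that almost very goodness of $\textrm{Bun}_{D,w,\alpha}$ gives very goodness of $G(\alpha^N)\backslash KS(D,w,\alpha^N)$ and that $KS(D,w,\alpha^N)$ is irreducible. Your closing paragraph correctly identifies these as the delicate points, but as written the proposal neither supplies the fiberwise non-vanishing over the good locus (where the one-dimensional obstruction is killed by the integrality condition) nor the density/dimension control over the bad strata for $\mathcal{C}'$ itself, and its substitute explicit construction is false.
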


Theorem 1.4.1 and Theorem 1.2.1 give us the following corollary:
\begin{cor}
If $\alpha$ is in the fundamental region, $\sum_{i=1}^k \sum_{j=1}^{w_i} \zeta_{ij}(\alpha_{ij-1} - \alpha_{ij})$ is an integer, and $\delta (\alpha) > 0$, then $\textrm{Conn}_{D,w,\alpha,\zeta}(\mathbb{P}^1)$ is a nonempty, irreducible, locally complete intersection of dimension $2p(\alpha)-1$.  
\end{cor}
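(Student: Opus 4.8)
The plan is to obtain the corollary as a formal consequence of the two main theorems, with no new argument required. First I would apply Theorem 1.2.1, whose hypotheses are exactly that $\alpha$ lies in the fundamental region and that $\delta(\alpha) > 0$. These are two of the three hypotheses of the corollary, so Theorem 1.2.1 yields that the moduli stack $\textrm{Bun}_{D,w,\alpha}(\mathbb{P}^1)$ is almost very good (with $m = 1$, as noted after Theorem 1.2.1, so that the notion is well defined in this setting).

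Next I would invoke Theorem 1.4.1. It has two hypotheses: that $\textrm{Bun}_{D,w,\alpha}(\mathbb{P}^1)$ be almost very good, which has just been established, and that $\sum_{i=1}^k \sum_{j=1}^{w_i} \zeta_{ij}(\alpha_{ij-1} - \alpha_{ij})$ be an integer, which is the remaining hypothesis of the corollary. Its conclusion is exactly the statement we want: $\textrm{Conn}_{D,w,\alpha,\zeta}(\mathbb{P}^1)$ is a nonempty, irreducible, locally complete intersection of dimension $2p(\alpha) - 1$, where $p(\alpha) = 1 - q(\alpha)$ for the Tits form $q$ of Section 1.2 — the same quantity occurring in Theorem 1.4.1, so no translation of notation is needed.

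Since the hypotheses of the corollary are precisely the union of those of Theorems 1.2.1 and 1.4.1, and the conclusions coincide, the proof is complete after these two invocations. There is no genuine obstacle at this stage: all of the substance lies in the proofs of Theorems 1.2.1 and 1.4.1 (respectively, the codimension estimate on the loci of parabolic bundles with prescribed automorphism dimension, and the presentation of $\textrm{Conn}_{D,w,\alpha,\zeta}(\mathbb{P}^1)$ as a twisted cotangent bundle over $\textrm{Bun}_{D,w,\alpha}(\mathbb{P}^1)$). The only point worth double-checking is that the integrality condition is phrased identically in the corollary and in Theorem 1.4.1, which it is.
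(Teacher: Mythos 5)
Your proposal is correct and matches the paper's own proof, which simply deduces Corollary 1.4.2 by combining Theorem 1.2.1 (to get the almost very good property from the fundamental region and $\delta(\alpha)>0$ hypotheses) with Theorem 1.4.1 (using the integrality hypothesis). No further comment is needed.
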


If the vector bundles underlying the parabolic bundles are trivial, then Theorem 1.4.1 may be used to obtain the following: 

\begin{thm}
If the conjugacy classes $C_i$ are semisimple, the corresponding quotient stack $\textrm{PGL}(\alpha_0) \backslash Fl(\alpha)$ is very good and the eigenvalues of all the $C_i$ add up to $0$, then the space of solutions of the additive Deligne-Simpson problem for $C_1, \dots, C_k$ is a nonempty, irreducible complete intersection of dimension $2\cdot \dimn Fl(\alpha) - \alpha_0^2 + 1$.  
\end{thm}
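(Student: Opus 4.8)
The plan is to realize the space of solutions of the additive Deligne-Simpson problem as a torsor over the locus of $\textrm{Conn}_{D,w,\alpha,\zeta}(\mathbb{P}^1)$ lying over trivial vector bundles, and then to run the argument of Theorem 1.4.1 on that locus. First I would fix $k$ distinct points $D=(x_1,\dots,x_k)$ on $\mathbb{P}^1$ and, after choosing an ordering of the eigenvalues of each $C_i$, form the dimension vector $\alpha$ and the vector $\zeta$ as in Section 1.4, so that each $\zeta_{ij}$ is an eigenvalue of $C_i$ with multiplicity $\alpha_{ij-1}-\alpha_{ij}$. A connection with regular singularities in $D$ on the trivial bundle $\mathcal{O}^{\alpha_0}$ over $\mathbb{P}^1$ may, after a trivialization, be written $\nabla = d - \sum_i A_i\,\frac{dz}{z-x_i}$ with $\textrm{Res}_{x_i}\nabla = A_i$ and $\textrm{Res}_\infty\nabla = -\sum_i A_i$; it extends to a global logarithmic connection on $\mathcal{O}^{\alpha_0}$ exactly when $\sum_i A_i = 0$. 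Since the eigenvalues $\zeta_{i1},\dots,\zeta_{iw_i}$ of $C_i$ are distinct, the $\zeta$-parabolic condition $(\textrm{Res}_{x_i}\nabla-\zeta_{ij})(E_{ij-1})\subset E_{ij}$ forces $\prod_j(A_i-\zeta_{ij})=0$, hence $A_i$ semisimple, and comparing the dimensions of the flag steps pins its eigenvalue multiplicities down to $\alpha_{ij-1}-\alpha_{ij}$; so $A_i\in C_i$ and the parabolic flag at $x_i$ is the eigenflag of $A_i$, while conversely every $A_i\in C_i$ produces such a flag. Therefore the open substack $\textrm{Conn}^{\textrm{triv}}_{D,w,\alpha,\zeta}\subseteq\textrm{Conn}_{D,w,\alpha,\zeta}(\mathbb{P}^1)$ of connections on trivial bundles is isomorphic to $[\textrm{Sol}/\textrm{GL}(\alpha_0)]$, where $\textrm{Sol}=\{(A_i)\in\prod_i C_i:\sum_i A_i=0\}$ is the Deligne-Simpson solution space with $\textrm{GL}(\alpha_0)$ acting by simultaneous conjugation.

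Next I would note that the presentation of $\textrm{Conn}_{D,w,\alpha,\zeta}(\mathbb{P}^1)$ as a twisted cotangent bundle over $\textrm{Bun}_{D,w,\alpha}(\mathbb{P}^1)$ used in the proof of Theorem 1.4.1 restricts, over the open substack $\textrm{Bun}^{\textrm{triv}}_{D,w,\alpha}$ of parabolic bundles with trivial underlying bundle, to a twisted cotangent bundle over $\textrm{Bun}^{\textrm{triv}}_{D,w,\alpha}=[Fl(\alpha)/\textrm{GL}(\alpha_0)]$ --- a parabolic bundle with trivial underlying bundle being simply a $\textrm{GL}(\alpha_0)$-orbit of flag tuples. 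The stack $[Fl(\alpha)/\textrm{GL}(\alpha_0)]$ carries the central $\mathbb{C}^*$ in every automorphism group, and since it is a $\mathbb{C}^*$-gerbe over $[Fl(\alpha)/\textrm{PGL}(\alpha_0)]$ the codimensions of its automorphism strata match those of $[\textrm{PGL}(\alpha_0)\backslash Fl(\alpha)]$; thus it is almost very good with $m=1$ precisely when $[\textrm{PGL}(\alpha_0)\backslash Fl(\alpha)]$ is very good, which is our hypothesis. Moreover $\sum_{i}\sum_{j}\zeta_{ij}(\alpha_{ij-1}-\alpha_{ij})=0$ is an integer, so the hypotheses of Theorem 1.4.1 are met for the pair $(\textrm{Bun}^{\textrm{triv}}_{D,w,\alpha},\textrm{Conn}^{\textrm{triv}}_{D,w,\alpha,\zeta})$. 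Running the argument of that theorem with this base in place of $\textrm{Bun}_{D,w,\alpha}(\mathbb{P}^1)$ --- the argument needs only the (almost) very good property of the base, to bound the dimensions of automorphism strata, and the integrality condition, to produce a section of the twisted cotangent bundle --- yields that $\textrm{Conn}^{\textrm{triv}}_{D,w,\alpha,\zeta}$ is nonempty, irreducible, and a local complete intersection of dimension $2p(\alpha)-1$.

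Finally I would transfer these properties to $\textrm{Sol}$. The map $\textrm{Sol}\to[\textrm{Sol}/\textrm{GL}(\alpha_0)]=\textrm{Conn}^{\textrm{triv}}_{D,w,\alpha,\zeta}$ is a $\textrm{GL}(\alpha_0)$-torsor, hence smooth and surjective of relative dimension $\alpha_0^2$, so $\textrm{Sol}$ is nonempty, irreducible, and a local complete intersection of dimension $2p(\alpha)-1+\alpha_0^2$. Expanding the Tits form of the star-shaped quiver gives $q(\alpha)=\alpha_0^2-\dimn Fl(\alpha)$, hence $p(\alpha)=\dimn Fl(\alpha)-\alpha_0^2+1$, so this number equals $2\dimn Fl(\alpha)-\alpha_0^2+1$. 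Since each semisimple class satisfies $\dimn C_i=2\dimn Fl(\alpha_0,\alpha_{i1},\dots,\alpha_{iw_i})$, the ambient variety $\prod_i C_i$ is smooth and quasi-affine of dimension $2\dimn Fl(\alpha)$, and $\textrm{Sol}$ is the fibre over $0$ of the map $(A_i)\mapsto\sum_i A_i$ into the traceless matrices $\mathfrak{sl}(\alpha_0)$ (the image is traceless because the eigenvalues add to $0$); as $\textrm{Sol}$ has the expected codimension $\alpha_0^2-1$, it is cut out by the expected number of equations, so it is a genuine complete intersection rather than merely a local one.

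The step I expect to be the main obstacle is the second paragraph: confirming that the proof of Theorem 1.4.1 genuinely localizes to the trivial-bundle stratum, so that very-goodness of $[\textrm{PGL}(\alpha_0)\backslash Fl(\alpha)]$ alone is the hypothesis it uses there (rather than the a priori stronger almost-very-goodness of all of $\textrm{Bun}_{D,w,\alpha}(\mathbb{P}^1)$), and in particular establishing nonemptiness of $\textrm{Conn}^{\textrm{triv}}_{D,w,\alpha,\zeta}$. The latter can alternatively be derived from Crawley-Boevey's solution of the additive Deligne-Simpson problem: by the remark after Theorem 1.2.2, very-goodness of $[\textrm{PGL}(\alpha_0)\backslash Fl(\alpha)]$ is equivalent to the inequality $p(\alpha)>\sum_i p(\beta_i)$ over all decompositions of $\alpha$ into positive roots of the star-shaped quiver, and this together with the vanishing of $\sum_{i,j}\zeta_{ij}(\alpha_{ij-1}-\alpha_{ij})$ places $\alpha$ in the set of roots for which a solution is known to exist.
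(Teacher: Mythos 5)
Your proposal is correct and is essentially the paper's own argument: the paper proves this theorem by applying Corollary \ref{2.4.3} to the moment map on $T^*RI(Q^{st}_{D,w},\alpha)$ --- which is exactly the rigidified form of your trivial-bundle locus, since with $\alpha_{\infty}=0$ and $N=0$ a degree-zero bundle with globally generated dual is trivial --- using the very good property of $G(\alpha)\backslash RI(Q^{st}_{D,w},\alpha)$ supplied by the codimension equality established in the proof of Lemma \ref{6.3.2}, and then identifying $\mu_{G(\alpha)}^{-1}(\theta^N)/H(\alpha)$ with $ADS(C_1,\dots,C_k)$ via Theorem \ref{7.1.4}, which is your $[\mathrm{Sol}/\mathrm{GL}(\alpha_0)]$ identification read through the free $H(\alpha)$-quotient instead of the $\mathrm{GL}(\alpha_0)$-torsor. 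Your flagged obstacle therefore dissolves: one never needs to localize Theorem 1.4.1 itself, because the moment-map machinery (irreducible representation space plus very good quotient, and the integrality giving $\mathrm{tr}(\theta^N)=0$) is applied directly to the star-shaped quiver data, which already lives entirely over trivial bundles.
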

Applying Theorem 1.2.2 we obtain:
\begin{cor}
If the conjugacy classes $C_i$ are semisimple, the eigenvalues of all the $C_i$ add up to $0$, $\alpha$ is in the fundamental region, and $\delta(\alpha) > 0$, then the space of solutions of the additive Deligne-Simpson problem for $C_1, \dots, C_k$ is a nonempty, irreducible complete intersection of dimension $2\cdot\textrm{dim } Fl(\alpha) - \alpha_0^2 + 1$.
\end{cor}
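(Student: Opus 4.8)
The plan is to obtain this as an immediate consequence of Theorem 1.4.3 and Theorem 1.2.2, so that essentially no new argument is required beyond matching hypotheses. Recall first the dictionary of Section 1.4: given the semisimple conjugacy classes $C_1,\dots,C_k$ on an $n$-dimensional space, together with a chosen ordering of the eigenvalues of each $C_i$, one forms the dimension vector $\alpha$ with $\alpha_0=n$ and $\alpha_{ij}$ equal to the sum of the dimensions of the first $j$ eigenspaces of $C_i$ (and $\zeta$ the corresponding eigenvalue vector, counted with multiplicity). The hypothesis of the corollary is exactly that, for the chosen ordering, this $\alpha$ lies in the fundamental region and satisfies $\delta(\alpha)>0$.

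The first step is then to invoke Theorem 1.2.2: since $\alpha$ is in the fundamental region and $\delta(\alpha)>0$, the quotient stack $\textrm{PGL}(\alpha_0)\backslash Fl(\alpha)$ is very good. The second step is to feed this into Theorem 1.4.3. Its three hypotheses are now all in force — the $C_i$ are semisimple by assumption, $\textrm{PGL}(\alpha_0)\backslash Fl(\alpha)$ is very good by the previous step, and the eigenvalues of all the $C_i$ add up to $0$ by assumption — so Theorem 1.4.3 applies verbatim and yields that the space of solutions of the additive Deligne-Simpson problem for $C_1,\dots,C_k$ is a nonempty, irreducible complete intersection of dimension $2\dim Fl(\alpha)-\alpha_0^2+1$, which is precisely the asserted conclusion.

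The only point that deserves a word of care — and it is the mildest of obstacles rather than a genuine difficulty — is the dependence on the chosen ordering of eigenvalues, since a priori $\alpha$, and hence $Fl(\alpha)$, changes with the ordering. This causes no trouble: the solution space $\{(A_1,\dots,A_k)\in C_1\times\cdots\times C_k : A_1+\cdots+A_k=0\}$ is manifestly independent of any ordering, so it is enough that \emph{some} ordering produces an $\alpha$ in the fundamental region with $\delta(\alpha)>0$, which is what the hypothesis supplies. One should also note that the stated dimension is independent of the ordering: $\dim Fl(\alpha_0,\alpha_{i1},\dots,\alpha_{iw_i})$ is the sum over unordered pairs of products of the sizes of the graded pieces $\alpha_{ij-1}-\alpha_{ij}$, a symmetric function of the eigenspace dimensions of $C_i$, so $\dim Fl(\alpha)$ depends only on the $C_i$ and not on how their eigenvalues are ordered. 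With these remarks, the corollary follows by combining the two cited theorems and no further computation is needed.
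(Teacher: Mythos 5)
Your proposal is correct and matches the paper's own argument exactly: the paper proves this corollary by citing Theorem 1.2.2 (to get that $\textrm{PGL}(\alpha_0)\backslash Fl(\alpha)$ is very good from the fundamental-region and $\delta(\alpha)>0$ hypotheses) and then applying Theorem 1.4.3. Your additional remark about the choice of eigenvalue ordering is a reasonable bit of care that the paper handles only with a brief comment before the proof, but it does not change the argument.
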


We can obtain results similar to Theorem 1.4.3 and Corollary 1.4.4 for the multiplicative Deligne-Simpson problem. Indeed, let $C_1, \dots , C_k$ be semisimple conjugacy classes in $\textrm{GL}(n,\mathbb{C})$.  The Riemann-Hilbert correspondence provides an analytic isomorphism between the space of solutions to the multiplicative Deligne-Simpson problem for $C_1, \dots, C_k$ and a certain moduli space of $\zeta$-parabolic connections.  This is similar to the analytic isomorphism obtained for the moduli space of stable $\zeta$-parabolic connections in \cite{In2013}, \cite{IIS2006}, or \cite{Y2008}.  We get the following:

\begin{thm}
If we have that the conjugacy classes $C_i$ are semisimple, the corresponding moduli stack $\textrm{Bun}_{D,w,\alpha}(\mathbb{P}^1)$ is almost very good, and the eigenvalues of all the $C_i$ multiply to $1$, then the space of solutions of the multiplicative Deligne-Simpson problem for $C_1, \dots, C_k$ is a nonempty, irreducible complete intersection of dimension $2\cdot\dimn Fl(\alpha) - \alpha_0^2 + 1 = 2p(\alpha) + \alpha_0^2 - 1$.  
\end{thm}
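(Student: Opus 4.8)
The plan is to run the multiplicative analogue of the argument behind Theorem 1.4.3: transport the conclusion of Theorem 1.4.1 across the Riemann--Hilbert correspondence, the only genuinely new ingredient being the multiplicative version of that correspondence. First I would fix an ordering of the (distinct) eigenvalues of each semisimple class $C_i$ and choose complex numbers $\zeta_{ij}$, $1\le j\le w_i$, with $\exp(-2\pi\sqrt{-1}\,\zeta_{ij})$ equal to the $j$-th eigenvalue of $C_i$ and $\alpha_{ij-1}-\alpha_{ij}$ its multiplicity; this is exactly the dictionary described before Theorem 1.4.1 and it produces the dimension vector $\alpha$ of the statement. Then $\det C_i=\exp\!\big(-2\pi\sqrt{-1}\sum_j \zeta_{ij}(\alpha_{ij-1}-\alpha_{ij})\big)$, so the hypothesis that the eigenvalues of all the $C_i$ multiply to $1$ (i.e. $\prod_i\det C_i=1$) is precisely the statement that $\sum_{i=1}^k\sum_{j=1}^{w_i}\zeta_{ij}(\alpha_{ij-1}-\alpha_{ij})$ is an integer; moreover this integrality is unchanged if the $\zeta_{ij}$ are shifted by integers, which we may need in order to place them in a fundamental domain for the $\mathbb{Z}$-action. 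With the hypothesis that $\textrm{Bun}_{D,w,\alpha}(\mathbb{P}^1)$ is almost very good, Theorem 1.4.1 then applies: $\textrm{Conn}_{D,w,\alpha,\zeta}(\mathbb{P}^1)$ is nonempty, irreducible, and a locally complete intersection of dimension $2p(\alpha)-1$.

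Next I would set up the Riemann--Hilbert correspondence mentioned after the statement. Monodromy sends a $\zeta$-parabolic connection $\nabla$ on a parabolic bundle of dimension vector $\alpha$ to a local system on $\mathbb{P}^1-D$ whose monodromy around $x_i$ is conjugate to $\exp(-2\pi\sqrt{-1}\,\textrm{Res}_{x_i}\nabla)$, which lies in $C_i$ by the choice of $\zeta$, the parabolic flag matching the decreasing filtration by partial sums of eigenspaces; conversely, Deligne's canonical extension together with the eigenspace filtration at each puncture recovers a $\zeta$-parabolic connection from a representation of $\pi_1(\mathbb{P}^1-D)=\langle\gamma_1,\dots,\gamma_k\mid\gamma_1\cdots\gamma_k=\textrm{Id}\rangle$ with $\gamma_i\mapsto A_i\in C_i$. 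As in \cite{In2013}, \cite{IIS2006}, \cite{Y2008}, but carried out at the level of stacks rather than for stable objects, this gives an \emph{analytic} isomorphism $[\mathcal{S}/\textrm{GL}(\alpha_0)]\cong\textrm{Conn}_{D,w,\alpha,\zeta}(\mathbb{P}^1)$, where $\mathcal{S}=\{(A_1,\dots,A_k)\in C_1\times\cdots\times C_k\mid A_1\cdots A_k=\textrm{Id}\}$ is the space of solutions of the multiplicative Deligne--Simpson problem and $\textrm{GL}(\alpha_0)$ acts by simultaneous conjugation; equivalently, passing to the atlas, $\mathcal{S}$ is analytically isomorphic to the moduli space of framed $\zeta$-parabolic connections, a $\textrm{GL}(\alpha_0)$-torsor over $\textrm{Conn}_{D,w,\alpha,\zeta}(\mathbb{P}^1)$. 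An analytic isomorphism preserves nonemptiness, dimension, the smooth locus, the number of connected components of the smooth locus (hence the number of irreducible components of a reduced variety), and the locally-complete-intersection property (detectable on analytic-local rings). Since $\dim[\mathcal{S}/\textrm{GL}(\alpha_0)]=\dim\mathcal{S}-\alpha_0^2$ and $\textrm{GL}(\alpha_0)$ is connected, Step 1 yields: $\mathcal{S}$ is nonempty, irreducible, a locally complete intersection, of dimension $2p(\alpha)-1+\alpha_0^2$.

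It remains to upgrade ``locally complete intersection'' to ``complete intersection'' and to rewrite the dimension. Because $\prod_i\det C_i=1$, the product map $\mu\colon C_1\times\cdots\times C_k\to\textrm{GL}(\alpha_0)$ factors through $\textrm{SL}(\alpha_0)$; choosing $\alpha_0^2-1$ coordinate functions on $\textrm{SL}(\alpha_0)$ that cut out $\textrm{Id}$ scheme-theoretically and pulling them back along $\mu$ exhibits $\mathcal{S}$ as the zero locus of $\alpha_0^2-1$ functions on the smooth affine variety $C_1\times\cdots\times C_k$, whose dimension is $\sum_i\dim C_i$. For a semisimple class $C_i$ in $\textrm{GL}(\alpha_0)$ with eigenvalue multiplicities $\alpha_{ij-1}-\alpha_{ij}$ one has $\dim C_i=\alpha_0^2-\sum_j(\alpha_{ij-1}-\alpha_{ij})^2=2\dim Fl(\alpha_0,\alpha_{i1},\dots,\alpha_{iw_i})$, so $\sum_i\dim C_i=2\dim Fl(\alpha)$; combined with Step 2 this shows $\mathcal{S}$ has codimension exactly $\alpha_0^2-1$ in $C_1\times\cdots\times C_k$, so the $\alpha_0^2-1$ pulled-back functions form a regular sequence and $\mathcal{S}$ is a complete intersection of dimension $\sum_i\dim C_i-(\alpha_0^2-1)=2\dim Fl(\alpha)-\alpha_0^2+1$, which equals $2p(\alpha)+\alpha_0^2-1$ by the same numerical identity used in Theorem 1.4.3.

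I expect the main obstacle to be Step 2: one must make the multiplicative Riemann--Hilbert correspondence precise as an isomorphism (not merely a morphism) of the relevant moduli stacks, which forces the judicious, possibly integer-shifted, choice of weights $\zeta_{ij}$ lying in a single fundamental domain so as to avoid resonance, and one must verify that the properties furnished by Theorem 1.4.1 --- in particular irreducibility and the locally-complete-intersection property --- genuinely descend across an \emph{analytic}, rather than algebraic, isomorphism; this is where the reduction to connectedness of the smooth locus and to analytic-local rings does the work. The remaining steps are bookkeeping with dimensions of conjugacy classes and flag varieties.
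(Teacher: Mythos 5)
Your proposal is correct and follows essentially the same route as the paper: apply Theorem 1.4.1 (the integrality of $\sum_{i,j}\zeta_{ij}(\alpha_{ij-1}-\alpha_{ij})$ coming from $\prod_i\det C_i=1$ and a transversal-type choice of the $\zeta_{ij}$), pass to a framed moduli space of $\zeta$-parabolic connections with a trivialization at a base point $y$ (the paper's $R_{DR}(D,w,y,\alpha,\zeta)$, a smooth representable $\textrm{GL}(\alpha_0)$-cover of $\textrm{Conn}_{D,w,\alpha,\zeta}$), identify it analytically with $MDS(C_1,\dots,C_k)$ via monodromy, and transfer dimension, the complete-intersection property, and irreducibility through connectedness of the smooth locus. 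Your extra step exhibiting $MDS$ globally as the zero locus of $\alpha_0^2-1$ equations in $C_1\times\cdots\times C_k$ (via the factorization through $\textrm{SL}(\alpha_0)$) is a slightly more careful justification of ``complete intersection'' than the paper's, which simply transfers the property across the analytic isomorphism.
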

Applying Theorem 1.2.1 we obtain:
\begin{cor}
If the conjugacy classes $C_i$ are semisimple, the eigenvalues of all the $C_i$ multiply to $1$, $\alpha$ is in the fundamental region and $\delta(\alpha) > 0$, then the space of solutions of the multiplicative Deligne-Simpson problem for $C_1, \dots, C_k$ is a nonempty, irreducible complete intersection of dimension $2\cdot \dimn Fl(\alpha) - \alpha_0^2 + 1 = 2p(\alpha) + \alpha_0^2 - 1$.  
\end{cor}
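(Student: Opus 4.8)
The plan is to obtain this corollary as a direct combination of Theorem 1.4.7 and Theorem 1.2.1, so the argument is essentially a matter of checking that the hypotheses line up; there is no substantive new input. First I would recall the dictionary from Section 1.4 between conjugacy classes and parabolic data: from the semisimple classes $C_1,\dots,C_k$ in $\textrm{GL}(n,\mathbb{C})$, once an ordering of the eigenvalues of each $C_i$ is fixed, one reads off the weight type $(D,w)$ and the dimension vector $\alpha$ with $\alpha_0=n$ and $\alpha_{ij}$ the dimension of the sum of the first $j$ eigenspaces of $C_i$. With respect to this $\alpha$, the hypotheses of the corollary are exactly: the $C_i$ are semisimple, the product of all the eigenvalues is $1$, and $\alpha$ lies in the fundamental region with $\delta(\alpha)>0$.

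Next I would invoke Theorem 1.2.1 to upgrade the last hypothesis: since $\alpha$ is in the fundamental region and $\delta(\alpha)>0$, the moduli stack $\textrm{Bun}_{D,w,\alpha}(\mathbb{P}^1)$ is almost very good. Now all three hypotheses of Theorem 1.4.7 are in force --- semisimplicity of the $C_i$, almost-very-goodness of $\textrm{Bun}_{D,w,\alpha}(\mathbb{P}^1)$, and the eigenvalue product condition --- so Theorem 1.4.7 applies verbatim and yields that the space of solutions of the multiplicative Deligne-Simpson problem for $C_1,\dots,C_k$ is a nonempty, irreducible complete intersection of dimension $2\cdot\dimn Fl(\alpha)-\alpha_0^2+1$. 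The use of the Riemann-Hilbert correspondence identifying this space with a moduli of $\zeta$-parabolic connections is internal to Theorem 1.4.7, so nothing further is required here.

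Finally I would record the numerical identity $2\cdot\dimn Fl(\alpha)-\alpha_0^2+1 = 2p(\alpha)+\alpha_0^2-1$ from the statement. This is elementary: writing $d_{ij}=\alpha_{ij-1}-\alpha_{ij}$ for the jump sizes along the $i$-th arm, so that $\sum_j d_{ij}=\alpha_0$, the $i$-th factor of $Fl(\alpha)$ has dimension $\frac{1}{2}\bigl(\alpha_0^2-\sum_j d_{ij}^2\bigr)$; expanding the squares and using the conventions $\alpha_{i0}=\alpha_0$ and $\alpha_{iw_i}=0$ collapses the sum to $2\dimn Fl(\alpha)=2\alpha_0^2-2q(\alpha)$, and since $p(\alpha)=1-q(\alpha)$ both sides of the claimed identity equal $\alpha_0^2-2q(\alpha)+1$. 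There is essentially no obstacle here, as the content lives in Theorems 1.2.1 and 1.4.7; the only point worth flagging is that $\alpha$ depends on the chosen ordering of the eigenvalues within each $C_i$, so the hypothesis "$\alpha$ is in the fundamental region and $\delta(\alpha)>0$" is to be read with respect to a fixed such ordering (concretely, ordering the eigenvalues of each $C_i$ by non-increasing multiplicity makes the $d_{ij}$ non-increasing and hence automatically enforces the convexity inequalities $-2\alpha_{ij}+\alpha_{ij-1}+\alpha_{ij+1}\ge 0$ along the arms, leaving only $\delta(\alpha)\ge 0$ as a genuine constraint).
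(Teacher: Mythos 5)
Your proposal is correct and matches the paper's proof, which likewise deduces the corollary immediately from Theorem 1.2.1 (almost-very-goodness of $\textrm{Bun}_{D,w,\alpha}(\mathbb{P}^1)$ under the fundamental-region and $\delta(\alpha)>0$ hypotheses) together with the multiplicative theorem --- note only that the result you cite as ``Theorem 1.4.7'' is Theorem 1.4.5 in the paper's numbering. Your extra checks (the identity $2\dimn Fl(\alpha)-\alpha_0^2+1=2p(\alpha)+\alpha_0^2-1$ and the remark that a suitable eigenvalue ordering reduces the fundamental-region condition to $\delta(\alpha)\ge 0$) are accurate and consistent with the paper's Remark 1.4.7 and Section 3.6.
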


\begin{rmk}
In the above corollaries, $\delta(\alpha)$ is actually equal to the defect $\delta$ that appears in Katz's middle convolution algorithm.  Moreover, for the specific ordering on the eigenspaces described above, the condition that $\alpha$ is in the fundamental region reduces to $\delta(\alpha) \ge 0$.  Therefore, $\delta(\alpha) > 0$ alone is sufficient to obtain the properties for the space of solutions. 
\end{rmk}


\subsection{Further Discussion}

In our formulation, the Deligne-Simpson problem asks whether there exist connections on $\mathbb{P}^1$ with simple poles such that the residues lie in prescribed conjugacy classes.  It is also possible to ask a similar question for connections with poles of higher order.  

We replace the idea of a logarithmic connection on $\mathbb{P}^1$ that has residues in prescribed conjugacy classes with the more general one of a connection with irregular singularities that has prescribed \textit{formal types}.  The notion of formal type (see e.g. \cite{Ar2010}) allows one to classify connections with irregular singularities based on their restrictions to formal neighborhoods of points.  Using this notion it is possible to formulate a more general version of the Deligne-Simpson problem by asking whether there exist connections with irregular singularities on $\mathbb{P}^1$ with prescribed formal types at a fixed collection of points $D$ on $\mathbb{P}^1$.

Hiroe in \cite{Hi2013} solves the ``additive'' version of this problem (when the connections are on trivial vector bundles) by using Boalch's quiver construction from \cite{Bo2008}.  This approach, similar to what Crawley-Boevey does in \cite{CB2003} for the case of regular singularities, suggests that it is possible to apply the very good condition to obtain certain geometric properties for the space of solutions to the irregular version of the additive Deligne-Simpson problem.  Moreover, it may be possible to generalize representations of squids, in order to study the space of solutions to the general version of the irregular Deligne-Simpson problem.

It would also be interesting to extend the result of Theorem 1.2.1 to other reductive groups.  By analogy with flag varieties, it is possible to define a parabolic structure on $G$-bundles, when $G$ is not $GL(n,\mathbb{C})$, by specifying parabolic subgroups $P_i$ at each marked point $x_i \in \mathbb{P}^1$.  Although there is no correspondence with quiver representations for a general $G$, it may be possible to modify Beilinson and Drinfeld's original proof of the very good property for $\textrm{Bun}_G$.  A key part of their argument consists of showing that the \textit{global nilpotent cone} $\textrm{Nilp}(G)$ (introduced in \cite{La1987} and \cite{La1988}), the fiber over $0$ in the Hitchin system, is Lagrangian (see \cite{Gi2001}).  One can consider the parabolic analogue of the Hitchin system, which has its own global nilpotent cone. It has been proved to be Lagrangian in specific instances, such as for complete flags (\cite{F1993}, \cite{Su2007}) or rank 3 (\cite{GP2007}).  However, the author is unaware of a proof for the case of partial flags.

\subsection{Plan}

In the second section, we begin by defining the good property and its variants for algebraic stacks.  We reformulate these properties in terms of the dimension of the corresponding inertia stacks, so that they become easier to prove.  Specifically, we examine the case of the quotient stack, when a semisimple algebraic group $G$ acts on a variety $X$.  In this case, the dimension of the inertia stack is simply the number of parameters (e.g. see \cite{CB2001} or the proof of the Kac Theorem in \cite{CB1993}).  We also apply the very good property for the quotient stack $G \backslash X$ to study the geometry of the fibers of the moment map on $T^*X$. 

In the third section, we demonstrate the very good property for the quotient stack arising from quiver representations.  We largely follow Crawley-Boevey's arguments in \cite{CB1993} and \cite{CB2001} for computing the number of parameters by means of the Kac Theorem.  We also introduce the space of squid representations (following \cite{CB2004}), which will be used to construct moduli of parabolic bundles later.  We then look at the cotangent bundle to this space.  We finish by discussing the special case of star-shaped quiver representations, demonstrating the very good property for the associated quotient stack.

In the fourth section, we prove our main result, Theorem 1.2.1, concerning the almost very good property for the moduli stack of parabolic bundles over $\mathbb{P}^1$.  A key element in the proof involves an estimate of the dimension of the inertia stack.

In the fifth section, we introduce stability and semistability for parabolic bundles.  We explain how to simplify the proof of Theorem 1.2.1 if we restrict ourselves to the open substack of semistable parabolic bundles.  We finish by relating semistable parabolic bundles to semistable quiver representations.

In the sixth section, we introduce several moduli spaces of parabolic bundles, relating parabolic bundles to flag bundles and squid representations.  As an additional example of this relationship, we prove Theorem 1.2.2 in two different ways, using the results of the third and fourth sections.

In the final section, we define a moduli space of $\zeta$-parabolic connections on $\mathbb{P}^1$ in terms of a moment map on the cotangent bundle to squid representations.  We use this moduli space and Theorem 1.2.1 in order to prove Theorem 1.4.1 and Corollary 1.4.2, demonstrating certain properties of the moduli stack of solutions to the Deligne-Simpson problem.  Subsequently, we derive analogous properties for the specific case of the additive Deligne-Simpson problem, proving Theorem 1.4.3 and Corollary 1.4.4.  Finally, we use the Riemann-Hilbert correspondence, Theorem 1.4.1, and Corollary 1.4.2 in order to prove Theorem 1.4.5 and Corollary 1.4.6, which confer the same properties onto the space of solutions to the multiplicative Deligne-Simpson problem. 

\subsection{Acknowledgments}
I am extremely grateful to my advisor D. Arinkin for his numerous comments, corrections, and explanations.  I would like to thank P. Belkale, A. Braverman, I. Cherednik, E. Frenkel, A. Goncharov, M. Kontsevich, S. Kumar, Z. Lin, A. Polishchuk, L. Rozansky, J. Sawon, A. Varchenko for the interest they have expressed in my work and for the useful discussions.  I would also like to thank W. Crawley-Boevey for clarifying the status of the multiplicative Deligne-Simpson problem and inspiring much of the work seen here.

\section{Very Good Property}

\subsection{Outline}

We wish to define the notions of \textit{good}, \textit{very good}, and \textit{almost very good} for the algebraic stack $\mathcal{Y}$, proving several results that will make it easier to check for these properties.  We do this by reducing each property to inequalities involving components of the inertia stack associated with $\mathcal{Y}$.  

We also consider these properties in the specific case when $\mathcal{Y}$ is the quotient stack $G \backslash X$, obtained from the action of an algebraic group $G$ on a variety $X$.  In this case, we introduce (following \cite{CB1993}) the \textit{number of parameters} and prove a couple of technical lemmas we will use later. As a useful application of $G \backslash X$ being very good, we derive several geometric properties for the fiber of the moment map defined on $T^*X$ by the natural action of $G$.

\subsection{Definitions}

Let $\mathcal{Y}$ be an equidimensional algebraic stack over $\mathbb{C}$, and denote by $\textrm{Aut}(y)$ the automorphism group of $y \in \mathcal{Y}$. Let $\mathcal{Y}^n = \{y \in \mathcal{Y}| \textrm{dim Aut}(y) = n\}$, which gives rise to a reduced locally closed substack of $\mathcal{Y}$.    
\begin{defn}[\cite{BD1991}]
\label{2.1.1}
We call $\mathcal{Y}$ good when: 
$$
\textrm{codim } \mathcal{Y}^n \ge n \quad \forall n > 0,
$$
and we call it very good when: 
$$
\textrm{codim } \mathcal{Y}^n > n \quad \forall n > 0.
$$
\end{defn}
In the case when $\mathcal{Y}$ is smooth, being good is equivalent to the condition that $\textrm{dim } T^*\mathcal{Y}  = 2 \textrm{ dim } \mathcal{Y}$, where $T^*\mathcal{Y}$ is the cotangent stack to $\mathcal{Y}$ (see \cite{BD1991}).  Furthermore, $\mathcal{Y}$ is very good if and only if $T^*\mathcal{Y}^0$ is dense in $T^*\mathcal{Y}$.  Now, suppose there exists an integer $m > 0$ such that for all $y \in \mathcal{Y}$ we have $\textrm{dim Aut}(y) \ge m$.  In this case, we can see that $\mathcal{Y}$ cannot be very good.  
\begin{defn}
\label{2.1.2}
Let $m = \textrm{min dim Aut}(y)$ over all $y \in \mathcal{Y}$.  We say $\mathcal{Y}$ is \textit{almost good} if:
$$
\textrm{codim } \mathcal{Y}^{n+m} \ge n \quad \forall n > 0,
$$  
and we say it is almost very good if: 
$$
\textrm{codim } \mathcal{Y}^{n+m} > n \quad \forall n > 0.
$$  

\end{defn}

\begin{rmk}
\label{2.1.3}
Since $GL(n,\mathbb{C})$ contains the 1-dimensional center $\mathbb{C}^*$, the moduli stack of parabolic bundles provides an example of a stack that cannot be very good.  Instead, we offset the dimension of each automorphism group by $1$ and show that the stack is almost very good.  As we pointed out in Section 1.1, this is equivalent to showing that the quotient of moduli stack of parabolic bundles by the classifying stack of $\mathbb{C}^*$ is very good.  
\end{rmk}

\subsection{The very good property and the inertia stack}
In order to prove our Theorem 1.2.1, we will need to reformulate the very good property in terms of the inertia stack.  Let $\mathcal{I}_{\mathcal{Y}}$ be the inertia stack associated with the stack $\mathcal{Y}$,
which consists of pairs $(y, f)$, such that $y \in \mathcal{Y}$ and $f \in \textrm{Aut}(y)$.
We will be using the following lemma (see Properties of Algebraic Stacks in \cite{Sta2014}):  
 
\begin{lmm}
\label{2.2.1}
Let $f: \mathcal{X}_1 \rightarrow \mathcal{X}_2$ be a flat morphism of stacks of finite type and let $x \in \mathcal{X}_1$.  We have: 
$$
\dimn_{x}(\mathcal{X}_1)_{f(x)} = \dimn_{x} \mathcal{X}_1 - \dimn_{f(x)} \mathcal{X}_2,
$$
where $(\mathcal{X}_1)_{f(x)}$ is the fiber over $f(x)$. 
\end{lmm}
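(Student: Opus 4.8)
The plan is to reduce the statement to the corresponding fact for schemes: if $g\colon U \to V$ is a flat morphism of schemes locally of finite type and $u \in U$ with $v = g(u)$, then $\dim_u U_v = \dim_u U - \dim_v V$ (EGA IV, 6.1.2, or \cite{Sta2014}). Recall that for a locally Noetherian algebraic stack $\mathcal{X}$ and a point $x \in |\mathcal{X}|$, one computes $\dim_x \mathcal{X}$ by choosing a smooth morphism $(U,u) \to (\mathcal{X},x)$ from a scheme and setting $\dim_x \mathcal{X} = \dim_u U - e$, where $e$ is the relative dimension of $U \to \mathcal{X}$ at $u$; this value does not depend on the chosen presentation. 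Since the $\mathcal{X}_i$ are of finite type over $\mathbb{C}$, they are locally Noetherian and all of this applies. Note that equidimensionality of the $\mathcal{X}_i$ is not needed, since only local dimensions at points occur.

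First I would build a compatible system of atlases. Choose a smooth surjection $p\colon V \to \mathcal{X}_2$ from a scheme, with $v \in V$ mapping to $f(x)$, and form the fiber product $\mathcal{X}_1' := \mathcal{X}_1 \times_{\mathcal{X}_2} V$. The projection $\mathcal{X}_1' \to \mathcal{X}_1$ is smooth and surjective, being the base change of $p$, while $\mathcal{X}_1' \to V$ is flat and locally of finite type, being the base change of $f$. Next pick a point $x' \in |\mathcal{X}_1'|$ over $(x,v)$ and a smooth surjection $U \to \mathcal{X}_1'$ from a scheme, with $u \in U$ mapping to $x'$. Then the composite $g\colon U \to \mathcal{X}_1' \to V$ is a flat morphism of schemes, locally of finite type, with $g(u) = v$, so the scheme-level identity applies to $g$.

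Finally I would translate the three dimension terms through these atlases. Let $a$ be the relative dimension of $U \to \mathcal{X}_1'$ at $u$ and $b$ the relative dimension of $V \to \mathcal{X}_2$ at $v$. Relative dimension of a smooth morphism is stable under base change, so $\mathcal{X}_1' \to \mathcal{X}_1$ has relative dimension $b$ at $x'$; by additivity of relative dimension along the smooth morphisms $U \to \mathcal{X}_1' \to \mathcal{X}_1$, the morphism $U \to \mathcal{X}_1$ has relative dimension $a + b$ at $u$. Hence $\dim_x \mathcal{X}_1 = \dim_u U - a - b$ and $\dim_{f(x)} \mathcal{X}_2 = \dim_v V - b$. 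Representing the point $f(x)$ by $\operatorname{Spec}\kappa(v) \to V \xrightarrow{p} \mathcal{X}_2$, the scheme fiber $U_v$ maps smoothly and surjectively, with relative dimension $a$ at $u$, onto $\mathcal{X}_1' \times_V \operatorname{Spec}\kappa(v) = \mathcal{X}_1 \times_{\mathcal{X}_2} \operatorname{Spec}\kappa(v) = (\mathcal{X}_1)_{f(x)}$, so $\dim_x (\mathcal{X}_1)_{f(x)} = \dim_u U_v - a$. Combining this with the scheme identity $\dim_u U_v = \dim_u U - \dim_v V$ yields
$$\dim_x (\mathcal{X}_1)_{f(x)} = (\dim_u U - a - b) - (\dim_v V - b) = \dim_x \mathcal{X}_1 - \dim_{f(x)} \mathcal{X}_2,$$
as desired. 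The step I expect to require the most care — the nearest thing to an obstacle — is this last bookkeeping: one must choose the atlas of $\mathcal{X}_1'$ and the field representing the point $f(x)$ compatibly so that every square in sight is a base change of another, and one must invoke that the local dimension of a fiber is insensitive to extension of the base field (so that using $\kappa(v)$ in place of a minimal field representing $f(x)$ is harmless). There is no deeper difficulty: the substantive input, the scheme-level identity for flat morphisms locally of finite type, is imported wholesale.
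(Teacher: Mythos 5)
Your proof is correct. The paper does not actually prove this lemma --- it is imported verbatim from the Stacks Project (``Properties of Algebraic Stacks''), so there is no in-paper argument to compare against; your reduction via a smooth atlas $V \to \mathcal{X}_2$, the base change $\mathcal{X}_1' = \mathcal{X}_1 \times_{\mathcal{X}_2} V$, and an atlas $U \to \mathcal{X}_1'$, followed by the scheme-level flat dimension formula and the relative-dimension bookkeeping, is exactly the standard argument underlying the cited result, and the two points you flag (choosing $x'$ over $(x,v)$ and the insensitivity of local fiber dimension to the field representing $f(x)$) are indeed the only places requiring care.
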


Now, we can obtain:
\begin{thm}
\label{2.2.2}
The stack $\mathcal{Y}$ is good if and only if $\dimn \mathcal{I}_{\mathcal{Y}} \le \dimn \mathcal{Y}$.
\end{thm}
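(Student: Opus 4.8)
The strategy is to study the forgetful projection $\pi\colon \mathcal{I}_{\mathcal{Y}}\to\mathcal{Y}$, $(y,f)\mapsto y$, whose fibre over a field-valued point $y$ is the automorphism group scheme $\textrm{Aut}(y)$, of dimension $\dimn\textrm{Aut}(y)$. The idea is to compute $\dimn\mathcal{I}_{\mathcal{Y}}$ via the stratification of $\mathcal{Y}$ by the substacks $\mathcal{Y}^n$ and then to match the resulting numerical condition with the definition of good.

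First I would record that $\pi$ admits the identity section $y\mapsto(y,\textrm{id})$, so $\dimn\mathcal{I}_{\mathcal{Y}}\ge\dimn\mathcal{Y}$ and the inequality $\dimn\mathcal{I}_{\mathcal{Y}}\le\dimn\mathcal{Y}$ in the statement is really the equality $\dimn\mathcal{I}_{\mathcal{Y}}=\dimn\mathcal{Y}$. Since $\mathcal{Y}$ is of finite type, only finitely many $\mathcal{Y}^n$ are non-empty, $\mathcal{Y}=\bigsqcup_{n\ge0}\mathcal{Y}^n$ on underlying spaces, and correspondingly $\mathcal{I}_{\mathcal{Y}}=\bigsqcup_{n\ge0}\pi^{-1}(\mathcal{Y}^n)$; hence $\dimn\mathcal{I}_{\mathcal{Y}}=\max_{n\ge0}\dimn\pi^{-1}(\mathcal{Y}^n)$. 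The core of the proof is the identity $\dimn\pi^{-1}(\mathcal{Y}^n)=\dimn\mathcal{Y}^n+n$. The inequality $\le$ follows because over $\mathcal{Y}^n$ every fibre of $\pi$ has dimension exactly $n$, so by upper semicontinuity of fibre dimension (applied on a smooth atlas) the total space has dimension at most $\dimn\mathcal{Y}^n+n$. For $\ge$, choose an irreducible component $Z$ of the reduced stack $\mathcal{Y}^n$ with $\dimn Z=\dimn\mathcal{Y}^n$; by generic flatness $\pi$ is flat over a dense open substack $U\subseteq Z$, so Lemma 2.2.1 gives, for $x\in\pi^{-1}(U)$, the equality $\dimn_x\pi^{-1}(\pi(x))=\dimn_x\pi^{-1}(U)-\dimn_{\pi(x)}U$; the left-hand side equals $n$ and $U$ is irreducible of dimension $\dimn\mathcal{Y}^n$, so $\dimn\pi^{-1}(U)=\dimn\mathcal{Y}^n+n$, whence $\dimn\pi^{-1}(\mathcal{Y}^n)\ge\dimn\mathcal{Y}^n+n$.

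Combining these, $\dimn\mathcal{I}_{\mathcal{Y}}=\max_{n\ge0}\bigl(\dimn\mathcal{Y}^n+n\bigr)$. Because $\mathcal{Y}$ is equidimensional, $\textrm{codim}\,\mathcal{Y}^n=\dimn\mathcal{Y}-\dimn\mathcal{Y}^n$, so the condition $\textrm{codim}\,\mathcal{Y}^n\ge n$ is equivalent to $\dimn\mathcal{Y}^n+n\le\dimn\mathcal{Y}$. As this inequality holds automatically for $n=0$, the stack $\mathcal{Y}$ is good if and only if $\dimn\mathcal{Y}^n+n\le\dimn\mathcal{Y}$ for every $n\ge0$, i.e.\ if and only if $\max_{n\ge0}(\dimn\mathcal{Y}^n+n)\le\dimn\mathcal{Y}$, i.e.\ if and only if $\dimn\mathcal{I}_{\mathcal{Y}}\le\dimn\mathcal{Y}$.

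I expect the only genuine obstacle to be the dimension identity $\dimn\pi^{-1}(\mathcal{Y}^n)=\dimn\mathcal{Y}^n+n$: one must take care that $\mathcal{Y}^n$ need not be irreducible (hence the reduction to a top-dimensional component), and that the classical tools used — Chevalley's bound on fibre dimension and generic flatness — are invoked correctly in the stacky setting by descending to a smooth presentation. The remaining steps are only bookkeeping with the definitions and with Lemma 2.2.1.
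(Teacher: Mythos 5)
Your proof is correct and follows essentially the same route as the paper: stratify the inertia stack by the strata $\pi^{-1}(\mathcal{Y}^n)$ (the paper's $\mathcal{I}^n$), establish $\dimn \pi^{-1}(\mathcal{Y}^n) = \dimn \mathcal{Y}^n + n$ via Lemma 2.2.1, and then translate the codimension condition into $\max_n(\dimn \mathcal{Y}^n + n) \le \dimn \mathcal{Y}$. The only difference is that you justify the key dimension identity more carefully (generic flatness on a top-dimensional component, Chevalley-type bound for the other inequality), where the paper simply invokes Lemma 2.2.1 on the restriction $\mathcal{I}^n \to \mathcal{Y}^n$.
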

\begin{proof}
Let $\mathcal{I}^n$ be the locally closed, reduced substack of $\mathcal{I}_{\mathcal{Y}}$ consisting of objects $(y,g)$ such that $\textrm{dim Aut}(y) = n$.  Furthermore, let
$f: \mathcal{I}_{\mathcal{Y}} \rightarrow \mathcal{Y}$ be the canonical morphism and let $f_n: \mathcal{I}^n \rightarrow \mathcal{Y}^n$ be its restriction to $\mathcal{I}^n$.  By Lemma \ref{2.2.1}, we have that: 
$$
\textrm{dim } \mathcal{I}^n = n + \textrm{dim } \mathcal{Y}^n.
$$
Note that $\textrm{dim } \mathcal{I}^0 = \textrm{dim } \mathcal{Y}^0$.  Now, suppose $\mathcal{Y}$ is good.  This implies $\textrm{dim } \mathcal{I}^n \le \textrm{dim } \mathcal{Y}$ for $n > 0$.  By the definition of dimension, there exists an $n \ge 0$ such that $\textrm{dim } \mathcal{I}^n = \textrm{dim } \mathcal{I}_{\mathcal{Y}}$.  It follows that $\textrm{dim } \mathcal{I}_{\mathcal{Y}} \le \textrm{dim } \mathcal{Y}$.

Now, suppose $\textrm{dim } \mathcal{I}_{\mathcal{Y}} \le \textrm{dim } \mathcal{Y}$.  We have that: 
$$
n + \textrm{dim } \mathcal{Y}^n = \textrm{dim } \mathcal{I}^n \le \textrm{dim } \mathcal{Y},
$$
for all $n \ge 0$.  Therefore, we obtain that $\textrm{codim } \mathcal{Y}^n \ge n$ for all $n > 0$, and $\mathcal{Y}$ is good.
\end{proof}

From this theorem we can then obtain:

\begin{cor}
\label{2.2.3}
The stack $\mathcal{Y}$ is very good if and only if $\dimn (\mathcal{I}_{\mathcal{Y}} - \mathcal{I}^0) < \dimn \mathcal{Y}$.
\end{cor}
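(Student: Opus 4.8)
The plan is to read off the needed input from the proof of Theorem~\ref{2.2.2} and then rephrase the family of inequalities defining ``very good'' as a single bound on $\dimn(\mathcal{I}_{\mathcal{Y}} - \mathcal{I}^0)$.

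First I would recall the dimension identity established in that proof, namely $\dimn \mathcal{I}^n = n + \dimn \mathcal{Y}^n$ for every $n \ge 0$ (with the convention $\dimn \emptyset = -\infty$), which is the instance of Lemma~\ref{2.2.1} applied to the flat morphism $f_n \colon \mathcal{I}^n \to \mathcal{Y}^n$ whose fibers have dimension $n$. Then I would observe that, as a reduced substack, $\mathcal{I}_{\mathcal{Y}} - \mathcal{I}^0$ is the union of the locally closed strata $\mathcal{I}^n$ with $n > 0$, and that since $\mathcal{I}_{\mathcal{Y}}$ is of finite type only finitely many of these are nonempty; hence $\dimn(\mathcal{I}_{\mathcal{Y}} - \mathcal{I}^0) = \max_{n>0}\dimn\mathcal{I}^n = \max_{n>0}\,(n + \dimn \mathcal{Y}^n)$.

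With these in hand the equivalence is immediate. By Definition~\ref{2.1.1}, $\mathcal{Y}$ is very good precisely when $\operatorname{codim}\mathcal{Y}^n > n$, i.e. $n + \dimn\mathcal{Y}^n < \dimn\mathcal{Y}$, for all $n > 0$. Since the quantities $n + \dimn\mathcal{Y}^n$ range over a finite set, all of these inequalities hold simultaneously if and only if their maximum is strictly less than $\dimn\mathcal{Y}$, which by the previous paragraph says exactly that $\dimn(\mathcal{I}_{\mathcal{Y}} - \mathcal{I}^0) < \dimn\mathcal{Y}$. When $\mathcal{Y}^n$ is empty for all $n > 0$, so that $\mathcal{I}_{\mathcal{Y}} - \mathcal{I}^0 = \emptyset$, both conditions hold vacuously, consistently with the $-\infty$ convention.

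There is essentially no obstacle here: the only point requiring care is the interchange of ``for all $n > 0$'' with ``for the maximum over $n > 0$,'' which is legitimate because finiteness of type forces all but finitely many strata $\mathcal{I}^n$ to be empty — the same observation already used in the proof of Theorem~\ref{2.2.2}. Accordingly I would present this corollary as a short formal consequence of that theorem and its proof rather than repeating the stratification argument.
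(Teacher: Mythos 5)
Your proposal is correct and follows essentially the same route as the paper, which states Corollary \ref{2.2.3} as an immediate consequence of the stratification argument in the proof of Theorem \ref{2.2.2}: the identity $\dimn \mathcal{I}^n = n + \dimn \mathcal{Y}^n$ together with $\dimn(\mathcal{I}_{\mathcal{Y}} - \mathcal{I}^0) = \max_{n>0} \dimn \mathcal{I}^n$ turns the family of codimension inequalities into the single bound $\dimn(\mathcal{I}_{\mathcal{Y}} - \mathcal{I}^0) < \dimn \mathcal{Y}$. Your explicit handling of the empty strata and the $-\infty$ convention is a harmless elaboration of what the paper leaves implicit.
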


Similarly, we have:

\begin{cor}
\label{2.2.4}
Let $m$ and $\mathcal{I}^n$ be as before.  The stack $\mathcal{Y}$ is almost very good if and only if $\dimn (\mathcal{I}_{\mathcal{Y}} - \coprod_{i = 0}^m \mathcal{I}^i) - m < \dimn \mathcal{Y}$.
\end{cor}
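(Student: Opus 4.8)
The plan is to run the same stratification argument used for Theorem~\ref{2.2.2} and Corollary~\ref{2.2.3}, now keeping track of the shift by $m$. First I would observe that since $m=\min\dimn\textrm{Aut}(y)>0$, the strata $\mathcal{Y}^{\ell}$, and hence $\mathcal{I}^{\ell}$, are empty for $0\le \ell<m$, while $\mathcal{Y}^{m}\neq\emptyset$. In particular $\coprod_{i=0}^{m}\mathcal{I}^{i}=\mathcal{I}^{m}$, and the complement $\mathcal{I}_{\mathcal{Y}}-\coprod_{i=0}^{m}\mathcal{I}^{i}$ is the union of the locally closed reduced substacks $\mathcal{I}^{\ell}$ with $\ell>m$. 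Because $\{\mathcal{I}^{\ell}\}_{\ell\ge 0}$ is a (locally finite) stratification of $\mathcal{I}_{\mathcal{Y}}$ into locally closed substacks, the dimension of this complement is $\max_{\ell>m}\dimn\mathcal{I}^{\ell}$.

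Next I would invoke the identity $\dimn\mathcal{I}^{\ell}=\ell+\dimn\mathcal{Y}^{\ell}$, valid for all $\ell\ge 0$: for $\ell>0$ this is exactly the computation carried out in the proof of Theorem~\ref{2.2.2} via Lemma~\ref{2.2.1}, and for $\ell=0$ it is the observation $\dimn\mathcal{I}^{0}=\dimn\mathcal{Y}^{0}$ recorded there. Substituting, the inequality $\dimn\bigl(\mathcal{I}_{\mathcal{Y}}-\coprod_{i=0}^{m}\mathcal{I}^{i}\bigr)-m<\dimn\mathcal{Y}$ becomes $\max_{\ell>m}\bigl(\ell+\dimn\mathcal{Y}^{\ell}\bigr)-m<\dimn\mathcal{Y}$, i.e.\ the family of inequalities $(\ell-m)+\dimn\mathcal{Y}^{\ell}<\dimn\mathcal{Y}$ for every $\ell>m$. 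Reindexing by $n=\ell-m>0$, this is precisely $\dimn\mathcal{Y}^{n+m}+n<\dimn\mathcal{Y}$, that is $\textrm{codim }\mathcal{Y}^{n+m}>n$ for all $n>0$, which is the definition of almost very good in Definition~\ref{2.1.2}. Reading the equivalence in both directions finishes the argument.

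The content here is essentially bookkeeping, so I do not anticipate a genuine obstacle; the only points needing a little care are that in the almost-very-good regime $\mathcal{Y}^{0}$ is empty, so unlike the very good case one cannot write $\dimn\mathcal{Y}=\dimn\mathcal{Y}^{0}$ (one only has $\dimn\mathcal{Y}=\max_{\ell\ge m}\dimn\mathcal{Y}^{\ell}$, which is harmless since we only ever compare against $\dimn\mathcal{Y}$ itself), and the elementary fact that the dimension of a complement of finitely many strata equals the maximum of the dimensions of the remaining strata. If one wanted, the whole statement could alternatively be deduced formally from Corollary~\ref{2.2.3} applied to $\mathcal{Y}\times B\mathbb{C}^{*}$-type quotients, but the direct stratification proof above is cleaner.
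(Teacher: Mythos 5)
Your argument is correct and is exactly the stratification computation the paper intends: Corollary \ref{2.2.4} is stated in the paper with no separate proof (``Similarly, we have:''), relying on the identity $\dimn \mathcal{I}^{\ell} = \ell + \dimn \mathcal{Y}^{\ell}$ from the proof of Theorem \ref{2.2.2}, which is precisely what you use, together with the routine reindexing $n = \ell - m$. The bookkeeping points you flag (emptiness of $\mathcal{Y}^{\ell}$ for $\ell < m$, taking the maximum over the remaining strata) are handled correctly.
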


\subsection{The very good property for quotient stacks}
The case when $\mathcal{Y}$ is a quotient stack is of special interest, so we will examine it in detail.
Let $X$ be a variety over $\mathbb{C}$, and let $G$ be an algebraic group over $\mathbb{C}$, acting on $X$. For $y \in \mathcal{Y} = G \backslash X$, we have that $\textrm{dim Aut}(y) = \textrm{dim } G_x$, where $G_x$ is the stabilizer subgroup of a point $x \in X$ corresponding to $y$. 

Note that if the subgroup $H = \{g \in G| g \cdot x = x \textrm{ for all } x \in X  \}$ has nonzero dimension, then $\mathcal{Y}$ cannot be very good.  However, since $H$ is a closed, normal subgroup of $G$, we may instead consider the quotient stack $(G/H)\backslash X$, which may still be good or very good.  We introduce the following definition (see e.g. \cite{CB1993}):

\begin{defn}
\label{2.3.1}
If $G$ is an algebraic group acting on $X$ and $Y \subset X$ is a $G$-stable constructible subset, then we define the \textit{number of parameters} of $G$ on $Y$ as 
$$\textrm{dim}_G Y = \textrm{max}_{s} \{ \textrm{dim } Y \cap X_s + s - \textrm{dim } G \},$$ 
where $X_s = \{ x \in X| \textrm{dim }G_x = s \}$.  
\end{defn}
We can easily see that the number of parameters for $Y = X$ is simply the dimension of the inertia stack associated to the quotient stack $\mathcal{Y}$.  Therefore, by Theorem \ref{2.2.2}, the good condition on $G \backslash Y$ is equivalent to 
$$\textrm{dim}_G X \le \textrm{dim } X - \textrm{dim } G.$$  
Similarly, we can apply Corollary \ref{2.2.3} in order to obtain that $\mathcal{Y}$ is very good if and only if 
$$\textrm{dim}_G X_n < \textrm{dim } X - \textrm{dim } G \textrm{ for all } n > 0.$$

We will be using the following two lemmas from \cite{CB2001}, the first of which is obvious:

\begin{lmm}
\label{2.3.2}
Suppose we have algebraic groups $G_i$ acting on schemes $X_i$.  Let $Y_i \subseteq X_i$ be constructible subsets stable under the action of $G_i$.  We have that $\dimn_G Y = \sum_i \dimn_{G_i} Y_i$, where $G = \prod_i G_i$ and $Y = \prod_i Y_i$.
\end{lmm}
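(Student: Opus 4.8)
The plan is to reduce the statement to two elementary facts: that the stabilizer of a point for a product action is the product of the stabilizers, and that Krull dimension is additive under products of finite-type $\mathbb{C}$-schemes while it takes the maximum over a finite stratification. Throughout it suffices, by an obvious induction, to treat the case of two factors and iterate, but the notation is no worse for general $r$.

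First I would record the stabilizer computation: for $x = (x_i) \in X = \prod_i X_i$ with $G = \prod_i G_i$ acting componentwise, one has $G_x = \prod_i (G_i)_{x_i}$, hence $\dim G_x = \sum_i \dim (G_i)_{x_i}$. Consequently, for each integer $s \ge 0$,
$$
X_s = \coprod_{s_1 + \cdots + s_r = s} \prod_i (X_i)_{s_i},
$$
a \emph{finite} disjoint union (the stabilizer dimension of $G_i$ on $X_i$ is bounded by $\dim G_i$), each piece a product of the constructible strata $(X_i)_{s_i}$. Intersecting with the $G$-stable subset $Y = \prod_i Y_i$ and distributing the intersection over the union and the product gives
$$
Y \cap X_s = \coprod_{s_1 + \cdots + s_r = s} \prod_i \bigl( Y_i \cap (X_i)_{s_i} \bigr).
$$

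Next I would compute dimensions. Using $\dim \bigl( \prod_i Z_i \bigr) = \sum_i \dim Z_i$ and $\dim \bigl( \coprod_a W_a \bigr) = \max_a \dim W_a$ for a finite union (with the convention that the empty scheme has dimension $-\infty$, so empty strata never matter unless every $Y \cap X_s$ is empty, a degenerate case one checks by hand), we obtain
$$
\dim (Y \cap X_s) = \max_{s_1 + \cdots + s_r = s} \sum_i \dim \bigl( Y_i \cap (X_i)_{s_i} \bigr).
$$
Writing $s - \dim G = \sum_i (s_i - \dim G_i)$ inside this maximum and then maximizing over $s$ merges the two nested maxima into a single maximum over all tuples $(s_1, \dots, s_r)$:
$$
\dimn_G Y = \max_{(s_1,\dots,s_r)} \sum_i \Bigl( \dim \bigl( Y_i \cap (X_i)_{s_i} \bigr) + s_i - \dim G_i \Bigr).
$$
Since the $s_i$ vary independently, the maximum of a sum is the sum of the maxima, and the right-hand side is precisely $\sum_i \dimn_{G_i} Y_i$.

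There is no genuine obstacle here --- this is why the statement is called obvious --- but three points deserve a word: the finiteness of the stratification $\{X_s\}$, so that ``dimension of a disjoint union is the maximum'' applies and all maxima are attained; the additivity of dimension under products, which is where the finite-type hypothesis over $\mathbb{C}$ is used; and the bookkeeping of empty strata, so that the displayed formula for $\dim(Y \cap X_s)$ stays correct when some factors $Y_i \cap (X_i)_{s_i}$ are empty.
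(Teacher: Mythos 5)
Your argument is correct: the paper itself gives no proof of this lemma, simply declaring it obvious (it is quoted from \cite{CB2001}), and your write-up is exactly the intended argument --- product stabilizers give the stratification $X_s = \coprod_{\sum s_i = s} \prod_i (X_i)_{s_i}$, and additivity of dimension for products plus the merging of the nested maxima (legitimate since each summand depends on its own $s_i$ only and the stratification is finite) yields $\dimn_G Y = \sum_i \dimn_{G_i} Y_i$. Your cautions about empty strata and attainment of the maxima are the right ones and nothing further is needed.
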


\begin{lmm}
\label{2.3.3}
Let $X$ be a scheme with an algebraic group $G$ acting on it, and $H \subseteq G$ be a closed subgroup.  Furthermore, let $Z \subseteq Y \subseteq X$ be constructible subsets of X, where $Y$ is $G$-stable and $Z$ is $H$-stable.  If $Y = G\cdot Z$ and $Z$ intersects any orbit of $G$ along a finite union of $H$-orbits, then we have that $\dimn_H Z = \dimn_G Y$.
\end{lmm}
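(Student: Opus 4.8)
\emph{Proof sketch proposal.}

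The plan is to compute both $\dimn_H Z$ and $\dimn_G Y$ in terms of a common refinement of the two stratifications by stabilizer dimension, and to observe that the resulting expressions are literally the same. For $s,s'\ge 0$, set
\[
Z_{s,s'}=\{z\in Z\mid \dimn G_z=s,\ \dimn H_z=s'\}.
\]
Each $Z_{s,s'}$ is constructible and $H$-stable, being the intersection of the $H$-stable set $Z$ with the $G$-stable locus $\dimn G_z=s$ and the $H$-stable locus $\dimn H_z=s'$; it is empty unless $s'\le s$ since $H_z=H\cap G_z$; and $Z\cap X_s=\coprod_{s'}Z_{s,s'}$, while $\{z\in Z\mid\dimn H_z=s'\}=\coprod_s Z_{s,s'}$. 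I would also record that $Y\cap X_s=G\cdot(Z\cap X_s)=\bigcup_{s'}G\cdot Z_{s,s'}$ (from $Y=G\cdot Z$ and $G$-invariance of $X_s$), with each $G\cdot Z_{s,s'}$ contained in $X_s$.

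The essential point is the dimension formula
\[
\dimn\bigl(G\cdot Z_{s,s'}\bigr)=\dimn G+\dimn Z_{s,s'}-s-\dimn H+s'.
\]
For this I would study the surjection $\mu\colon G\times Z_{s,s'}\to G\cdot Z_{s,s'}$, $(g,z)\mapsto g\cdot z$, and compute the dimensions of its fibres. The fibre over $y$ is carried isomorphically by $(g,z)\mapsto g$ onto $\{g\in G\mid g^{-1}y\in Z_{s,s'}\}$, which is the preimage of $Z_{s,s'}\cap G\cdot y$ under the map $g\mapsto g^{-1}y$, all of whose fibres have dimension $\dimn G_y=s$. Here the hypothesis of the lemma is exactly what is needed: $Z_{s,s'}\cap G\cdot y$ is a union of $H$-orbits, it is a \emph{finite} union because $Z\cap G\cdot y$ is, and each of those $H$-orbits has dimension $\dimn H-s'$; hence $\dimn(Z_{s,s'}\cap G\cdot y)=\dimn H-s'$ uniformly in $y\in G\cdot Z_{s,s'}$. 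Thus every fibre of $\mu$ has dimension $s+\dimn H-s'$, and since $\mu$ is surjective with equidimensional fibres, $\dimn(G\cdot Z_{s,s'})=\dimn(G\times Z_{s,s'})-(s+\dimn H-s')$, which is the displayed formula.

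Granting this, the lemma follows by unwinding Definition \ref{2.3.1}. Since $Y\cap X_s=\bigcup_{s'}G\cdot Z_{s,s'}$ is a finite union, $\dimn(Y\cap X_s)=\max_{s'}\dimn(G\cdot Z_{s,s'})$, so substituting the formula gives
\[
\dimn_G Y=\max_s\bigl(\dimn(Y\cap X_s)+s-\dimn G\bigr)=\max_{s,s'}\bigl(\dimn Z_{s,s'}+s'-\dimn H\bigr).
\]
On the other hand, applying the definition of the number of parameters to the action of $H$ and using $\{z\in Z\mid\dimn H_z=s'\}=\coprod_s Z_{s,s'}$,
\[
\dimn_H Z=\max_{s'}\Bigl(\dimn\{z\in Z\mid\dimn H_z=s'\}+s'-\dimn H\Bigr)=\max_{s,s'}\bigl(\dimn Z_{s,s'}+s'-\dimn H\bigr).
\]
The two right-hand sides agree, which is the assertion.

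The step I expect to be the real obstacle is the dimension formula for $G\cdot Z_{s,s'}$, more precisely the claim that the fibres of $\mu$ are equidimensional over all of $G\cdot Z_{s,s'}$. This is where one genuinely uses that $Z\cap G\cdot y$ is a \emph{finite} union of $H$-orbits, not merely a union: an infinite family of $(\dimn H-s')$-dimensional $H$-orbits inside a single $G$-orbit could have a union of strictly larger dimension, which would spoil both the value and the constancy of the fibre dimension, and hence the formula and the whole computation.
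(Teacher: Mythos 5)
Your proof is correct and takes essentially the same route as the paper's: the same double stratification $Z_{s,s'}$ by $G$- and $H$-stabilizer dimensions, the same fiber-dimension count for the action map $G\times Z_{s,s'}\to G\cdot Z_{s,s'}$ (where the finite-union-of-$H$-orbits hypothesis pins the fiber dimension at $s+\dimn H-s'$), and then a comparison of maxima via Definition \ref{2.3.1}. The only difference is presentational: you express both $\dimn_H Z$ and $\dimn_G Y$ as the common double maximum $\max_{s,s'}\bigl(\dimn Z_{s,s'}+s'-\dimn H\bigr)$, whereas the paper derives the two inequalities separately.
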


\begin{proof}
Let $Z_{s,s'} = \{z \in Z| \textrm{dim } H_z = s, \textrm{dim } G_z = s' \}$.  Since $G$ is an algebraic group and $H$ its subgroup, then we have that $Z$ (and $Z_d$) is stratified by a finite number of the $Z_{s,s'}$, and $Y$ is stratified by a finite number of the $Y_{s'}$.  Also, note that $Z_{s,s'}$ and $Y_{s'}$ are both locally closed, making them constructible subsets of $Z$ and $Y$.  Now, we have, for each $s$ and $s'$, a morphism: 
				\begin{align*}
				 	f_{s,s'}: \quad & G \times Z_{s,s'} \rightarrow Y_{s'} \\
				 		 & (g, z) \mapsto g\cdot z.
				 \end{align*}
				 
The fiber of $f_{s,s'}$ over $y \in Y_{s'}$ consists of pairs $(g,z)$, with $z$ belonging to the same $G$-orbit as $y$ and $g$ contained in a coset of $G_z$.  Because the intersection of the $G$-orbit of $y$ with $Z$ is a finite union of $H$-orbits, we have 
$$\textrm{dim } f_s^{-1}(y)  = s' + \textrm{dim } H - s.$$  
Since for any $y \in Y_{s'}$ there is a $z \in Z_{s,s'}$ for some $s$ such that $f_{s,s'}(z) = y$, then $Y_s'$ is covered by the images of the morphisms $f_{s,s'}$.  By Chevalley's Theorem each such image is constructible, and therefore, $\textrm{dim } Y_{s'} = \textrm{dim } f_{s,s'}(Z_{s,s'})$, for some $s$.  It follows that 
$$\textrm{dim } Z_{s,s'} + \textrm{dim } G = \textrm{dim } Y_{s'} + s' + \textrm{dim } H - s,$$
 which can be rewritten as 
$$ \textrm{dim } Z_{s,s'} + s - \textrm{dim } H = \textrm{dim } Y_{s'} + s' - \textrm{dim } G,$$ 
for a specific $s$.  Now, taking a maximum of both sides over $s'$ we obtain 
$$\textrm{dim}_H Z \ge \textrm{max}_{s'}\{\textrm{dim } Z_{s,s'} + s - \textrm{dim } H\} = \textrm{dim}_G Y.$$
Conversely, since the $Z_{s,s'}$ stratify $Z_s$, there is a value $s$ such that $\textrm{dim} Z_s = Z_{s,s'}$.  Using the above computations for $f_{s,s'}$, we obtain
$$ \textrm{dim } Z_{s,s'} - \textrm{dim } H + s = \textrm{dim } Y_{s'} + s' - \textrm{dim } G.$$  
As we take the maximum over $s$ of both sides we obtain 
$$\textrm{dim}_H Z = \textrm{max}_{s} \{ \textrm{dim } Y_{s'} + s' - \textrm{dim } G \} \le \textrm{dim}_G Y.$$ 
 Together, the two inequalities give us $\textrm{dim}_H Z = \textrm{dim}_G Y$. 
\end{proof}

\subsection{The very good property and the moment map}

Let $X$ be a smooth algebraic variety over $\mathbb{C}$ with a semisimple complex group $G$ acting on it.  This gives rise to a natural Hamiltonian $G$-action on the cotangent bundle $T^*X$ equipped with the standard symplectic form. There is a moment map $\mu: T^*X \rightarrow \mathfrak{g}^*$, defined by:
$$
\mu(y)(\xi) = y(\xi_X(x)),
$$
where $\mathfrak{g}$ is the Lie algebra of $G$, $y \in T^*_xX$, and $\xi_X$ is the vector field on $X$ induced by $\xi \in \mathfrak{g}$.  It is clear from the above description that $\mu$ is linear on each cotangent space $T^*_xX$.  Therefore, the image is a vector subspace of $\mathfrak{g}^*$.

\begin{lmm}
\label{2.4.1}
The image $\mu(T^*_xX)$ is the annihilator of $\mathfrak{g}_x$, where $\mathfrak{g}_x$ is the Lie algebra of the stabilizer of $x \in X$ under the action of $G$.
\end{lmm}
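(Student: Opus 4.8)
The plan is to identify the restriction of $\mu$ to a single cotangent space $T^*_xX$ with the transpose of the infinitesimal action map, and then invoke the elementary fact that the image of the transpose of a linear map is the annihilator of its kernel.

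First I would introduce the linear map $\mathrm{ev}_x \colon \mathfrak{g} \to T_xX$ sending $\xi$ to the value $\xi_X(x)$ of the induced vector field at $x$; this is the differential at the identity of the orbit map $G \to X$, $g \mapsto g\cdot x$. By the defining formula $\mu(y)(\xi) = y(\xi_X(x))$, we have, for all $y \in T^*_xX$ and $\xi \in \mathfrak{g}$, that $\mu(y)(\xi) = y(\mathrm{ev}_x(\xi))$; in other words the restriction $\mu|_{T^*_xX}$ is precisely the dual (transpose) map $(\mathrm{ev}_x)^{*} \colon (T_xX)^{*} \to \mathfrak{g}^{*}$. In particular $\mu(T^*_xX)$ is the image of $(\mathrm{ev}_x)^{*}$.

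Next I would recall that $\ker \mathrm{ev}_x = \mathfrak{g}_x$. The inclusion $\mathfrak{g}_x \subseteq \ker \mathrm{ev}_x$ follows by differentiating the relation $\exp(t\xi)\cdot x = x$ valid for $\xi \in \mathfrak{g}_x$. For the reverse inclusion, over $\mathbb{C}$ the orbit map factors as $G \twoheadrightarrow G/G_x \hookrightarrow X$ with the second map a locally closed immersion, so the induced map $\mathfrak{g}/\mathfrak{g}_x \hookrightarrow T_xX$ is injective, which forces $\ker \mathrm{ev}_x \subseteq \mathfrak{g}_x$. (Alternatively, one may simply take $\mathfrak{g}_x := \ker\mathrm{ev}_x$ as the working definition of the Lie algebra of the stabilizer in this setting.) Finally, I would apply the standard linear-algebra fact that for a linear map $\phi \colon V \to W$ of finite-dimensional vector spaces one has $\mathrm{im}(\phi^{*}) = (\ker \phi)^{\perp}$, the annihilator of $\ker \phi$ inside $V^{*}$. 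Taking $\phi = \mathrm{ev}_x$ then yields
$$
\mu(T^*_xX) = \mathrm{im}\bigl((\mathrm{ev}_x)^{*}\bigr) = (\ker \mathrm{ev}_x)^{\perp} = \mathfrak{g}_x^{\perp},
$$
which is exactly the asserted statement.

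There is no serious obstacle in this argument; essentially everything is linear algebra once $\mu|_{T^*_xX}$ is recognized as $(\mathrm{ev}_x)^{*}$. The only point that merits explicit comment is the identification $\ker \mathrm{ev}_x = \mathfrak{g}_x$, which uses that we are in characteristic zero (so that the orbit map induces an immersion $G/G_x \hookrightarrow X$); if one prefers, this identification can be cited rather than reproved.
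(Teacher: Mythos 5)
Your proof is correct and takes essentially the same route as the paper: recognizing $\mu|_{T^*_xX}$ as the transpose of the infinitesimal action map $\xi \mapsto \xi_X(x)$ and using $\ker \mathrm{ev}_x = \mathfrak{g}_x$, which is exactly what the paper does via the inclusion $\mu(T^*_xX) \subset \mathfrak{g}_x^{\perp}$ followed by a dimension count (the dimension count being just the standard proof that the image of the transpose is the annihilator of the kernel). Your explicit justification of $\ker \mathrm{ev}_x \subseteq \mathfrak{g}_x$ in characteristic zero is a reasonable extra detail that the paper only notes in passing.
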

\begin{proof}
Let $\mathfrak{g}_x^{\perp}$ be the annihilator of $\mathfrak{g}_x$ and consider $\xi \in \mathfrak{g}_x$.  For $y \in X$, let $f_y: G \rightarrow X$ be the map that takes $g \in G$ to $g\cdot y$.  By definition, 
$$\xi_X(x) = (df_x)_e(\xi),$$
where $e$ is the identity element of $G$, so we have that $\xi_X(x) = 0$.  Therefore, $\mu(T^*_xX) \subset \mathfrak{g}_x^{\perp}$.  We can compute the dimension of $\mu(T^*_xX)$ as
$$
\textrm{dim }\mu(T^*_xX) = \textrm{dim } T^*_x X - \textrm {dim ker} \left.\mu\right|_{T^*_xX}.
$$
Let $V \subset T_xX$ be the vector subspace spanned by $\xi_X(x)$ for all $\xi \in \mathfrak{g}$.  By definition, $\textrm{ker} \left.\mu\right|_{T^*_xX}$ is the annihilator of $V$.  Therefore, we have that
$$
\textrm{dim ker} \left.\mu\right|_{T^*_xX} = \textrm{dim } T_xX - \textrm{dim V}.
$$
Note that $\mathfrak{g}_x$ contains all $\xi \in \mathfrak{g}$ such that $\xi_X(x) = 0$.  It follows that $\textrm{dim V} = \textrm{dim }\mathfrak{g} - \textrm{dim }\mathfrak{g}_x$.  Thus:
$$
\textrm{dim }\mu(T^*_xX) = \textrm{dim }\mathfrak{g} - \textrm{dim }\mathfrak{g}_x = \textrm{dim } \mathfrak{g}_x^{\perp},
$$
and $\mu(T^*_xX) = \mathfrak{g}_x^{\perp}$.
\end{proof}

Note that the moment map is algebraic, so the fiber $\mu^{-1}(\theta)$ is a closed algebraic subvariety of $T^*X$ for any $\theta \in \mathfrak{g}^*$.
We are now ready to prove the following theorem:
\begin{thm}
\label{2.4.2}
If the quotient stack $G \backslash X$ is very good, then for any $\theta \in \mathfrak{g}^*$ we have that $\mu^{-1}(\theta)$ is a nonempty, equidimensional variety of dimension $2 \dimn X - \dimn G$.  Moreover, there is a bijective correspondence between the irreducible components of $\mu^{-1}(\theta)$ and the irreducible components of $X$.
\end{thm}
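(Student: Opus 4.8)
The plan is to combine a crude equation count for a lower bound on $\dimn\mu^{-1}(\theta)$ with a stratification of $X$ by stabilizer dimension for an upper bound, using Lemma~\ref{2.4.1} to control $\mu$ on each cotangent space and the very good hypothesis (in the ``number of parameters'' form recorded just after Definition~\ref{2.3.1}) to bound the strata. Since $X$ is smooth, its irreducible components are disjoint; each is $G$-stable (as $G$ is connected), and---$X$ being equidimensional---the very good inequality passes to each component. Restricting $T^*X$ and $\mu$ to a component, we may therefore assume $X$ is irreducible of dimension $d$, and must show that $\mu^{-1}(\theta)$ is irreducible of dimension $2d-\dimn G$; in this case the very good condition reads $\dimn X_n<d-n$ for all $n>0$, where $X_n=\{x\in X:\dimn G_x=n\}$.

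For the lower bound, $\mu^{-1}(\theta)$ is the common zero locus of $\dimn G$ regular functions on the smooth irreducible variety $T^*X$ of dimension $2d$, so by Krull's height theorem every irreducible component of $\mu^{-1}(\theta)$ has dimension at least $2d-\dimn G$. For the upper bound, let $\pi\colon T^*X\to X$ be the projection. For $x\in X_n$, Lemma~\ref{2.4.1} gives $\mu(T^*_xX)=\mathfrak{g}_x^{\perp}$, of dimension $\dimn G-n$, so $\left.\mu\right|_{T^*_xX}$ has kernel of dimension $d-\dimn G+n$, and $\mu^{-1}(\theta)\cap T^*_xX$ is either empty or a coset of that kernel; hence
\[ \dimn\left(\mu^{-1}(\theta)\cap\pi^{-1}(X_n)\right)\le\dimn X_n+d-\dimn G+n. \]
For $n>0$ the right-hand side is $<2d-\dimn G$ because $\dimn X_n+n<d$. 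For $n=0$ one has $\mathfrak{g}_x=0$, so $\mu$ is surjective on each $T^*_xX$ and $\mu^{-1}(\theta)\cap T^*_xX$ is never empty; moreover $X_0$ is open in $X$, and dense since $\coprod_{n>0}X_n$ has dimension $<d$, hence irreducible of dimension $d$. The fibrewise surjective bundle morphism $T^*X|_{X_0}\to X_0\times\mathfrak{g}^*$, $(x,\xi)\mapsto(x,\mu(x,\xi))$, has kernel a rank-$(d-\dimn G)$ subbundle, and $U:=\mu^{-1}(\theta)\cap\pi^{-1}(X_0)$ is an affine bundle under it over the irreducible base $X_0$; thus $U$ is nonempty, irreducible, and of dimension $2d-\dimn G$.

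To finish, $U$ is a nonempty open subset of $\mu^{-1}(\theta)$ (as $X_0$ is open in $X$), irreducible of dimension $2d-\dimn G$, while $\mu^{-1}(\theta)\setminus U=\coprod_{n>0}(\mu^{-1}(\theta)\cap\pi^{-1}(X_n))$ has dimension $<2d-\dimn G$. So $\mu^{-1}(\theta)$ is nonempty, and $\overline{U}$ is an irreducible closed subset of dimension $2d-\dimn G$. If $C$ is an irreducible component of $\mu^{-1}(\theta)$ other than $\overline{U}$, then $C\cap U=\emptyset$ (otherwise $C\cap U$ is dense open in $C$, forcing $C=\overline{C\cap U}\subseteq\overline{U}$ and hence $C=\overline{U}$), so $C\subseteq\mu^{-1}(\theta)\setminus U$ and $\dimn C<2d-\dimn G$, contradicting the lower bound. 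Thus $\overline{U}$ is the unique irreducible component and $\mu^{-1}(\theta)$ is irreducible, in particular equidimensional, of dimension $2d-\dimn G$. For a general smooth equidimensional $X$, applying this to each component $Z_i$ makes $\mu^{-1}(\theta)\cap\pi^{-1}(Z_i)$ irreducible of dimension $2\dimn Z_i-\dimn G=2\dimn X-\dimn G$; these pieces are pairwise disjoint and open-and-closed in $\mu^{-1}(\theta)$, hence are exactly its irreducible components, which yields the asserted equidimensionality and the bijection with the components of $X$.

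The step that I expect to require the most care is the upper bound on the strata with $n>0$: one must check that the fibrewise computation of Lemma~\ref{2.4.1} meshes with the number-of-parameters form of the very good property precisely as above---retaining the dimension bound even though $\mu^{-1}(\theta)\cap T^*_xX$ may be empty (which happens exactly when $\theta$ fails to annihilate $\mathfrak{g}_x$)---and that $U$ is genuinely an affine bundle over $X_0$, hence genuinely irreducible, rather than merely an affine space on each fibre.
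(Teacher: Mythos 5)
Your proposal is correct and follows essentially the same route as the paper's proof: Lemma \ref{2.4.1} plus the stratification of $X$ by stabilizer dimension, the very good inequality to kill the strata with $n>0$, the lower bound on every component of $\mu^{-1}(\theta)$, and the affine-space fibers over the open dense stratum $X_0$ of each component of $X$ to produce the components of $\mu^{-1}(\theta)$. The only differences are organizational --- you reduce to irreducible $X$ first and argue uniqueness of the component directly, and you justify the local triviality of the affine bundle over $X_0$ more carefully than the paper does --- but the underlying argument is the same.
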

\begin{proof}
Let $x \in X$ and let $\pi: T^*X \rightarrow X$ be the natural projection.  By Lemma \ref{2.4.1} we have that 
$$
\dimn \mu(\pi^{-1}(x)) = \dimn \mathfrak{g}^* - \dimn G_x.
$$
Let $X_0 = \{x \in X| \dimn G_x = 0 \}$. We have that $\pi^{-1}(X_0) \cap \mu^{-1}(\theta)$ is nonempty.  Since $G\backslash X$ is very good, then $X_0$ is nonempty.  Consequently,  $\mu$ is surjective, and we have:
$$
\dimn \mu^{-1}(\theta) \ge = 2\dimn X - \dimn G.
$$
In fact, for every irreducible component $I$ of $\mu^{-1}(\theta)$ we have that $\dimn I \ge  2\dimn X - \dimn G$.  

Let $p$ be the restriction of $\pi$ to $\mu^{-1}(\theta)$ and let $I$ be an irreducible component of $\mu^{-1}(\theta)$, as above. Since $X$ is stratified by the dimension of the stabilizer of the $G$-action, there exists an $m \ge 0$ such that  
$$
 \dimn X - \dimn G + m = \dimn I - \dimn p(I).
$$ 
If $m > 0$, by the very good property for the quotient stack  $G \backslash X$ we have the following:
$$
 2\dimn X - \dimn G > \dimn X - \dimn G + m + \dimn p(I)  = \dimn I,
$$
which is impossible by our previous estimate from below.  In that case $m = 0$, and $\dimn I = 2\dimn X - \dimn G$.  It follows that $\mu^{-1}(\theta)$ is an equidimensional variety of dimension $2\dimn X - \dimn G$.  

Let $Z \subset X$ be an irreducible component of $X$.  Since $G \backslash X$ is very good, then $X_0$ intersects $Z$.  Moreover, $X_0$ is open in $X$, so $Y := Z \cap X_0$ is irreducible and open.

  We have that $p^{-1}(Y)$ is irreducible in $\mu^{-1}(\theta)$, since $Y$ is irreducible and the fibers of $p$ are isomorphic to $\mathbb{C}^{\dimn X}$.  It follows that $p^{-1}(Y)$ must be contained entirely in some irreducible component of $\mu^{-1}(\theta)$.  
	
	This means there is a correspondence between the irreducible components of $X$ and the irreducible components of $\mu^{-1}(\theta)$.  Since $X$ is smooth, its irreducible components are disjoint, and therefore the correspondence is injective.  It is also surjective, because the above computation implies $p^{-1}(X_0)$ intersects each irreducible component of $\mu^{-1}(\theta)$.
\end{proof}

We immediately obtain the following corollary:
\begin{cor}
\label{2.4.3}
If $X$ is irreducible and the quotient stack $G \backslash X$ is very good, then $\mu^{-1}(\theta)$ is a nonempty, irreducible, variety of dimension $2 \dimn X - \dimn G$.
\end{cor}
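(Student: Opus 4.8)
The plan is to obtain this as an immediate consequence of Theorem \ref{2.4.2}. Since $X$ is irreducible it has exactly one irreducible component, namely $X$ itself, so there is essentially nothing left to prove once that theorem is in hand.

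More precisely, first I would apply Theorem \ref{2.4.2} using the standing hypothesis that $G\backslash X$ is very good: this already gives that $\mu^{-1}(\theta)$ is nonempty and equidimensional of dimension $2\dimn X - \dimn G$, and that its irreducible components are in bijective correspondence with the irreducible components of $X$. Then I would simply observe that, $X$ being irreducible, the target of this bijection is a singleton, hence so is its source; thus $\mu^{-1}(\theta)$ has a unique irreducible component and is therefore irreducible. Combined with the dimension and nonemptiness statements from Theorem \ref{2.4.2}, this is exactly the assertion.

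There is no real obstacle here; the entire content lives in Theorem \ref{2.4.2}, and the corollary is merely its specialization to the case where the ``variety'' side of the correspondence cannot split. If one wanted an argument that does not quote the component-counting half of Theorem \ref{2.4.2}, the key point would be that $X_0 = \{x \in X \mid \dimn G_x = 0\}$ is open and, by the very good property, nonempty, hence a dense irreducible open subset of $X$; writing $p$ for the restriction of the projection $\pi\colon T^*X \to X$ to $\mu^{-1}(\theta)$, the fibers of $p$ over points of $X_0$ are translates of $\ker(\mu|_{T^*_xX}) \cong \mathbb{C}^{\dimn X}$ (using Lemma \ref{2.4.1} and $\dimn G_x = 0$), so $p^{-1}(X_0)$ is irreducible of dimension $2\dimn X - \dimn G$, while the very good property forces $\dimn\bigl(\mu^{-1}(\theta) \setminus p^{-1}(X_0)\bigr) < 2\dimn X - \dimn G$ exactly as in the proof of Theorem \ref{2.4.2}, so $p^{-1}(X_0)$ is dense in $\mu^{-1}(\theta)$; being the closure of an irreducible set, $\mu^{-1}(\theta)$ is irreducible of the asserted dimension.
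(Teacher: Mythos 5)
Your proposal is correct and follows exactly the paper's route: the paper derives Corollary \ref{2.4.3} immediately from Theorem \ref{2.4.2}, using that the bijection between irreducible components of $\mu^{-1}(\theta)$ and of $X$ collapses to a singleton when $X$ is irreducible. The alternative direct argument you sketch is fine as well, but the main line of reasoning is the same as in the paper.
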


\begin{rmk}
\label{2.4.4}
If we assume the quotient stack $G \backslash X$ merely to be good, then the result that $\mu^{-1}(\theta)$ is an equidimensional variety of dimension $2\dimn X - \dimn G$ still holds.
\end{rmk}

\begin{rmk}
\label{2.4.5}
Note that even if $G$ is not assumed to be semisimple, then Lemma \ref{2.4.1} still holds.  Let $X_s = \{x \in X| \dimn G_x = s \}$.  If the quotient stack $G \backslash X$ is only almost very good for a given $m$, then Theorem \ref{2.4.2} and Remark \ref{2.4.4} still hold, as long as $\mu(\pi^{-1}(X_m))$ contains $\theta$, with the exception that 
$$
\dimn \mu^{-1}(\theta) = 2\dimn X - \dimn G + m.
$$ 
\end{rmk}

\section{Quivers and their Representations}

\subsection{Outline}
\label{3.0}
Before proceeding with the proof of Theorem 1.2.1, we will consider the very good property for the quotient stack of quiver representations (in coordinate spaces) by the change of basis action at each vertex.  This example is related to the special case of Theorem 1.2.1, when the vector bundle underlying the parabolic bundles is trivial.  Specifically, Theorem 1.2.1 reduces to showing the very good property for $\textrm{PGL}(\alpha_0, \mathbb{C}) \backslash Fl(\alpha)$, which follows from the very good property for the quotient stack of representations of a certain quiver (See Section 6.4 for details).  We will largely follow the arguments outlined in Section 6 of \cite{CB1993} and Sections 1-4 of \cite{CB2001}, since his results imply ours.  We commence by setting up conditions that make the quotient stack of quiver representations very good.

Let $\textrm{Rep}(Q,\alpha)$ be the vector space of representations of the finite, loop-free quiver $Q$ in the standard coordinate spaces over an algebraically closed field $K$.  The dimensions of these coordinate spaces can be encoded as the \textit{dimension vector} $\alpha =  (\alpha_i)_{i \in I_Q}$, where $I_Q$ is the set of vertices for $Q$.  The group $G(\alpha) = \prod_{i \in I_Q} \textrm{GL}(\alpha_i, K)/K^*$ acts on $\textrm{Rep}(Q,\alpha)$ by change of basis at each vertex $i \in I_Q$.  

We define the \textit{Tits quadratic form} on the dimension vectors by 
$$
q(\alpha) = \sum \alpha_i^2  - \sum_{a: i \rightarrow j} \alpha_i\alpha_j,
$$
where the latter sum is taken over all arrows in $Q$ connecting vertex $i$ with vertex $j$.  We set $p(\alpha) = 1 - q(\alpha)$ following \cite{CB1993}.  

We say a dimension vector $\alpha$ is in the \textit{fundamental region} if it is nonzero, has connected support, and satisfies the following inequalities:
$$
2\alpha_{i} - \sum_{a: i \rightarrow j} \alpha_j - \sum_{a: l \rightarrow i} \alpha_l \le 0 \quad \forall i \in I_Q, 
$$
where the sums are taken over all arrows going into $i$ and coming out of $i$.  The main theorem we wish to prove in this section is:
\begin{thm}
\label{3.0.1}
Suppose $\alpha$ is in the fundamental region and $p(\alpha)>\sum_i p(\beta^{(i)})$ for any decomposition $\alpha = \sum_i \beta^{(i)}$ into the sum of two or more dimension vectors, then the quotient stack $G(\alpha) \backslash \textrm{Rep}(Q, \alpha)$ is very good.
\end{thm}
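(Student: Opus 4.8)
\section*{Proof proposal}

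The plan is to run the inertia-stack reformulation of Section~2.3 and then import Kac's theorem together with the parameter estimates of \cite{CB1993} and \cite{CB2001}. Write $\mathcal{Y}=G(\alpha)\backslash\textrm{Rep}(Q,\alpha)$, so that $\dimn\mathcal{Y}=\dimn\textrm{Rep}(Q,\alpha)-\dimn G(\alpha)=-q(\alpha)+1=p(\alpha)$; by Corollary \ref{2.2.3} applied to the quotient stack it then suffices to prove
$$
\dimn_{G(\alpha)}\textrm{Rep}(Q,\alpha)_n < p(\alpha)\qquad\textrm{for every }n>0,
$$
where $\textrm{Rep}(Q,\alpha)_n$ is the set of representations $M$ with $\dimn\textrm{Aut}(M)=\dimn\textrm{End}(M)=n+1$, equivalently with $n$-dimensional stabilizer in $G(\alpha)$. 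Before doing this I would record that under the hypotheses the minimum of $\dimn\textrm{Aut}(M)$ over $\textrm{Rep}(Q,\alpha)$ equals $1$, so $\mathcal{Y}$ is genuinely a candidate for being very good rather than merely almost very good: being in the fundamental region means $\alpha$ has connected support and, by Kac, is a positive imaginary root (hence $p(\alpha)\ge 1$ and the set $\textrm{Ind}(\alpha)$ of indecomposable representations of dimension $\alpha$ is nonempty), while the inequality $p(\alpha)>\sum_i p(\beta^{(i)})$, applied to the canonical decomposition $\alpha=\sigma_1+\cdots+\sigma_t$ (for which $p(\alpha)\le\sum_i p(\sigma_i)-(t-1)$), forces $t=1$, i.e.\ $\alpha$ is a Schur root and the generic representation of dimension $\alpha$ is a brick.

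The next step is to stratify $\textrm{Rep}(Q,\alpha)$ by Krull--Schmidt type. Since indecomposable representations occur only in dimensions that are positive roots, each stratum is indexed by a decomposition $\alpha=\beta^{(1)}+\cdots+\beta^{(r)}$ into positive roots, and there are finitely many such. Let $X(\tau)\subseteq\textrm{Rep}(Q,\alpha)$ be the corresponding $G(\alpha)$-stable constructible subset of representations isomorphic to $\bigoplus_i B_i$ with each $B_i$ indecomposable of dimension $\beta^{(i)}$. As the number of parameters of a finite union is the maximum of those of its pieces, it is enough to bound $\dimn_{G(\alpha)}\bigl(X(\tau)\cap\textrm{Rep}(Q,\alpha)_n\bigr)$ for each $\tau$ and each $n>0$.

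For a strictly decomposable type, $r\ge2$, fix a block decomposition of the coordinate spaces; this identifies the set $Z$ of block-diagonal representations $\bigoplus_i B_i$ (with $B_i$ indecomposable of dimension $\beta^{(i)}$) with $\prod_i\textrm{Ind}(\beta^{(i)})$, stable under $H=\prod_i G(\beta^{(i)})$, and $G(\alpha)\cdot Z=X(\tau)$ with $Z$ meeting each $G(\alpha)$-orbit in a finite union of $H$-orbits by Krull--Schmidt. Lemma \ref{2.3.3}, Lemma \ref{2.3.2} and Kac's theorem ($\dimn_{G(\beta)}\textrm{Ind}(\beta)=p(\beta)$ for a positive root $\beta$) then give
$$
\dimn_{G(\alpha)}X(\tau)=\dimn_H Z=\sum_{i=1}^r\dimn_{G(\beta^{(i)})}\textrm{Ind}(\beta^{(i)})=\sum_{i=1}^r p(\beta^{(i)}).
$$
Every $M\in X(\tau)$ is decomposable, so $\dimn\textrm{Aut}(M)\ge2$ and $X(\tau)\subseteq\coprod_{n\ge1}\textrm{Rep}(Q,\alpha)_n$; thus it suffices to know $\dimn_{G(\alpha)}X(\tau)<p(\alpha)$, which is exactly the hypothesis applied to the decomposition of $\alpha$ into the $r\ge2$ dimension vectors $\beta^{(1)},\dots,\beta^{(r)}$.

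It remains to treat the indecomposable stratum $X(\tau)=\textrm{Ind}(\alpha)$. By Kac's theorem $\dimn_{G(\alpha)}\textrm{Ind}(\alpha)=p(\alpha)$, but this maximum is attained on the open dense (because $\alpha$ is a Schur root) locus of bricks, which lies in $\textrm{Rep}(Q,\alpha)_0$ and so is irrelevant to the $n>0$ estimate. For $n>0$ one has $\textrm{Ind}(\alpha)\cap\textrm{Rep}(Q,\alpha)_n=\{M\textrm{ indecomposable}:\dimn\textrm{End}(M)=n+1\}$, which lies in $\{M\textrm{ indecomposable}:\dimn\textrm{End}(M)\ge 2\}$, and what must be shown is that this locus has number of parameters strictly less than $p(\alpha)$; equivalently, that $\{M\textrm{ indecomposable}:\dimn\textrm{End}(M)\ge n+1\}$ has codimension strictly greater than $n$ in $\textrm{Rep}(Q,\alpha)$ (of which it is a subset, and in which the bricks are dense). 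This is the one point that is not mere bookkeeping with numbers of parameters: it rests on the finer part of Kac's theory --- the behaviour of the canonical decomposition under specialization, which controls how the generic endomorphism algebra of an indecomposable can jump in a family --- as developed in \cite{CB1993} and \cite{CB2001}, and I expect this indecomposable case, rather than the decomposable bookkeeping, to be the main obstacle. Granting it and combining with the previous paragraph yields $\dimn_{G(\alpha)}\textrm{Rep}(Q,\alpha)_n<p(\alpha)$ for all $n>0$, and Corollary \ref{2.2.3} then gives that $G(\alpha)\backslash\textrm{Rep}(Q,\alpha)$ is very good.
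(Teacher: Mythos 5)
Your reduction to the estimate $\dimn_{G(\alpha)}\textrm{Rep}(Q,\alpha)_n < p(\alpha)$ for $n>0$, the stratification by Krull--Schmidt type, and the treatment of the strictly decomposable strata via Lemma \ref{2.3.2}, Lemma \ref{2.3.3}, the Krull--Schmidt theorem and Kac's theorem all match the paper's argument (Lemma \ref{3.3.1} and case (2) of Theorem \ref{3.3.2}). But at the indecomposable stratum you stop: you state that one must show the locus of indecomposables with $\dimn\textrm{End}(M)\ge n+1$ has number of parameters strictly less than $p(\alpha)$, observe that this is ``the main obstacle'', and then grant it, pointing to the behaviour of the canonical decomposition under specialization. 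That is a genuine gap, and it sits exactly where the theorem's real content lies: this step is the paper's Theorem \ref{3.3.3}, and without it nothing forces the locus of indecomposables with large endomorphism algebra to be small, which is precisely what the fundamental-region hypothesis is there to control.

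Moreover, the tool you gesture at is not the one that closes the gap in \cite{CB1993}, \cite{CB2001}, or in the paper. The actual argument is an elementary count of pairs $(M,g)$ with $g$ a nontrivial nilpotent endomorphism of $M$: stratify the nilpotents by Jordan type $\lambda=(\lambda_i)$, compute $\dimn \textrm{Mod}_g = \sum_{a}\sum_r \lambda^r_{h(a)}\lambda^r_{t(a)}$ and $\dimn N_\lambda = \dimn G(\alpha)+1-\sum_i\sum_r(\lambda_i^r)^2$, so that the stratum of pairs has dimension $\dimn G(\alpha)+1-\sum_r q(\lambda^r)$. Since a nontrivial nilpotent forces the decomposition $\alpha=\sum_r\lambda^r$ to have at least two nonzero parts, the hypothesis (via Proposition \ref{3.0.2}, or directly the assumed inequality $p(\alpha)>\sum_i p(\beta^{(i)})$) gives $q(\alpha)<\sum_r q(\lambda^r)$, hence the pair space has dimension strictly below $\dimn G(\alpha)+p(\alpha)$. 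Fitting's lemma then identifies, for $M$ indecomposable, the stabilizer dimension minus one with the dimension of the space of nilpotent endomorphisms of $M$, and projecting the pair space onto the indecomposables with $\dimn\textrm{End}(M)=m$ (fibers of dimension $m-1$) yields exactly the codimension bound you need. No specialization theory for canonical decompositions is required, and appealing to it does not substitute for this computation; as written, your proof is incomplete at its central point.
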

  
Note that in the statement of the theorem it suffices for $\beta^{(i)}$ to be roots of the Kac-Moody algebra associated with $Q$.  We will prove this theorem following an argument of Crawley-Boevey, outlined in \cite{CB2001}.  A key computation in this argument relies on the following theorem from \cite{Kac1980} and \cite{Kac1982}:
\begin{Kac}
Let $\alpha$ be a dimension vector for representations of a quiver $Q$, and let $\textrm{Ind}(Q, \alpha)$ consist of indecomposable representations of $Q$ with dimension vector $\alpha$.  We have that  $\textrm{Ind}(Q, \alpha)$ is nonempty if and only if $\alpha$ is a root of the Kac-Moody algebra associated with $Q$.  In this case, $\dimn_{G(\alpha)} \textrm{Ind}(Q, \alpha) = p(\alpha)$.
\end{Kac}
The specific quiver $Q^{st}$ related to $Fl(\alpha)$ is called the \textit{star-shaped quiver} (see Section \ref{3.4} below).  The dimension vectors of this quiver have the form $\alpha = (\alpha_0, \alpha_{ij})$, where $1 \le i \le k$ and $1 \le j \le w_i-1$.  Let $\delta(\alpha) = -2\alpha_0 + \sum_j \alpha_{ij}$.    If we consider only representations of the star-shaped quiver, we can replace the condition on $p(\alpha)$ in the statement of Theorem \ref{3.0.1} by the weaker $\delta(\alpha) > 0$, which is easier to check.  The reduction is accomplished through the following key proposition:

\begin{prp}
\label{3.0.2}
Suppose $\delta(\alpha) > 0$ and $\alpha$ is in the fundamental region, then $p(\alpha) > \sum_i p(\beta^{(i)})$, for any decomposition $\alpha = \sum_i \beta^{(i)}$ into the sum of two or more vectors in $\mathbb{Z}_{\ge 0}^{I_{Q^{st}}}$.
\end{prp}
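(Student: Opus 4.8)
The plan is to rephrase the inequality in terms of the symmetrized Tits form and then prove it by peeling off summands according to their behaviour at the central vertex. Write $(\beta,\gamma)=q(\beta+\gamma)-q(\beta)-q(\gamma)$ for the symmetric bilinear form, so that $q(\beta)=\tfrac12(\beta,\beta)$ and $q\bigl(\sum_l\beta^{(l)}\bigr)=\sum_l q(\beta^{(l)})+\sum_{l<m}(\beta^{(l)},\beta^{(m)})$. Since $p=1-q$ and everything in sight is an integer, the inequality $p(\alpha)>\sum_l p(\beta^{(l)})$ is equivalent to $\sum_{l<m}(\beta^{(l)},\beta^{(m)})\le -r$, where $r\ge 2$ is the number of summands (tacitly nonzero: a zero summand would make the assertion false). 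This is the statement I would establish. It is also convenient to note the polarization of the star-quiver formula $q(\beta)=(1-\tfrac k2)\beta_0^2+\tfrac12\sum_{i,j}d_{ij}(\beta)^2$, namely $(\beta,\gamma)=(2-k)\beta_0\gamma_0+\sum_{i,j}d_{ij}(\beta)\,d_{ij}(\gamma)$, where $d_{ij}(\beta):=\beta_{i,j-1}-\beta_{ij}$ (with the conventions $\beta_{i0}=\beta_0$, $\beta_{i,w_i}=0$) satisfies $\sum_j d_{ij}(\beta)=\beta_0$ along each arm.

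\textbf{Unpacking the hypotheses.} Being in the fundamental region means precisely that $(\alpha,e_v)\le 0$ for every vertex $v$ of $Q^{st}$, and $\delta(\alpha)>0$ is exactly the strict inequality $(\alpha,e_0)\le -1$ at the central vertex. From $(\alpha,e_{ij})\le 0$ one sees that the sequence $\alpha_0=\alpha_{i0},\alpha_{i1},\dots,\alpha_{i,w_i}=0$ along each arm is convex; a convex sequence of nonnegative integers with those endpoints is automatically non-increasing and lies below the chord, so $\alpha_{ij}\le\alpha_0(1-j/w_i)$. Feeding $\alpha_{i1}\le\alpha_0(1-1/w_i)$ into $\delta(\alpha)>0$ forces $\sum_i(1-1/w_i)>2$, hence $k\ge 3$ (so $2-k\le -1$) and, more precisely, $c:=k-2-\sum_i 1/w_i>0$, i.e. $Q^{st}$ is of indefinite type. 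I would also record two consequences of the formula for $q$ together with $\sum_j d_{ij}(\beta)=\beta_0$: in general $q(\beta)\ge \tfrac{\beta_0^2}{2}\bigl(2-k+\sum_i 1/w_i\bigr)=-\tfrac{c}{2}\beta_0^2$; and $q(\beta)\ge 1$ for every nonzero $\beta$ with $\beta_0\in\{0,1\}$ — for $\beta_0=0$ because the arms are disjoint type-$A$ quivers, on which $q$ is positive definite, and for $\beta_0=1$ because then each of the $k$ arms contributes $\sum_j d_{ij}(\beta)^2\ge 1$.

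\textbf{Peeling off the type-$A$ summands.} Reindex so that $\beta^{(1)},\dots,\beta^{(s)}$ are the \emph{thick} summands ($\beta^{(l)}_0\ge 1$) and $\beta^{(s+1)},\dots,\beta^{(r)}$ the \emph{thin} ones ($\beta^{(l)}_0=0$, hence supported on the arms); at least one is thick since $\alpha_0\ge 1$. Put $\gamma=\sum_{l>s}\beta^{(l)}$ and $\alpha'=\alpha-\gamma=\sum_{l\le s}\beta^{(l)}$. Using $(\alpha,\gamma)\le 0$ and $(\alpha',\gamma)=(\alpha,\gamma)-2q(\gamma)$ one gets
$$\sum_{l=1}^{r} q(\beta^{(l)})-q(\alpha)\ \ge\ \Bigl(\sum_{l=1}^{s}q(\beta^{(l)})-q(\alpha')\Bigr)+\sum_{l=s+1}^{r}q(\beta^{(l)})+q(\gamma),$$
and by the lower bound above every thin term is $\ge 1$, as is $q(\gamma)$ when $\gamma\ne 0$. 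So the claim is reduced to the following all-thick statement: for a decomposition of \emph{any} $\alpha'$ with $0\le\alpha'\le\alpha$ into $s$ nonzero thick vectors one has $\sum_{l=1}^{s}q(\beta^{(l)})-q(\alpha')\ge s-1$, with $\ge s$ when $\alpha'=\alpha$.

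\textbf{The all-thick estimate (the main obstacle).} I would prove this by a minimal-counterexample argument on $s$. The case $s=1$ is trivial; assume $s\ge 2$. If $\sum_{l<m}(\beta^{(l)},\beta^{(m)})<0$, then, since twice this equals $\sum_{l_0}(\beta^{(l_0)},\alpha'-\beta^{(l_0)})$, some $(\beta^{(l_0)},\alpha'-\beta^{(l_0)})\le -1$; peeling $\beta^{(l_0)}$ off and invoking minimality for the decomposition of $\alpha'-\beta^{(l_0)}\le\alpha$ into $s-1$ thick vectors yields $\sum_l q(\beta^{(l)})-q(\alpha')\ge 1+(s-2)=s-1$, a contradiction (and for $\alpha'=\alpha$ one gains one more from $(\beta^{(l_0)},\alpha)\le -\beta^{(l_0)}_0$ and the lower bounds for $q$). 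Hence a minimal counterexample satisfies $q(\alpha')\ge\sum_l q(\beta^{(l)})$, which by the polarized identity reads $\sum_{i,j}\sum_{l<m}d_{ij}(\beta^{(l)})d_{ij}(\beta^{(m)})\ge (k-2)\sum_{l<m}\beta^{(l)}_0\beta^{(m)}_0$. I would contradict this by bounding the left-hand side above via the exact quadratic optimization of $\min\sum_l\sum_j d_{ij}(\beta^{(l)})^2$ under the affine constraints $\sum_l d_{ij}(\beta^{(l)})=d_{ij}(\alpha')$ and $\sum_j d_{ij}(\beta^{(l)})=\beta^{(l)}_0$ on each arm, then combining the result with $\sum_{i,j}d_{ij}(\alpha')^2=2q(\alpha')+(k-2)\alpha_0^2$ and with the inequalities forced by $0\le\alpha'\le\alpha$ with $\alpha$ convex and $\delta(\alpha)>0$ (which keep $q(\alpha')$ negative enough, and keep $\sum_j d_{ij}(\alpha')^2$ close to $\alpha_0^2/w_i$, relative to $\sum_{l<m}\beta^{(l)}_0\beta^{(m)}_0$). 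The hard part is precisely this final estimate — it is the only place where convexity of $\alpha$ and the strict inequality $\delta(\alpha)>0$ are both indispensable; the rest is bookkeeping.
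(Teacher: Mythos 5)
Your reduction lands on a false intermediate statement, so the argument has a genuine gap. The ``all-thick estimate'' you formulate --- for every $\alpha'$ with $0\le\alpha'\le\alpha$ and every decomposition of $\alpha'$ into $s$ nonzero vectors with positive central coordinate, $\sum_{l}q(\beta^{(l)})-q(\alpha')\ge s-1$ --- fails already for $\alpha'=2e_0=e_0+e_0$ (allowed whenever $\alpha_0\ge 2$): here $q(e_0)+q(e_0)-q(2e_0)=1+1-4=-2$, not $\ge 1$. Worse, such configurations arise from your own reduction, not just from the artificial freedom ``any $\alpha'\le\alpha$'': if the thin summands swallow the whole arm part of $\alpha$, then $\alpha'=\alpha_0 e_0$ and the decomposition into $\alpha_0$ copies of $e_0$ gives $\sum_l q(\beta^{(l)})-q(\alpha')=\alpha_0-\alpha_0^2$, which violates $\ge s-1$ for $\alpha_0\ge 2$. (Concretely: $k=5$, all $w_i=2$, $\alpha_0=2$, $\alpha_{i1}=1$ is in the fundamental region with $\delta(\alpha)=1$, and the decomposition $\alpha=e_0+e_0+\gamma$ with $\gamma$ the full arm part realizes this.) The proposition itself survives in these examples only because the terms you discard --- the excess of each thin $q(\beta^{(l)})$ over $1$, and $-(\alpha,\gamma)\ge 0$ --- are large; crediting each thin summand only $1$ and then hoping to prove the residual all-thick inequality cannot work, and consequently the ``minimal counterexample plus quadratic optimization'' step (which you explicitly leave unproven, calling it ``the hard part'') cannot succeed as stated: a minimal counterexample exists and is exhibited above. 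The preliminary material (the reformulation $\sum_{l<m}(\beta^{(l)},\beta^{(m)})\le -r$, the bounds $q(\beta)\ge 1$ for nonzero $\beta$ with $\beta_0\in\{0,1\}$, the consequence $k-2-\sum_i 1/w_i>0$) is correct but does not reach the core difficulty.

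The structural reason the induction-by-peeling fails is that after removing a summand the residual vector need not inherit any fundamental-region-type property, and the inequality is simply false for unbalanced residues (heavy center, light arms). The paper's proof avoids this entirely: it never estimates decompositions of sub-vectors $\alpha'\le\alpha$, but instead uses the signature of the symmetrized Tits form (Corollary 3.6.3) together with $q(\alpha)<0$ to write each $\beta^{(i)}=a_i\alpha+\delta_i$ with $(\alpha,\delta_i)=0$, $q(\delta_i)\ge 0$, $\sum a_i=1$, $\sum\delta_i=0$; this gives $(\beta^{(i)},\beta^{(j)})<0$ for some pair, and the case $l=2$ is settled by a short case analysis on $q(\beta)=0$ (using $\delta(\alpha)>0$), after which the general case follows by merging two summands. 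If you want to salvage your approach, you would have to keep the exact identity $\sum_l q(\beta^{(l)})-q(\alpha)=\bigl(\sum_{\mathrm{thick}}q-q(\alpha')\bigr)+\bigl(\sum_{\mathrm{thin}}q-q(\gamma)\bigr)-(\alpha',\gamma)$ and carry the full size of $-(\alpha',\gamma)$ and of the thin terms through the estimate, which essentially forces you back to an argument relative to $\alpha$ itself, i.e.\ to something like the paper's orthogonal-decomposition scheme.
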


\subsection{General Definitions}
\label{3.1}
Let us recall some definitions from the theory of quiver representations.  
\begin{defn}
\label{3.1.1}
A \textit{quiver} is a directed multigraph.  For a quiver $Q$, we have:
\begin{itemize}
\item $I_Q$ is the vertex set of $Q$.
\item $A_Q$ is the arrow set of $Q$.
\item $h: A_Q \rightarrow I_Q$ is a map that sends an arrow to its head vertex.
\item $t: A_Q \rightarrow I_Q$ is a map that sends an arrow to its tail vertex.
\end{itemize}  
We call $Q$ finite if $I_Q$ and $A_Q$ are both finite.  We call $Q$ loop-free if every $a \in A_Q$ has distinct head and tail vertices.
\end{defn}
Let $K$ be an algebraically closed field.  A \textit{representation} of $Q$ consists of a collection of $K$-vector spaces $V_i$ indexed by $i \in I_Q$, together with a family of linear maps $f_a: V_{t(a)} \rightarrow V_{h(a)}$ for each $a \in A_Q$.

Given two quiver representations $V = (V_i,f_a)$ and $W = (W_i, g_a)$, a \textit{morphism} from $V$ to $W$ is a family of linear maps $\phi = (\phi_i)_{i\in I_Q}$ such that $\phi_i: V_i \rightarrow W_i $ and $\phi_{h(a)} \circ f_a = g_a \circ \phi_{t(a)}$.

Let $R(Q)$ denote the \textit{path algebra} corresponding to the quiver $Q$.  That is, $R(Q)$ is the associative $K$-algebra generated by the paths of $Q$, where multiplication is given by the concatenation of paths.  Therefore, we can interpret quiver representations as modules over $R(Q)$.  Representations of the quiver $Q$ form an abelian category $R(Q)-\mathbf{Mod}$.

We wish to restrict ourselves to representations of $Q$ where the spaces $V_i$ are all standard coordinate spaces.
\begin{defn}
\label{3.1.2}
Let $Q$ be a quiver and let $K$ be an algebraically closed field.  A representation of $Q$ in coordinate spaces, with dimension vector $\alpha = (\alpha_i)_{i\in I_Q}\in \mathbb{Z}^{I_Q}_{\ge 0}$, is an element of the following vector space:
$$
\textrm{Rep}(Q,\alpha) = \bigoplus_{a \in A_Q} \textrm{Mat}(\alpha_{h(a)} \times \alpha_{t(a)}, K).
$$
\end{defn}
From now on, we will only consider representations of $Q$ in coordinate spaces. Let $\textrm{Mat}(\alpha)_x$ be the algebra of endomorphisms of the module $K_x$ corresponding to $x \in \textrm{Rep}(Q, \alpha)$.  This is a subalgebra of $\textrm{Mat}(\alpha) = \prod_{i \in I_Q} \textrm{Mat}(\alpha_i, K)$, consisting of collections of matrices $g = (g_i)$ such that $g_{h(a)} \circ a = a \circ g_{t(a)}$ for all $a \in A_Q$.

\begin{defn}
\label{3.1.3}
Let $Q$ be finite.  We define the \textit{Euler-Ringel form} on the dimension vectors $\alpha$ in the following way:
$$
\langle \alpha, \beta \rangle = \sum_{i\in I_Q} \alpha_i \beta_i - \sum_{a \in A_Q} \alpha_{t(a)}\beta_{h(a)}.
$$
\begin{itemize}
\item $q(\alpha) = \langle \alpha, \alpha \rangle$ is called the Tits quadratic form.
\item We set $p(\alpha) = 1 - q(\alpha)$.
\end{itemize}
\end{defn}

Let $G(\alpha) = \prod_{i \in I_Q} \textrm{GL}(\alpha_i, K)/K^*$.  The group $G(\alpha)$ acts on $\textrm{Rep}(Q,\alpha)$ by conjugation.  In the next section, we will consider the very good property for $\textrm{Rep}(Q,\alpha)$ with respect to this action.  Note that it is easy to check that $\textrm{dim } \textrm{Rep}(Q, \alpha)  = \textrm{dim } G(\alpha) + p(\alpha)$.

\subsection{Quivers and Kac-Moody algebras}
\label{3.2}
We wish to introduce several concepts from the theory of Kac-Moody Lie algebras and relate these concepts to the theory of quiver representations.  For details concerning Kac-Moody algebras and associated concepts, see \cite{Kac1990}.  

\begin{defn}
\label{3.2.1}
Let $A = (a_{ij})_{i,j=1}^n$ be a complex $n \times n$ matrix of rank $l$.  We say that $A$ is a generalized Cartan matrix if the following holds:
\begin{itemize}
\item[(1)] $a_{ii} = 2$,
\item[(2)] $a_{ij}$ are nonpositive integers for $i \neq j$,
\item[(3)] $a_{ij} = 0$ implies $a_{ji} = 0$.
\end{itemize}
\end{defn}

Consider a triple $(\mathfrak{h}, \Pi, \Pi^{\vee})$, where $\mathfrak{h}$ is a complex vector space, $\Pi = (\epsilon_1, \dots, \epsilon_n) \subset \mathfrak{h}^*$ and $\Pi^{\vee} = (\epsilon_1^{\vee}, \dots, \epsilon_n^{\vee}) \subset \mathfrak{h}$ are indexed subsets.  We call the triple $(\mathfrak{h}, \Pi, \Pi^{\vee})$ a \textit{realization} of $A$ if the following three conditions hold:
\begin{itemize}
\item[(1)] the sets $\Pi$ and $\Pi^{\vee}$ are linearly independent,
\item[(2)] $\epsilon_j(\epsilon_i^{\vee}) = a_{ij}$,
\item[(3)] $\textrm{dim } \mathfrak{h} = 2n - l$.
\end{itemize}

\begin{defn}
\label{3.2.2}
Let $(\mathfrak{h}, \Pi, \Pi^{\vee})$ be a realization of a generalized Cartan matrix $A$.  Consider the Lie algebra $\mathfrak{g}(A)$ generated by $e_i, f_i$ (for $i=1, \dots, n$) and $\mathfrak{h}$.  We call $\mathfrak{g}(A)$ a Kac-Moody algebra if it satisfies the following relations:
\begin{itemize}
\item[(1)] $[e_i, f_j] = \delta_{ij}\epsilon_i^{\vee}$ for all $i,j$, 
\item[(2)] $[h,h'] = 0$ for all $h,h' \in \mathfrak{h}$,
\item[(3)] $[h, e_i] = \epsilon_i(h)e_i$,
\item[(4)] $[h, f_i] = -\epsilon_i(h)f_i$,
\item[(5)] $(\textrm{ad } e_i)^{1-a_{ij}}(e_j) = 0$ for $i \neq j$,
\item[(6)] $(\textrm{ad } f_i)^{1-a_{ij}}(f_j) = 0$ for $i \neq j$,
\end{itemize}
where $\textrm{ad}(x)(y) = [x,y]$.
\end{defn}

Two realizations $(\mathfrak{h}_1, \Pi_1, \Pi^{\vee})_1$ and $(\mathfrak{h}_2, \Pi_2, \Pi^{\vee}_2)$ of $A$ are said to be \textit{isomorphic} if there exists a vector space isomorphism $\phi(\mathfrak{h}_1) = \mathfrak{h}_2$ such that $\phi^*(\Pi_1) = \Pi_2$ and $\phi(\Pi_1^{\vee}) = \Pi_2^{\vee}$.  It is clear that any two isomorphic realizations define the same algebra $\mathfrak{g}(A)$.  By Proposition 1.1 in (\cite{Kac1990}), any two realizations of a matrix $A$ are isomorphic.

Let $S = \sum_{i=1}^n \mathbb{Z}\epsilon_i$.  We define the \textit{root space} attached to $\epsilon \in S$ as the following vector space:
$$
\mathfrak{g}_{\epsilon} = \{x \in \mathfrak{g}(A)|[h,x] = \epsilon(h)x \}.
$$
The elements $\epsilon_i$ are called the \textit{simple roots} of $\mathfrak{g}(A)$.
\begin{defn}
\label{3.2.3}
Let $\epsilon$ be in $S$.  We say $\epsilon$ is a root if $\epsilon \neq 0$ and $\dimn \mathfrak{g}_{\epsilon} \neq 0$.
\begin{itemize}
\item A root $\epsilon$ is called positive if it has all positive coefficients in $S$.
\item A root $\epsilon$ is called negative if it has all negative coefficients in $S$.
\end{itemize}
We denote by $\Delta, \Delta_{+}, \Delta_{-}$ the sets of all roots, of positive roots, and of negative roots, respectively.
\end{defn}

As in the theory of semisimple Lie algebras, one can define a Weyl group $W(A)$ of $\mathfrak{g}(A)$, generated by reflections.  For further details of this construction see Remark \ref{3.2.5} below.

Given a finite, loop-free quiver $Q$, we will construct a Kac-Moody algebra associated with $Q$.  Indeed, by symmetrizing the Euler-Ringel form we obtain:
$$
(\epsilon_i, \epsilon_j) =
\begin{cases}
2 & \textrm{if } i=j\\
-(a_{ij} + a_{ji}) & \textrm{otherwise},
\end{cases}
$$
where $\epsilon_i$ is a standard basis vector in $\mathbb{Z}^{I_Q}$ corresponding to the vertex $i \in I_Q$ and $a_{ij}$ is the number of arrows from vertex $i$ to vertex $j$ in $Q$. From this we can clearly see that the symmetrized Euler-Ringel form defines a generalized Cartan matrix $A$.  

Let $S = \sum_{i \in I_Q} \mathbb{Z}\epsilon_i$ and let $\mathfrak{h}' = \mathbb{C} \otimes S$.  Note that we can extend the symmetric bilinear form defined by $A$ to $\mathfrak{h}'$.  This form may be degenerate, so let $\mathfrak{c}$ be its kernel.  Now, set $\mathfrak{h} = \mathfrak{h}' \oplus \mathfrak{c}^*$.  Fix a complement $\mathfrak{h}''$ to $\mathfrak{c}$ in $\mathfrak{h}'$.  We extend the symmetric bilinear form on $\mathfrak{h}'$ to $\mathfrak{h}$ in the following way:
\begin{align*}
& (c,d) = c(d) \textrm{ for } c \in \mathfrak{c}^*, d \in \mathfrak{c}\\
& (c_1, c_2) = 0 \textrm{ for } c_1, c_2 \in \mathfrak{c}^*\\
& (c, h) = 0 \textrm{ for } c \in \mathfrak{c}^*, h \in \mathfrak{h}''.
\end{align*} 
We can see that this defines a nondegenerate symmetric bilinear form on $\mathfrak{h}$.  Therefore, we have a canonical isomorphism between $\mathfrak{h}$ and $\mathfrak{h}^*$.  This gives us a realization of $A$ where $\mathfrak{h}$ is as defined above, $\Pi = (\epsilon_i)_{i=1}^n$, and $\Pi^{\vee} = (\epsilon_i^{\vee})_{i=1}^n$, where $\epsilon_i^{\vee}$ is identified with $\epsilon_i$ under the bilinear form on $\mathfrak{h}$.  The Kac-Moody algebra $\mathfrak{g}(A)$ associated with this realization is the Kac-Moody algebra associated with the quiver $Q$.  Note that a different choice for the complement $\mathfrak{h}''$ defines a realization of $A$ that is isomorphic to the given one.

We can see that the standard basis vectors $\epsilon_i$ are the simple roots of the Kac-Moody algebra associated with $Q$.  Since dimension vectors are in $S$, it makes sense to consider certain dimension vectors as roots of this algebra.  By analogy with roots, we say that a dimension vector $\alpha$ is \textit{positive} (respectively: \textit{negative}, \textit{nonnegative}) if the coefficients it has in $S$ are positive (respectively: negative, nonnegative).

\begin{defn}
\label{3.2.4}
  The \textit{fundamental region} is the set of positive dimension vectors $F = \{\alpha \in \mathbb{Z}^{I_Q}_{> 0}| (\alpha, \epsilon_i) \le 0 \ \forall i \in I_Q\}$ with connected support. 
\end{defn}

\begin{rmk}
\label{3.2.5}
A \textit{reflection} at a vertex $i \in I_Q$ is defined as $s_i(\alpha) = \alpha - (\alpha,\epsilon_i)\epsilon_i$.  Note that, the Weyl group $W$ of the associated Kac-Moody algebra is generated by these reflections.  The real roots are the images of the coordinate vectors under elements of $W$.  The fundamental region consists of integer points of $-C^{\vee}$, where $C^{\vee}$ is the dual of the fundamental chamber of the Weyl group. The imaginary roots are the images of vectors in $-C^{\vee}$ under the action of $W$. 
\end{rmk}

\subsection{The very good property for quiver representations}
\label{3.3}
The contents of this section largely follow Crawley-Boevey in \cite{CB1993} and \cite{CB2001}.  Let $Q$ be a finite loop-free quiver, and fix $\alpha \in \mathbb{Z}^{I_{Q}}_{\ge 0}$.

Let $\textrm{Ind}(Q, \beta^{(1)}, \dots, \beta^{(l)})$ be the $G(\alpha)$-stable constructible set consisting of all quiver representations that can be written as the sum of indecomposable representations of dimension types $\beta^{(1)}, \dots, \beta^{(l)}$, where $\alpha = \sum_i\beta^{(i)}$.  Since $\textrm{Rep}(Q, \alpha)$ is the union of all the $\textrm{Ind}(Q, \beta^{(1)}, \dots, \beta^{(l)})$, the following lemma holds.

\begin{lmm}
\label{3.3.1}
We have $\dimn_{G(\alpha)}\textrm{Rep}(Q, \alpha)= \textrm{max }\{ \dimn_{G(\alpha)}\textrm{Ind}(Q, \beta^{(1)},..,\beta^{(l)})\}$, where the maximum is taken over all decompositions into indecomposables of dimensions $\beta^{(1)}, \dots, \beta^{(l)}$.
\end{lmm}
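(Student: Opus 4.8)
The plan is to deduce the lemma from the elementary behavior of the number-of-parameters function of Definition \ref{2.3.1} under finite unions. First I would note that there are only finitely many decompositions $\alpha = \sum_i \beta^{(i)}$ into nonzero dimension vectors: each summand satisfies $0 < \beta^{(i)} \le \alpha$ coordinatewise, and the number of summands is at most $\sum_{i\in I_Q}\alpha_i$, so only finitely many of the sets $\textrm{Ind}(Q,\beta^{(1)},\dots,\beta^{(l)})$ occur. By the Krull--Schmidt theorem, every $x \in \textrm{Rep}(Q,\alpha)$ decomposes, uniquely up to isomorphism and reordering, as a direct sum of indecomposable subrepresentations, and the tuple of their dimension vectors is one of these decompositions of $\alpha$; hence $x$ lies in exactly one such set, and $\textrm{Rep}(Q,\alpha) = \bigcup \textrm{Ind}(Q,\beta^{(1)},\dots,\beta^{(l)})$ is a \emph{finite} union of $G(\alpha)$-stable constructible subsets (their constructibility and $G(\alpha)$-stability having already been recorded in the text). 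Decompositions in which some $\beta^{(i)}$ fails to be a root contribute only the empty set and may be discarded; this is why the maximum in the statement ranges only over genuine decompositions into indecomposables, and it is consistent with the union on the left.

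Next I would recall that the dimension of the stabilizer $G(\alpha)_x$ is a constructible function of $x$, so $\textrm{Rep}(Q,\alpha)$ is stratified by the locally closed subsets $X_s = \{x : \dimn G(\alpha)_x = s\}$, and these same strata compute $\dimn_{G(\alpha)}$ of any $G(\alpha)$-stable constructible subset. Writing $Y = \textrm{Rep}(Q,\alpha)$ and $Y_1,\dots,Y_N$ for the finitely many sets $\textrm{Ind}(Q,\beta^{(1)},\dots,\beta^{(l)})$, Definition \ref{2.3.1} gives
$$\dimn_{G(\alpha)} Y = \max_s\{\dimn(Y\cap X_s) + s - \dimn G(\alpha)\}.$$
Since $Y\cap X_s = \bigcup_{j} (Y_j\cap X_s)$ is a finite union of constructible sets, $\dimn(Y\cap X_s) = \max_j \dimn(Y_j\cap X_s)$, and substituting this in and interchanging the two finite maxima yields
$$\dimn_{G(\alpha)} Y = \max_j \max_s\{\dimn(Y_j\cap X_s) + s - \dimn G(\alpha)\} = \max_j \dimn_{G(\alpha)} Y_j,$$
which is precisely the asserted identity.

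I do not expect a serious obstacle here; the statement is essentially a formal consequence of the set-up. The only points that need care are the finiteness of the family of relevant decompositions — handled by the coordinatewise bound above — and the fact that the dimension of a finite union of constructible sets equals the maximum of the dimensions, which is exactly what lets the two maxima be exchanged. Everything else is already available: constructibility and $G(\alpha)$-stability of each $\textrm{Ind}(Q,\beta^{(1)},\dots,\beta^{(l)})$, and the description of $\dimn_{G(\alpha)}$ via stabilizer strata.
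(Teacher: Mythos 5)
Your proposal is correct and follows the same route as the paper, which simply observes that $\textrm{Rep}(Q,\alpha)$ is the (finite, by Krull--Schmidt) union of the constructible $G(\alpha)$-stable sets $\textrm{Ind}(Q,\beta^{(1)},\dots,\beta^{(l)})$ and that the number of parameters of such a union is the maximum over the pieces. Your write-up just makes explicit the finiteness bound and the exchange of the two maxima, which the paper leaves implicit.
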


Note that by the Kac Theorem the dimension vectors $\beta^{(1)} \dots \beta^{(l)}$ are actually positive roots of the Kac-Moody algebra corresponding to $Q$.  We can now prove the following result (see Lemma 4.3 in \cite{CB1993}):

\begin{thm}
\label{3.3.2}
Let one of the following hold:
\begin{enumerate}
\item{The maximum in Lemma \ref{3.3.1} is achieved for $l=1$.}
\item{The maximum in Lemma \ref{3.3.1} is achieved for $l\ge 2$, and for the corresponding
collection $\beta^{(1)},...,\beta^{(l)}$ we have $p(\alpha)>\sum_i p(\beta^{(i)})$.}
\end{enumerate}
Then the stack $G(\alpha) \backslash \textrm{Rep}(Q, \alpha)$ is very good.
\end{thm}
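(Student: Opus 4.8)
The plan is to reduce the very good property to an estimate on the dimension of the inertia stack of $G(\alpha)\backslash\mathrm{Rep}(Q,\alpha)$ and then to compute that dimension stratum by stratum, following Crawley-Boevey. By the reformulation of Section 2 --- Corollary \ref{2.2.3} together with the identification, recorded after Definition \ref{2.3.1} and used in Section \ref{3.1}, of $\dim_{G(\alpha)}Y$ with the dimension of the part of the inertia stack lying over $[Y/G(\alpha)]$ --- the stack $G(\alpha)\backslash\mathrm{Rep}(Q,\alpha)$ is very good if and only if $\dim_{G(\alpha)}Y<\dim\mathrm{Rep}(Q,\alpha)-\dim G(\alpha)=p(\alpha)$ for every $G(\alpha)$-stable constructible $Y\subseteq\mathrm{Rep}(Q,\alpha)$ on which the automorphism group is everywhere positive-dimensional. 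Since the subgroup $K^{*}$ of scalars lies in every stabilizer, $\dim_{G(\alpha)}=\dim_{\mathrm{GL}(\alpha)}$, and a representation $M$ has $0$-dimensional automorphism group in $G(\alpha)$ exactly when $\mathrm{End}(M)=K$, i.e.\ when $M$ is a brick; so it suffices to bound the number of parameters of the complement of the brick locus.

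The key computation is that of $\dim_{G(\alpha)}\mathrm{Ind}(Q,\beta^{(1)},\dots,\beta^{(l)})$. Fixing a direct sum decomposition of each coordinate space $K^{\alpha_{i}}$ into blocks of sizes $\beta^{(1)}_{i},\dots,\beta^{(l)}_{i}$, the block-diagonal representations whose $j$-th block is indecomposable of dimension $\beta^{(j)}$ form a constructible subset $Z$, stable under the Levi $H=\prod_{j}\mathrm{GL}(\beta^{(j)})$, with $\mathrm{GL}(\alpha)\cdot Z=\mathrm{Ind}(Q,\beta^{(1)},\dots,\beta^{(l)})$; by the Krull--Schmidt theorem $Z$ meets every $\mathrm{GL}(\alpha)$-orbit in a finite union of $H$-orbits. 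Hence Lemma \ref{2.3.3} applies, and combined with Lemma \ref{2.3.2} it gives $\dim_{G(\alpha)}\mathrm{Ind}(Q,\beta^{(1)},\dots,\beta^{(l)})=\sum_{j}\dim_{\mathrm{GL}(\beta^{(j)})}\mathrm{Ind}(Q,\beta^{(j)})$, which by Kac's Theorem equals $\sum_{j}p(\beta^{(j)})$ (each $\beta^{(j)}$ being a positive root, since otherwise the set is empty). In particular Lemma \ref{3.3.1} reads $\dim_{G(\alpha)}\mathrm{Rep}(Q,\alpha)=\max\sum_{j}p(\beta^{(j)})$, the maximum over decompositions into positive roots.

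In situation (2) this maximum is the $l\geq 2$ sum $\sum_{i}p(\beta^{(i)})<p(\alpha)$, so $\dim\mathcal I<p(\alpha)=\dim\bigl(G(\alpha)\backslash\mathrm{Rep}(Q,\alpha)\bigr)$ and the stack is a fortiori very good (this case in fact forces $\alpha$ not to be a root). In situation (1) the maximum equals $p(\alpha)$, attained only for $l=1$, so every decomposition $\alpha=\sum_{j}\beta^{(j)}$ into two or more positive roots has $\sum_{j}p(\beta^{(j)})<p(\alpha)$; thus the part of $\mathcal I$ over the union of the sets $\mathrm{Ind}(Q,\beta^{(1)},\dots,\beta^{(l)})$ with $l\geq 2$ already has dimension $<p(\alpha)$. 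What remains, and is the main obstacle, is the locus of \emph{indecomposable non-bricks} of dimension $\alpha$: these share the dimension vector $\alpha$ with the generic (brick) representation, and Kac's Theorem only bounds the number of parameters of indecomposables of dimension $\alpha$ by $p(\alpha)$, not strictly below it. To obtain the strict bound I would use that such an $M$ carries a nonzero nilpotent endomorphism $\phi$, so that $\mathrm{im}\,\phi$ is a proper nonzero subrepresentation, $M$ is an extension $0\to\ker\phi\to M\to M/\ker\phi\to 0$ with $\mathrm{im}\,\phi\cong M/\ker\phi$; estimating the number of parameters of representations carrying such a configuration in terms of the dimension vectors of $\mathrm{im}\,\phi$ and of $\ker\phi$ --- via $\dim\mathrm{Rep}(Q,\gamma)=\dim\mathrm{GL}(\gamma)+p(\gamma)-1$ for a subrepresentation and its quotient, together with Lemma \ref{3.3.1} and Kac's Theorem applied to these strictly smaller dimension vectors --- bounds it by a sum associated with a proper decomposition of $\alpha$, hence strictly below $p(\alpha)$ by the hypothesis. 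Assembling the strata then yields $\dim(\mathcal I-\mathcal I^{0})<p(\alpha)=\dim\bigl(G(\alpha)\backslash\mathrm{Rep}(Q,\alpha)\bigr)$, and the stack is very good.
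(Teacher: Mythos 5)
Your treatment of the decomposable strata is the paper's own: block-diagonal embedding, Krull--Schmidt, Lemmas \ref{2.3.2} and \ref{2.3.3}, and Kac's Theorem give $\dim_{G(\alpha)}\mathrm{Ind}(Q,\beta^{(1)},\dots,\beta^{(l)})=\sum_j p(\beta^{(j)})$, and case (2) then follows exactly as in the paper. The gap is in case (1), which is precisely the part the paper delegates to Theorem \ref{3.3.3}. A first, smaller problem: hypothesis (1) only says the maximum in Lemma \ref{3.3.1} is attained at $l=1$, which gives $\sum_j p(\beta^{(j)})\le p(\alpha)$ for $l\ge 2$ decompositions; your ``attained only for $l=1$'' and the ensuing strict inequality on the decomposable strata are not consequences of the hypothesis (in the paper this strict inequality is a separate assumption, supplied in Theorem \ref{3.0.1} by Proposition \ref{3.0.2} under the fundamental-region condition).

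The serious gap is your bound for indecomposable non-bricks. Passing to $\ker\phi$ and $\mathrm{im}\,\phi\cong M/\ker\phi$ and estimating parameters via the two-term decomposition $\alpha=\dim\ker\phi+\dim(\mathrm{im}\,\phi)$ does not produce the needed strict bound: over a fixed pair (sub, quotient) the fibre is a space of extensions whose dimension (an $\mathrm{Ext}^1$ term) is not controlled by $p(\ker\phi)+p(\mathrm{im}\,\phi)$, and no hypothesis available in case (1) makes the resulting estimate strictly less than $p(\alpha)$ --- indeed you appeal to ``the hypothesis'' $p(\alpha)>\sum_i p(\beta^{(i)})$, which belongs to case (2), and you never invoke the fundamental region, which is what the argument actually needs. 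The paper's Theorem \ref{3.3.3} instead fixes the nilpotent endomorphism up to conjugacy: stratifying by Jordan type $\lambda$, it computes exactly $\dim \mathrm{Mod}_g=\sum_{a}\sum_r \lambda^r_{h(a)}\lambda^r_{t(a)}$ and $\dim N_\lambda$, so that $\dim MN_\lambda=\dim G(\alpha)+1-\sum_r q(\lambda^r)$, and then applies Proposition \ref{3.0.2} to the decomposition $\alpha=\sum_r\lambda^r$ into the column vectors of $\lambda$ (these are generally not roots, which is why the fundamental-region inequality, not Kac's Theorem, is the right tool); Fitting's Lemma then converts this into the strict bound on indecomposables with nontrivial endomorphism algebra. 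Without this exact count of modules commuting with a fixed nilpotent (or a genuine substitute for it), your estimate for the indecomposable non-brick locus is unproved, and the proof of case (1) is incomplete.
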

\begin{proof}
The case when $l = 1$ is considered in Theorem \ref{3.3.3} below.  Assume the second case holds.  By Lemma \ref{3.3.1},  $\textrm{dim}_{G(\alpha)} \textrm{Rep}(Q, \alpha) = \textrm{dim}_{G(\alpha)} \textrm{Ind}(Q, \beta^{(1)}, \dots, \beta^{(l)})$, for some $\beta^{(1)}, \dots, \beta^{(l)}$ with $l \ge 2$.  Consider $S = \textrm{Rep}(Q,\beta^{(1)}) \times \cdots \times \textrm{Rep}(Q, \beta^{(l)})$ included in $\textrm{Rep}(Q, \alpha)$ as block diagonal matrices.  Let $J \subseteq S$ consist of elements such that the projection onto each $\textrm{Rep}(Q,\beta^{(i)})$ is indecomposable.  Note that $J$ is constructible.  Now we have $$\textrm{dim}_{G(\alpha)} \textrm{Ind}(Q, \beta^{(1)}, \dots, \beta^{(l)}) = \textrm{dim}_H J  = \sum_i \textrm{dim}_{G(\beta^{(i)})} \textrm{Ind}(Q, \beta^{(i)}) = \sum_i p(\beta^{(i)}).$$  The first equality follows from Lemma \ref{2.3.2} for $G = G(\alpha)$ and $H = \prod_i G(\beta^{(i)})$ (using the Krull-Schmidt Theorem), the second follows from Lemma \ref{2.3.3}, and the third follows from Kac's Theorem.  Now, $\textrm{dim } \textrm{Rep}(Q, \alpha)  = \textrm{dim } G(\alpha) +  p(\alpha)$ and Corollary \ref{2.2.3} imply that the very good condition on $\textrm{Rep}(Q, \alpha)$ holds if $\textrm{dim}_{G(\alpha)} \textrm{Rep}(Q, \alpha) < p(\alpha)$. This, however, is clearly true since $\textrm{dim}_{G(\alpha)} \textrm{Rep}(Q, \alpha) = \sum_i p(\beta^{(i)})$, for the decomposition $\alpha = \sum_i \beta^{(i)}$.
\end{proof}

Note that the inequality on $p(\alpha) > \sum_i p(\beta^{(i)})$ fails for $l = 1$, so it remains to handle this case.  The proof follows Section 6 in \cite{CB1993}.

\begin{thm}
\label{3.3.3}
If $\alpha$ is in the fundamental region, then the space of indecomposable quiver representations, $\textrm{Ind}(Q, \alpha)$, is very good.
\end{thm}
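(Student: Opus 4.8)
The plan is to convert ``very good'' into an inequality between numbers of parameters and then estimate the dimension of the locus of indecomposables with large endomorphism algebra, following the method of Kac and Crawley-Boevey. By the numerical criterion for very goodness in the quotient-stack case (Corollary \ref{2.2.3} together with the discussion of the number of parameters), writing $\textrm{Ind}(Q,\alpha)_n$ for the $G(\alpha)$-stable, locally closed set of indecomposable $V$ with $\dimn\textrm{End}_{R(Q)}(V)=n+1$ (so that the $G(\alpha)$-stabilizer of the corresponding point has dimension $n$), the assertion is equivalent to $\dimn_{G(\alpha)}\textrm{Ind}(Q,\alpha)_n<p(\alpha)$ for every $n>0$. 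Since $\dimn\textrm{Rep}(Q,\alpha)=\dimn G(\alpha)+p(\alpha)$ and Kac's Theorem gives $\dimn_{G(\alpha)}\textrm{Ind}(Q,\alpha)=p(\alpha)$, it is enough to keep the thin strata ($n>0$) strictly below this value; the brick locus $\textrm{Ind}(Q,\alpha)_0$ then automatically realizes the top number of parameters.

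Next I would produce, for each $V\in\textrm{Ind}(Q,\alpha)_n$ with $n>0$, a ``square-zero'' endomorphism: $\textrm{End}_{R(Q)}(V)$ is finite-dimensional and local with residue field $K$, so its radical is a nonzero nilpotent ideal, and one may pick $0\neq\theta$ in it with $\theta^2=0$. The first isomorphism theorem for the module map $\theta\colon V\to V$ then produces proper nonzero subrepresentations $\textrm{im}\,\theta\subseteq\ker\theta\subsetneq V$ together with an isomorphism $V/\ker\theta\cong\textrm{im}\,\theta$; writing $\beta$ for the dimension vector of $\textrm{im}\,\theta$ and $\gamma=\alpha-\beta$ for that of $\ker\theta$, we get a decomposition $\alpha=\beta+\gamma$ with $\beta\neq 0$ and $2\beta\le\alpha$. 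Consequently $\textrm{Ind}(Q,\alpha)_n$ is covered by the finitely many images $\pi(\mathcal{N}_\beta)$, over all $\beta$ with $0\neq\beta$ and $2\beta\le\alpha$, where $\mathcal{N}_\beta=\{(V,\theta)\in\textrm{Rep}(Q,\alpha)\times\prod_i\textrm{Mat}(\alpha_i,K):\theta\in\textrm{End}_{R(Q)}(V),\ \theta^2=0,\ \textrm{rk}\,\theta=\beta\}$ and $\pi$ forgets $\theta$.

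The core computation is then $\dimn\mathcal{N}_\beta$. Fibering $\mathcal{N}_\beta$ over the product of partial flag varieties $\prod_i\textrm{Fl}(\beta_i,\gamma_i;\alpha_i)$ by $(V,\theta)\mapsto(\textrm{im}\,\theta\subseteq\ker\theta)$, the fibre over a fixed flag is the choice of the isomorphism underlying $\theta$ times the space of arrow maps commuting with $\theta$, the latter being a product of $\textrm{Hom}$-spaces of $K[t]/(t^2)$-modules; after simplification this yields $\dimn\mathcal{N}_\beta=\dimn\textrm{Rep}(Q,\alpha)+(\beta,\gamma)$, with $(\,\cdot\,,\,\cdot\,)$ the symmetrized Euler--Ringel form. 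Now the hypothesis enters: since $\alpha$ lies in the fundamental region, $(\alpha,\epsilon_i)\le 0$ for all $i$, hence $(\alpha,\delta)\le 0$ for every $0\le\delta\le\alpha$, and in particular $(\beta,\alpha)\le 0$, so $(\beta,\gamma)=(\beta,\alpha)-2q(\beta)\le -2q(\beta)$. Combining $\dimn\textrm{Ind}(Q,\alpha)_n\le\max_\beta\dimn\pi(\mathcal{N}_\beta)$ with the fact that over $\textrm{Ind}(Q,\alpha)_n$ the map $\pi$ has positive-dimensional fibres (the square-zero elements of a prescribed rank form a cone inside the $n$-dimensional radical of $\textrm{End}_{R(Q)}(V)$), and converting back via the constant stabilizer dimension $n$ on $\textrm{Ind}(Q,\alpha)_n$, one should land on the strict inequality $\dimn_{G(\alpha)}\textrm{Ind}(Q,\alpha)_n<p(\alpha)$.

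The hard part will be precisely this last strictness: one must wring out of $\textrm{Ind}(Q,\alpha)_n$ codimension strictly greater than $n$, i.e., show that the fibre dimension of $\pi$ over the thin strata more than cancels the $+n$ contributed by the stabilizer. This needs a careful inspection of the possible local endomorphism algebras of $V$, together with a case split according to whether the pieces $\beta,\gamma$ are real or imaginary roots of the Kac--Moody algebra: when a piece is a real root, $q=1$ already forces $(\beta,\gamma)\le -2$, whereas when the pieces are imaginary one must instead exploit the dimension of the square-zero cone and use the full strength of the fundamental-region inequalities. This estimate is essentially the content of Section 6 of \cite{CB1993}, and carrying it out in the present normalization is where the real work lies.
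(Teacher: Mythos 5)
Your setup is sound, and your dimension count is correct as far as it goes: the reformulation of very goodness as $\dimn_{G(\alpha)}\textrm{Ind}(Q,\alpha)_n<p(\alpha)$ for $n>0$, the existence of a square-zero $\theta\neq 0$ in the radical of the local algebra $\textrm{End}_{R(Q)}(V)$, and the formula $\dimn\mathcal{N}_{\beta}=\dimn\textrm{Rep}(Q,\alpha)+(\beta,\gamma)$ are all fine (the latter is exactly the two-row special case, $\lambda^1=\gamma$, $\lambda^2=\beta$, of the count the paper performs). But the step you yourself flag as ``the hard part'' is a genuine gap, and the square-zero reduction cannot close it. Over the stratum $\textrm{Ind}(Q,\alpha)_n$ the fibre of your forgetful map $\pi$ is only the cone of square-zero elements of prescribed rank inside the $n$-dimensional radical, and all you can guarantee is that it has dimension $\ge 1$. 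Combined with the fundamental-region bound (which for a general splitting only gives $(\beta,\gamma)\le q(\alpha)-q(\beta)-q(\gamma)\le -1$), this yields $\dimn_{G(\alpha)}\textrm{Ind}(Q,\alpha)_n\le p(\alpha)+n-2$, which proves the claim only for $n=1$. For $n\ge 2$ you would need the square-zero cone to have dimension greater than $n+(\beta,\gamma)$, and there is no such lower bound in general: already for $\textrm{End}_{R(Q)}(V)\cong K[t]/(t^{n+1})$ the square-zero cone has dimension roughly $n/2$, while the rank vector $\beta$ of a square-zero element can be small, so $(\beta,\gamma)$ need not be correspondingly negative. Your suggested case split into real/imaginary pieces does not repair this.

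The paper's proof avoids the problem by not throwing away the rest of the radical. It considers pairs $(x,g)$ with $g$ an \emph{arbitrary} nonzero nilpotent element of $\textrm{Mat}(\alpha)_x$, stratifies by the full Jordan type $\lambda=(\lambda^r)$, and uses the same $\sum_r\lambda^r\mu^r$ formula to get $\dimn MN_{\lambda}=\dimn G(\alpha)+1-\sum_r q(\lambda^r)$; strictness, $\sum_r q(\lambda^r)>q(\alpha)$, then comes from Proposition \ref{3.0.2} applied to the decomposition $\alpha=\sum_r\lambda^r$, which has at least two nonzero parts because $g\neq 0$. The decisive point is that the fibre of the projection $I_{(m)}N\to I_{(m)}$ is the entire punctured nilradical, of dimension exactly $m-1$, which precisely cancels the stabilizer contribution $n=m-1$, giving $\dimn I_{(m)}<\dimn G(\alpha)+p(\alpha)-(m-1)$ uniformly in $m$. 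So the fix for your argument is to let $\theta$ range over all nonzero nilpotent endomorphisms and keep track of the full Jordan type rather than only the rank of a square-zero element; with only the latter, the required codimension strictly greater than $n$ is simply not obtained.
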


\begin{proof}
Let $N$ consist of nontrivial nilpotents (tuples of nilpotent elements) in the product $\textrm{Mat}(\alpha) = \prod_{i \in I_Q} \textrm{Mat}(\alpha_i, K)$, and let $MN = \{(x,g) \in \textrm{Rep}(Q, \alpha) \times N| g \in \textrm{Mat}(\alpha)_x \}$.
We assume that there exists some $x \in \textrm{Ind}(Q, \alpha)$ with a nontrivial element in its stabilizer, otherwise, $\textrm{Ind}(Q, \alpha)$ is clearly very good.

Consider $\lambda = (\lambda_{i})$, where $\lambda_{i} = (\lambda_{i}^1, \lambda_{i}^2, \dots)$ is a partition of $\alpha_{i}$.  An element $g \in N$ has type $\lambda$ if $\lambda_{i}^r$ is the number of Jordan blocks of size $r$ or larger in the Jordan form of the $i$-th component of $g$.  That is, if $g_{i}$ has type $\lambda_{i}$.

Let $N_{\lambda}$ consist of elements in $N$ of type $\lambda$, and $\textrm{Mod}_{g} = \{ x \in \textrm{Rep}(Q, \alpha)| g \in \textrm{Mat}(\alpha)_x \}$.  We can compute:
$$
\textrm{dim Mod}_g  = \sum_{a \in A_Q} \sum_r \lambda_{h(a)}^r\lambda_{t(a)}^r,
$$ 
and since $N_{\lambda}$ consists of elements with fixed Jordan form, we have:
$$
\textrm{dim } N_{\lambda} = \textrm{dim } G(\alpha) - \textrm{dim }\{ h \in G(\alpha)| hg = gh \} = \textrm{dim } G(\alpha) + 1 - \sum_i \sum_r \lambda_i^r \lambda_i^r.
$$
Both computations come from the formula $\textrm{dim }\{h| g h = h f\} = \sum_r \lambda^r \mu^r$, where $h: V \rightarrow W$, $g \in \textrm{End}(V)$ has type $\lambda = (\lambda^r)$ and $f \in \textrm{End}(W)$ has type $\mu = (\mu^r)$.  The formula follows because $h$ must take invariant subspaces of $g$ to invariant subspaces of $h$ and vice versa.

Consider the natural projection $$p: MN_{\lambda} \rightarrow N_{\lambda},$$  where $(x,g) \in MN_{\lambda}$ implies $g$ has type $\lambda$ .  Since $p^{-1}(g) = \textrm{Mod}_g$, we can compute:
$$
\textrm{dim } MN_{\lambda}  = \textrm{dim } N_ {\lambda} + \textrm{dim Mod}_g = \textrm{dim } G(\alpha) + 1 - \sum_r q(\lambda^r) < \textrm{dim } G(\alpha) + p(\alpha).
$$ 
The last inequality follows from Proposition \ref{3.0.2} (which will be proved later, independent of Theorem \ref{3.3.3}), since $\alpha = \sum_r \lambda^r$ is in the fundamental region, with at least two nonzero $\lambda^r$.  It follows that $\textrm{dim } MN < \textrm{dim }G(\alpha) + p (\alpha)$.

Note that quiver representations correspond to modules over the path algebra of the quiver.  Therefore, $x \in \textrm{Ind}(Q, \alpha)$ corresponds to some indecomposable module $K_x$, and we have $\dimn G(\alpha)_x = \dimn \textrm{Aut}(K_x) - 1 = \dimn \textrm{Mat}(\alpha)_x - 1$.  Note that $\textrm{End}(K_x) \cong \textrm{Mat}(\alpha)_x$, so it follows from Fitting's lemma that $\textrm{End}(K_x)$ can be presented as the direct sum $\mathbb{C} \oplus N_x$, where $N_x$ consists of nilpotents.  Consider the natural surjective projection: 
$$\pi: I_{(m)}N \rightarrow I_{(m)}, $$
where $I_{(m)} = \{x \in \textrm{Ind}(Q, \alpha)|\textrm{dim Mat}(\alpha)_x = m\}$ and $I_{(m)}N = \{(x,g) \in I_{(m)} \times N| g \in N_x \}$.  We have that $\textrm{dim } \pi^{-1}(x) = m - 1$ since the fiber consists of nilpotent elements that stabilize $x \in I_{(m)}$.  Therefore, we can compute:
$$
\textrm{dim }I_{(m)} = \textrm{dim }I_{(m)}N - m + 1 \le \textrm{dim } MN - m + 1 < \textrm{dim }G(\alpha) + p(\alpha) - (m-1).
$$
It follows that $\textrm{Ind}(Q, \alpha)$ is very good.
\end{proof}

\begin{proof}[Proof of Theorem \ref{3.0.1}:]
The theorem follows from Theorem \ref{3.3.2} and Theorem \ref{3.3.3}.
\end{proof}

\subsection{Squids and Star-shaped Quivers}
\label{3.4}
Let $D = (x_1, \dots, x_k)$ be a collection of points of $\mathbb{P}^1$, and let $w = (w_1, \dots, w_k)$ be a collection of positive integers. Consider the following quiver $Q_{D,w}$:

\setlength{\unitlength}{2.0pt}
\[
\begin{picture}(150,80)
\put(-20,45){\circle*{2.5}}
\put(6,45){\circle*{3.0}}

\put(40,10){\circle*{3.0}}
\put(70,10){\circle*{3.0}}
\put(140,10){\circle*{3.0}}

\put(40,45){\circle*{3.0}}
\put(70,45){\circle*{3.0}}
\put(140,45){\circle*{3.0}}

\put(40,70){\circle*{3.0}}
\put(70,70){\circle*{3.0}}
\put(140,70){\circle*{3.0}}
\put(5,47){\vector(-1,0){23}}
\put(5,43){\vector(-1,0){23}}
\put(40,70){\vector(-4,-3){32}}
\put(40,45){\vector(-4,0){32}}
\put(40,10){\vector(-1,1){33}}
\put(70,10){\vector(-1,0){28}}
\put(70,45){\vector(-2,0){28}}
\put(70,70){\vector(-1,0){28}}
\put(96,10){\vector(-1,0){24}}
\put(96,45){\vector(-1,0){24}}
\put(96,70){\vector(-1,0){24}}
\put(140,10){\vector(-1,0){26}}
\put(140,45){\vector(-1,0){26}}
\put(140,70){\vector(-1,0){26}}
\put(100,10){\circle*{1}}
\put(105,10){\circle*{1}}
\put(110,10){\circle*{1}}
\put(100,45){\circle*{1}}
\put(105,45){\circle*{1}}
\put(110,45){\circle*{1}}
\put(100,70){\circle*{1}}
\put(105,70){\circle*{1}}
\put(110,70){\circle*{1}}
\put(40,20){\circle*{1}}
\put(40,27){\circle*{1}}
\put(40,34){\circle*{1}}
\put(70,20){\circle*{1}}
\put(70,27){\circle*{1}}
\put(70,34){\circle*{1}}
\put(140,20){\circle*{1}}
\put(140,27){\circle*{1}}
\put(140,34){\circle*{1}}
\put(5,48){0}
\put(-23,40){$\infty$}
\put(-7,49){$b_0$}
\put(-7,38){$b_1$}
\put(20,62){$c_{11}$}
\put(17,23){$c_{k1}$}
\put(35,2){$[k,1]$}
\put(35,50){$[2,1]$}
\put(35,74){$[1,1]$}
\put(66,2){$[k,2]$}
\put(66,50){$[2,2]$}
\put(66,74){$[1,2]$}
\put(132,2){$[k,w_k-1]$}
\put(132,50){$[2,w_2-1]$}
\put(132,74){$[1,w_1-1]$}
\put(52,2){$c_{k2}$}
\put(24,50){$c_{21}$}
\put(52,50){$c_{22}$}
\put(52,74){$c_{12}$}
\end{picture}
\]
Recall that $R(Q_{D,w})$ denotes the path algebra corresponding to the above quiver. 
\begin{defn}
\label{3.4.1}
A \textit{squid} (see e.g. \cite{CB2004}) is the following algebra:
$$
S_{D,w} = R(Q_{D,w})/\{(\lambda_{i0}b_0 + \lambda_{i1}b_1)c_{i1}\},
$$
where $x_i = (\lambda_{i0}:\lambda_{i1})$.  
\begin{itemize}
\item The part of $Q_{D,w}$ consisting of the vertices $\{0, \infty\}$ and the arrows $\{b_0,b_1\}$ is called the \textit{Kronecker quiver}.
\item The quiver $Q^{st}_{D,w}$ with vertex set $I_{Q_{D,w}} - \{\infty\}$ and arrow set $A_{Q_{D,w}} - \{ b_0,b_1\}$ is called a \textit{star-shaped quiver}.
\end{itemize}
Note that we can identify representations of a star-shaped quiver with representations of the corresponding $Q_{D,w}$ that have $\alpha_{\infty} = 0$.
\end{defn} 
A representation of the Kronecker quiver is called \textit{preinjective} if $\lambda_0b_0 + \lambda_1b_1$ is surjective for all $(\lambda_0:\lambda_1) \in \mathbb{P}^1$.  A representation of $Q_{D,w}$ is called \textit{Kronecker-preinjective}, if the corresponding Kronecker quiver representation is preinjective. 

\subsection{The cotangent bundle for squids}
\label{3.5}
Recall the quiver $Q_{D,w}$ that was introduced in Section \ref{3.4}.  The cotangent bundle $T^*\textrm{Rep}(Q_{D,w},\alpha)$ to the space of representations $\textrm{Rep}(Q_{D,w},\alpha)$ may be identified with the space of representations of the quiver $\overline{Q}_{D,w}$ pictured below.

\begin{tikzpicture}[scale=0.9, every node/.style={scale=0.9}]

\draw[fill] (0,5) circle [radius=0.1];
\node [below] (inf2) at (0,5) {};
\node [above] (inf1) at (0,5) {};
\node [left] (inf) at (0,5) {$\infty$};

\draw[fill](2,5) circle [radius=0.1];
\node [below] (02) at (2,5) {};
\node [above] (01) at (2,5) {};
\node [right] (0) at (2,5) {$0$};

\draw [thick, <-] (inf1)  -- node [above] {$b_0$} (01);
\draw [thick, ->] (inf1)  edge[bend left=50] node[above]{$\hat{b}_0$}(01);
\draw [thick, <-] (inf2) -- node [below] {$b_1$}(02);
\draw [thick, ->] (inf2)  edge[bend right=50] node[below]{$\hat{b}_1$} (02);

\draw[fill](4,8) circle [radius=0.1];
\node [above] (11a) at (4,8) {$[1,1]$};
\node [left] (11l) at (4,8) {};
\node [below] (11b) at (4,8) {};
\node [right] (11r) at (4,8) {};
\draw [thick, <-] (01) -- (11l);
\node at (2.85,7){$c_{11}$};
\draw [thick, ->] (2.25,5.2) -- (11b);
\node at (3.75,7){$\hat{c}_{11}$};

\draw[fill](6,8) circle [radius=0.1];
\node [above] (12a) at (6,8) {$[1,2]$};
\node [below] (12b) at (6,8) {};
\draw [thick, ->] (12a) -- node[above]{$c_{12}$}(11a);
\draw [thick, <-] (12b) -- node[below]{$\hat{c}_{12}$}(11b);

\draw[opacity=0, fill](8,8) circle [radius=0.1];
\node [opacity=0, above] (13a) at (8,8) {$[1,3]$};
\node [below] (13b) at (8,8) {};
\draw [thick, ->] (13a) -- (12a);
\draw [thick, <-] (13b) -- (12b);

\draw[fill](8.5,8) circle [radius=0.05];
\draw[fill](9,8) circle [radius=0.05];
\draw[fill](9.5,8) circle [radius=0.05];

\draw[opacity=0, fill](10,8) circle [radius=0.1];
\node [opacity=0, above] (1w1ma) at (10,8) {$[]$};
\node [below] (1w1mb) at (10,8) {};

\draw[fill](12,8) circle [radius=0.1];
\node [above] (1w1m1) at (12,8) {$[1,w_1-1]$};
\node [below] (1w1m2) at (12,8) {};
\draw [thick, ->] (1w1m1) -- (1w1ma);
\draw [thick, <-] (1w1m2) -- (1w1mb);

\draw[fill](4,6) circle [radius=0.1];
\node[above] (21a) at (4,6) {$[2,1]$};
\node[below] (21b) at (4,6) {};
\draw [thick, ->] (3.8,6) -- node[above]{$c_{21}$}(2.3,5.1);
\draw [thick, <-] (21b) -- node[below]{$\hat{c}_{21}$}(2.4,5);

\draw[fill](6,6) circle [radius=0.1];
\node [above] (22a) at (6,6) {$[2,2]$};
\node [below] (22b) at (6,6) {};
\draw [thick, ->] (22a) -- node[above]{$c_{22}$}(21a);
\draw [thick, <-] (22b) -- node[below]{$\hat{c}_{22}$}(21b);

\draw[opacity=0, fill](8,6) circle [radius=0.1];
\node [opacity=0, above] (23a) at (8,6) {$[2,3]$};
\node [below] (23b) at (8,6) {};
\draw [thick, ->] (23a) -- (22a);
\draw [thick, <-] (23b) -- (22b);

\draw[fill](8.5,6) circle [radius=0.05];
\draw[fill](9,6) circle [radius=0.05];
\draw[fill](9.5,6) circle [radius=0.05];

\draw[opacity=0, fill](10,6) circle [radius=0.1];
\node [opacity=0, above] (2w2ma) at (10,6) {$[]$};
\node [below] (2w2mb) at (10,6) {};

\draw[fill](12,6) circle [radius=0.1];
\node [above] (2w2m1) at (12,6) {$[2,w_2-1]$};
\node [below] (2w2m2) at (12,6) {};
\draw [thick, ->] (2w2m1) -- (2w2ma);
\draw [thick, <-] (2w2m2) -- (2w2mb);

\draw[fill](4,5) circle [radius=0.05];
\draw[fill](4,4) circle [radius=0.05];
\draw[fill](4,3) circle [radius=0.05];

\draw[fill](6,5) circle [radius=0.05];
\draw[fill](6,4) circle [radius=0.05];
\draw[fill](6,3) circle [radius=0.05];

\draw[fill](12,5) circle [radius=0.05];
\draw[fill](12,4) circle [radius=0.05];
\draw[fill](12,3) circle [radius=0.05];

\draw[fill](4,2) circle [radius=0.1];
\node[above] (k1a) at (4,2) {};
\node[left] (k1l) at (4,2) {};
\node[below] (k1b) at (4,2) {$[k,1]$};
\draw [thick, ->] (2.3,4.8) -- node[right]{$\hat{c}_{k1}$}(k1a);
\draw [thick, <-] (2.15,4.7) -- node[left]{$c_{k1}$}(k1l);

\draw[fill](6,2) circle [radius=0.1];
\node [below] (k2b) at (6,2) {$[k,2]$};
\node [above] (k2a) at (6,2) {};
\draw [thick, <-] (k2a) -- node[above]{$\hat{c}_{k2}$}(k1a);
\draw [thick, ->] (k2b) -- node[below]{$c_{k2}$}(k1b);

\draw[opacity=0, fill](8,2) circle [radius=0.1];
\node [opacity=0, below] (k3b) at (8,2) {$[k,3]$};
\node [above] (k3a) at (8,2) {};
\draw [thick, <-] (k3a) -- (k2a);
\draw [thick, ->] (k3b) -- (k2b);

\draw[fill](8.5,2) circle [radius=0.05];
\draw[fill](9,2) circle [radius=0.05];
\draw[fill](9.5,2) circle [radius=0.05];

\draw[opacity=0, fill](10,2) circle [radius=0.1];
\node [opacity=0, below] (kwkma) at (10,2) {$[]$};
\node [above] (kwkmb) at (10,2) {};

\draw[fill](12,2) circle [radius=0.1];
\node [below] (kwkm1) at (12,2) {$[k,w_k-1]$};
\node [above] (kwkm2) at (12,2) {};
\draw [thick, ->] (kwkm1) -- (kwkma);
\draw [thick, <-] (kwkm2) -- (kwkmb);
\end{tikzpicture}

Recall from Section \ref{3.4} that a squid representation is a representation of $Q_{D,w}$ such that $(\lambda_{i0}b_0 + \lambda_{i1}b_1)c_{i1} = 0$.  Further recall that $KS(D,w,\alpha)$ is the space of Kronecker-preinjective squid representations, such that the arrows $c_{ij}$ are injective (see Section \ref{3.4} for details).  

Squid representations form a closed subvariety of representations of $Q_{D,w}$.  Therefore, it follows $T^*KS(D,w,\alpha)$ may be identified with the quotient of $\textrm{Rep}(\overline{Q}_{D,w}, \alpha)$ such that:
\begin{itemize}
\item  The maps $\hat{b}_0 \in \textrm{Hom}(\mathbb{C}^{\alpha_{\infty}}, \mathbb{C}^{\alpha_{0}})$ are taken modulo the relations $\lambda_{0i}c_{1i}A_{i} = 0$, where $A_{i}: \mathbb{C}^{\alpha_{\infty}} \rightarrow \mathbb{C}^{\alpha_{ij}}$ are linear maps.
\item The maps $\hat{b}_1 \in \textrm{Hom}(\mathbb{C}^{\alpha_{\infty}}, \mathbb{C}^{\alpha_{0}})$ are taken modulo the relations $\lambda_{1i}c_{1i}A_{i} = 0$.
\item The maps $\hat{c}_{1i} \in \textrm{Hom}(\mathbb{C}^{\alpha_0}, \mathbb{C}^{\alpha_{i1}})$ modulo the relations $A_i(\lambda_{0i}b_0 + \lambda_{1i}b_1) = 0$.
\end{itemize}

Recall from Section \ref{3.1} that the group 
$$G(\alpha) = \textrm{GL}(\alpha_{\infty}, \mathbb{C}) \times \textrm{GL}(\alpha_0, \mathbb{C}) \times \prod \textrm{GL}(\alpha_{ij}, \mathbb{C})/\mathbb{C}^*$$ acts on $\textrm{Rep}(Q_{D,w},\alpha)$ by change of basis.  This action induces a canonical Hamiltonian action of $G(\alpha)$ on $T^*\textrm{Rep}(Q_{D,w},\alpha)$.  Identifying $\textrm{Rep}(\overline{Q}_{D,w},\alpha)$ with its tangent space at a point, the standard symplectic form on $T^*\textrm{Rep}(Q_{D,w},\alpha)$ may be written as: 
$$
\omega(X,X') = \sum_{l=0,1} \textrm{tr}(b_l\hat{b}_l') -\textrm{tr}(b_l'\hat{b}_l)+ \sum_{\substack{1\le i \le k \\  1 \le j \le w_i-1}} \textrm{tr}(c_{ij}\hat{c}_{ij}') - \textrm{tr}(c_{ij}'\hat{c}_{ij}),
$$
where $X = (b_0,b_1,c_{ij},\hat{b}_0, \hat{b}_1, \hat{c}_{ij})$ and $X' = (b_0',b_1',c_{ij}',\hat{b}_0', \hat{b}_1', \hat{c}_{ij}')$ are cotangent vectors.  Recall that 
$$\textrm{Mat}(\alpha) = \textrm{Mat}(\alpha_{\infty}, \mathbb{C}) \times \textrm{Mat}(\alpha_0, \mathbb{C}) \times \prod_{ij} \textrm{Mat}(\alpha_{ij},\mathbb{C}).$$ 
Using the trace pairing, we can identify $\textrm{Lie}(G(\alpha))^*$ with 
$$\textrm{Mat}(\alpha)_0 = \{ (A_i) \in \textrm{Mat}(\alpha)| \sum_i \textrm{tr}(A_i) = 0 \}.$$ 
Note that $KS(D,w,\alpha)$ is invariant under the $G(\alpha)$ action, and the symplectic form defined above descends to the cotangent bundle $T^*KS(D,w,\alpha)$.  Therefore, we can write the corresponding moment map as:
\begin{align*}
& \mu_{G(\alpha)}(X)_{\infty} =  b_0\hat{b}_0 + b_1\hat{b}_1\\
& \mu_{G(\alpha)}(X)_0  = \sum_{1 \le i \le k} c_{i1}\hat{c}_{i1} - (\hat{b}_0b_0 + \hat{b}_1b_1)\\
& \mu_{G(\alpha)}(X)_{ij} = c_{ij+1}\hat{c}_{ij+1} - \hat{c}_{ij}c_{ij} \textrm{ where $1 \le i\le k$ and $1 \le j \le w_i - 1$ },
\end{align*}
at the vertices $\infty$, $0$, and $[i,j]$, respectively.

\subsection{The very good property for star-shaped quivers}
\label{3.6}
We can simplify the statement of Theorem \ref{3.0.1} if the quiver we are considering is a star-shaped quiver $Q^{st}_{D,w}$, described above in Section \ref{3.4}.  The indexing set for the vertices of $Q^{st}_{D,w}$ is $I_{Q^{st}_{D,w}} = \{0\} \cup \{(i,j)|1 \le i \le k, 1 \le j \le w_{i-1} \}$.  This means a dimension vector of a representation of $Q^{st}_{D,w}$ has the form $\alpha = (\alpha_0, \alpha_{ij})$. 

Recall that $\delta(\alpha) = -2\alpha_0 + \sum_j \alpha_{ij}$.  In the case of a star-shaped quiver, the condition that a dimension vector $\alpha$ is in the fundamental region is equivalent to the following inequalities:
\begin{align*}
&\delta(\alpha) \ge 0\\
-2\alpha_{ij} + \alpha_{ij-1} + \alpha_{ij+1} \ge 0, & \textrm{ for } 1 \le i \le k \textrm{ and } 1 \le j \le w_{i-1} 
\end{align*}
(note that we assume $\alpha_{i0} = \alpha_0$, for all $i$).  We wish to prove:

\begin{thm}
\label{3.6.1}
Suppose $\delta(\alpha) > 0$ and $\alpha$ is in the fundamental region, then the quotient stack $G(\alpha) \backslash \textrm{Rep}(Q^{st}_{D,w}, \alpha)$ is very good.
\end{thm}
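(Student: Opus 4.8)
The plan is to deduce the statement from Theorem \ref{3.0.1} applied to the quiver $Q = Q^{st}_{D,w}$, so that the only real work is packaged into Proposition \ref{3.0.2}. Recall that Theorem \ref{3.0.1} requires two things of $\alpha$: that it lie in the fundamental region, and that $p(\alpha) > \sum_i p(\beta^{(i)})$ for every decomposition $\alpha = \sum_i \beta^{(i)}$ into two or more dimension vectors (equivalently, by the remark after that theorem, into positive roots of the Kac--Moody algebra of $Q^{st}_{D,w}$). I would check these two conditions in turn.

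First, the fundamental region. As recorded just before the statement, for the star-shaped quiver the general condition $(\alpha,\epsilon_i)\le 0$ of Definition \ref{3.2.4} unwinds to exactly the inequalities $\delta(\alpha)\ge 0$ and $-2\alpha_{ij}+\alpha_{ij-1}+\alpha_{ij+1}\ge 0$ (with the conventions $\alpha_{i0}=\alpha_0$, $\alpha_{iw_i}=0$); concretely, pairing with the symmetrized Euler--Ringel form gives $(\alpha,\epsilon_0)=2\alpha_0-\sum_i\alpha_{i1}=-\delta(\alpha)$ at the central vertex and $(\alpha,\epsilon_{ij})=2\alpha_{ij}-\alpha_{ij-1}-\alpha_{ij+1}$ at the leg vertex $[i,j]$. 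So ``$\alpha$ in the fundamental region'' means the same thing here as in Theorem \ref{3.0.1}, and the strict hypothesis $\delta(\alpha)>0$ is exactly what upgrades the conclusion from ``good'' to ``very good''.

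Second, the inequality on $p$. This is precisely the content of Proposition \ref{3.0.2}: assuming $\delta(\alpha)>0$ and $\alpha$ in the fundamental region, one has $p(\alpha) > \sum_i p(\beta^{(i)})$ for every decomposition of $\alpha$ into two or more vectors of $\mathbb{Z}_{\ge 0}^{I_{Q^{st}}}$, hence in particular for every decomposition into positive roots. With both hypotheses of Theorem \ref{3.0.1} verified, that theorem (whose proof goes through Theorem \ref{3.3.2} for the several-summand case and Theorem \ref{3.3.3} for the indecomposable case) yields that $G(\alpha)\backslash\textrm{Rep}(Q^{st}_{D,w},\alpha)$ is very good, as claimed.

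The genuinely substantive step, and the one I expect to be the main obstacle, is therefore Proposition \ref{3.0.2} --- the combinatorial estimate that the Tits form of a star-shaped quiver is strictly superadditive under splitting a fundamental-region vector with $\delta(\alpha)>0$. It is proved separately, and it is important that this is done independently of Theorem \ref{3.3.3}, since Theorem \ref{3.3.3}'s proof itself invokes Proposition \ref{3.0.2}; otherwise the argument would be circular. Everything else is formal: the identification of the two descriptions of the fundamental region is already recorded above, and the only remaining observation is that passing from ``nonnegative integer decompositions'' to ``root decompositions'' merely weakens the hypothesis of Theorem \ref{3.0.1}.
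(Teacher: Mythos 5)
Your proposal is correct and follows the paper's own route exactly: Theorem \ref{3.6.1} is deduced from Theorem \ref{3.0.1}, with the required inequality $p(\alpha) > \sum_i p(\beta^{(i)})$ supplied by Proposition \ref{3.0.2}, and the identification of the two descriptions of the fundamental region is the same bookkeeping the paper records before the statement. Your remark about the non-circularity of Proposition \ref{3.0.2} relative to Theorem \ref{3.3.3} matches the paper's explicit note as well.
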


Recall from Section \ref{3.2} that we can symmetrize the Euler-Ringel form, in order to define a bilinear symmetric form on dimension vectors of quiver representations.  For the quiver $Q^{st}_{D,w}$ this form can be written as:
$$
(\alpha, \beta) = 2\alpha_0\beta_0 - \sum_{i=1}^k \beta_0\alpha_{i1} + \sum_{i=1}^k \sum_{j=1}^{w_i-1} 2\beta_{ij}\alpha_{ij} - \beta_{ij}\alpha_{ij-1} - \beta_{ij}\alpha_{ij+1},
$$
where $\alpha_{iw_i} = 0$, $\alpha_{i0} = \alpha_0$ and where $\beta_{iw_i} = 0$, $\beta_{i0} = \beta_0$.  The associated Tits quadratic can be expressed as:
\begin{align*}
q(\alpha) & = \frac{1}{2}(\alpha, \alpha) = \alpha_0^2 - \sum_{1 \leq i \leq k} \alpha_0\alpha_{i1} + \sum_{1 \leq i \leq k} \sum_{0 \leq j \leq w_i-1} \alpha_{ij}(\alpha_{ij} - \alpha_{i,j1}) \\ 
& = \alpha_0^2 - \sum_{1 \leq i \leq k} \alpha_0\alpha_{i1}+ \frac{1}{2}\sum_{1 \leq i \leq k}\alpha_{i1}^2 + \sum_{1 \leq i \leq k} \sum_{1 \leq j \leq w_i-1} \frac{1}{2}(\alpha_{ij} - \alpha_{ij+1})^2,
\end{align*}
where $\alpha_{iw_i} = 0$ and $\alpha_{i0} = \alpha_0$.  Recall that $p(\alpha) = 1 - q(\alpha)$.  Note that the Tits form can be defined on real vectors instead of integer vectors.  We distinguish the real version from the integer version by writing $q(x)$, instead of $q(\alpha)$, where $x = (x_0, x_{ij})$ is indexed by $I_{Q^{st}_{D,w}}$. 
   
To prove the theorem, it suffices to show that $\delta(\alpha) > 0$ and $\alpha$ in the fundamental region imply that $p(\alpha)>\sum_i p(\beta^{(i)})$ for any decomposition $\alpha = \sum_i \beta^{(i)}$ into the sum of nonzero dimension vectors.  However, before proving the inequality on $p(\alpha)$, we need several facts about the signature of $q(x)$.  Note that the signature will consist of a triple $(n_{+},n_{-},n_{0})$, corresponding to the positive index of inertia, the negative index of inertia, and the nullity, respectively.

\begin{prp}
\label{3.6.2}
Assume $q(x)$ has rank $n$.  On the $(n-1)$-dimensional subspace defined by $x_0 = 0$, we have that $q(x)$ is positive definite.
\end{prp}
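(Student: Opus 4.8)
The plan is to compute the restriction of $q$ to the coordinate hyperplane $\{x_0 = 0\}$ directly and to recognize it as a direct sum of positive definite forms, one for each arm of the star. Starting from the expression for the Tits form recorded just above the statement,
$$
q(x) = x_0^2 - \sum_{i=1}^k x_0 x_{i1} + \frac{1}{2}\sum_{i=1}^k x_{i1}^2 + \sum_{i=1}^k\sum_{j=1}^{w_i-1}\frac{1}{2}(x_{ij} - x_{i,j+1})^2,
$$
with the convention $x_{iw_i} = 0$, I set $x_0 = 0$. Every term containing $x_0$ disappears, leaving
$$
\left. q(x) \right|_{x_0 = 0} = \sum_{i=1}^k \left( \frac{1}{2} x_{i1}^2 + \sum_{j=1}^{w_i-1} \frac{1}{2}(x_{ij} - x_{i,j+1})^2 \right),
$$
again with $x_{iw_i} = 0$. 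The key structural point is that the central vertex $0$ was the only thing coupling the different arms, so once its coordinate is killed this becomes a direct sum over $i$ of quadratic forms in the disjoint variable blocks $(x_{i1}, \dots, x_{i,w_i-1})$, and the $i$-th block is exactly the Tits form of a type $A_{w_i - 1}$ Dynkin diagram.

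Next I would check that each summand is positive definite. This is classical for the $A_{w_i-1}$ form, but it is also immediate from the displayed sum of squares: if the $i$-th summand vanishes then $x_{i1} = 0$ and $x_{ij} = x_{i,j+1}$ for every $j$, so working down the arm and using $x_{iw_i} = 0$ at the far end forces $x_{i1} = \cdots = x_{i,w_i-1} = 0$. Summing over the arms gives that $\left. q \right|_{x_0 = 0}$ is positive definite, hence in particular nondegenerate.

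For the dimension assertion, $\{x_0 = 0\}$ is a coordinate hyperplane, so its dimension is one less than the number of vertices of $Q^{st}_{D,w}$; since $q$ is nondegenerate on it, the radical of $q$ meets $\{x_0 = 0\}$ only in $0$, so $\textrm{rad}(q)$ has dimension at most $1$ and, if nonzero, is transverse to $\{x_0 = 0\}$. When $q$ itself is nondegenerate this gives $\dim\{x_0 = 0\} = n - 1$ for $n = \textrm{rank}(q)$, as stated, and this is the situation relevant to the applications: the degenerate star-shaped quivers are exactly the affine Dynkin types, for which no dimension vector in the fundamental region satisfies $\delta(\alpha) > 0$, so the hypothesis of Proposition \ref{3.0.2} and Theorem \ref{3.6.1} excludes them. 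I do not expect a genuine obstacle here: the computation is routine, and the only thing to watch carefully is the boundary convention $x_{iw_i} = 0$, which is precisely what upgrades each arm's contribution from positive semidefinite to positive definite via the extra square $\frac{1}{2} x_{i,w_i-1}^2$.
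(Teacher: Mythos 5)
Your proof is correct and is essentially the paper's own argument: set $x_0 = 0$ in the displayed expansion of $q$, observe the result is a sum of squares, and check that its vanishing forces every $x_{ij} = 0$ via the convention $x_{iw_i}=0$. The extra remarks about the $A_{w_i-1}$ decomposition and the rank/dimension bookkeeping are fine but not needed beyond what the paper states.
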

\begin{proof}
From the expansion of $q(x)$ above we obtain that 
$$
q(x) = \frac{1}{2}\sum_{1 \leq i \leq k}x_{i1}^2 + \sum_{1 \leq i \leq k} \sum_{1 \leq j \leq w_k} \frac{1}{2}(x_{ij} - x_{ij+1})^2,
$$ 
for $x_0 = 0$.  It is clear that this implies $q(x) > 0$ for all nonzero $x$ with $x_0 = 0$.
\end{proof}

We immediately obtain:
\begin{cor}
\label{3.6.3}
Assume $q(x)$ has rank $n$.  The signature of $q(x)$ can be $(n,0,0)$, $(n-1,0,1)$, or $(n-1,1,0)$.
\end{cor}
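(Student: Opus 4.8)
The plan is to read the corollary off from Proposition \ref{3.6.2} by an elementary interlacing argument for indices of inertia; nothing beyond the computation already carried out in Proposition \ref{3.6.2} is needed. Write the signature of $q(x)$ on its $n$-dimensional ambient space as $(n_+, n_-, n_0)$, so that $n_+ + n_- + n_0 = n$, and recall that $n_+$ is the largest dimension of a subspace on which $q$ is positive definite.

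First I would invoke Proposition \ref{3.6.2}: it exhibits the codimension-one subspace $\{x : x_0 = 0\}$, of dimension $n-1$, on which $q$ is positive definite. By the variational description of $n_+$ recalled above, this forces $n_+ \geq n-1$. (Equivalently, one may appeal to the standard fact that restricting a symmetric bilinear form to a hyperplane decreases the positive index by at most one, applied to $q$ and to the positive definite form $q|_{\{x_0 = 0\}}$, whose signature is $(n-1,0,0)$; this is just Sylvester's law of inertia together with a dimension count.)

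It then remains to combine $n_+ \geq n-1$ with $n_+ + n_- + n_0 = n$, which gives $n_- + n_0 \leq 1$. Hence either $n_+ = n$, so $(n_-, n_0) = (0,0)$; or $n_+ = n-1$, so $(n_-, n_0)$ is $(1,0)$ or $(0,1)$. These are exactly the three signatures $(n,0,0)$, $(n-1,1,0)$, $(n-1,0,1)$ claimed in the statement. There is no real obstacle; the only thing to keep straight is that the locus $x_0 = 0$ genuinely has codimension one in the ambient space of $q$, which is precisely the input that prevents the positive index from dropping by more than one.
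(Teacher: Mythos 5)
Your argument is correct and is exactly the deduction the paper intends: Proposition \ref{3.6.2} gives an $(n-1)$-dimensional subspace on which $q$ is positive definite, so $n_+ \ge n-1$ by the variational characterization of the positive index, leaving only the three listed signatures; the paper treats this as immediate and gives no further detail. Your implicit reading of ``rank $n$'' as the number of variables (the size of the form) is the one the corollary requires, since $(n-1,0,1)$ has matrix rank $n-1$, so no issue there.
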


The ordering on the elements of $\alpha$, such that $\alpha_{ij-1} - \alpha_{ij} \ge \alpha_{ij} - \alpha_{ij+1}$, together with $\delta(\alpha) > 0$, imply that $\alpha$ is in the fundamental region.
\begin{proof}[Proof of Proposition \ref{3.0.2}]

Note that the necessary inequality may be rewritten as 
$$ \sum_i q(\beta^{(i)}) - q(\alpha) > l-1.$$  We proceed by induction on $l$.  Consider the base case when $l = 2$.  In this case, we prove that the inequality holds for $\alpha = \beta + \gamma$.  We can directly compute 
\begin{align*}
(\alpha, \beta) & = 2\alpha_0\beta_0 - \sum_{i=1}^k \beta_0\alpha_{i1} + \sum_{i=1}^k \sum_{j=1}^{w_i-1} 2\beta_{ij}\alpha_{ij} - \beta_{ij}\alpha_{ij-1} - \beta_{ij}\alpha_{ij+1} \\
& = \beta_0(2\alpha_0 - \sum_{i=1}^k \alpha_{i1}) + \sum_{i=1}^k \sum_{j=1}^{w_i-1} \beta_{ij}(2\alpha_{ij} - \alpha_{ij-1} - \alpha_{ij+1}) \le 0.
\end{align*}
Similarly, we obtain $(\alpha, \gamma) \le 0$.  By Corollary \ref{3.6.3}, signature of $q(x)$ can be $(n,0,0)$, $(n-1,0,1)$, or $(n-1,1,0)$.  Since $q(\alpha) < 0$ it is $(n-1,1,0)$.   Restrict $q(x)$ to the subspace spanned by $\alpha$ and $\beta$. On this space the signature of $q(x)$ is $(1,1,0)$.  By the Gram-Schmidt process there is an orthogonal basis for this space containing $\alpha$.  That means we can write 
\begin{align*} 
& \beta = a_1\alpha + \delta_1 \\ & \gamma = a_2\alpha + \delta_2, 
\end{align*} 
where $a_i$ are nonnegative with $a_1 + a_2 = 1$, $\delta_1 + \delta_2 = 0$, $(\alpha, \delta_i) = 0$ and $q(\delta_i) \ge 0$ ($q(\delta_i) = 0$ only if $\delta_i = 0$), for all $i$.  
It follows that 
$$q(\beta) + q(\gamma) - q(\alpha) = -(\beta, \gamma) = -a_1a_2(\alpha, \alpha) - (\delta_1, \delta_2) \ge 1,$$
since the last sum is positive and $-(\beta, \gamma)$ is an integer.  Therefore, we have $(\beta, \beta) > (\alpha, \beta) \ge (\alpha, \alpha)$ hence $q(\beta) - q(\alpha) > 0$.  Similarly, we also have $q(\gamma) - q(\alpha) > 0$.  

We proceed by considering cases.  Let us first suppose that $q(\beta) \neq 0$ and $q(\gamma) \neq 0$.  We can assume without loss of generality that $(\alpha, \beta) \le (\alpha, \gamma)$.  We will suppose $(\beta, \gamma) = - 1$ and arrive at a contradiction.  From the previous decomposition in the orthogonal basis, we obtain that $a_1 \ge a_2$.  Therefore, 
$$(\gamma, \gamma) = a_2^2 (\alpha, \alpha) - (\delta_1, \delta_1) \ge a_1a_2(\alpha, \alpha) + (\delta_1, \delta_1) = -1,$$  
and it follows that $q(\gamma) \ge -\frac{1}{2}$.  Since $q(\gamma)$ is an integer we have $q(\gamma) > 0$.  Together with $q(\beta) - q(\alpha) > 0$ this gives us  $q(\beta)+ q(\gamma) - q (\alpha) > 1$, which is what we need.  Now suppose $q(\beta) = 0$.  We have that 
$$(\beta, \gamma) = (\beta, \alpha) = \beta_0(2\alpha_0 - \sum_i \alpha_{i1}) + \sum_{ij} \beta_{ij}(2\alpha_{ij} - \alpha_{ij-1} - \alpha_{ij+1}).$$  
Since $\delta(\alpha) > 0$, we have that $2\alpha_0 - \sum_i \alpha_{i1} \le -1$.  Thus, for $\beta_0 \ge 2$ and $\alpha$ in the fundamental region we have $-(\beta, \gamma) > 1$, contradicting our assumption that $(\beta, \gamma) = -1$.  If $\beta_0 = 0$, then we have  
$$
q(\beta) = \frac{1}{2}\sum_{1 \leq i \leq k}\beta_{i1}^2 + \sum_{1 \leq i \leq k} \sum_{1 \leq j \leq w_i-1} \frac{1}{2}(\beta_{ij} - \beta_{ij+1})^2 > 0,
$$
for nontrivial $\beta$.  This contradicts the original assumption that $q(\beta) = 0$.  If $\beta_0 = 1$, then we can show
$$
q(\beta) = 1 - \sum_{1 \leq i \leq k} \beta_{i1} +  \frac{1}{2}\sum_{1 \leq i \leq k}\beta_{i1}^2 + \sum_{1 \leq i \leq k} \sum_{1 \leq j \leq v_i-1} \frac{1}{2}(\beta_{ij} - \beta_{ij+1})^2 + \frac{1}{2}\sum_{1 \leq i \leq k} \beta_{iv_i}^2 > 0,
$$
where $v_i$ is the maximal entry with $\beta_{iv_i} \ne 0$.  Indeed, the inequality is valid since $\frac{1}{2}\beta_{i1}^2 + \frac{1}{2} \beta_{iv_i}^2 -  \beta_{i1} \ge 0$.  Again this contradicts the assumption that $q(\beta) = 0$. This covers all of the possibilities for $\beta$.  A similar argument works if $q(\gamma) = 0$. Hence, in all cases  $q(\beta) + q(\gamma) - q(\alpha) > 1$.

By induction we may assume that: 
\begin{align*} 
& q(\beta^{(1)}) + \cdots + q(\beta^{(l)}) - q (\alpha)\\ & =  q(\beta^{(1)}) + \cdots + q(\beta^{(i)} + \beta^{(j)}) - (\beta^{(i)}, \beta^{(j)}) + \cdots + q(\beta^{(l)}) - q(\alpha) \\ &> l-2 - (\beta^{(i)}, \beta^{(j)}), 
\end{align*}  
for any choice $i \ne j$.  Therefore, it suffices to prove that there exist differing $1 \le i,j \le l$ such that $(\beta^{(i)}, \beta^{(j)}) < 0$.  Consider the the subspaces spanned by $\alpha, \beta^{(i)}$.  As in the $l=2$ case, each such space has an orthogonal basis consisting of $\alpha$ and a vector on which $q(x)$ is positive.  It follows that for each $i$ we have $\beta^{(i)} = a_i \alpha + \delta_i$, with nonnegative $a_i$ such that $a_1 + \cdots + a_l = 1$, $\delta_1 + \cdots + \delta_l = 0$, $(\alpha, \delta_i) = 0$, and $q(\delta_i) \ge 0$.  Note that $q(\delta_i) = 0$ only when $\delta_i = 0$.  Now fix $\beta^{(i_0)}$.  If $\delta_{i_0} = 0$, then $1 > a_{i_0} > 0$.  There is a $j_0 \ne i_0$ such that 
$$(\beta^{(i_0)}, \beta^{(j_0)}) = a_{i_0}a_{j_0} (\alpha, \alpha) < 0.$$  
Otherwise, we have: 
$$\sum_i (\delta_i,\delta_{i_0}) = 0,$$
so for some $\beta^{(j_0)}$ it is true that $(\delta_{i_0}, \delta_{j_0}) < 0$, because $(\delta_{i_0},\delta_{i_0}) > 0$.  It follows that $$(\beta^{(i_0)}, \beta^{(j_0)}) = a_{i_0}a_{j_0} (\alpha, \alpha) + (\delta_{i_0}, \delta_{j_0}) < 0.$$ 
So, Proposition \ref{3.0.2} is proven.
\end{proof}

\begin{proof}[Proof of Theorem \ref{3.6.1}]
The theorem follows from Theorem \ref{3.0.1} and Proposition \ref{3.0.2}.
\end{proof}

\section{Moduli of Parabolic Bundles}

\subsection{Outline}
\label{4.0}
In this section we will prove our main result, Theorem 1.2.1.  That is, we will prove that the moduli stack of parabolic bundles over $\mathbb{P}^1$ is almost very good under some restrictions on the parabolic structure.  Our proof resembles Crawley-Boevey's arguments in \cite{CB1993} and \cite{CB2001}.  However, Kac's theorem is inapplicable, and we replace it with an algebro-geometric result that works in the case of nontrivial parabolic bundles. 

Recall the notation $D,w,\alpha$ from Section 1.2.  Let $X$ be a complex projective curve and let $\textrm{Bun}_{D, w,\alpha}(X)$ be the moduli stack of parabolic bundles $\mathbf{E}$ of weight type $(D,w)$ and dimension vector $\alpha$ over $X$.  Let $\mathcal{P}_{\textrm{Bun}_{D, w,\alpha}(X)}$ be the stack of pairs $(\mathbf{E},f)$, where $\mathbf{E}$ is in $\textrm{Bun}_{D, w,\alpha}(X)$ and $f$ is its endomorphism.  

Note that $\mathcal{P}_{\textrm{Bun}_{D, w,\alpha}(X)}$ contains the inertia stack associated to $\textrm{Bun}_{D, w,\alpha}(X)$ as an open substack.  That is, it contains the stack $\mathcal{I}_{\textrm{Bun}_{D, w,\alpha}(X)}$, consisting of pairs $(\mathbf{E},f)$, where $\mathbf{E}$ is in $\textrm{Bun}_{D, w,\alpha}(X)$ and $f$ is its automorphism.  

Similarly, it contains the reduced closed substack $\mathcal{N}(D,w, \alpha)$, consisting of pairs $(\mathbf{E},f)$, where $\mathbf{E}$ is in $\textrm{Bun}_{D, w,\alpha}(X)$ and $f$ is its nilpotent endomorphism.

From now on, let $X = \mathbb{P}^1$.  Let $$\tilde{q}(\alpha) = \textrm{min} \sum q(\gamma_i),$$ where the minimum is taken over all positive, finite decompositions $\alpha  = \sum_i \gamma_i$.  We can reduce the proof of Theorem 1.2.1 to a dimension estimate for irreducible components of $\mathcal{P}_{\textrm{Bun}_{D, w,\alpha}(X)}$.  This, in turn, reduces to the following key estimate:
\begin{thm}
\label{4.0.1}
We have the inequality $\dimn \mathcal{N}(D,w, \alpha) \le -\tilde{q}(\alpha)$.
\end{thm}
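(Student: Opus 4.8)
The plan is to stratify the nilpotent cone $\mathcal{N}(D,w,\alpha)$ according to the type of the underlying vector bundle and the Jordan type of the nilpotent endomorphism, and then to bound the dimension of each stratum by comparing with representations of the squid algebra $S_{D,w}$ introduced in Section \ref{3.4}. Recall that, by Grothendieck's theorem, every vector bundle on $\mathbb{P}^1$ splits as $E \cong \bigoplus_r \mathcal{O}(d_r)$, so the moduli stack of parabolic bundles of dimension vector $\alpha$ decomposes into locally closed substacks indexed by the splitting type $\underline{d}$ of $E$ together with the relative position of the flags with respect to this splitting. On each such stratum, the choice of a nilpotent endomorphism $f$ refines the stratification further by the partition-valued data recording the Jordan type of $f$ at every vertex of the star-shaped quiver (i.e. the ranks of the flag pieces and of $E$), exactly as in the proof of Theorem \ref{3.3.3}. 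The first step is therefore to set up this combined stratification carefully and to identify, for a fixed splitting type and Jordan type, the space parametrizing the pair $(\mathbf{E}, f)$ as (an open subset of) a space of representations of the quiver $Q_{D,w}$ with prescribed dimension vectors on each block.

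The second and central step is the dimension count on a fixed stratum. Writing $\alpha = \sum_r \lambda^r$ for the decomposition of $\alpha$ into the "layers" of the Jordan blocks (so that $\lambda^r$ is itself a dimension vector for the star-shaped quiver), the computation in Theorem \ref{3.3.3} shows that the space $\mathrm{Mod}_g$ of parabolic structures compatible with a fixed $g$ of type $\lambda$, modulo the action of the automorphisms commuting with $g$, has dimension expressible in terms of $\sum_r q(\lambda^r)$. The essential input that replaces Kac's theorem here is a direct algebro-geometric estimate: for a fixed splitting type $\underline{d}$ the stack of parabolic bundles with that underlying bundle, together with the "extra" cohomological contributions $h^1$ coming from the nontriviality of $E$ (i.e. from $\mathrm{Ext}^1$ between line bundle summands of different degrees), still fits into the bound governed by the quadratic form. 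Concretely, I would show that the dimension of the stratum of $\mathcal{N}(D,w,\alpha)$ indexed by $(\underline{d}, \lambda)$ is at most $-\sum_r q(\lambda^r)$; since $\sum_r q(\lambda^r) \ge \tilde q(\alpha)$ by definition of $\tilde q$, and since the nontrivial case involves at least two nonzero layers $\lambda^r$ (the stabilizer being nontrivial), Proposition \ref{3.0.2} applied after reduction to the star-shaped quiver gives the strict inequality needed to absorb the auxiliary terms. Taking the maximum over the finitely many strata then yields $\dimn \mathcal{N}(D,w,\alpha) \le -\tilde q(\alpha)$.

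The third step is bookkeeping: one must check that the correction coming from passing from trivial to nontrivial underlying bundles — namely the dimension of the space of extensions $\bigoplus_{d_r > d_s} H^1(\mathbb{P}^1, \mathcal{O}(d_s - d_r))^{\oplus m_r m_s}$, weighted against the gain in automorphisms $\bigoplus_{d_r < d_s} H^0(\mathbb{P}^1, \mathcal{O}(d_s - d_r))^{\oplus m_r m_s}$ — contributes nonpositively to the estimate, which is the usual "the nilpotent cone in $T^*\mathrm{Bun}$ is isotropic on $\mathbb{P}^1$ after adding parabolic structure" phenomenon. This is precisely where the hypothesis $\delta(\alpha) > 0$ and membership in the fundamental region enter, through Proposition \ref{3.0.2}.

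The main obstacle I anticipate is exactly this third point: making the passage from the quiver-theoretic count (which is literally Crawley-Boevey's argument, valid for the star-shaped quiver and hence for \emph{trivial} parabolic bundles) to the genuinely geometric statement about \emph{all} parabolic bundles on $\mathbb{P}^1$. Kac's theorem is unavailable because the moduli stack of parabolic bundles is not a quotient of a vector space of quiver representations by a product of general linear groups; instead one must use a Harder--Narasimhan or Grothendieck-splitting stratification and control the $h^1$ terms by hand, verifying that the "instability" directions do not increase the dimension beyond the bound predicted by $-\tilde q(\alpha)$. Handling this uniformly over all splitting types $\underline{d}$, rather than just the generic (balanced) one, is the delicate part, and I expect it to occupy the bulk of the proof.
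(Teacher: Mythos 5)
There is a genuine gap, and it sits exactly where you place the ``bulk of the proof.'' Your plan reduces the theorem to the claim that each stratum of $\mathcal{N}(D,w,\alpha)$ indexed by a splitting type $\underline{d}$ and a Jordan datum $\lambda$ has dimension at most $-\sum_r q(\lambda^r)$, but you supply no mechanism for proving this: the correction terms coming from nontrivial splitting types (the $H^1(\mathbb{P}^1,\mathcal{O}(d_s-d_r))$ directions versus the extra automorphisms in $H^0$) are precisely what distinguishes the parabolic-bundle statement from Crawley-Boevey's count for $\textrm{Rep}(Q^{st}_{D,w},\alpha)$, and deferring them means the argument proves nothing beyond the trivial-bundle case already covered by Theorem \ref{3.3.3}. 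Worse, the tool you propose for absorbing these terms, the strict inequality of Proposition \ref{3.0.2}, requires $\alpha$ to lie in the fundamental region with $\delta(\alpha)>0$, whereas Theorem \ref{4.0.1} is unconditional in $\alpha$ (those hypotheses enter only afterwards, in Corollary \ref{4.3.5}, to convert $-\tilde{q}(\alpha)$ into $-q(\alpha)$); moreover the corrections are cohomological, not root-theoretic, so there is no reason a purely lattice-theoretic inequality should dominate them. A further unaddressed point is that your stratification is not well defined as stated: for an endomorphism of a nontrivial bundle on $\mathbb{P}^1$ the fiberwise Jordan type can jump (constancy is established in the paper only for semistable bundles of equal slope, Corollary \ref{5.2.3}), and $\mathbf{ker}\, f$ need not be a subbundle, so ``the Jordan type of $f$ at every vertex'' requires a choice (generic type, saturations) that your count does not track.

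For contrast, the paper does not argue stratum by stratum at all. It inducts on the rank $\alpha_0$ using the kernel $\mathbf{F}=\mathbf{ker}\, f$: the map $(\mathbf{E},f)\mapsto(\mathbf{E}/\mathbf{F},\,f|_{\mathbf{E}/\mathbf{F}})$ fibers $\mathcal{N}_{\beta}(D,w,\alpha)$ over $\mathcal{N}(D,w,\alpha-\beta)$, the induction hypothesis handles the base, and the fiber dimension is computed by deformation theory on $\mathbb{P}^1$: the vanishing $\mathbb{H}^2(\mathbb{P}^1,C^{\bullet})=0$ for the relevant two-term complex (Lemma \ref{4.3.2}, via Serre duality), the identity $\dimn\mathcal{P}_{\mathbf{V}}=\chi(\mathscr{H}om_{\textrm{Par}}(\mathbf{V},\mathbf{W}))-\chi(\mathscr{E}nd_{\textrm{Par}}(\mathbf{W}))$ (Lemma \ref{4.3.3}), and the Euler-characteristic computation of Lemma \ref{4.3.4}. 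The conclusion then needs only $\chi(\mathscr{E}nd_{\textrm{Par}}(\mathbf{F}))=q(\beta)$ together with the formal properties $\tilde{q}(\alpha)\le\tilde{q}(\alpha-\beta)+\tilde{q}(\beta)$ and $\tilde{q}\le q$, with no hypothesis on $\alpha$. This is the missing idea in your proposal: replace the global stratified count by an inductive fibration along $\mathbf{ker}\, f$ so that all splitting-type corrections are packaged into Euler characteristics of $\mathscr{H}om_{\textrm{Par}}$ sheaves, which on $\mathbb{P}^1$ are computed by Riemann--Roch and are insensitive to the splitting type.
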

Note that a similar estimate is used in the proof of Kac's Theorem to compute the number of parameters of $\textrm{Ind}(Q, \alpha)$. We will prove Theorem \ref{4.0.1} by induction on the rank of the nilpotent endomorphism $f$, reducing it to an application of deformation theory and a subsequent computation in hypercohomology.

\subsection{Generalities on Parabolic Bundles}
\label{4.1}

Let $X$ be a smooth complex projective curve.  
\begin{defn}
\label{4.1.1}
A $\textit{parabolic structure}$ on a vector bundle $E$ over $X$ consists of the following:
\begin{itemize}
\item[(1)]  A collection of distinct points $D =(x_1, \dots, x_k)$ in X.
\item[(2)] Flags $E_{x_i} = E_{i0} \supseteq E_{i1} \supseteq E_{iw_{i-1}}  \supseteq E_{iw_i} = 0$ in the fibers over the points $x_i$.
\end{itemize}
We call $E$ together with a parabolic structure on $E$ a $\textit{parabolic bundle}$ over $X$ and denote it by $\mathbf{E}$.  We denote its underlying bundle by $E$.
\end{defn}
If $D = (x_1, \dots, x_k)$ and $w = (w_1, \dots, w_k)$, then we say $\mathbf{E}$ has \textit{weight type} $(D,w)$.  Setting $\alpha_0 = \dimn E_{i0} = \textrm{rk } E$ and $\alpha_{ij} = \dimn E_{ij}$, we call $\alpha = (\alpha_0, \alpha_{i1}, \dots, \alpha_{iw_i-1})$ the \textit{dimension vector} of $\mathbf{E}$.  

A \textit{parabolic subbundle} $\mathbf{F} \subset \mathbf{E}$ is a vector subbundle $F \subset E$ together with a parabolic structure induced on $D$  by the parabolic structure of $\mathbf{E}$.  Note that some texts refer to the structure described above as a ``quasi-parabolic structure'' and to the associated parabolic bundle as a ``quasi-parabolic bundle''.

\begin{defn}
\label{4.1.2}
Let $\mathbf{E}$ and $\mathbf{F}$ be parabolic bundles of weight type $(D,w)$.  We call the morphism of vector bundles $f: F \rightarrow E$ a \textit{morphism of parabolic bundles} if $f_{x_i}(F_{ij}) \subset E_{ij}$ for all $1 \le i \le k$ and $1 \le j \le w_i$, where $f_{x_i}$ is the morphism induced by $f$ on the fiber over $x_i$.  

We denote the subsheaf of morphisms of parabolic bundles between $\mathbf{F}$ and $\mathbf{E}$ by $\mathscr{H}om_{\textrm{Par}}(\mathbf{F}, \mathbf{E}) \subset \mathscr{H}om(F,E)$ and the subsheaf of endomorphisms by $\mathscr{E}nd_{\textrm{Par}}(\mathbf{E}) \subset \mathscr{E}nd(E)$.  
\end{defn}

Note that  $\mathscr{H}om_{\textrm{Par}}(\mathbf{F}, \mathbf{E})$ and $\mathscr{E}nd_{\textrm{Par}}(\mathbf{E})$ are both vector bundles.  Therefore, we can compute the Euler characteristic of $\mathscr{H}om_{\textrm{Par}}(\mathbf{F}, \mathbf{E})$ by applying the Riemann-Roch theorem.  Specifically, let $\mathbf{E}$ have dimension vector $\alpha$ and let $\mathbf{F}$ have dimension vector $\beta$.  The degree of $\mathscr{H}om_{\textrm{Par}}(\mathbf{F}, \mathbf{E})$ may be computed, based on the degree of $\mathscr{H}om(F,E)$, as:
\begin{align*}
\textrm{deg}(\mathscr{H}om_{\textrm{Par}}(\mathbf{F}, \mathbf{E})) & = \textrm{rk}(F) \cdot \textrm{deg}(E) - \textrm{rk}(E) \cdot \textrm{deg}(F) \\ &- \sum_{i=1}^k \sum_{j=1}^{w_i-1}(\alpha_{0}-\alpha_{ij})(\beta_{ij} - \beta_{ij+1}).
\end{align*}
Therefore, by the Riemann-Roch theorem, we obtain that:
\begin{align*}
\chi(\mathscr{H}om_{\textrm{Par}}(\mathbf{F}, \mathbf{E})) & = \textrm{deg}(\mathscr{H}om_{\textrm{Par}}(\mathbf{F}, \mathbf{E})) + (1-g)\alpha_0\beta_0 \\ & = \beta_0 \cdot \textrm{deg}(E) - \alpha_0 \cdot \textrm{deg}(F) - g\alpha_0\beta_0 + \langle \beta, \alpha  \rangle,
\end{align*}
where $\langle \beta, \alpha \rangle$ is as in Definition \ref{3.1.3}.  Note that in the case when $g = 0$ and $\mathbf{F} = \mathbf{E}$, we obtain that
$$
\chi(\mathscr{E}nd_{\textrm{Par}}(\mathbf{E})) = q(\alpha).
$$

\subsection{The moduli stack of parabolic bundles over $\mathbb{P}^1$}
\label{4.2}

Definitions and general properties of algebraic stacks are given in Laumon and Moret-Bailly's book \cite{LMB2000}.  We will view a stack as a sheaf of groupoids in the fppf-topology and an algebraic stack as a stack with a smooth presentation by a scheme.  We will use $\langle \quad \rangle$ to denote a category in which the objects are enclosed by the brackets and the morphisms are all isomorphisms.

As before, let $X$ be the smooth complex projective curve.  Fix the weight type $(D,w)$ as in Section \ref{4.1}.  Let $I = \{0\} \cup \{(i,j)|1 \le i \le k, 1 \le j \le w_{i}-1 \}$, let $d \in \mathbb{Z}$ , and fix $\alpha \in \mathbb{Z}_{\ge 0}^I$, such that $\alpha_0 \ge \alpha_{i1} \ge \cdots \ge \alpha_{iw_i}$, for all $i$.  

\begin{defn}
\label{4.2.1}
 The stack of parabolic bundles of weight type $(D,w)$, degree $d$, dimension type $\alpha$, over $X$ is a functor that associates to a test scheme $T$ the groupoid $\textrm{Bun}_{D,w,\alpha}^d(T) = \left\langle (E,E^{i,j})_{1\le i \le k} \right\rangle$, where 
\begin{itemize}
\item $E$ is a vector bundle on $T\times X$,
\item $E|_{T \times \{x_i\}} \supset E^{i,1}\supset \dots \supset E^{i,w_i-1} \supset E^{i,w_i} = 0$
is a filtration by vector bundles,
\item $\textrm{rk}(E) = \alpha_0$ and $\textrm{rk}(E^{i,j}) = \alpha_{ij}$,
\item $\textrm{deg } E|_{\{y\} \times \mathbb{P}^1} = d$ for all $y \in T$.
\end{itemize}
\end{defn}

In the case when $X = \mathbb{P}^1$, we see that $\textrm{Bun}_{D,w,\alpha}^d$ admits the following presentation as an algebraic stack: $U = \coprod_{N \in \mathbb{Z}_{\ge 0}} \left\langle (\mathbf{E},s_i,t_j)\right\rangle$, where
\begin{itemize}
\item $\mathbf{E}$ is a parabolic bundle on  $X$, 
\item $\deg(E)=d$ and $\mathbf{E}$ has dimension vector $\alpha$,
\item $H^0(E^* \otimes \mathcal{O}(N))$ is generated by global sections,
\item $s_i$ is a basis for $H^0(E^* \otimes \mathcal{O}(N))^*$,
\item $t_j$ is a basis for $H^0(E^* \otimes \mathcal{O}(N-1))^*$.
\end{itemize}
For $X = \mathbb{P}^1$, we will give a more detailed description of $U$ in Section 6.  Let $\mathcal{B} := \textrm{Bun}_{D, w,\alpha}(X) = \coprod_{d \in \mathbb{Z}} \textrm{Bun}_{D,w,\alpha}^d$ be the moduli stack of parabolic bundles of weight type $(D,w)$ and with dimension vector $\alpha$.  We can use the presentation above to turn this stack into an algebraic stack.  

\begin{defn}
\label{4.2.2}
The stack of pairs $\mathcal{P}_{\textrm{Bun}_{D, w,\alpha}^d}$ is a functor that associates to a test scheme $T$ the groupoid $\mathcal{P}_{\textrm{Bun}_{D, w,\alpha}^d}(T) = \left\langle (E,E^{i,j},f)_{1\le i \le k} \right\rangle$, where
\begin{itemize}
\item $E$ is a vector bundle on $T\times X$,
\item $E|_{T \times \{x_i\}} \supset E^{i,1}\supset \dots \supset E^{i,w_i-1} \supset E^{i,w_i} = 0$
is a filtration by vector bundles,
\item $\textrm{rk}(E) = \alpha_0$ and $\textrm{rk}(E^{i,j}) = \alpha_{ij}$,
\item $\textrm{deg } E|_{\{y\} \times \mathbb{P}^1} = d$ for all $y \in T$,
\item $f$ is an endomorphism of $E$ such that $f|_{T \times \{x_i\}}(E^{i,j}) \subset E^{i,j}$ for all $i$.
\end{itemize}
\end{defn}

Before we give a presentation for $\mathcal{P}_{\textrm{Bun}_{D, w,\alpha}^d}$ as an algebraic stack, we will need some preliminary notation.  Let $\mathbf{E}, s_l, t_m$ be as in the description of $U$ above.  Let $G_0(s_l,t_m)$ and $G_1(s_l,t_m)$ be matrices in the bases $s_l$ and $t_m$, representing the morphisms from $H^0(E^* \otimes \mathcal{O}(N))^*$ to $H^0(E^* \otimes \mathcal{O}(N-1))^*$ that correspond to multiplication by $1, -z$, the two standard generating global sections of $\mathcal{O}(1)$.  Note that $\textrm{ker } (\lambda_{i0}G_0(s_l,t_m) + \lambda_{i1}G_1(s_l,t_m))$ contains the flag $E_{x_i} = E_{i0} \supseteq E_{i1} \supseteq E_{iw_{i-1}}  \supseteq E_{iw_i} = 0$, where $x_i = (\lambda_{i0}: \lambda_{i1})$ (see Section 6 for details).

In the case when $X = \mathbb{P}^1$, we see that $\mathcal{P}_{\textrm{Bun}_{D, w,\alpha}^d}$ admits the following presentation as an algebraic stack: $U = \coprod_{N \in \mathbb{Z}_{\ge 0}} \left\langle (\mathbf{E},s_l,t_m,F_1,F_2)\right\rangle$, where 
\begin{itemize}
\item $\mathbf{E}$ is a parabolic bundle on $X$, 
\item $\deg(E)=d$ and $\mathbf{E}$ has dimension vector $\alpha$,
\item $H^0(E^* \otimes \mathcal{O}(N))$ is generated by global sections,
\item $s_l$ is a basis for $H^0(E^* \otimes \mathcal{O}(N))^*$,
\item $t_m$ is a basis for $H^0(E^* \otimes \mathcal{O}(N-1))^*$,
\item $F_1$ is a matrix in the basis $s_l$ acting on $H^0(E^* \otimes \mathcal{O}(N))^*$,
\item $F_2$ is a matrix in the basis $t_m$ acting on $H^0(E^* \otimes \mathcal{O}(N-1))^*$,
\item $F_2 \circ G_r(s_l,t_m) = G_r(s_l,t_m) \circ F_1$ for $r = 0,1$,
\item $F_1(E_{ij}) \subset E_{ij}$ for all $1 \le i \le k$ and $1 \le j \le w_i$. 
\end{itemize}
Since our computations are independent of degree, then we will define the algebraic stack as $\mathcal{P}_{\mathcal{B}} := \mathcal{P}_{\textrm{Bun}_{D, w,\alpha}(X)} = \coprod_{d \in \mathbb{Z}} \mathcal{P}_{\textrm{Bun}_{D, w,\alpha}^d}$.

Let $\mathcal{I}_{\mathcal{B}} = \mathcal{I}_{\textrm{Bun}_{D, w,\alpha}(X)}$ be the inertia stack corresponding to $\textrm{Bun}_{D, w,\alpha}(X)$.  This is an open substack of $\mathcal{P}_{\mathcal{B}}$, where the endomorphism $f$ is taken to be an automorphism.  Note that $\textrm{dim } \mathcal{I}_{\mathcal{B}} = \textrm{dim } \mathcal{P}_{\mathcal{B}}$.  

The following stack will play an essential role in the proof of Theorem 1.2.1:
\begin{defn}
\label{4.2.3}
The stack $\mathcal{N}(D,w,d,\alpha)$ is a functor that associates to a test scheme $T$ the groupoid $\mathcal{N}(D,w,\alpha)(T) = \left\langle (E,E^{i,j},f)_{1\le i \le k} \right\rangle$, where
\begin{itemize}
\item $E$ is a vector bundle on $T\times X$,
\item $E|_{T \times \{x_i\}} \supset E^{i,1}\supset \dots \supset E^{i,w_i-1} \supset E^{i,w_i} = 0$
is a filtration by vector bundles,
\item $\textrm{rk}(E) = \alpha_0$ and $\textrm{rk}(E^{i,j}) = \alpha_{ij}$,
\item $\textrm{deg } E|_{\{y\} \times \mathbb{P}^1} = d$ for all $y \in T$,
\item $f$ is a nilpotent endomorphism of $E$ such that $f|_{T \times \{x_i\}}(E^{i,j}) \subset E^{i,j}$ for all $i$.
\end{itemize}
\end{defn}

We can see that $\mathcal{N}(D,w,d,\alpha)$ is a reduced closed algebraic substack of $\mathcal{P}_{\mathcal{B}}$, given the presentation (assuming that $X = \mathbb{P}^1$): $U = \coprod_{N \in \mathbb{Z}_{\ge 0}} \left\langle (\mathbf{E},s_l,t_m,F_1,F_2)\right\rangle$, where 
\begin{itemize}
\item $\mathbf{E}$ is a parabolic bundle on  $X$, 
\item $\deg(E)=d$ and $\mathbf{E}$ has dimension vector $\alpha$,
\item $H^0(E^* \otimes \mathcal{O}(N))$ is generated by global sections,
\item $s_l$ is a basis for $H^0(E^* \otimes \mathcal{O}(N))^*$,
\item $t_m$ is a basis for $H^0(E^* \otimes \mathcal{O}(N-1))^*$,
\item $F_1$ is a nilpotent matrix in the basis $s_l$ acting on $H^0(E^* \otimes \mathcal{O}(N))^*$,
\item $F_2$ is a nilpotent matrix in the basis $t_m$ acting on $H^0(E^* \otimes \mathcal{O}(N-1))^*$,
\item $F_2 \circ G_r(s_l,t_m) = G_r(s_l,t_m) \circ F_1$ for $r = 0,1$,
\item $F_1(E_{ij}) \subset E_{ij}$ for all $1 \le i \le k$ and $1 \le j \le w_i$. 
\end{itemize}
Our computations are independent of degree, we will define the algebraic stack $\mathcal{N}(D,w,\alpha) :=  \coprod_{d \in \mathbb{Z}} \mathcal{N}(D,w,d,\alpha)$.

Note that $\textrm{Bun}_{D, w,\alpha}(X)$ is smooth, and by Lemma \ref{2.2.1} we can compute its dimension as: 
\begin{align*}
& \textrm{dim Bun}_{D, w,\alpha}(X) = \textrm{dim Bun}_{\textrm{GL}(\alpha_0)}(X) + \textrm{dim } Fl(\alpha) \\
& = (g-1)\alpha_0^2 + \alpha_0^2 - q(\alpha) = g\alpha_0^2 - q(\alpha).
\end{align*}
From now on, let $X = \mathbb{P}^1$.  This means $g = 0$, and therefore $\dimn \textrm{Bun}_{D, w,\alpha}(X) =  -q(\alpha)$.  

\subsection{Proof of Theorem 1.2.1}
\label{4.3}

Let us define
$$
\tilde{q}(\alpha) = \textrm{min} \sum q(\gamma_i),
$$
where the minimum is taken over all positive, finite decompositions $\alpha  = \sum_i \gamma_i$.  We can summarize the properties of $\tilde{q}(\alpha)$ in the following proposition:
\begin{prp}
\label{4.3.1}
Let $\alpha$ and $\beta$ be dimension vectors.  For $\tilde{q}(\alpha)$, we have: 

a) $\tilde{q}(\alpha) \le q (\alpha)$ 

b) $\tilde{q}(\alpha + \beta) \le \tilde{q}(\alpha) + \tilde{q}(\beta)$

c) $\tilde{q}(\alpha) = q(\alpha)$, if $\alpha$ is in the fundamental region. 
\end{prp}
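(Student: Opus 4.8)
The plan is to read parts (a) and (b) off directly from the definition of $\tilde q$, and to reduce (c) to the signature information for the Tits form already obtained in Section~\ref{3.6}. Part (a) is immediate, since the one--term decomposition $\alpha=\alpha$ competes in the minimum defining $\tilde q(\alpha)$, so $\tilde q(\alpha)\le q(\alpha)$. For (b), observe that any decomposition of a fixed dimension vector $\mu$ into nonzero nonnegative vectors has at most $\sum_{v}\mu_v$ parts, each bounded componentwise by $\mu$, so the minimum is a minimum over a finite set and is attained; concatenating a minimizing decomposition $\alpha=\sum_i\gamma_i$ with a minimizing decomposition $\beta=\sum_j\delta_j$ yields a decomposition of $\alpha+\beta$, whence $\tilde q(\alpha+\beta)\le\sum_i q(\gamma_i)+\sum_j q(\delta_j)=\tilde q(\alpha)+\tilde q(\beta)$.

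For (c), by (a) it suffices to prove that $\sum_{i=1}^l q(\gamma_i)\ge q(\alpha)$ for every decomposition $\alpha=\sum_{i=1}^l\gamma_i$ into nonzero nonnegative vectors. Two facts about a vector $\alpha$ in the fundamental region do the work. First, from the explicit formula for the symmetrized form in Section~\ref{3.6}, for any nonnegative vector $\gamma$ we have
\begin{equation*}
(\alpha,\gamma)=\gamma_0\left(2\alpha_0-\sum_i\alpha_{i1}\right)+\sum_{i,j}\gamma_{ij}\bigl(2\alpha_{ij}-\alpha_{ij-1}-\alpha_{ij+1}\bigr)\le 0,
\end{equation*}
because every coefficient on the right is $\le 0$ by the fundamental--region inequalities; taking $\gamma=\alpha$ gives in particular $2q(\alpha)=(\alpha,\alpha)\le 0$. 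Second, by Corollary~\ref{3.6.3} the form $q$ has negative index at most $1$ and nullity at most $1$, and by Proposition~\ref{3.6.2} it is positive definite on $\{x_0=0\}$; from this a short argument gives that the restriction of $q$ to $\alpha^{\perp}$ is positive semidefinite whenever $q(\alpha)\le 0$, and positive definite when $q(\alpha)<0$.

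The estimate now follows by splitting off the $\alpha$--direction. Applying the functional $(\alpha,-)$ to $\sum_i\gamma_i=\alpha$ gives $\sum_i(\alpha,\gamma_i)=(\alpha,\alpha)=2q(\alpha)$. If $q(\alpha)=0$, the summands are nonpositive with sum zero, hence all zero, so each $\gamma_i\in\alpha^{\perp}$, $q(\gamma_i)\ge 0$, and $\sum_i q(\gamma_i)\ge 0=q(\alpha)$. If $q(\alpha)<0$, write $\gamma_i=a_i\alpha+\delta_i$ with $a_i=(\alpha,\gamma_i)/(\alpha,\alpha)\ge 0$ and $\delta_i\in\alpha^{\perp}$; then $\sum_i a_i=1$, $\sum_i\delta_i=0$, $q(\gamma_i)=a_i^2q(\alpha)+q(\delta_i)$, and
\begin{equation*}
\sum_i q(\gamma_i)-q(\alpha)=\left(\sum_i a_i^2-1\right)q(\alpha)+\sum_i q(\delta_i)\ge 0,
\end{equation*}
since $\sum_i a_i^2\le(\sum_i a_i)^2=1$ makes the first term $\ge 0$ (as $q(\alpha)<0$) and the second term is $\ge 0$ by semidefiniteness of $q$ on $\alpha^{\perp}$. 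Together with (a) this yields $\tilde q(\alpha)=q(\alpha)$.

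The only step that really needs care is the auxiliary claim that $q|_{\alpha^{\perp}}$ is positive semidefinite when $q(\alpha)\le 0$. When $q(\alpha)<0$ this is immediate from the negative index being at most $1$: a vector $v\in\alpha^{\perp}$ with $q(v)<0$ would make $q$ negative definite on the plane $\langle\alpha,v\rangle$. The borderline case $q(\alpha)=0$ with $\alpha$ not in the radical of $q$ rests on the standard fact about Lorentzian forms that the orthogonal complement of a nonzero isotropic vector is positive semidefinite with radical the line through it (diagonalize $q$ and rotate the positive part so that $\alpha$ becomes the sum of a positive unit vector and the negative unit vector); the remaining cases, $\alpha$ in the radical or $q$ globally positive semidefinite, are trivial. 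Everything else is bookkeeping with the formulas of Sections~\ref{3.6} and~\ref{4.1}.
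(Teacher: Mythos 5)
Your proof is correct. Parts (a) and (b) are read off the definition exactly as in the paper. For (c) the paper simply says that the inequality $q(\alpha)\le\sum_i q(\gamma_i)$ ``follows from the proof of Proposition \ref{3.0.2}'', so the underlying ingredients coincide with yours: the signature facts of Proposition \ref{3.6.2} and Corollary \ref{3.6.3}, together with a splitting $\gamma_i=a_i\alpha+\delta_i$ with $a_i\ge 0$, $\sum_i a_i=1$, $(\alpha,\delta_i)=0$ and $q(\delta_i)\ge 0$. Where you genuinely differ is in how the splitting is exploited. The proof of Proposition \ref{3.0.2} is built for the strict inequality under the stronger hypothesis $\delta(\alpha)>0$, so it works with $q(\alpha)<0$, argues the case $l=2$ with integrality considerations, and then inducts on $l$; you instead obtain the non-strict inequality for all $l$ in one step from the convexity estimate $\sum_i a_i^2\le\bigl(\sum_i a_i\bigr)^2=1$, which makes the term $\bigl(\sum_i a_i^2-1\bigr)q(\alpha)$ nonnegative, and you treat explicitly the borderline case $q(\alpha)=0$ (allowed in the fundamental region when $\delta(\alpha)=0$, but excluded by the hypotheses of Proposition \ref{3.0.2}) via positive semidefiniteness of $q$ on $\alpha^{\perp}$, using the standard Lorentzian fact about the orthogonal complement of an isotropic vector. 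What this buys is a self-contained, induction-free argument that makes precise the adaptation the paper's terse citation leaves implicit; what the paper's route buys is the stronger strict inequality $\sum_i q(\beta^{(i)})-q(\alpha)>l-1$, which is what is actually needed elsewhere (Theorem \ref{3.3.2}, Corollary \ref{4.3.5}), whereas your argument deliberately proves only the weak inequality required for part (c).
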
 
\begin{proof}
Parts a) and b) follow directly from the definition.  Part c) is equivalent to the inequality 
$$ 
q(\alpha) \le \sum_i q(\gamma_i),
$$
for any finite positive decomposition $\alpha  = \sum_i \gamma_i$ and $\alpha$ in the fundamental region.  This follows from the proof of Proposition \ref{3.0.2}.
\end{proof}

Consider the two-element complex
$$
C^{\bullet}: \mathscr{E}nd_{\textrm{Par}}(\mathbf{W}) \rightarrow \mathscr{H}om_{\textrm{Par}}(\mathbf{V}, \mathbf{W}),
$$
induced by the inclusion of parabolic bundles $i: \mathbf{V} \hookrightarrow \mathbf{W}$.  This complex arises when we consider first-order deformations of pairs $(\mathbf{W}, i)$, for a fixed $\mathbf{V}$.  However, the usual sheaf cohomology is no longer sufficient to determine these deformations.  Instead, one can generalize the notion of sheaf cohomology to $\textit{hypercohomology}$, in order to obtain a cohomology theory for chain complexes of sheaves (see e.g. \cite{We1994}).  By analogy with sheaf cohomology, we can compute hypercohomology by means of a \v{C}ech resolution, for a sufficiently good cover.  It follows that we can study the deformations of the pairs $(\mathbf{W}, i)$ by studying the hypercohomology groups of $C^{\bullet}$. 
\begin{lmm}
\label{4.3.2}
We have that $\mathbb{H}^2(\mathbb{P}^1, C^{\bullet}) = 0$.
\end{lmm}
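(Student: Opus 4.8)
The plan is to show that the differential of $C^{\bullet}$ is already surjective as a morphism of sheaves on $\mathbb{P}^1$; then $C^{\bullet}$ is quasi-isomorphic to a single coherent sheaf placed in degree $0$, and the vanishing of $\mathbb{H}^2$ is forced by the fact that $\mathbb{P}^1$ is a curve.

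Concretely, the differential $d\colon \mathscr{E}nd_{\textrm{Par}}(\mathbf{W})\to\mathscr{H}om_{\textrm{Par}}(\mathbf{V},\mathbf{W})$ is composition with the inclusion, $\phi\mapsto\phi\circ i$; this indeed lands in $\mathscr{H}om_{\textrm{Par}}(\mathbf{V},\mathbf{W})$ since the induced flag satisfies $V_{ij}=V_{x_i}\cap W_{ij}\subseteq W_{ij}$. I would prove that $d$ is surjective by checking it on stalks. Away from the points of $D$ this is clear, since there $V$ is locally a direct summand of $W$ and any local homomorphism $V\to W$ extends to an endomorphism of $W$. At a marked point $x_i$, choosing a local $\mathcal{O}_{\mathbb{P}^1,x_i}$-splitting $W\cong V\oplus V'$ reduces the problem to extending a single fibrewise map: given $\overline f\colon V_{x_i}\to W_{x_i}$ with $\overline f(V_{ij})\subseteq W_{ij}$, produce $\overline\phi\in\textrm{End}(W_{x_i})$ with $\overline\phi(W_{ij})\subseteq W_{ij}$ and $\overline\phi|_{V_{x_i}}=\overline f$. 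For this one uses that there is a grading $W_{x_i}=\bigoplus_l W^{(l)}$ realizing the flag $(W_{ij})$ and simultaneously compatible with $V_{x_i}$, in the sense that $V_{x_i}=\bigoplus_l(V_{x_i}\cap W^{(l)})$ — such a grading always exists because the flag on $V_{x_i}$ is the one induced by intersection — and then extends $\overline f$ by zero on a chosen complement of $V_{x_i}\cap W^{(l)}$ inside each $W^{(l)}$; the required inclusions $\overline\phi(W_{ij})\subseteq W_{ij}$ follow immediately from parabolicity of $\overline f$ since $V_{x_i}\cap W^{(l)}\subseteq V_{ij}$ whenever $W^{(l)}\subseteq W_{ij}$. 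This fibrewise extension is the only step that requires genuine care, but it is elementary linear algebra.

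Granting that $d$ is an epimorphism of sheaves, write $\mathscr{K}=\ker d$, a coherent subsheaf (in fact a subbundle) of $\mathscr{E}nd_{\textrm{Par}}(\mathbf{W})$. The two-term complex $C^{\bullet}$, concentrated in degrees $0$ and $1$, then has $\mathscr{K}$ as its only nonzero cohomology sheaf, in degree $0$, hence is quasi-isomorphic to $\mathscr{K}[0]$, so $\mathbb{H}^n(\mathbb{P}^1,C^{\bullet})\cong H^n(\mathbb{P}^1,\mathscr{K})$ for all $n$. In particular $\mathbb{H}^2(\mathbb{P}^1,C^{\bullet})=H^2(\mathbb{P}^1,\mathscr{K})=0$, since coherent cohomology on the curve $\mathbb{P}^1$ vanishes in degrees $>1$. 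Equivalently, one may argue with the hypercohomology spectral sequence $E_1^{p,q}=H^q(\mathbb{P}^1,C^p)\Rightarrow\mathbb{H}^{p+q}(\mathbb{P}^1,C^{\bullet})$: there is no $C^2$ and $H^q(\mathbb{P}^1,-)=0$ for $q\ge 2$, so $\mathbb{H}^2(\mathbb{P}^1,C^{\bullet})$ is the cokernel of $H^1(\mathscr{E}nd_{\textrm{Par}}(\mathbf{W}))\to H^1(\mathscr{H}om_{\textrm{Par}}(\mathbf{V},\mathbf{W}))$, which vanishes because the long exact sequence attached to $0\to\mathscr{K}\to\mathscr{E}nd_{\textrm{Par}}(\mathbf{W})\to\mathscr{H}om_{\textrm{Par}}(\mathbf{V},\mathbf{W})\to 0$ continues with $H^2(\mathbb{P}^1,\mathscr{K})=0$. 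Either way, the crux is the sheaf-level surjectivity of $d$ at the marked points.
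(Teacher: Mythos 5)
The step that fails is the sheaf-level surjectivity of the differential $d\colon \mathscr{E}nd_{\textrm{Par}}(\mathbf{W})\to\mathscr{H}om_{\textrm{Par}}(\mathbf{V},\mathbf{W})$. In the setting of this lemma, $i\colon\mathbf{V}\hookrightarrow\mathbf{W}$ is an inclusion of parabolic bundles only in the sense of an injection on sheaves of sections, and $\mathbf{V}$ carries its own fixed flags: it is neither assumed that $i(V)$ is a subbundle of $W$ nor that the flags of $\mathbf{V}$ are the intersection flags $V_{x_i}\cap W_{ij}$. This generality is forced by the application in the proof of Theorem~\ref{4.0.1}, where $i$ is the map $\mathbf{ker}\,f^2/\mathbf{ker}\,f\to\mathbf{ker}\,f$ induced by $f$, whose image is in general not saturated; the semistable case, where Proposition~\ref{5.2.2} does make the image a parabolic subbundle and the complex collapses to a single bundle, is treated separately in Section 5 precisely because this simplification is unavailable in general. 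Under the actual hypotheses both of your local claims break down. At a point where $i$ drops rank on fibers there is no splitting $W\cong V\oplus V'$, and surjectivity fails already with no parabolic structure: for $V=\mathcal{O}(-x_0)\subset W=\mathcal{O}$ the map $d$ is the inclusion $\mathcal{O}\hookrightarrow\mathcal{O}(x_0)$, whose cokernel is a skyscraper at $x_0$. Even where $i(V)$ is a subbundle, the fibrewise extension at a marked point fails when $V_{ij}\subsetneq V_{x_i}\cap W_{ij}$: take $W_{x_i}=\mathbb{C}^2$ with flag $W_{i1}=\mathbb{C}e_1$, let $V$ be the line subbundle through $e_1$ equipped with the trivial flag $V_{i1}=0$, and let $h$ be the local homomorphism with $h_{x_i}(e_1)=e_2$; then $h$ is a section of $\mathscr{H}om_{\textrm{Par}}(\mathbf{V},\mathbf{W})$ near $x_i$, but any $\phi$ with $\phi\circ i=h$ satisfies $\phi_{x_i}(e_1)=e_2\notin W_{i1}$, so $\phi$ is not parabolic. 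Consequently neither the quasi-isomorphism $C^{\bullet}\simeq\mathscr{K}[0]$ nor the short exact sequence $0\to\mathscr{K}\to\mathscr{E}nd_{\textrm{Par}}(\mathbf{W})\to\mathscr{H}om_{\textrm{Par}}(\mathbf{V},\mathbf{W})\to 0$, on which both versions of your argument rest, is available.

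The conclusion can still be reached in the spirit of your approach, but only with a weaker local statement: the cokernel of $d$, while generally nonzero, is supported on the finite set consisting of $D$ together with the points where $i$ fails to be injective on fibers, hence is a torsion sheaf; since the only possible contributions to $\mathbb{H}^2(\mathbb{P}^1,C^{\bullet})$ are $H^2(\mathbb{P}^1,\ker d)$, which vanishes because $\mathbb{P}^1$ is a curve, and $H^1(\mathbb{P}^1,\mathrm{coker}\,d)$, which vanishes because $\mathrm{coker}\,d$ has finite support, the lemma follows. That, however, is a different proof from the one you wrote, and also different from the paper's: the paper identifies $\mathbb{H}^2(\mathbb{P}^1,C^{\bullet})$ with the cokernel of $H^1(\mathbb{P}^1,\mathscr{E}nd_{\textrm{Par}}(\mathbf{W}))\to H^1(\mathbb{P}^1,\mathscr{H}om_{\textrm{Par}}(\mathbf{V},\mathbf{W}))$ via the exact triangle and then applies Serre duality, reducing everything to the injectivity on global sections of the map given by post-composition with $i$ — which uses only the one property of $i$ that is actually guaranteed, namely injectivity on sheaves.
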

\begin{proof}
Consider the chain complexes
\begin{align*}
& A^{\bullet}: 0 \rightarrow \mathscr{E}nd_{\textrm{Par}}(\mathbf{W}) \\
& B^{\bullet}: 0 \rightarrow \mathscr{H}om_{\textrm{Par}}(\mathbf{V}, \mathbf{W}),
\end{align*}
which are nontrivial only in degree $1$.  Since $i$ induces the obvious chain map, we have an exact triangle $A^{\bullet} \rightarrow B^{\bullet} \rightarrow C^{\bullet}$, which gives rise to the long exact sequence for hypercohomology
$$
\cdots \rightarrow \mathbb{H}^2(\mathbb{P}^1,A^{\bullet}) \rightarrow \mathbb{H}^2(\mathbb{P}^1,B^{\bullet}) \rightarrow \mathbb{H}^2(\mathbb{P}^1,C^{\bullet}) \rightarrow \mathbb{H}^3(\mathbb{P}^1,A^{\bullet}) \rightarrow \cdots .
$$
Since $A^{\bullet}$ and $B^{\bullet}$ are only nontrivial in degree $1$, we have both that $\mathbb{H}^2(\mathbb{P}^1, A^{\bullet}) = H^1(\mathbb{P}^1, \mathscr{E}nd_{\textrm{Par}}(\mathbf{W}))$ and $\mathbb{H}^2(\mathbb{P}^1,B^{\bullet}) = H^1(\mathbb{P}^1, \mathscr{H}om_{\textrm{Par}}(\mathbf{V}, \mathbf{W}))$. We also obtain that $\mathbb{H}^3(\mathbb{P}^1,A^{\bullet}) = 0$.  Hence, it follows that we have the exact sequence
$$
H^1(\mathbb{P}^1, \mathscr{E}nd_{\textrm{Par}}(\mathbf{W})) \rightarrow H^1(\mathbb{P}^1, \mathscr{H}om_{\textrm{Par}}(\mathbf{V}, \mathbf{W})) \rightarrow  \mathbb{H}^2(\mathbb{P}^1, C^{\bullet}) \rightarrow 0.
$$
Therefore, it follows $\mathbb{H}^2(\mathbb{P}^1,C^{\bullet})$ is the cokernel of $i^*: H^1(\mathbb{P}^1, \mathscr{E}nd_{\textrm{Par}}(\mathbf{W})) \rightarrow H^1(\mathbb{P}^1, \mathscr{H}om_{\textrm{Par}}(\mathbf{V}, \mathbf{W}))$.  Applying Serre Duality, we obtain that $\mathbb{H}^2(\mathbb{P}^1,C^{\bullet})$ is isomorphic to the dual of the kernel of
$$
H^0(\mathbb{P}^1, \mathscr{H}om_{\textrm{Par}}(\mathbf{W}, \mathbf{V})\otimes \Omega^1) \rightarrow H^0(\mathbb{P}^1, \mathscr{E}nd_{\textrm{Par}}(\mathbf{W})\otimes \Omega^1).
$$
However, this map comes from the inclusion of $\mathscr{H}om_{\textrm{Par}}(\mathbf{W}, \mathbf{V}) \hookrightarrow \mathscr{E}nd_{\textrm{Par}}(\mathbf{W})$, which is induced by $i$.  Therefore, the map is injective, so the kernel is trivial.  Thus,  $\mathbb{H}^2(\mathbb{P}^1,C^{\bullet}) = 0$.
\end{proof}

Let $\mathbf{V}$ be a parabolic bundle over $\mathbb{P}^1$ and let $\mathcal{P}_{\mathbf{V}} = \mathcal{P}_{\mathbf{V}}(D,w,\alpha)$ be the algebraic stack consisting of pairs
$\{\mathbf{W}, i: \mathbf{V} \hookrightarrow \mathbf{W} \}$, where $i$ is an inclusion of parabolic bundles and $\mathbf{W}$ is a parabolic bundle of weight type $(D,w)$ and dimension vector $\alpha$.

\begin{lmm}
\label{4.3.3}
Either $\mathcal{P}_{\mathbf{V}}(D,w,\alpha)$ is empty or we have 
$$\dimn \mathcal{P}_{\mathbf{V}}(D,w,\alpha) = \chi(\mathscr{H}om_{\textrm{Par}}(\mathbf{V}, \mathbf{W})) -  \chi (\mathscr{E}nd_{\textrm{Par}}(\mathbf{W})).$$
\end{lmm}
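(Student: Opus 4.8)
The plan is to realize $\mathcal{P}_{\mathbf{V}}(D,w,\alpha)$ as a smooth algebraic stack whose tangent complex is the two-term complex $C^{\bullet}$, and then to read off its dimension as an Euler characteristic. Fix a point $(\mathbf{W},i)$, so that $\mathbf{V}$ is held fixed and $i\colon\mathbf{V}\hookrightarrow\mathbf{W}$ is an inclusion of parabolic bundles with $\mathbf{W}$ of weight type $(D,w)$ and dimension vector $\alpha$. An infinitesimal deformation of the pair $(\mathbf{W},i)$ consists of a deformation of $\mathbf{W}$ (a \v{C}ech $1$-cocycle valued in $\mathscr{E}nd_{\textrm{Par}}(\mathbf{W})$) together with a deformation of the morphism $i$ (a section of $\mathscr{H}om_{\textrm{Par}}(\mathbf{V},\mathbf{W})$, since $\mathbf{V}$ does not move); the compatibility of the two and the action of $\mathrm{Aut}(\mathbf{W})$ on $i$ by reparametrization are encoded exactly by the differential $\psi\mapsto\psi\circ i$ of $C^{\bullet}$, placed in degrees $0$ and $1$. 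So I would invoke the standard deformation--obstruction formalism to identify $\mathbb{H}^{0}(\mathbb{P}^1,C^{\bullet})$ with the infinitesimal automorphisms of $(\mathbf{W},i)$, $\mathbb{H}^{1}(\mathbb{P}^1,C^{\bullet})$ with the first-order deformations, and $\mathbb{H}^{2}(\mathbb{P}^1,C^{\bullet})$ with the obstruction space; note also that injectivity of $i$ is an open condition, so it does not affect the local structure.

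Next I would apply Lemma \ref{4.3.2}: since $\mathbb{H}^{2}(\mathbb{P}^1,C^{\bullet})=0$, the deformation functor at every point is unobstructed, hence $\mathcal{P}_{\mathbf{V}}(D,w,\alpha)$ is smooth and
$$
\dimn_{(\mathbf{W},i)}\mathcal{P}_{\mathbf{V}}(D,w,\alpha)=\dimn\mathbb{H}^{1}(\mathbb{P}^1,C^{\bullet})-\dimn\mathbb{H}^{0}(\mathbb{P}^1,C^{\bullet}).
$$
Since $C^{\bullet}$ is a complex of vector bundles on $\mathbb{P}^1$ concentrated in degrees $0$ and $1$ (Section \ref{4.1}), one has $\mathbb{H}^{n}(\mathbb{P}^1,C^{\bullet})=0$ for $n<0$ or $n>2$, and the hypercohomology spectral sequence gives
$$
\dimn\mathbb{H}^{0}-\dimn\mathbb{H}^{1}+\dimn\mathbb{H}^{2}=\chi(\mathscr{E}nd_{\textrm{Par}}(\mathbf{W}))-\chi(\mathscr{H}om_{\textrm{Par}}(\mathbf{V},\mathbf{W})).
$$
Combining this with $\mathbb{H}^{2}=0$ yields
$$
\dimn_{(\mathbf{W},i)}\mathcal{P}_{\mathbf{V}}(D,w,\alpha)=\chi(\mathscr{H}om_{\textrm{Par}}(\mathbf{V},\mathbf{W}))-\chi(\mathscr{E}nd_{\textrm{Par}}(\mathbf{W})),
$$
which is the asserted formula. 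Finally I would note that, by the Riemann--Roch computation of Section \ref{4.1}, the right-hand side depends only on $\deg W$ together with the fixed discrete data, hence is locally constant on the stack; so, evaluating at any point when $\mathcal{P}_{\mathbf{V}}(D,w,\alpha)$ is nonempty, this gives $\dimn\mathcal{P}_{\mathbf{V}}(D,w,\alpha)$.

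The hard part will be the first step: verifying carefully that $C^{\bullet}$ is genuinely the tangent complex of $\mathcal{P}_{\mathbf{V}}$, i.e. correctly matching $\mathbb{H}^{0},\mathbb{H}^{1},\mathbb{H}^{2}$ with automorphisms, first-order deformations, and obstructions of the pair $(\mathbf{W},i)$ with $\mathbf{V}$ frozen. A clean way to organize this is to present $C^{\bullet}$ as the cone of the composition-with-$i$ map between the one-term complexes $A^{\bullet}$ and $B^{\bullet}$ from the proof of Lemma \ref{4.3.2}, and to compare the deformation theory of $(\mathbf{W},i)$ with that of $\mathbf{W}$ alone and of the single morphism $i$ via the long exact hypercohomology sequence; Lemma \ref{4.3.2} then removes all obstructions. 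Everything after that is routine Euler-characteristic bookkeeping together with the Riemann--Roch formulas already recorded in Section \ref{4.1}.
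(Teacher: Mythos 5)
Your argument is essentially the paper's own proof: you identify the deformation theory of the pair $(\mathbf{W},i)$ (with $\mathbf{V}$ fixed) as governed by the hypercohomology of $C^{\bullet}$, use Lemma \ref{4.3.2} to kill $\mathbb{H}^{2}$, and then extract the dimension as $\dimn\mathbb{H}^{1}-\dimn\mathbb{H}^{0}=-\chi(C^{\bullet})$ by Euler-characteristic bookkeeping. The only cosmetic difference is that you compute $\chi(C^{\bullet})$ via the hypercohomology spectral sequence rather than via additivity on the exact triangle $A^{\bullet}\rightarrow B^{\bullet}\rightarrow C^{\bullet}$, which is an equivalent calculation.
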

\begin{proof}

Assume that $\mathcal{P}_{\mathbf{V}}$ is nonempty.  The dimension of $\mathcal{P}_{\mathbf{V}}$ is equal to the dimension of the corresponding tangent complex.  We compute its dimension  by considering the deformations of $(\mathbf{W}, i) \in \mathcal{P}_{\mathbf{V}}$.  These deformations are governed by the hypercohomology of the complex $C^{\bullet}$, defined above.  It follows that
$$
\textrm{dim } \mathcal{P}_{\mathbf{V}} = \textrm{dim } \mathbb{H}^1(\mathbb{P}^1, C^{\bullet}) - \textrm{dim } \mathbb{H}^0(\mathbb{P}^1, C^{\bullet}),
$$
since $\mathbb{H}^2(C^{\bullet}) = 0$ by Lemma \ref{4.3.2}. 

Let $\chi(D^{\bullet})$ denote the Euler characteristic of the hypercohomology of a complex of sheaves $D^{\bullet}$ and let $A^{\bullet}, B^{\bullet}$ be as in Lemma \ref{4.3.2}. Since $\chi (D^{\bullet})$ additive on exact triangles, we have that
$$
\chi(C^{\bullet})  = \chi(B^{\bullet}) - \chi(A^{\bullet}).
$$
Moreover, because $\chi(B^{\bullet}) =  - \chi(\mathscr{H}om_{\textrm{Par}}(\mathbf{V}, \mathbf{W}))$ and $\chi(A^{\bullet}) = -\chi (\mathscr{E}nd_{\textrm{Par}}(\mathbf{W}))$, we can simplify this to
$$
\chi(C^{\bullet}) = \chi (\mathscr{E}nd_{\textrm{Par}}(\mathbf{W})) - \chi(\mathscr{H}om_{\textrm{Par}}(\mathbf{V}, \mathbf{W})).
$$
By Lemma \ref{4.3.2},  $\textrm{dim } \mathcal{P}_{\mathbf{V}} = - \chi(C^{\bullet})$.  Thus,
$$
\textrm{dim } \mathcal{P}_{\mathbf{V}} = \chi(\mathscr{H}om_{\textrm{Par}}(\mathbf{V}, \mathbf{W})) -  \chi (\mathscr{E}nd_{\textrm{Par}}(\mathbf{W})).
$$
\end{proof}

Let $\mathbf{F}, \mathbf{G}$ be parabolic bundles over $\mathbb{P}^1$, and let $g$ be an endomorphism of $\mathbf{G}$.  Let $D^{\bullet}$ be the following chain complex: 
$$
\mathscr{H}om_{\textrm{Par}}(\mathbf{G}, \mathbf{F}) \rightarrow \mathscr{H}om_{\textrm{Par}}(\mathbf{G}, \mathbf{F}),
$$
where the connecting map is induced by $g$.
\begin{lmm}
\label{4.3.4}
We can compute the following: $\dimn \mathbb{H}^1(\mathbb{P}^1, D^{\bullet}) - \dimn \mathbb{H}^0(\mathbb{P}^1, D^{\bullet}) = \dimn H^1(\mathbb{P}^1, \mathscr{H}om_{\textrm{Par}}(\textbf{ker } g, \mathbf{F}))$.
\end{lmm}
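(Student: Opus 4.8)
The plan is to run the hypercohomology spectral sequence of $D^\bullet$ on $\mathbb P^1$ and reduce the statement to a comparison, up to a torsion sheaf, between the cokernel of the differential of $D^\bullet$ and $\mathscr{H}om_{\textrm{Par}}(\textbf{ker}\,g,\mathbf F)$. I regard $D^\bullet$ as placed in degrees $0$ and $1$, as in Lemma \ref{4.3.3}, with differential the precomposition $\phi\mapsto\phi\circ g$; this maps $\mathscr{H}om_{\textrm{Par}}(\mathbf G,\mathbf F)$ to itself because $g$, being an endomorphism of the parabolic bundle $\mathbf G$, satisfies $g_{x_i}(G_{ij})\subseteq G_{ij}$. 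The first observation is that the two terms of $D^\bullet$ are literally the same coherent sheaf, so $\chi(\mathbb P^1,D^\bullet)=\chi(\mathbb P^1,D^0)-\chi(\mathbb P^1,D^1)=0$; since $D^\bullet$ sits in degrees $0,1$ and $\mathbb P^1$ has cohomological dimension $1$ for coherent sheaves, $\mathbb H^{\ge 3}(\mathbb P^1,D^\bullet)=0$, hence $\dim\mathbb H^1(\mathbb P^1,D^\bullet)-\dim\mathbb H^0(\mathbb P^1,D^\bullet)=\dim\mathbb H^2(\mathbb P^1,D^\bullet)$. So it is enough to show $\dim\mathbb H^2(\mathbb P^1,D^\bullet)=\dim H^1(\mathbb P^1,\mathscr{H}om_{\textrm{Par}}(\textbf{ker}\,g,\mathbf F))$.

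Next I would compute $\mathbb H^2(\mathbb P^1,D^\bullet)$ from the spectral sequence $E_2^{p,q}=H^p(\mathbb P^1,\mathcal H^q(D^\bullet))\Rightarrow\mathbb H^{p+q}(\mathbb P^1,D^\bullet)$. Its only possibly nonzero terms have $p\in\{0,1\}$ and $q\in\{0,1\}$, so it degenerates at $E_2$ and $\mathbb H^2(\mathbb P^1,D^\bullet)=E_2^{1,1}=H^1(\mathbb P^1,\mathcal H^1(D^\bullet))$, where $\mathcal H^1(D^\bullet)=\textrm{coker}\bigl(\mathscr{H}om_{\textrm{Par}}(\mathbf G,\mathbf F)\xrightarrow{\circ g}\mathscr{H}om_{\textrm{Par}}(\mathbf G,\mathbf F)\bigr)$.

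The heart of the argument is then the comparison of this cokernel with $\mathscr{H}om_{\textrm{Par}}(\textbf{ker}\,g,\mathbf F)$. On the smooth curve $\mathbb P^1$ the sheaf kernel $\ker g\subseteq G$ is a sub-bundle, which I equip with the parabolic structure $\textbf{ker}\,g$ induced from $\mathbf G$ (flags $(\ker g)_{x_i}\cap G_{ij}$), and restriction of homomorphisms gives $\rho\colon\mathscr{H}om_{\textrm{Par}}(\mathbf G,\mathbf F)\to\mathscr{H}om_{\textrm{Par}}(\textbf{ker}\,g,\mathbf F)$. I would check two things. First, $\rho$ is surjective: locally one chooses a splitting $G=\ker g\oplus C$ whose fibre $C_{x_i}$ at each marked point is in general position with respect to the flag $\{G_{ij}\}$, so that the projection $\pi\colon G\to\ker g$ carries $G_{ij}$ into $(\ker g)_{x_i}\cap G_{ij}$, and then $\psi\mapsto\psi\circ\pi$ is a local section of $\rho$. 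Second, $\textrm{im}(\circ g)\subseteq\ker\rho$ since $g$ kills $\ker g$, and the quotient $T=\ker\rho/\textrm{im}(\circ g)$ is a torsion sheaf: over the generic point any homomorphism vanishing on $\ker g$ factors through $G/\ker g\cong\textrm{im}\,g$, a direct summand of the generic fibre of $G$, hence is of the form $\psi\circ g$. This produces an exact sequence $0\to T\to\mathcal H^1(D^\bullet)\to\mathscr{H}om_{\textrm{Par}}(\textbf{ker}\,g,\mathbf F)\to 0$ with $T$ of finite length; applying $H^\bullet(\mathbb P^1,-)$ and using $H^{\ge 1}(\mathbb P^1,T)=0$ gives $H^1(\mathbb P^1,\mathcal H^1(D^\bullet))\cong H^1(\mathbb P^1,\mathscr{H}om_{\textrm{Par}}(\textbf{ker}\,g,\mathbf F))$, which together with the first two paragraphs proves the lemma.

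The hypercohomology bookkeeping is formal; the hard part will be the surjectivity of $\rho$ onto the \emph{parabolic} Hom sheaf, which requires the local analysis at the marked points and the choice of flag-compatible splittings of $\mathbf G$. Once that is in place, the discrepancy between $\textrm{im}(\circ g)$ and $\ker\rho$ is concentrated on the finitely many points where $g$ drops rank and therefore does not affect $H^1$.
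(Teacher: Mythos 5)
Your argument is correct, but it takes a genuinely different route from the paper. The paper also first reduces the statement to computing $\dimn \mathbb{H}^2(\mathbb{P}^1, D^{\bullet})$ via the vanishing of the Euler characteristic, but then applies Serre duality to the two-term complex, identifying $\mathbb{H}^2$ with the dual of $H^0$ of the kernel sheaf of post-composition with $g\otimes \textrm{Id}$ on $\mathscr{H}om_{\textrm{Par}}(\mathbf{F}, \mathbf{G}\otimes\Omega^1_{\mathbb{P}^1})$, namely $H^0(\mathbb{P}^1, \mathscr{H}om_{\textrm{Par}}(\mathbf{F}, \textbf{ker}\,g)\otimes\Omega^1_{\mathbb{P}^1})$, and then dualizes once more. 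You instead compute $\mathbb{H}^2$ from the second hypercohomology spectral sequence as $H^1(\mathbb{P}^1, \mathcal{H}^1(D^{\bullet}))$ and compare the cokernel sheaf $\mathcal{H}^1(D^{\bullet})$ directly with $\mathscr{H}om_{\textrm{Par}}(\textbf{ker}\,g, \mathbf{F})$, showing the restriction map $\rho$ is surjective and that the discrepancy $\ker\rho/\textrm{im}(\circ g)$ is a torsion sheaf, hence invisible in $H^1$. What your route buys is that it stays entirely on the source side and never dualizes parabolic Hom sheaves (a step that requires care, since duality for parabolic morphism sheaves ordinarily involves strongly parabolic morphisms), and it yields a canonical isomorphism $\mathbb{H}^2 \cong H^1(\mathbb{P}^1, \mathscr{H}om_{\textrm{Par}}(\textbf{ker}\,g,\mathbf{F}))$ rather than only an equality of dimensions; the cost is the local work at the marked points and the torsion comparison, which the paper's duality argument avoids. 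One small point to make precise: the phrase ``in general position'' for the fibre $C_{x_i}$ is misleading, since a generic complement of $(\textbf{ker}\,g)_{x_i}$ will in general \emph{not} project $G_{ij}$ into $(\textbf{ker}\,g)_{x_i}\cap G_{ij}$ (already for a single line not contained in the kernel); what you need, and what does exist by an elementary descending induction on the flag, is a complement $C_{x_i}$ adapted to the flag in the sense that $G_{ij} = \bigl((\textbf{ker}\,g)_{x_i}\cap G_{ij}\bigr)\oplus\bigl(C_{x_i}\cap G_{ij}\bigr)$ for all $j$; with that wording fixed, your surjectivity claim for $\rho$, the generic identification of $\ker\rho$ with $\textrm{im}(\circ g)$, and hence the whole proof, are sound.
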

\begin{proof}
Since $D^{\bullet}$ consists of two copies of $\mathscr{H}om_{\textrm{Par}}(\mathbf{G}, \mathbf{F})$ we can see (by the argument from Lemma \ref{4.3.2}) that the Euler characteristic for hypercohomology is $0$.  That is, we have:
$$
\dimn \mathbb{H}^1(\mathbb{P}^1, D^{\bullet}) - \dimn \mathbb{H}^0(\mathbb{P}^1, D^{\bullet}) = \dimn \mathbb{H}^2(\mathbb{P}^1, D^{\bullet}).
$$
By Serre duality, $\mathbb{H}^2(\mathbb{P}^1, D^{\bullet})$ is isomorphic to $\mathbb{H}^0$ for the complex
$$
\mathscr{H}om_{\textrm{Par}}(\mathbf{F}, \mathbf{G}\otimes \Omega^1_{\mathbb{P}^1}) \rightarrow \mathscr{H}om_{\textrm{Par}}(\mathbf{F}, \mathbf{G}\otimes \Omega^1_{\mathbb{P}^1}),
$$
where the connecting map is induced by $g\otimes \text{Id}$.  However, by definition, this is just: 
$$
H^0(\mathbb{P}^1, \mathscr{H}om_{\textrm{Par}}(\mathbf{F}, (\textbf{ker } g)\otimes \Omega^1_{\mathbb{P}^1})) \cong H^0(\mathbb{P}^1, \mathscr{H}om_{\textrm{Par}}(\mathbf{F}, \textbf{ker } g)\otimes \Omega^1_{\mathbb{P}^1}).
$$
Applying Serre duality, we get: 
$$
\dimn \mathbb{H}^1(\mathbb{P}^1, D^{\bullet}) - \dimn \mathbb{H}^0(\mathbb{P}^1, D^{\bullet}) = \dimn \mathbb{H}^2(\mathbb{P}^1,D^{\bullet}) = \dimn H^1(\mathbb{P}^1, \mathscr{H}om_{\textrm{Par}}(\textbf{ker } g, \mathbf{F})).
$$
\end{proof}
  
Now we can proceed with the proof of our key argument:
\begin{proof}[Proof of Theorem \ref{4.0.1}]
Let $(\mathbf{E}, f)$ be a point of $\mathcal{N}(D,w, \alpha)$.  Let $\mathbf{F} = \mathbf{ker} \ f$ and $\mathbf{G} = \mathbf{E}/ \mathbf{F}$.  We wish to prove this theorem by induction on the rank of the vector bundle $E$ (note that this is $\alpha_0$ in our notation).  To that end, it suffices to prove that for all $\beta$ we have:
$$
\textrm{dim } \mathcal{N}_{\beta}(D,w, \alpha) \le -\tilde{q}(\alpha),
$$
where $\mathcal{N}_{\beta}(D,w, \alpha)$ is a substack consisting of objects $(\mathbf{E},f)$ of $\mathcal{N}(D,w, \alpha)$ such that the corresponding $\mathbf{F}$ belongs to $\textrm{Bun}_{D,w, \beta}(X)$.  In order to accomplish this, consider the morphism
$$
\phi: \mathcal{N}_{\beta}(D,w, \alpha) \rightarrow \mathcal{N}(D,w, \alpha - \beta),
$$
which is defined by sending $(\mathbf{E}, f)$ to
$(\mathbf{G}, f|_{\mathbf{G}})$ $\in \mathcal{N}(D, w, \alpha - \beta)$, with corresponding restrictions on the arrows.  In this case, after applying the induction hypothesis, we get
$$
\textrm{dim } \mathcal{N}_{\beta}(D,w, \alpha) \le \textrm{dim } \mathcal{N}_{\beta}(D,w, \alpha)_x - \tilde{q}(\alpha - \beta) ,
$$
for some $x = (\mathbf{G},g) \in \mathcal{N}(D, w, \alpha - \beta)$.  Now, we wish to compute the dimension of the fiber $\mathcal{X} = \mathcal{N}_{\beta}(D,w, \alpha)_x$.  Let $\mathbf{F_1} = \mathbf{ker} \ g$ and let $\mathcal{X}' = \mathcal{P}_{\mathbf{F_1}}(D, w, \beta)$.  In this case, we have two morphisms $\psi_1: \mathcal{X} \rightarrow \textrm{Bun}_{D,w, \beta}(X)$ and $\psi_2: \mathcal{X}' \rightarrow \textrm{Bun}_{D,w, \beta}(X)$, where $\psi_1$ sends the pair $(\mathbf{E},f)$ to $\textbf{ker } f$ and likewise $\psi_2$ sends $(\mathbf{F},i)$ to $\mathbf{F}$. 

The deformations of elements of the fiber $\mathcal{X}_{\mathbf{F}}$ are governed by the hypercohomology of the complex 
$$
\mathscr{H}om_{\textrm{Par}}(\mathbf{G}, \mathbf{F}) \xrightarrow{g} \mathscr{H}om_{\textrm{Par}}(\mathbf{G}, \mathbf{F}),
$$
defined in Lemma \ref{4.3.4}.  Therefore, by Lemma \ref{4.3.4}, we get that:
$$
\textrm{dim } \mathcal{X}_{\mathbf{F}} = \textrm{dim } H^1 (\mathbb{P}^1, \mathscr{H}om_{\textrm{Par}}(\mathbf{F_1}, \mathbf{F})).
$$

Furthermore, since $f$ induces an injective morphism $\textbf{ker } f^2/\textbf{ker }f \rightarrow \textbf{ker } f$, then the fiber 
$\mathcal{X}'_{\mathbf{F}}$ is nonempty.  Therefore, 
$$
\textrm{dim } \mathcal{X}'_{\mathbf{F}} = \textrm{dim } H^0 (\mathbb{P}^1, \mathscr{H}om_{\textrm{Par}}(\mathbf{F_1}, \mathbf{F})).
$$

Thus, $\textrm{dim } \mathcal{X}_{\mathbf{F}}  = \textrm{dim } \mathcal{X}'_{\mathbf{F}} - \chi(\mathscr{H}om_{\textrm{Par}}(\mathbf{F_1}, \mathbf{F}))$.  We have $\textrm{dim } \mathcal{X} = \textrm{dim } \mathcal{X}' - \chi(\mathscr{H}om_{\textrm{Par}}(\mathbf{F_1}, \mathbf{F}))$. So, we obtain
$$
\textrm{dim } \mathcal{N}_{\beta}(D,w, \alpha) \le \textrm{dim } \mathcal{X}'  - \tilde{q}(\alpha - \beta) - \chi(\mathscr{H}om_{\textrm{Par}}(\mathbf{F_1}, \mathbf{F})).
$$
It follows from Lemma \ref{4.3.3} that $\dimn \mathcal{X}' = \chi(\mathscr{H}om_{\textrm{Par}}(\mathbf{F_1}, \mathbf{F})) -  \chi (\mathscr{E}nd_{\textrm{Par}}(\mathbf{F}))$, which means
$$
\textrm{dim } \mathcal{N}_{\beta}(D,w, \alpha) \le -\chi (\mathscr{E}nd_{\textrm{Par}}(\mathbf{F})) - \tilde{q}(\alpha - \beta).
$$
Since $\chi (\mathscr{E}nd_{\textrm{Par}}(\mathbf{F})) = q(\beta)$ and $\tilde{q}(\alpha) \le \tilde{q}(\alpha - \beta) + \tilde{q}(\beta)$ (by Proposition \ref{4.3.1} b)), we can reduce this to
$$
\textrm{dim } \mathcal{N}_{\beta}(D,w, \alpha) \le -q(\beta) +\tilde{q}(\beta) -\tilde{q}(\alpha).
$$
The result follows from Proposition \ref{4.3.1} a). 
\end{proof}

\begin{cor}
\label{4.3.5}
For $\alpha$ lying in the fundamental region, we have $\dimn \mathcal{N}(D,w,\alpha) \le -q(\alpha)$.  If, in addition, $\delta(\alpha) > 0$, then $\dimn (\mathcal{N}(D,w,\alpha) - \mathcal{N}_{\alpha}(D,w,\alpha)) < -q(\alpha).$
\end{cor}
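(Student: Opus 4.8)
The plan is to read off both assertions from the machinery already assembled: Theorem~\ref{4.0.1}, the properties of $\tilde q$ in Proposition~\ref{4.3.1}, the strict quadratic-form inequality of Proposition~\ref{3.0.2}, and the stratification of $\mathcal{N}(D,w,\alpha)$ by the dimension vector of $\mathbf{ker}\,f$ that already appears in the proof of Theorem~\ref{4.0.1}. The first inequality is immediate: Theorem~\ref{4.0.1} gives $\dimn\mathcal{N}(D,w,\alpha)\le -\tilde q(\alpha)$, and since $\alpha$ lies in the fundamental region, Proposition~\ref{4.3.1}(c) gives $\tilde q(\alpha)=q(\alpha)$, so $\dimn\mathcal{N}(D,w,\alpha)\le -q(\alpha)$.

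For the second assertion I would first identify $\mathcal{N}_\alpha(D,w,\alpha)$ with the zero section $\{f=0\}\cong\textrm{Bun}_{D,w,\alpha}(\mathbb{P}^1)$. Indeed, a nilpotent endomorphism $f$ of a vector bundle on $\mathbb{P}^1$ has $\mathbf{ker}\,f$ saturated (the image of $f$ is a subsheaf of $E$, hence torsion-free, so $E/\mathbf{ker}\,f$ is torsion-free), and $f\neq 0$ forces $\mathbf{ker}\,f$ to have rank $<\alpha_0$ since $f$ is nonzero at the generic point; thus $\mathbf{ker}\,f$ can have dimension vector $\alpha$ only when $f=0$. In particular $\dimn\mathcal{N}_\alpha(D,w,\alpha)=-q(\alpha)$, which is exactly why the first inequality is sharp. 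Consequently $\mathcal{N}(D,w,\alpha)-\mathcal{N}_\alpha(D,w,\alpha)$ is the union of the finitely many locally closed pieces $\mathcal{N}_\beta(D,w,\alpha)$ with $\beta\le\alpha$ componentwise, $\beta\neq\alpha$, and $\beta\neq 0$ (nonzero because a nonzero nilpotent endomorphism always has nonzero kernel). It therefore suffices to prove $\dimn\mathcal{N}_\beta(D,w,\alpha)<-q(\alpha)$ for each such $\beta$, after which the dimension of the union is the maximum of these dimensions.

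The estimate established inside the proof of Theorem~\ref{4.0.1} already gives $\dimn\mathcal{N}_\beta(D,w,\alpha)\le -q(\beta)-\tilde q(\alpha-\beta)$. Expanding $\tilde q(\alpha-\beta)=\sum_j q(\gamma_j)$ along an optimal positive decomposition $\alpha-\beta=\sum_j\gamma_j$, the quantity $q(\beta)+\tilde q(\alpha-\beta)$ is the value of $\sum q$ on the decomposition $\alpha=\beta+\sum_j\gamma_j$ of $\alpha$ into $l\ge 2$ nonzero nonnegative dimension vectors. Since $\alpha$ is in the fundamental region and $\delta(\alpha)>0$, Proposition~\ref{3.0.2} applies and yields $p(\alpha)>p(\beta)+\sum_j p(\gamma_j)$; rewriting via $p=1-q$ this becomes $q(\beta)+\tilde q(\alpha-\beta)>q(\alpha)+(l-1)\ge q(\alpha)$, so $\dimn\mathcal{N}_\beta(D,w,\alpha)\le -q(\beta)-\tilde q(\alpha-\beta)<-q(\alpha)$, as required. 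Taking the maximum over the finitely many admissible $\beta$ then gives $\dimn(\mathcal{N}(D,w,\alpha)-\mathcal{N}_\alpha(D,w,\alpha))<-q(\alpha)$.

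I do not anticipate a genuine obstacle here: all of the geometric input is contained in Theorem~\ref{4.0.1}, and all of the combinatorial input in the strict version of the quadratic-form inequality, Proposition~\ref{3.0.2}, both already in hand. The only points needing care are the identification of $\mathcal{N}_\alpha$ with the $f=0$ locus and the finiteness of the set of admissible dimension vectors $\beta$; both are routine, the latter because $0\le\beta\le\alpha$ componentwise leaves only finitely many choices.
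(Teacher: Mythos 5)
Your proposal is correct and follows essentially the same route as the paper: the first inequality from Theorem~\ref{4.0.1} together with Proposition~\ref{4.3.1}(c), and the second from the stratum estimate $\dimn \mathcal{N}_{\beta}(D,w,\alpha) \le -q(\beta) - \tilde{q}(\alpha-\beta)$ established in the proof of Theorem~\ref{4.0.1}, combined with the strict inequality of Proposition~\ref{3.0.2} for $\beta \neq \alpha$. The additional points you spell out (identifying $\mathcal{N}_{\alpha}$ with the $f=0$ locus, excluding $\beta = 0$, and finiteness of the admissible $\beta$) are exactly the routine verifications the paper leaves implicit.
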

\begin{proof}
The first statement clearly follows from Theorem \ref{4.0.1} and Proposition \ref{4.3.1} c).  Now, let $\alpha$ be in the fundamental region and $\delta(\alpha)>0$.  By the proof of Theorem \ref{4.0.1}, 
$$\textrm{dim } \mathcal{N}_{\beta}(D,w, \alpha) \le -q(\beta) - \tilde{q}(\alpha - \beta),$$
for all nonnegative $\beta \le \alpha$.  If $\alpha \neq \beta$, then by Proposition \ref{3.0.2}, $\textrm{dim } \mathcal{N}_{\beta}(D,w, \alpha) < -q(\alpha)$.
\end{proof}

Let $c: \mathcal{P}_{\mathcal{B}} \rightarrow \mathbb{A}^{\alpha_0}$ be the morphism defined by sending the pair $(\mathbf{E}, f)$ to the coefficients of the characteristic polynomial $\textrm{char}(f)$ of $f$.  We will need the following lemma:

\begin{lmm}
\label{4.3.6}
There exists a decomposition into nonnegative dimension vectors $\alpha = \sum_{i=1}^r \beta^{(i)}$ such that $\dimn \mathcal{P}_{\mathcal{B}} = r + \sum_{i=1}^r \dimn \mathcal{N}(D,w,\beta^{(i)})$.
\end{lmm}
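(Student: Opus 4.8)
The plan is to stratify $\mathcal{P}_{\mathcal{B}}$ according to the generalized-eigenspace type of the endomorphism $f$ — equivalently, according to the factorization type of $\mathrm{char}(f)$, which is exactly the data recorded by the map $c$ — and to identify each stratum, up to a surjective quasi-finite morphism, with a configuration space of eigenvalues times a product of the stacks $\mathcal{N}(D,w,-)$. Taking a stratum of top dimension then produces the required decomposition, with the number of parts $r$ being the number of distinct eigenvalues in that stratum.

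The key input I would establish first is the primary-decomposition statement, upgraded from linear algebra to parabolic bundles over $\mathbb{P}^1$. If $(\mathbf{E},f)$ is a pair with $\mathrm{char}(f)=\prod_{i=1}^{r}(t-\lambda_i)^{m_i}$ and the $\lambda_i$ pairwise distinct, then the projectors onto the generalized eigenspaces are polynomials in $f$, hence \emph{global} parabolic endomorphisms of $\mathbf{E}$; they split $E=\bigoplus_{i=1}^{r}E_i$ with $E_i=\ker (f-\lambda_i)^{\alpha_0}$ a saturated subsheaf and hence a sub-bundle of $E$, and since every flag step $E^{i,\ell}$ is $f$-stable it is carried into itself by these projectors and so splits compatibly. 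Thus $\mathbf{E}=\bigoplus_{i=1}^{r}\mathbf{E}_i$ as parabolic bundles, $f$ restricts to $f_i$ on $\mathbf{E}_i$, and $g_i:=f_i-\lambda_i\cdot\mathrm{Id}$ is a nilpotent parabolic endomorphism of $\mathbf{E}_i$; writing $\beta^{(i)}$ for the dimension vector of $\mathbf{E}_i$ one obtains $\alpha=\sum_{i=1}^{r}\beta^{(i)}$ with $\beta^{(i)}_0=m_i\ge 1$. Conversely, from distinct scalars $(\lambda_i)$ and pairs $(\mathbf{E}_i,g_i)\in\mathcal{N}(D,w,\beta^{(i)})$ one reassembles $\big(\bigoplus_i\mathbf{E}_i,\ \bigoplus_i(g_i+\lambda_i\cdot\mathrm{Id})\big)$, and by uniqueness of the primary decomposition these two operations are mutually inverse once the labelling of the summands is fixed; in particular the reassembly identifies automorphism groups on both sides.

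Next, for each tuple $(\beta^{(1)},\dots,\beta^{(r)})$ of nonzero nonnegative dimension vectors with $\sum_i\beta^{(i)}=\alpha$, let $\mathcal{P}^{(\beta^{\bullet})}\subseteq\mathcal{P}_{\mathcal{B}}$ be the locally closed substack of pairs whose primary decomposition has summands of dimension vectors $\beta^{(1)},\dots,\beta^{(r)}$ with distinct eigenvalues; this is cut out by a factorization-type (discriminant) condition on $c$ together with semicontinuous conditions on the flag-intersection dimensions, there are finitely many such substacks (since $r\le\alpha_0$ and $\beta^{(i)}\le\alpha$), and they cover $\mathcal{P}_{\mathcal{B}}$. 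The previous paragraph provides a surjective morphism $\mathrm{Conf}_r(\mathbb{A}^1)\times\prod_{i=1}^{r}\mathcal{N}(D,w,\beta^{(i)})\to\mathcal{P}^{(\beta^{\bullet})}$ which is representable and quasi-finite (the only fibral ambiguity being a permutation of equal summands, the automorphism groups matching on the nose), so $\dim\mathcal{P}^{(\beta^{\bullet})}=\dim\mathrm{Conf}_r(\mathbb{A}^1)+\sum_{i=1}^{r}\dim\mathcal{N}(D,w,\beta^{(i)})=r+\sum_{i=1}^{r}\dim\mathcal{N}(D,w,\beta^{(i)})$. Since $\mathcal{P}_{\mathcal{B}}$ is the union of the $\mathcal{P}^{(\beta^{\bullet})}$, its dimension is the maximum of these quantities, and a tuple achieving the maximum is the asserted decomposition. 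All of this respects the splitting of $\mathcal{P}_{\mathcal{B}}$ and of $\mathcal{N}(D,w,-)$ into degree components because $\deg E=\sum_i\deg E_i$, so the bookkeeping over $d$ goes through and the identity is degree-independent.

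I expect the main obstacle to lie not in any numerics but in the family/stacky version of the primary decomposition: namely, verifying that ``generalized-eigenspace type'' genuinely defines locally closed substacks of $\mathcal{P}_{\mathcal{B}}$ (so that the stratification is legitimate), and that the reassembly map above is an honest surjective representable quasi-finite morphism of algebraic stacks rather than merely a bijection on $\mathbb{C}$-points — this is exactly what upgrades the evident inequality $\dim\mathcal{P}^{(\beta^{\bullet})}\le r+\sum_i\dim\mathcal{N}(D,w,\beta^{(i)})$ to an equality. Once the reassembly morphism is in hand, the dimension count follows from additivity of dimension along a surjective morphism with zero-dimensional fibers, together with $\dim\mathrm{Conf}_r(\mathbb{A}^1)=r$.
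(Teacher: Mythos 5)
Your proposal is correct and follows essentially the same route as the paper: stratify $\mathcal{P}_{\mathcal{B}}$ by the characteristic polynomial via $c$, use the primary decomposition to split each pair $(\mathbf{E},f)$ into parabolic summands $\mathbf{E}_i=\ker(f-\lambda_i)^{m_i}$ carrying nilpotent endomorphisms, identify the stratum (up to the $r$-dimensional choice of distinct eigenvalues) with $\prod_i\mathcal{N}(D,w,\beta^{(i)})$, and take the stratum of maximal dimension. Your treatment is merely more explicit than the paper's about the parabolic compatibility of the projectors and the stack-theoretic status of the strata and reassembly map, but the underlying argument is the same.
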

\begin{proof}
Fix a point of $x \in \mathbb{A}^{\alpha_0}$.  This defines some characteristic polynomial $x(t)= (t -  \lambda_1)^{m_1}(t - \lambda_2)^{m_2} \cdots (t - \lambda_r)^{m_r}$.  Consider $(\mathcal{P}_{\mathcal{B}})_x$, the fiber of $c$ over $x$.  The points of $(\mathcal{P}_{\mathcal{B}})_x$
may be identified with pairs $(\mathbf{E}, f)$, such that $f$ is an endomorphism of the parabolic bundle
$\mathbf{E}$ with $\textrm{char}(f) = x(t)$.  Therefore, $\mathbf{E}$ decomposes as 
$$
\mathbf{E} = \bigoplus_i \textbf{ker}(f-\lambda_i)^{m_i},
$$
and the fiber $(\mathcal{P}_{\mathcal{B}})_x$ is isomorphic to $\prod_i \mathcal{P}_i$.  Here 
$\mathcal{P}_i$ is the substack of pairs $(\mathbf{E}_i, f_i)$, where $\mathbf{E}_i$ is a parabolic bundle and $f_i$ is its endomorphism such that $\textrm{char}(f_i) = (t - \lambda_i)^{m_i}$.  Since $f_i - \lambda_i$
is nilpotent, we can compute 
$$\dimn \mathcal{P}_i = \textrm{dim } \mathcal{N}(D,w,\beta^{(i)}),$$
for some dimension vector $\beta^{(i)} \le \alpha$.  Note that $\alpha = \beta_1 + \cdots + \beta_r$.  Since $c$ maps $(\mathcal{P}_{\mathcal{B}})_x$ to the subvariety consisting of polynomials with $r$ distinct roots, we can compute:
$$
\dimn \mathcal{P}_{\mathcal{B}} = r + \sum_{i=1}^r \dimn \mathcal{N}(D,w,\beta^{(i)}),
$$
for some decomposition $\alpha = \sum_{i=1}^r \beta^{(i)}$ into nonnegative dimension vectors. 
\end{proof}

\begin{proof}[Proof of Theorem 1.2.1]
Suppose $r=1$ in Lemma \ref{4.3.6}.  That is, $\dimn \mathcal{P}_{\mathcal{B}} - 1 = \dimn \mathcal{N}(D,w,\alpha)$.  Let $\mathcal{I}^i$ be components of the inertia stack $\mathcal{I}_{\mathcal{B}}$ as in Theorem \ref{2.2.2}.

Assume that $\dimn \mathcal{I}^{i_0} - 1 = \dimn \mathcal{N}(D,w,\alpha)$ for some $i_0 > 1$.  Otherwise, $\dimn \mathcal{I}^{i} - 1 < \dimn \mathcal{N}(D,w,\alpha)$ for all $i>1$, and we are done by Corollary \ref{4.3.5} and Corollary \ref{2.2.4}.

 Let $\mathcal{B}^{i_0}$ be the substack of $\textrm{Bun}_{D,w, \alpha}(X)$ consisting of vector bundles with $i_0$-dimensional endomomorphism groups.  By the proof of Lemma \ref{4.3.6}, the subspace of nilpotent endomorphisms of an element in $\mathcal{B}^{i_0}$ has dimension $i-1$.

Recall from the proof of Theorem \ref{2.2.2} that $\dimn \mathcal{I}^i = \dimn \mathcal{B}^i + i$.  Since the stack $\mathcal{N}_{\alpha}(D,w,\alpha)$ may be interpreted as pairs $(\mathbf{E}, f)$, where $\mathbf{E}$ is a parabolic bundle and $f$ is the zero endomorphism, we have: 
$$
 \dimn (\mathcal{I}_{\mathcal{B}} - \coprod_{i = 0}^1 \mathcal{I}^i) - 1 = \dimn \mathcal{I}^{i_0} - 1 = \dimn \mathcal{B}^{i_0} + i - 1 = \dimn (\mathcal{N}(D,w,\alpha) - \mathcal{N}_{\alpha}(D,w,\alpha)).
$$
 Therefore, by Corollary \ref{4.3.5} and Corollary \ref{2.2.4}, $\textrm{Bun}_{D,w, \alpha}(X)$ is almost very good.

Now, suppose $r \ge 2$ in Lemma \ref{4.3.6}.  In this case, by Proposition  \ref{3.0.2}, Lemma \ref{4.3.6}, and Corollary \ref{4.3.5}, we have that:
$$
\dimn \mathcal{I}_{\mathcal{B}}  = \dimn \mathcal{P}_{\mathcal{B}} \le \sum_{i=1}^r p(\beta^{(i)}) < p(\alpha).
$$
Therefore, $\dimn \mathcal{I}_{\mathcal{B}} - 1 < \dimn \textrm{Bun}_{D,w, \alpha}(X)$.  It follows from Corollary \ref{2.2.4} that $\textrm{Bun}_{D,w, \alpha}(X)$ is almost very good.
\end{proof}

\section{Stability for Parabolic Bundles}

\subsection{Outline}
\label{5.0}
We wish to define stability and semistability for parabolic bundles similarly to how they are defined for vector bundles without parabolic structure.  Following \cite{MS1980}, we extend the definition of the degree for vector bundles over a curve $X$ to parabolic bundles by introducing additional parameters $\theta$ called \textit{weights}.  We can define the \textit{slope} of a parabolic bundle using the rank and the parabolic degree.  Stability and semistablility are introduced following the usual definition for vector bundles.

Let $X = \mathbb{P}^1$.  If we restrict ourselves to the open substack $\textrm{Bun}_{D, w,\alpha}^{\theta, ss}(X)$ of semistable parabolic bundles in $\textrm{Bun}_{D, w,\alpha}(X)$, we can simplify the proof of Theorem \ref{4.0.1}, the key argument in the proof of Theorem 1.2.1.  To be precise, recall that
$$
\tilde{q}(\alpha) = \textrm{min} \sum q(\gamma_i),
$$
where the minimum is taken over all positive, finite decompositions $\alpha  = \sum_i \gamma_i$ and $\mathcal{N}(D,w,d,\alpha)$ is the stack of pairs $(\mathbf{E},f)$, such that $\mathbf{E} \in \textrm{Bun}_{D, w,\alpha}(X)$ and $f$ is a nilpotent endomorphism of $\mathbf{E}$.  Let $\mathcal{N}^{\theta, ss}(D,w,d,\alpha)$ be the open substack of pairs $(\mathbf{E},f) \in \mathcal{N}(D,w,d,\alpha)$, such that $\mathbf{E}$ is semistable.  We have the following equivalent of Theorem \ref{4.0.1} for semistable parabolic bundles:
\begin{thm}
\label{5.0.1}
We have the inequality $\dimn \mathcal{N}^{\theta, ss}(D,w, \alpha) \le -\tilde{q}(\alpha)$.
\end{thm}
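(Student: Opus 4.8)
Since $\mathcal{N}^{\theta, ss}(D,w,\alpha)$ is an open substack of $\mathcal{N}(D,w,\alpha)$, the inequality is of course immediate from Theorem \ref{4.0.1}; the point here is to give an independent argument, so that one obtains a route to Theorem 1.2.1 through the semistable locus that does not invoke the machinery of Section \ref{4.3} (the auxiliary stacks $\mathcal{P}_{\mathbf V}$ and Lemmas \ref{4.3.2}, \ref{4.3.3}). The plan is therefore to rerun the induction on $\textrm{rk}\, E = \alpha_0$ from the proof of Theorem \ref{4.0.1}, using semistability to make the relevant morphism have rigid fibers.

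The key structural input is that semistability propagates along the construction. Let $(\mathbf E, f)$ be a point of $\mathcal{N}^{\theta, ss}(D,w,\alpha)$, put $\mu = \mu_{\textrm{par}}(\mathbf E)$, and set $\mathbf F = \mathbf{ker}\, f$, $\mathbf G = \mathbf E / \mathbf F$. The kernel of a map of vector bundles on a smooth curve is saturated, so $\mathbf F$ is a parabolic subbundle and $\mathbf G$ a parabolic quotient bundle; moreover, since the semistable parabolic bundles of a fixed slope form an abelian category closed under sub- and quotient-objects (of that same slope), applying this to the endomorphism $f$ of $\mathbf E$ shows that $\mathbf F$ and $\mathbf G$ are again semistable of slope $\mu$. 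Consequently the morphism
$$
\phi \colon \mathcal{N}^{\theta,ss}_\beta(D,w,\alpha) \longrightarrow \mathcal{N}^{\theta,ss}(D,w,\alpha - \beta), \qquad (\mathbf E, f) \longmapsto (\mathbf G, f|_{\mathbf G}),
$$
is well defined, where $\mathcal{N}^{\theta,ss}_\beta$ is the locally closed substack on which $\mathbf F$ has dimension vector $\beta$, and the target really does contain the image thanks to semistability of $\mathbf G$. If $f \neq 0$ then $\mathbf F$ is a proper nonzero subbundle, so $1 \le \beta_0 < \alpha_0$ and the inductive hypothesis applies to $\mathcal{N}^{\theta,ss}(D,w,\alpha - \beta)$; the remaining case $f = 0$ is $\mathcal{N}^{\theta,ss}_\alpha = \textrm{Bun}^{\theta,ss}_{D,w,\alpha}(\mathbb P^1)$, of dimension $\le -q(\alpha) \le -\tilde q(\alpha)$.

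It remains to bound a fiber $\mathcal X = \phi^{-1}(\mathbf G, g)$. Exactly as in the proof of Theorem \ref{4.0.1}, $\mathcal X$ maps to $\textrm{Bun}^{\theta,ss}_{D,w,\beta}(\mathbb P^1)$ by $(\mathbf E, f) \mapsto \mathbf{ker}\, f$, and by Lemma \ref{4.3.4} the fiber over $\mathbf F$ has dimension $\dim H^1(\mathbb P^1, \mathscr{H}om_{\textrm{Par}}(\mathbf{ker}\, g, \mathbf F))$. This is where semistability is used: applying the structural input to $(\mathbf G, g)$, the bundle $\mathbf{ker}\, g$ is semistable of slope $\mu$, as is $\mathbf F$, so Serre duality rewrites this as $\dim H^0(\mathbb P^1, \mathscr{H}om_{\textrm{Par}}(\mathbf F, \mathbf{ker}\, g \otimes \Omega^1_{\mathbb P^1}))$, which vanishes because a nonzero parabolic morphism from a semistable parabolic bundle of slope $\mu$ to one of slope $\mu - 2$ cannot exist. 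Hence every fiber of $\phi$ has dimension $\le \dim \textrm{Bun}^{\theta,ss}_{D,w,\beta}(\mathbb P^1) \le -q(\beta)$, and combining with the inductive bound on the image,
$$
\dim \mathcal{N}^{\theta,ss}_\beta(D,w,\alpha) \;\le\; -q(\beta) + \dim \mathcal{N}^{\theta,ss}(D,w,\alpha - \beta) \;\le\; -q(\beta) - \tilde q(\alpha - \beta) \;\le\; -\tilde q(\beta) - \tilde q(\alpha - \beta) \;\le\; -\tilde q(\alpha),
$$
using Proposition \ref{4.3.1}(a) and (b); taking the maximum over the finitely many $\beta \le \alpha$ finishes the induction.

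I expect the main obstacle to be the parabolic-weight bookkeeping behind the two uses of semistability: checking that the parabolic structures induced on $\mathbf{ker}\, f$ and on $\mathbf E/\mathbf{ker}\, f$ are the ones for which parabolic degree is additive in the short exact sequence (so that ``semistable parabolic bundles of a fixed slope form an abelian category'' applies literally to $f$ and places $\mathbf G$ in $\mathcal{N}^{\theta,ss}(D,w,\alpha - \beta)$), and confirming that the form of Serre duality for $\mathscr{H}om_{\textrm{Par}}$ invoked above is exactly the one underlying Lemma \ref{4.3.4}, so that the vanishing $H^1 = 0$ genuinely follows from the slope inequality between semistable parabolic bundles. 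Everything else is a direct transcription of the argument in Section \ref{4.3} with the $\mathcal{P}_{\mathbf V}$-step deleted.
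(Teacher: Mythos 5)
You have correctly mirrored the paper's structural steps (that $\ker f$ and $\mathbf{E}/\ker f$ are again $\theta$-semistable of the same slope, and that $\phi$ is well defined), but the shortcut at the fiber estimate contains a genuine gap: the vanishing $H^1(\mathbb{P}^1,\mathscr{H}om_{\textrm{Par}}(\ker g,\mathbf{F}))=0$ for semistable parabolic bundles of equal slope is false. Serre duality for the parabolic Hom sheaf does not take the naive form you invoke: the coherent dual of $\mathscr{H}om_{\textrm{Par}}(\mathbf{F}_1,\mathbf{F})$ is not $\mathscr{H}om_{\textrm{Par}}(\mathbf{F},\mathbf{F}_1)$ but the sheaf of \emph{strongly} parabolic morphisms $\mathbf{F}\to\mathbf{F}_1$ twisted by $\mathcal{O}(D)$, so the dual of $H^1$ is a space of strongly parabolic maps $\mathbf{F}\to\mathbf{F}_1\otimes\Omega^1(D)$, whose target has parabolic slope $\mu+k-2$ rather than $\mu-2$; the slope argument then yields nothing. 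That the vanishing genuinely fails can be seen without adjudicating any duality: take $\alpha=2\beta$ and $g=0$, so $\mathbf{F}_1=\mathbf{G}$, and take $\mathbf{F}=\mathbf{G}$ a $\theta$-stable parabolic bundle with $p(\beta)>0$; then $h^0(\mathscr{E}nd_{\textrm{Par}}(\mathbf{F}))=1$ and $\chi(\mathscr{E}nd_{\textrm{Par}}(\mathbf{F}))=q(\beta)$, so $h^1(\mathscr{E}nd_{\textrm{Par}}(\mathbf{F}))=p(\beta)>0$. These classes are realized by honest points of $\mathcal{N}^{\theta,ss}$: any extension $0\to\mathbf{F}\to\mathbf{E}\to\mathbf{F}\to0$ is again semistable of the same slope, and the composite $\mathbf{E}\twoheadrightarrow\mathbf{F}\hookrightarrow\mathbf{E}$ is a nilpotent parabolic endomorphism with kernel $\mathbf{F}$; so the fiber of $(\mathbf{E},f)\mapsto\ker f$ over $\mathbf{F}$ has dimension exactly $h^1(\mathscr{H}om_{\textrm{Par}}(\mathbf{G},\mathbf{F}))>0$ (this is what Lemma \ref{4.3.4} gives with zero differential), and your bound $\dimn\mathcal{X}\le\dimn\textrm{Bun}^{\theta,ss}_{D,w,\beta}(\mathbb{P}^1)$ breaks down.

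This is precisely why the paper's proof of Theorem \ref{5.0.1} keeps the auxiliary stack $\mathcal{X}'=\mathcal{P}_{\mathbf{F}_1}(D,w,\beta,a)$: its fibers over $\mathbf{F}$ have dimension $h^0(\mathscr{H}om_{\textrm{Par}}(\mathbf{F}_1,\mathbf{F}))$, and the nonvanishing $h^1$ is cancelled against this $h^0$ through the Euler characteristic, $\dimn\mathcal{X}=\dimn\mathcal{X}'-\chi(\mathscr{H}om_{\textrm{Par}}(\mathbf{F}_1,\mathbf{F}))$, which leads to $\dimn\mathcal{N}_{\beta,a}(D,w,\alpha)\le-q(\beta)-\tilde{q}(\alpha-\beta)$. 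The simplification that semistability actually buys in the paper is different from the one you propose: by Proposition \ref{5.2.2} the inclusion $i:\mathbf{V}\hookrightarrow\mathbf{W}$ has constant rank, so $\dimn\mathcal{P}_{\mathbf{V}}=-\chi(\mathscr{H}om_{\textrm{Par}}(\mathbf{W/V},\mathbf{W}))$ is computed with ordinary sheaf cohomology, bypassing the hypercohomology arguments of Lemmas \ref{4.3.2} and \ref{4.3.3}, while Lemma \ref{4.3.4} and the comparison of $\mathcal{X}$ with $\mathcal{X}'$ are retained. So the $\mathcal{P}_{\mathbf{V}}$-step cannot be deleted; your concluding chain of inequalities is fine, but it rests on a fiber bound that is not available.
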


King defines stability and semistability for quiver representations (see \cite{King1994}).  We explore the relationship between parabolic bundles over $\mathbf{P}^1$ and quiver representations (see section 6 or \cite{CB2004}), by presenting a correspondence between certain semistable parabolic bundles and certain semistable squid representations. 
 
\subsection{Definitions}
\label{5.1}

Recall that one can define stability and semistability for vector bundles by introducing the notion of slope. Indeed, for a vector bundle $E$ of rank $r$ and degree $d$ over a complex projective curve $X$, we define the slope of $E$ to be $\mu(E) = \frac{d}{r}$.  We say that $E$ is semistable if $\mu(F) \le \mu(E)$ for all nonzero subbundles $F$ of $E$.  If the inequality is strict for all proper, nonzero subbundles $F$, then we say that $E$ is stable.

We can define stability and semistability for parabolic bundles similarly to how we define them for vector bundles.  Let $D$, $w$ be as in the Introduction.  In order to define an analogue of degree for parabolic bundles, we introduce a collection of real numbers $\theta = (\theta_{ij})$, where $1 \le i \le k$, while $0 \le j \le w_{k} - 1$ and $0 \le \theta_{i1} < \theta_{i2} \cdots < \alpha_{iw_k-1} < 1$.  For a parabolic bundle $\mathbf{E}$ of weight type $(D,w)$, the \textit{weights} associated with $\mathbf{E}$ is such a collection of numbers $\theta$. We may think of $\theta_{ij}$ as being ``attached" to the flag subspace $E_{ij}$.  If $\mathbf{E}$ has dimension vector $\alpha$, then we say that the \textit{multiplicity} of $\theta_{ij}$ is $m_{ij} = \alpha_{ij} - \alpha_{ij-1}$.  

Let $d$ be the degree of the vector bundle $E$.  We call $\textrm{par deg } (\mathbf{E}) := d + \sum_{i,j} m_{ij}\theta_{ij}$ the \textit{parabolic degree} of $\mathbf{E}$.  The \textit{parabolic slope} of $\mathbf{E}$ is defined as: 
$$
\mu(\mathbf{E}) = \frac{d + \sum_{i,j} m_{ij}\theta_{ij}}{\alpha_0}.
$$
We say that $\mathbf{E}$ is $\theta$-\textit{semistable} if for every nonzero parabolic subbundle $\mathbf{F} \subset \mathbf{E}$ we have $\mu(\mathbf{F}) \le \mu(\mathbf{E})$.  If the inequality is strict for all proper, nonzero parabolic subbundles, then we say $\mathbf{E}$ is $\theta$-\textit{stable}. 
  
\subsection{Semistability and the Very Good property}
\label{5.2}
Let $E$ and $F$ be vector bundles over $\mathbb{P}^1$, such that $E$ is a subsheaf or $F$.  Recall that the \textit{saturation} of a vector bundle $E$ in $F$ is the inverse image vector bundle in $F$ of $(F/E)/T(F/E)$, where $T(F/E)$ is the torsion sheaf of $F/E$.  We can obtain the following:

\begin{lmm}
\label{5.2.1}
If $\varphi: \mathbf{E} \rightarrow \mathbf{F}$ is a morphism of parabolic bundles that is injective on the sheaves of sections, then $\textrm{par deg } \mathbf{E} \le \textrm{par deg } \mathbf{V}$, where $\mathbf{V}$ is the parabolic bundle induced by the saturation of $E$ in $F$.  Equality holds only if $E = V$.
\end{lmm}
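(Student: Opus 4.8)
The plan is to reduce the inequality to a length bound at each marked point. Since $\varphi$ is injective on sheaves of sections, identify $E$ with the image subsheaf $\varphi(E)\subseteq F$ (a locally free subsheaf of the same rank as its saturation $V$). Then $V/E$ is a torsion sheaf on $\mathbb{P}^1$, and I would write $\operatorname{length}(V/E)=\ell'+\sum_{i=1}^{k}\ell_i$ with $\ell_i=\operatorname{length}_{x_i}(V/E)\ge 0$ and $\ell'\ge 0$ the part of the length supported away from $D$, so that $\deg V-\deg E=\ell'+\sum_i\ell_i$.

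The next step is to rewrite the parabolic degree in a form suited to the comparison. Abel summation applied to $\sum_{i,j}m_{ij}\theta_{ij}$ turns the parabolic degree into a $\deg$-term plus a nonnegative linear combination of the dimensions of the flag steps:
\[
\textrm{par deg } \mathbf{E}=\deg E+\sum_{i=1}^{k}\sum_{j}c_{ij}\dim E_{ij},\qquad
\textrm{par deg } \mathbf{V}=\deg V+\sum_{i=1}^{k}\sum_{j}c_{ij}\dim V_{ij},
\]
where $V_{ij}:=V_{x_i}\cap F_{ij}$ is the flag induced from $\mathbf{F}$, and the coefficients $c_{ij}$ depend only on $w$ and $\theta$, are $\ge 0$, and satisfy $\sum_j c_{ij}<1$ — the last point being exactly where the hypothesis $0\le\theta_{i1}<\dots<\theta_{i,w_i-1}<1$ enters. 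Subtracting and using the length decomposition gives
\[
\textrm{par deg }\mathbf{V}-\textrm{par deg }\mathbf{E}
=\ell'+\sum_{i=1}^{k}\Bigl(\ell_i+\sum_j c_{ij}\bigl(\dim V_{ij}-\dim E_{ij}\bigr)\Bigr).
\]

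The heart of the matter is the local estimate $\dim E_{ij}-\dim V_{ij}\le\ell_i$ for all $i,j$, together with the refinement that $\ell_i=0$ forces $\dim E_{ij}\le\dim V_{ij}$. I would prove it by passing to the discrete valuation ring $\mathcal O_{x_i}$: there $E\hookrightarrow V$ are free of equal rank with torsion cokernel of length $\ell_i$, so by the elementary divisor theorem the induced map on fibres $E_{x_i}\to V_{x_i}$ has kernel of dimension $\le\ell_i$ (and is injective when $\ell_i=0$). Since $\varphi$ is a morphism of parabolic bundles and factors through $V\hookrightarrow F$, the image of $E_{ij}$ in $F_{x_i}$ lies in $V_{x_i}\cap F_{ij}=V_{ij}$ and has dimension $\ge\dim E_{ij}-\ell_i$, which is the claim. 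Plugging this in, each $i$-summand is $\ge\ell_i-\ell_i\sum_j c_{ij}=\ell_i(1-\sum_j c_{ij})\ge 0$, and it is likewise $\ge 0$ when $\ell_i=0$; hence $\textrm{par deg }\mathbf{E}\le\textrm{par deg }\mathbf{V}$. Since $1-\sum_j c_{ij}>0$, equality in the total forces $\ell'=0$ and every $\ell_i=0$, i.e. $V/E=0$, i.e. $E=V$.

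I expect the only genuine obstacle to be this local estimate at the points of $D$: because $\varphi$ is merely injective on sections and need not be a subbundle inclusion, $\varphi_{x_i}$ may have a nonzero kernel, and one must bound the resulting drop in the dimensions of the flag steps by the torsion length of $V/E$ at $x_i$. The remaining pieces — the Abel-summation rewriting of the parabolic degree and the bookkeeping with the torsion decomposition — are routine.
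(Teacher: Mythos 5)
Your proof is correct, and it takes a slightly different (and more complete) route than the paper's. The paper reduces to a single marked point, Abel-sums the parabolic degree exactly as you do, and then concludes from two inequalities: the termwise comparison $\theta_0\alpha_0+\sum_i\alpha_i(\theta_{i+1}-\theta_i)\le\theta_0\beta_0+\sum_i\beta_i(\theta_{i+1}-\theta_i)$ between the flag dimensions of $\mathbf{E}$ and of $\mathbf{V}$, and $d\le d'$ with $d'-d\ge b$, $b=\dim\ker\varphi_{x_1}$. The first of these amounts to $\dim E_{1j}\le\dim V_{1j}$ for all $j$, which is not justified there and can actually fail when $\ker\varphi_{x_1}$ meets the flag; in that situation the needed slack must come from the degree gain, which the paper's displayed chain never combines with the bound $\theta<1$ on the weights. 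Your argument replaces that termwise comparison by the sharper local estimate $\dim E_{ij}-\dim V_{ij}\le\ell_i=\textrm{length}_{x_i}(V/E)$, proved by elementary divisors over the local ring at $x_i$, and then uses $\sum_j c_{ij}<1$ (i.e.\ the top weight is $<1$) to absorb any drop in flag dimensions into the degree gain $\deg V-\deg E=\ell'+\sum_i\ell_i$; the equality analysis ($\ell'=\ell_i=0$ forces $E=V$) then comes for free. What this buys is a global argument with no reduction to one point, which is airtight precisely where the paper's write-up is terse, and which makes explicit that the hypothesis $\theta_{i,w_i-1}<1$ is what both the inequality in the problematic case and the equality statement really rest on.
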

\begin{proof}
It suffices to consider bundles with parabolic structure only at one point, $x_1 \in X$.  Let $(\theta_0, \dots, \theta_{w_1})$ be the weights of the spaces in the flags at $x_1$ for $\mathbf{E}$ and $\mathbf{V}$, and let $(\alpha_0, \dots, \alpha_{w_1})$, $(\beta_0, \dots, \beta_{w_1})$ be the respective dimensions of the spaces in the flags. Let $d$ and $d'$ be the degrees of $E$ and $V$, respectively.  Note that, considered as a sheaf, $V$ coincides with $E$ except at finitely many points.  That means the induced map on fibers has full rank everywhere except finitely many points.  It is enough to consider the case when the rank drops only at $x_1$.  Let $b$ be the dimension of the kernel of $\varphi_{x_1}$, the map induced by $\varphi$ on the fiber at $x_1$ .  We have that $d' - d \ge b$.  Now, we need to prove:
$$
\textrm{par deg } \mathbf{E} = d + \sum_i \theta_i(\alpha_i-\alpha_{i+1}) \le \textrm{par deg } \mathbf{V} = d' + \sum_i \theta_i(\beta_i-\beta_{i+1}),
$$
We can obtain:
\begin{align*}
& d + \sum_i \theta_i(\alpha_i-\alpha_{i+1})\\ & = d + \theta_{0}\alpha_{0} + \sum_i \alpha_i(\theta_{i+1} - \theta_{i}) \\ & \le d + \theta_{0}\beta_{0} + \sum_i \beta_i(\theta_{i+1} - \theta_{i})\\ & \le
d' +  \theta_{0}\beta_{0} + \sum_i \beta_i(\theta_{i+1} - \theta_{i})\\ & = d' + \sum_i \theta_i(\beta_i-\beta_{i+1}).
\end{align*}
 Now, from the definitions, it follows that $\textrm{par deg } \mathbf{E} \le \textrm{par deg } \mathbf{V}$.  Note that equality can hold only if $d' - d = 0$.  In that case, we have $E = V$.
\end{proof}

\begin{prp}
\label{5.2.2}
The rank of a morphism $\varphi: \mathbf{E} \rightarrow \mathbf{F}$ between two $\theta$-semistable parabolic bundles of equal slope is constant over all the fibers.
\end{prp}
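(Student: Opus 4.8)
The plan is to run the standard ``morphisms between semistable objects of equal slope are strict'' argument, adapted to the parabolic setting, with Lemma \ref{5.2.1} supplying the comparison between a subsheaf and its saturation. Fix $\varphi\colon \mathbf{E}\to\mathbf{F}$ as in the statement and set $\mu:=\mu(\mathbf{E})=\mu(\mathbf{F})$; if $\varphi=0$ there is nothing to prove, so assume $\varphi\neq 0$. Let $K$ be the saturation inside $E$ of the sheaf-theoretic kernel $K'$ of $\varphi$, and let $\mathbf{K}\subset\mathbf{E}$ be the parabolic subbundle with underlying bundle $K$ and the induced parabolic structure. Since $K/K'$ is torsion and $F$ is torsion-free, $\varphi$ annihilates $K$, so $\varphi$ factors as $E\twoheadrightarrow E/K\xrightarrow{\ \bar\varphi\ }F$, where $\bar\varphi$ is injective on sheaves of sections and is a morphism of parabolic bundles once $\mathbf{E}/\mathbf{K}$ is given the quotient parabolic structure.

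Next I would apply Lemma \ref{5.2.1} to $\bar\varphi$: letting $\mathbf{V}\subset\mathbf{F}$ be the parabolic bundle induced by the saturation $V$ of $E/K$ in $F$, we obtain $\textrm{par deg}\,(\mathbf{E}/\mathbf{K})\le\textrm{par deg}\,\mathbf{V}$, with equality only if $E/K=V$, i.e.\ only if the image of $\varphi$ is already saturated in $F$. Then I would chain three estimates. First, additivity of the parabolic degree on $0\to\mathbf{K}\to\mathbf{E}\to\mathbf{E}/\mathbf{K}\to 0$ (standard, cf.\ \cite{MS1980}) together with $\theta$-semistability of $\mathbf{E}$, which gives $\textrm{par deg}\,\mathbf{K}\le\mu\cdot\textrm{rk}\,K$, yields $\textrm{par deg}\,(\mathbf{E}/\mathbf{K})\ge\mu\cdot\textrm{rk}\,(E/K)$. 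Second, $\textrm{rk}\,V=\textrm{rk}\,(E/K)$ (saturation preserves rank), and $\theta$-semistability of $\mathbf{F}$ gives $\textrm{par deg}\,\mathbf{V}\le\mu\cdot\textrm{rk}\,V$. Third, Lemma \ref{5.2.1} gives $\textrm{par deg}\,(\mathbf{E}/\mathbf{K})\le\textrm{par deg}\,\mathbf{V}$. Putting these together,
\[
\mu\cdot\textrm{rk}\,(E/K)\ \le\ \textrm{par deg}\,(\mathbf{E}/\mathbf{K})\ \le\ \textrm{par deg}\,\mathbf{V}\ \le\ \mu\cdot\textrm{rk}\,V\ =\ \mu\cdot\textrm{rk}\,(E/K),
\]
so every inequality is an equality; in particular $\textrm{par deg}\,(\mathbf{E}/\mathbf{K})=\textrm{par deg}\,\mathbf{V}$, and the equality clause of Lemma \ref{5.2.1} forces $E/K=V$. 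Hence $\varphi(E)=V$ is a subbundle of $F$, and since $E\twoheadrightarrow E/K\xrightarrow{\ \sim\ }V\hookrightarrow F$ fiberwise, the rank of $\varphi$ equals $\textrm{rk}\,V$ at every point of $\mathbb{P}^1$.

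The only routine points needing to be spelled out are: that the kernel, saturated in $E$, is a parabolic subbundle and that $\mathbf{E}/\mathbf{K}$ carries a quotient parabolic structure for which $\bar\varphi$ is parabolic and the parabolic degree is additive in the short exact sequence (all standard from \cite{MS1980}); and the elementary fact that a bundle map has constant fiberwise rank iff its image sheaf is a subbundle, together with $\textrm{rk}\,V=\textrm{rk}\,(E/K)$. The one substantive input is Lemma \ref{5.2.1}, and the only mild obstacle is fixing the induced parabolic structures on the kernel, the quotient, and the saturation consistently so that all the slope inequalities above are simultaneously available; once that is arranged the displayed chain closes immediately.
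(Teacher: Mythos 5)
Your proposal is correct and follows essentially the same route as the paper: since the kernel of a map into a torsion-free sheaf is already saturated, your $\mathbf{E}/\mathbf{K}$ is just the image sheaf, and your chain of inequalities (semistability of $\mathbf{E}$ on the quotient side, Lemma \ref{5.2.1} against the saturation, semistability of $\mathbf{F}$ on the subbundle side) is exactly the paper's chain $\mu(\mathbf{E})\le\mu(\mathbf{W})\le\mu(\mathbf{V})\le\mu(\mathbf{F})$ with the equality clause of Lemma \ref{5.2.1} forcing the image to be saturated, hence a subbundle of constant fiberwise rank.
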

\begin{proof}
It suffices to prove that the image sheaf, $\textrm{Im }\varphi$, is a vector subbundle of $F$.  Note that $\textrm{Im }\varphi$ has the structure of a parabolic bundle, since $\varphi$ is a morphism of parabolic bundles.  Let us denote this parabolic bundle bundle by $\mathbf{W}$.   Consider the saturation $V$ of $\textrm{Im }\varphi$ in $F$.  This is a vector subbundle of $F$ with parabolic structure on $V$ induced by that of $\mathbf{W}$.  Note that $\textrm{par deg } \mathbf{W} \le \textrm{par deg } \mathbf{V}$, by Lemma \ref{5.2.1}.  Since $\mu(\mathbf{E}) = \mu(\mathbf{F})$ and 
$$
\mu(\mathbf{E}) \le  \mu(\mathbf{W}) \le  \mu(\mathbf{V}) \le  \mu(\mathbf{F})
$$
we have that $\mu(\mathbf{W}) =  \mu(\mathbf{V})$.  The underlying vector bundles have the same rank, so $\textrm{par deg } \mathbf{W} = \textrm{par deg } \mathbf{V}$.  By Lemma \ref{5.2.1} this only happens when $\textrm{Im }\varphi = W = V$.  Therefore $\textrm{Im }\varphi$ is a vector subbundle of $F$.
\end{proof}

Since it is possible to reconstruct the Jordan form of an endomorphism $f$ of a vector space from the ranks of the operators $(f - \lambda)^a$, for $\lambda \in \mathbb{C}$ and $a \in \mathbb{Z}_{\ge 0}$, Proposition \ref{5.2.2} implies the following corollary: 

\begin{cor}
\label{5.2.3}
Let $\varphi_x$ be the vector space endomorphism on the fiber over $x \in X$ induced by the parabolic bundle endomorphism $\varphi$.  The conjugacy class  $\varphi_x$ is constant for all $x \in X$.
\end{cor}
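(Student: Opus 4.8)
The plan is to reduce the statement to Proposition \ref{5.2.2} by means of the standard fact, recalled just above, that the Jordan type of a linear operator on a finite-dimensional vector space is determined by the ranks of its shifted powers $(f-\lambda)^{a}$. Here $\varphi$ is an endomorphism of a fixed $\theta$-semistable parabolic bundle $\mathbf{E}$, i.e.\ a global section of the sheaf of algebras $\mathscr{E}nd_{\textrm{Par}}(\mathbf{E})$.

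First I would observe that for every $\lambda\in\mathbb{C}$ and every $a\in\mathbb{Z}_{\ge 0}$ the operator $(\varphi-\lambda\,\mathrm{Id}_{\mathbf{E}})^{a}$ is again a global section of $\mathscr{E}nd_{\textrm{Par}}(\mathbf{E})$: the scalar $\lambda\,\mathrm{Id}_{\mathbf{E}}$ preserves each flag subspace $E_{ij}$, hence is a morphism of parabolic bundles, and morphisms of parabolic bundles are closed under sums and composition. In particular each $(\varphi-\lambda\,\mathrm{Id}_{\mathbf{E}})^{a}$ is a morphism $\mathbf{E}\to\mathbf{E}$ between $\theta$-semistable parabolic bundles of (trivially) equal slope, so Proposition \ref{5.2.2} applies and tells us that the rank of the induced fiberwise endomorphism $(\varphi_x-\lambda)^{a}$ of $E_x$ is independent of $x\in\mathbb{P}^1$. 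Equivalently, $\dim\ker(\varphi_x-\lambda)^{a}=\alpha_0-\textrm{rk}\,(\varphi_x-\lambda)^{a}$ is independent of $x$.

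Finally I would invoke the elementary fact recalled before the statement: from the integers $\dim\ker(\varphi_x-\lambda)^{a}$, as $\lambda$ ranges over $\mathbb{C}$ and $a$ over $\mathbb{Z}_{\ge 1}$, one recovers the Jordan type of $\varphi_x$ — the scalar $\lambda$ is an eigenvalue of $\varphi_x$ iff $\dim\ker(\varphi_x-\lambda)>0$, and the number of Jordan blocks of size $\ge a$ attached to it is $\dim\ker(\varphi_x-\lambda)^{a}-\dim\ker(\varphi_x-\lambda)^{a-1}$ (with the convention $(\varphi_x-\lambda)^{0}=\mathrm{Id}$). Since all these numbers are $x$-independent by the previous paragraph, the Jordan type of $\varphi_x$, and hence its conjugacy class in $\textrm{End}(E_x)$, does not vary with $x$. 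No genuine obstacle arises in this argument; the only point requiring (routine) care is the bookkeeping that scalar shifts and powers of a parabolic endomorphism are again parabolic endomorphisms, which is precisely what makes Proposition \ref{5.2.2} applicable.
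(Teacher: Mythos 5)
Your argument is correct and is essentially the paper's own: the corollary is deduced from Proposition \ref{5.2.2} by applying it to the parabolic endomorphisms $(\varphi-\lambda\,\mathrm{Id})^{a}$ (trivially between semistable bundles of equal slope) and then recovering the Jordan type of $\varphi_x$ from the resulting $x$-independent ranks. You have merely spelled out the routine bookkeeping that the paper leaves implicit.
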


Let $X = \mathbb{P}^1$ from now on.  Let $\textrm{Bun}_{D, w,\alpha}^{\theta, ss}(X)$ be the open substack of $\textrm{Bun}_{D, w,\alpha}(X)$ consisting of $\theta$-semistable parabolic bundles.  

\begin{proof}[Proof of Theorem \ref{5.0.1}]
The logic of the proof of this theorem will be the same as for Theorem \ref{4.0.1}.  We will be repeating key parts of the proof of Theorem \ref{4.0.1} for convenience.  

From now on, we will be assuming all parabolic bundles are $\theta$-semistable and all morphisms between parabolic bundles are morphisms of $\theta$-semistable parabolic bundles. Let $(\mathbf{E}, f)$ be a point of $\mathcal{N}^{ss}(D,w, \alpha)$.  Let $\mathbf{F} = \mathbf{ker} \ f$ and $\mathbf{G} = \mathbf{E}/ \mathbf{F}$.  It is easy to see that both $\mathbf{F}$ and $\mathbf{G}$ are $\theta$-semistable, with the same slope as $\mathbf{E}$.  We wish to prove this theorem by induction on the rank of the vector bundle $E$ (note that this is $\alpha_0$ in our notation).  It suffices to prove that for all $\beta$ and all $a$ we have:
$$
\textrm{dim } \mathcal{N}_{\beta, a}(D,w, \alpha) \le -\tilde{q}(\alpha),
$$
where $\mathcal{N}_{\beta, a}(D,w, \alpha)$ is a substack consisting of objects of $\mathcal{N}^{ss}(D,w, \alpha)$ of slope $a$ such that $\mathbf{F} \in \textrm{Bun}_{D,w, \beta}(X)$.  In order to accomplish this, consider the morphism
$$
\phi: \mathcal{N}_{\beta, a}(D,w, \alpha) \rightarrow \mathcal{N}^{ss}(D,w, \alpha - \beta),
$$
which is defined by sending $(\mathbf{E}, f)$ to
$(\mathbf{G}, f|_{\mathbf{G}})$ $\in \mathcal{N}^{ss}(D, w, \alpha - \beta)$, with corresponding restrictions on the arrows.  In this case, by induction, we get
$$
\textrm{dim } \mathcal{N}_{\beta, a}(D,w, \alpha) \le \textrm{dim } \mathcal{N}_{\beta,a}(D,w, \alpha)_x - \tilde{q}(\alpha - \beta) ,
$$
for some $x = (\mathbf{G}, g) \in \mathcal{N}^{ss}(D, w, \alpha - \beta)$.  We want to compute the dimension of the fiber $\mathcal{X} = \mathcal{N}_{\beta, a}(D,w, \alpha)_x$.  To help us do this, fix a parabolic bundle $\mathbf{V}$ with $\mu(\mathbf{V}) = a$ and define $\mathcal{P}_{\mathbf{V}}:= \mathcal{P}_{\mathbf{V}}(D,w,\alpha, a)$ to be the algebraic stack consisting of pairs
$\{\mathbf{W}, i: \mathbf{V} \hookrightarrow \mathbf{W} \}$, where $i$ is an inclusion of parabolic bundles and $\mathbf{W}$ is a parabolic bundle of weight type $(D,w)$, dimension vector $\alpha$, and slope $a$.

By Proposition \ref{5.2.2}, any morphism of $\theta$-semistable parabolic bundles of the same slope has constant rank as morphism of the underlying vector bundles.  Therefore, in the definition of $\mathcal{P}_{\mathbf{V}}$
the image of $\mathbf{V}$ under the inclusion $i$ is a parabolic subbundle of $\mathbf{W}$.  It follows that the deformations of $(\mathbf{W},i)$ are governed by the cohomology of the vector bundle $\mathscr{H}om_{\textrm{Par}}(\mathbf{W / V}, \mathbf{W})$.  Therefore, we can compute
$$
\dimn \mathcal{P}_{\mathbf{V}}(D,w,\alpha,a) = - \chi(\mathscr{H}om_{\textrm{Par}}(\mathbf{W/V}, \mathbf{W})).
$$
Compare this to the computations in Lemma \ref{4.3.2} and Lemma \ref{4.3.3}.

Let $\mathbf{F_1} = \mathbf{ker} \ g$ and let $\mathcal{X}' = \mathcal{P}_{\mathbf{F_1}}(D,w,\beta,a)$.  In this case, we have two morphisms $\psi_1: \mathcal{X} \rightarrow \textrm{Bun}_{D,w, \beta}^{\theta, ss}(X)$ and $\psi_2: \mathcal{X}' \rightarrow \textrm{Bun}_{D,w, \beta}^{\theta, ss}(X)$, where $\psi_1$ sends the pair $(\mathbf{E},f)$ to $\textbf{ker }f$ and likewise $\psi_2$ sends $(\mathbf{F},i)$ to $\mathbf{F}$. For $\mathbf{F} \in \textrm{Bun}_{D,w, \beta}(X)$, we have
\begin{align*}
& \textrm{dim } \mathcal{X}_{\mathbf{F}} = \textrm{dim } H^1 (\mathbb{P}^1, \mathscr{H}om_{\textrm{Par}}(\mathbf{F_1}, \mathbf{F}))\\
& \textrm{dim } \mathcal{X}'_{\mathbf{F}} = \textrm{dim } H^0 (\mathbb{P}^1, \mathscr{H}om_{\textrm{Par}}(\mathbf{F_1}, \mathbf{F})).
\end{align*}
Therefore, $\textrm{dim } \mathcal{X}_{\mathbf{F}}  = \textrm{dim } \mathcal{X}'_{\mathbf{F}} - \chi(\mathscr{H}om_{\textrm{Par}}(\mathbf{F_1}, \mathbf{F}))$.  We have $\textrm{dim } \mathcal{X} = \textrm{dim } \mathcal{X}' - \chi(\mathscr{H}om_{\textrm{Par}}(\mathbf{F_1}, \mathbf{F}))$. So, we obtain
$$
\textrm{dim } \mathcal{N}_{\beta,a}(D,w, \alpha) \le \textrm{dim } \mathcal{X}'  - \tilde{q}(\alpha - \beta) - \chi(\mathscr{H}om_{\textrm{Par}}(\mathbf{F_1}, \mathbf{F})).
$$

It follows from the formula above that 
$$\dimn \mathcal{X}' = - \chi(\mathscr{H}om_{\textrm{Par}}(\mathbf{F / F_1}, \mathbf{F})) = \chi(\mathscr{H}om_{\textrm{Par}}(\mathbf{F_1}, \mathbf{F})) -  \chi (\mathscr{E}nd_{\textrm{Par}}(\mathbf{F})),$$ 
which means
$$
\textrm{dim } \mathcal{N}_{\beta,a}(D,w, \alpha) \le -\chi (\mathscr{E}nd_{\textrm{Par}}(\mathbf{F})) - \tilde{q}(\alpha - \beta).
$$
Since $\chi (\mathscr{E}nd_{\textrm{Par}}(\mathbf{F})) = q(\beta)$ and $\tilde{q}(\alpha) \le \tilde{q}(\alpha - \beta) + \tilde{q}(\beta)$ (by Proposition \ref{4.3.1} b)), we can reduce this to
$$
\textrm{dim } \mathcal{N}_{\beta,a}(D,w, \alpha) \le -q(\beta) +\tilde{q}(\beta) -\tilde{q}(\alpha).
$$
The result follows from Proposition \ref{4.3.1} a). 
\end{proof}

\subsection{Stability for Quiver Representations and Stability for Parabolic Bundles}
\label{5.3}
In \cite{King1994} we find a stability condition for quiver representations similar to the one defined above for parabolic bundles.  More precisely, let $Q$ be a quiver with vertex set $I$. Let $\lambda \in \mathbb{R}^I$ be a collection of real numbers. A quiver representation $R \in \textrm{Rep}(Q, \alpha)$ is called $\lambda-$\textit{semistable} if $\lambda \cdot \alpha = 0$ and any subrepresentation $S$, with dimension vector $\beta$, satisfies  $\lambda \cdot \beta \ge 0$.  If the inequality is strict, it is called $\lambda-$\textit{stable}.

Now, let $\mathcal{E}$ be the category of bundles over $\mathbb{P}^1$ with weight type $(D,w)$, for which the duals to the underlying bundles are generated by global sections.  Additionally, let $\mathcal{Q}$ be the category of Kronecker-preinjective representations of the squid $S_{D,w}$ with injective arrows $c_{ij}$.

Note that the quiver representation semistability condition may be applied to representations of squids.  Specifically, for $\lambda = (\lambda_{\infty}, \lambda_0, \lambda_{ij})$ a squid representation of dimension $\alpha = (\alpha_{\infty},\alpha_{\infty} + \alpha_0,\alpha_{ij})$ is semistable if $\lambda \cdot \alpha = 0$, and if for any subrepresentation, with some dimension $\beta = (\beta_{\infty},\beta_{\infty} + \beta_0,\beta_{ij})$, we have $\lambda \cdot \beta \ge 0$.

In Section 5 of \cite{CB2004} Crawley-Boevey describes an equivalence of categories between $\mathcal{E}$ and $\mathcal{Q}$.  This is the special case of Theorem \ref{6.0.2} below when $T$ is a point and $N = 0$. 

\begin{prp}
\label{5.3.1}
Under this equivalence, $\theta$-semistable parabolic bundles $\mathbf{E}$, with weights $\theta = (\theta_{i1}, \dots, \theta_{iw_i})$ and parabolic degree $a$, correspond to $\lambda-$semistable squid representations, where $\lambda = (- a + 1 + \sum_l \theta_{l1},a-\sum_l \theta_{l1}, \theta_{ij+1} - \theta_{ij})$. 
\end{prp}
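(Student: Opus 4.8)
The plan is to push the $\theta$-semistability condition through the equivalence of categories $F\colon\mathcal{E}\to\mathcal{Q}$ recalled above (Crawley--Boevey, \cite{CB2004}), matching it term by term with King's $\lambda$-semistability. I would begin by recording the numerics of $F$. If $\mathbf{E}\in\mathcal{E}$ has underlying bundle of degree $d$ and dimension vector $\alpha=(\alpha_0,\alpha_{ij})$, then in $F(\mathbf{E})$ the vertex $\infty$ has dimension $h^0(E^{*}\otimes\mathcal{O}_{\mathbb{P}^1}(-1))$, the vertex $0$ has dimension $h^0(E^{*})$, and the tail vertices $[i,j]$ carry the flag spaces of dimensions $\alpha_{ij}$; since $E^{*}$ is globally generated one has $H^1(E^{*}(-1))=0$, so by Riemann--Roch these first two dimensions are $-d$ and $\alpha_0-d$. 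As the condition $\textrm{par deg}(\mathbf{E})=a$ determines $d$ from $\alpha$, $w$ and $\theta$, this is exactly the dimension vector $(\alpha_\infty,\alpha_\infty+\alpha_0,\alpha_{ij})$ in the statement. The first thing to check is the normalization $\lambda\cdot(\textrm{this vector})=0$: after substituting $d=a-\sum_{i,j}(\textrm{weight})(\textrm{multiplicity})$ and collapsing $\sum_{i,j}(\theta_{i,j+1}-\theta_{ij})\alpha_{ij}$ by an Abel summation against the flag dimensions (with $\alpha_{i0}=\alpha_0$, $\alpha_{iw_i}=0$), all terms cancel, the constant $1$ and the shift by $\sum_l\theta_{l1}$ built into $\lambda_\infty,\lambda_0$ being precisely what makes this happen.

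Next I would establish the correspondence of sub-objects, which is where Lemma \ref{5.2.1} and Proposition \ref{5.2.2} enter. Under $F$, subrepresentations $S\subseteq F(\mathbf{E})$ correspond to parabolic subsheaves $\mathbf{F}'\subseteq\mathbf{E}$ (each a sub-bundle of vector bundles on the curve, with induced flags, but possibly not saturated), with $\dimn S=\bigl(-e,\ \textrm{rk}\,\mathbf{F}'-e,\ \beta_{ij}\bigr)$ for $e=\deg\mathbf{F}'$ and $\beta_{ij}$ its flag dimensions, and with $S$ lying in $\mathcal{Q}$ (Kronecker-preinjective, arrows $c_{ij}$ injective) if and only if $\mathbf{F}'$ is actually a parabolic sub-bundle, i.e.\ is saturated in $\mathbf{E}$. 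The crucial additional point is that replacing $\mathbf{F}'$ by its saturation $\mathbf{F}$ corresponds to replacing $S$ by its \emph{preinjectivization} $S^{+}$, the smallest subrepresentation of $F(\mathbf{E})$ containing $S$ whose Kronecker part is preinjective with injective tail arrows; one has $S\subseteq S^{+}$, and this is exactly the representation-theoretic mirror of Lemma \ref{5.2.1}. Running the Riemann--Roch/Abel computation of the first step for an arbitrary such $\mathbf{F}'$ then yields the identity
$$
\lambda\cdot\dimn S \;=\; c\,\bigl(\textrm{par deg}(\mathbf{F}')-\mu(\mathbf{E})\,\textrm{rk}\,\mathbf{F}'\bigr)\;=\;c\,\textrm{rk}(\mathbf{F}')\bigl(\mu(\mathbf{F}')-\mu(\mathbf{E})\bigr)
$$
for a fixed negative constant $c$ depending only on $\alpha$, where $\mu(\mathbf{E})=a/\alpha_0$.

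Granting these, the two implications are short. If $F(\mathbf{E})$ is $\lambda$-semistable, then for any parabolic sub-bundle $\mathbf{F}$ the representation $F(\mathbf{F})$ is a subrepresentation, so $\lambda\cdot\dimn F(\mathbf{F})\ge 0$, and since $c<0$ the identity forces $\mu(\mathbf{F})\le\mu(\mathbf{E})$; hence $\mathbf{E}$ is $\theta$-semistable. Conversely, if $\mathbf{E}$ is $\theta$-semistable and $S\subseteq F(\mathbf{E})$ is any subrepresentation, with $\mathbf{F}'$, $\mathbf{F}$ and $S^{+}=F(\mathbf{F})$ as above, then Lemma \ref{5.2.1} gives $\textrm{par deg}(\mathbf{F}')\le\textrm{par deg}(\mathbf{F})$ with $\textrm{rk}\,\mathbf{F}'=\textrm{rk}\,\mathbf{F}$, so $c<0$ yields $\lambda\cdot\dimn S\ge\lambda\cdot\dimn S^{+}$; and $\mu(\mathbf{F})\le\mu(\mathbf{E})$ gives $\lambda\cdot\dimn S^{+}=c\,\textrm{rk}(\mathbf{F})(\mu(\mathbf{F})-\mu(\mathbf{E}))\ge 0$. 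Thus $\lambda\cdot\dimn S\ge 0$ for every subrepresentation, and together with the normalization of the first step this is exactly $\lambda$-semistability of $F(\mathbf{E})$. The stable case is the same argument with strict inequalities: a proper parabolic sub-bundle gives a strict inequality, and a non-saturated subsheaf only widens the gap.

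The hard part will be the middle paragraph. One must unwind Crawley--Boevey's functor carefully enough to (i) confirm the dimension vector and, above all, the compatibility \emph{saturation of a parabolic subsheaf $\leftrightarrow$ preinjectivization of the corresponding subrepresentation}, and (ii) nail down the numerical identity with the correct sign of $c$, since the Riemann--Roch computation involving the weights $\theta$ is where all the indexing conventions (flag versus graded dimensions, the normalization of the $\theta_{ij}$, the orientation of the tail arrows) have to be handled consistently. The direction ``$\theta$-semistable $\Rightarrow$ $\lambda$-semistable'' is the genuinely delicate one, because a general subrepresentation of $F(\mathbf{E})$ need not lie in $\mathcal{Q}$ and is controlled only through its preinjectivization --- which is precisely what Lemma \ref{5.2.1} and Proposition \ref{5.2.2} are there to supply.
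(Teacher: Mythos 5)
Your normalization check and your forward implication ($\theta$-semistable $\Rightarrow$ $\lambda$-semistable) follow the paper's proof of Proposition \ref{5.3.1} essentially step for step: pass from a subrepresentation to the associated parabolic subsheaf, saturate, apply Lemma \ref{5.2.1} and $\theta$-semistability, and rewrite the resulting slope inequality as $\lambda\cdot\beta\ge 0$, together with $\lambda\cdot\alpha=0$. Note, however, that this one direction is all the paper's proof of Proposition \ref{5.3.1} establishes; the converse is deliberately postponed to Proposition \ref{5.3.2}, and that is where your proposal has a genuine gap.

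Your converse rests on the claim that for a parabolic subbundle $\mathbf{F}\subset\mathbf{E}$ the representation $F(\mathbf{F})$ is a subrepresentation of $F(\mathbf{E})$, and likewise that the ``preinjectivization'' $S^{+}=F(\mathbf{F})$ is a subrepresentation of $F(\mathbf{E})$ containing $S$. This is false in general: the vertex-$0$ component of $F(\mathbf{F})\to F(\mathbf{E})$ is the dual of the restriction map $H^0(E^*)\to H^0(F^*)$, which is injective only when that restriction is surjective, i.e.\ only when the connecting map into $H^1((E/F)^*)$ vanishes; since the quotient $E/F$ can have positive-degree summands, $(E/F)^*$ need not be globally generated and this can fail. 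Concretely, take $E=\mathcal{O}^{\oplus 2}$ and $F=\mathcal{O}(-2)$ embedded as the kernel of $(z_0^2,z_1^2)\colon E\to\mathcal{O}(2)$: then $H^0(E^*)\cong\mathbb{C}^2$ while $H^0(F^*)\cong\mathbb{C}^3$, and $H^0(E^*(-1))=0$ while $H^0(F^*(-1))\cong\mathbb{C}^2$, so $F(\mathbf{F})$ does not embed into $F(\mathbf{E})$. A destabilizing subbundle therefore only produces the \emph{image} of $F(\mathbf{F})$ as a subrepresentation, whose Kronecker-vertex dimensions are strictly smaller than $(-e,\ \mathrm{rk}\,\mathbf{F}-e)$, and since $\lambda_\infty$ and $\lambda_0$ have uncontrolled signs you cannot conclude $\lambda\cdot\dim<0$ from it. This is exactly why the paper proves the converse separately in Proposition \ref{5.3.2}: there a uniform twist $N$ is chosen (via boundedness of destabilizing subbundles and Grothendieck's theorem) so that $(E/F)^*(N)$ is globally generated, which is the condition making the destabilizing subbundle give an honest subrepresentation with the expected dimension vector. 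A smaller caveat: your exact identity $\lambda\cdot\dim S=c\,\mathrm{rk}(\mathbf{F}')(\mu(\mathbf{F}')-\mu(\mathbf{E}))$ for an \emph{arbitrary} subrepresentation presumes $S$ lies in the essential image of the functor, which a general subrepresentation does not; the paper's own proof makes the same identification, so on that point you merely inherit its level of rigor, but in the converse direction the missing injectivity is a real obstruction, not a bookkeeping issue.
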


\begin{proof}
Let $S$ be the squid representation with dimension vector $\alpha = (\alpha_{\infty}, \alpha_{\infty} + \alpha_0, \alpha_{ij})$ corresponding to $\mathbf{E}$, and consider a subrepresentation $R \subset S$, with dimension vector $\beta = (\beta_{\infty}, \beta_{\infty} + \beta_0, \beta_{ij})$.  Note that under the equivalence of categories $R$ corresponds to a parabolic bundle $\mathbf{F} \subset \mathbf{E}$, which is contained in a parabolic subbundle $\mathbf{F} \subset \mathbf{F'} \subset \mathbf{E}$ of rank $\beta_0$ and degree $d'$.  This bundle is constructed from the saturation $F'$ of $F$.  Note that by Lemma \ref{5.2.1} we have $\textrm{par deg } \mathbf{F} \le \textrm{par deg } \mathbf{F'}$.  By semistability, we have that 

$$
\mu(\mathbf{F}) = \frac{-\beta_{\infty} + \sum_i (\beta_0 - \beta_{i1})\theta_{i1} + \sum_i \sum_j (\beta_{ij} - \beta_{ij-1})\theta_{ij}}{\beta_0} \le \mu(\mathbf{F}') \le a
$$
or
$$
-\beta_{\infty} + \sum_i (\beta_0 - \beta_{i1})\theta_{i1} + \sum_i \sum_j (\beta_{ij} - \beta_{ij-1})\theta_{ij} \le a\beta_0, 
$$
which may be rewritten as 
$$
-\beta_{\infty} + \beta_0(-a + \sum_i \theta_{i1}) + \sum_i \sum_j (\theta_{ij} - \theta_{ij+1})\beta_{ij} \le 0.
$$
This is the same as
$$
-\beta_{\infty}(1 - a + \sum_i \theta_{i1}) + (\beta_0+\beta_{\infty})(-a + \sum_i \theta_{i1}) + \sum_i \sum_j (\theta_{ij} - \theta_{ij+1})\beta_{ij} \le 0
$$
or
$$
\lambda \cdot \beta = \beta_{\infty}\lambda_{\infty} + (\beta_0+\beta_{\infty})\lambda_0 + \sum_i \sum_j \lambda_{ij} \beta_{ij} \ge 0.
$$
Since $\mathbf{E}$ has parabolic degree $a$, a similar argument shows that $\lambda \cdot \alpha = 0$.  It follows by definition that $S$ is $\lambda-$semistable.
\end{proof}

Now, conversely, consider $\lambda-$semistable squid representations, with Kronecker quiver representations of dimension $\alpha = (d, d + \alpha_0)$ and weights $\lambda = (-a + 1 + \sum_l \theta_{l1},a-\sum_l \theta_{l1}, \theta_{ij+1} - \theta_{ij})$ for $0 \le \theta_{ij} \le \theta_{ij+1} < 1$.  

\begin{prp}
\label{5.3.2}
Under the equivalence of categories, $\lambda-$semistable squid representations with Kronecker quiver dimension $(\alpha_{\infty} + N, \alpha_{\infty} + N +\alpha_0)$ and injective arrows $c_{ij}$ correspond to $\theta$-semistable parabolic bundles with rank $\alpha_0$, vector bundle degree $-\alpha_{\infty}-N$, weights $(\theta_{i1}, \dots, \theta_{iw_k})$, and parabolic degree $a$, for some $N \in \mathbb{Z}_{\ge 0}$.
\end{prp}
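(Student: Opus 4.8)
The plan is to establish the converse of Proposition \ref{5.3.1} by applying the dictionary of that proposition, run in reverse, to the maximal destabilizing subbundle. Write $\Phi$ for the equivalence of categories between $\mathcal{E}$ and $\mathcal{Q}$ and let $\mathbf{E}=\Phi^{-1}(S)$, where $S\in\mathcal{Q}$ is $\lambda$-semistable with Kronecker dimension $(\alpha_{\infty}+N,\alpha_{\infty}+N+\alpha_{0})$ and injective arrows $c_{ij}$. First I would identify the numerical invariants of $\mathbf{E}$: computing $h^{0}(E^{*}\otimes\mathcal{O}(N))$ and $h^{0}(E^{*}\otimes\mathcal{O}(N-1))$ by Riemann--Roch on $\mathbb{P}^{1}$ as in Section \ref{4.1}, and matching them with the two Kronecker dimensions of $S$, gives $\textrm{rk}\,E=\alpha_{0}$ and $\deg E=-\alpha_{\infty}-N$; inverting the formula $\lambda=(-a+1+\sum_{l}\theta_{l1},\,a-\sum_{l}\theta_{l1},\,\theta_{ij+1}-\theta_{ij})$ recovers the weights $(\theta_{ij})$ (uniquely once $\theta_{i1}\ge 0$ is imposed); and the relation $\lambda\cdot\alpha=0$ becomes the identity $\textrm{par deg }\mathbf{E}=a$, by the same manipulation as at the end of the proof of Proposition \ref{5.3.1}.

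The substance is to show $\mathbf{E}$ is $\theta$-semistable. I would argue by contradiction: if not, let $\mathbf{F}\subset\mathbf{E}$ be the first step of the Harder--Narasimhan filtration of $\mathbf{E}$ for the parabolic slope, so $\mathbf{F}$ is $\theta$-semistable with $\mu(\mathbf{F})>\mu(\mathbf{E})=a/\alpha_{0}$. Since $F$ is a subbundle, $F^{*}$ is a quotient of $E^{*}$, so $F^{*}\otimes\mathcal{O}(N)$ is globally generated and $H^{1}(F^{*}\otimes\mathcal{O}(N-1))$ — a quotient of $H^{1}(E^{*}\otimes\mathcal{O}(N-1))=0$ — vanishes, so $\mathbf{F}$ lies in the source category of the equivalence. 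The key claim is that $\mathbf{F}$ is realized inside $S$ by a subrepresentation: because $\mathbf{F}$ is the maximal destabilizer the quotient $\mathbf{E}/\mathbf{F}$ is ``small'', which forces the vanishings $H^{1}((E/F)^{*}\otimes\mathcal{O}(N-1))=H^{1}((E/F)^{*}\otimes\mathcal{O}(N))=0$; these make the maps $H^{0}(E^{*}\otimes\mathcal{O}(N-1))\to H^{0}(F^{*}\otimes\mathcal{O}(N-1))$ and $H^{0}(E^{*}\otimes\mathcal{O}(N))\to H^{0}(F^{*}\otimes\mathcal{O}(N))$ surjective, so that after dualizing $\Phi(\mathbf{F})\to\Phi(\mathbf{E})=S$ is injective at the vertices $0$ and $\infty$, while at the flag vertices it is the inclusion of induced flag subspaces, hence injective there as well. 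Thus $R:=\Phi(\mathbf{F})$ is a subrepresentation of $S$, whose associated parabolic subsheaf — reconstructed as in the proof of Proposition \ref{5.3.1} via the Beilinson resolution — is $\mathbf{F}$ itself.

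Granting this, write $\beta=\underline{\dim}\,R$. The string of identities in the proof of Proposition \ref{5.3.1} that turns the slope inequality into $\lambda\cdot\beta\ge 0$ consists of equivalences, the only step that can be a strict inequality being $\mu(\mathbf{F}_R)\le\mu((\mathbf{F}_R)^{\mathrm{sat}})$ coming from Lemma \ref{5.2.1}; here the associated subsheaf $\mathbf{F}_R$ already equals the subbundle $\mathbf{F}$, so that step is an equality. Consequently $\mu(\mathbf{F})>a/\alpha_{0}$ is equivalent to $\lambda\cdot\beta<0$, contradicting the $\lambda$-semistability of $S$. Hence $\mathbf{E}$ is $\theta$-semistable, with the invariants found above; the value of $N$ in the statement is the one implicit in the Kronecker dimension of $S$.

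The main obstacle is the claim in the second paragraph that $\Phi$ sends the maximal destabilizing subbundle to a subrepresentation. The equivalence $\Phi$ matches the subcategories $\mathcal{E}$ and $\mathcal{Q}$ but not their ambient abelian categories, and it is not exact: for a general parabolic subbundle $F\subset E$ one can have $h^{0}(F^{*}\otimes\mathcal{O}(N))>h^{0}(E^{*}\otimes\mathcal{O}(N))$, so $\Phi$ applied to an inclusion need not yield an inclusion of representations — this is the very phenomenon that made the passage to saturations necessary in Proposition \ref{5.3.1}. What makes the argument go through is the cohomological vanishing forced by $\mathbf{F}$ being the top Harder--Narasimhan piece; making this precise (bounding the positivity of $E/F$ in terms of the global generation of $E^{*}\otimes\mathcal{O}(N)$ and the Harder--Narasimhan slopes, and checking the flag-vertex maps against Crawley-Boevey's construction in \cite{CB2004}) is where the real work lies. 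Everything else reduces to Riemann--Roch and the dictionary of Proposition \ref{5.3.1} read backwards.
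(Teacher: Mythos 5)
Your overall strategy is the same as the paper's: argue the contrapositive by taking a maximal destabilizing parabolic subbundle $\mathbf{F}\subset\mathbf{E}$, showing it corresponds to a squid subrepresentation $R\subset S$, and running the dictionary of Proposition \ref{5.3.1} backwards to convert $\mu(\mathbf{F})>\mu(\mathbf{E})$ into $\lambda\cdot\beta<0$, contradicting $\lambda$-semistability. That part of your outline, including the cohomological mechanism (surjectivity of $H^{0}(E^{*}\otimes\mathcal{O}(N))\to H^{0}(F^{*}\otimes\mathcal{O}(N))$ and its twist by $-1$ from vanishing of $H^{1}$ of the dual of the quotient), is sound. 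But the step you yourself flag as ``where the real work lies'' is exactly the content of the proposition, and your proposed way of supplying it does not work: the vanishing $H^{1}((E/F)^{*}\otimes\mathcal{O}(N-1))=H^{1}((E/F)^{*}\otimes\mathcal{O}(N))=0$ is \emph{not} forced by $\mathbf{F}$ being the top Harder--Narasimhan piece for the particular $N$ ``implicit in the Kronecker dimension of $S$''. For a fixed $N$ the quotient $E/F$ by a parabolic destabilizer can perfectly well acquire line-bundle summands too positive for $(E/F)^{*}(N)$ to be globally generated (the weights only shift slopes by a bounded amount, so the parabolic destabilizer's underlying bundle need not be the ordinary one), and then $\Phi(\mathbf{F})\to S$ fails to be injective at the vertices $0$ and $\infty$.

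The quantifier in the statement is the point: one must \emph{choose} $N$, and this is what the paper's proof actually does. It first shows that over the whole family of unstable parabolic bundles with the given rank, degree, dimension vector and weights, the degree of a maximal destabilizing subbundle is bounded (the weight contributions are bounded by the fixed data), so by Grothendieck's theorem only finitely many isomorphism types of destabilizers and quotients occur; one then picks a single $N$ so that $(E/F)^{*}(N)$ is generated by global sections simultaneously for all of them, and only then runs the argument you describe. Your proposal omits this uniform boundedness/finiteness step entirely and instead asserts the vanishing for a prescribed $N$, so as written the proof has a genuine gap at its central claim; to repair it you would need to insert the boundedness of the Harder--Narasimhan destabilizers and the resulting choice of $N$ before invoking the subrepresentation argument.
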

\begin{proof}
Let $\theta = (\theta_{ij})$ be weights that determine a stability condition for parabolic bundles over $\mathbb{P}^1$.  Consider the set $U$ of all unstable parabolic bundles with rank $\alpha_0$, underlying vector bundle of degree $-\alpha_{\infty}$.  Let $\mathbf{E} \in U$ and let $\mathbf{F} \subset \mathbf{E}$ be a maximal destabilizing parabolic subbundle.  We have:
$$
\frac{-\alpha_{\infty} + \sum_i \theta_{ij}(\alpha_{ij} - \alpha_{ij+1})}{\alpha_0} <\frac{\textrm{deg } F + \sum_i \theta_{ij}(\beta_{ij} - \beta_{ij+1})}{\beta_0},   
$$  
where $(\alpha_0, \alpha_{ij})$ and $(\beta_0, \beta_{ij})$ are the dimension vectors of $\mathbf{E}$ and $\mathbf{F}$.  Since $0 \le \beta_0 \le \alpha_0$, $0 \le \sum_i \theta_{ij}(\alpha_{ij} - \alpha_{ij+1})$ , and $\sum_i \theta_{ij}(\beta_{ij} - \beta_{ij+1}) \le k\alpha_0$, then $\textrm{deg } F$ is bounded, independent of $E$.  All subbundles of a $E$ have duals generated by global sections.  Therefore, the degree of a maximal destabilizing subbundle is bounded over all of $U$.  By Grothendieck's Theorem, the maximal destabilizing parabolic subbundles of bundles in $U$ have finitely many underlying vector bundle structures, up to isomorphism.  There are, likewise, finitely many isomorphism types of bundles over $\mathbb{P}^1$ with fixed rank, degree and dual generated by global sections.   Thus, there exists an $N \in \mathbb{Z}_{\ge 0}$ such that for any $E \in U$ and some maximal destabilizing subbundle $F\subset E$ we have $(E(-N)/F(-N))^* = (E/F)^*(N)$ is generated by global sections.  Note that since $E^*$ is generated by global sections, then $E(-N)^*$ is generated by global sections as well.  

Now, fix an unstable parabolic bundle $\mathbf{E}$ with parabolic degree $a$, vector bundle degree $-\alpha_{\infty}-N$, dimension vector $(\alpha_0, \alpha_{i,j})$, and weights $\theta = (\theta_{ij})$.  The choice of $N$ guarantees that there is a maximum destabilizing parabolic subbundle $\mathbf{F} \subset \mathbf{E}$ such that $(E/F)^*(N)$ is generated by global sections.  Let $\mathbf{F}$ have a vector bundle of degree $-\beta_{\infty}-N$ and the dimension vector $(\beta_0, \beta_{ij})$.  We have that, $\mathbf{E}$ corresponds to a squid representation $S$, with dimension vector $(\alpha_{\infty}+N, \alpha_{\infty}+N+\alpha_0, \alpha_{ij})$ under the equivalence of categories.  Furthermore, $\mathbf{F}$ corresponds to a squid subrepresentation $R \subset S$ with dimension vector $(\beta_{\infty}+N, \beta_{\infty}+N+\beta_0, \beta_{ij})$.  Since, $\mathbf{F}$ is unstable, we have:

$$
\frac{-\beta_{\infty} - N + \sum_i \beta_0\theta_{i1} + \sum_i \sum_j \theta_{ij} (\beta_{ij} - \beta_{ij+1})}{\beta_0} > a
$$
which implies
$$
(\beta_{\infty}+N)(- a + 1 + \sum_i \theta_{i1}) + (\beta_0+\beta_{\infty}+N)(a-\sum_i \theta_{i1}) + \sum_i \sum_j (\theta_{ij+1} - \theta_{ij})\beta_{ij} < 0
$$

or

$$
(\beta_{\infty}+N)\lambda_{\infty} + (\beta_0+\beta_{\infty}+N)\lambda_0 + \sum_{i,j} \lambda_{ij}\beta_{ij} < 0
$$
Therefore, $S$ is not $\lambda$-stable.  It follows that $\lambda$-semistable squid representations correspond to $\theta$-semistable parabolic bundles.     
\end{proof}

By putting together Propositions \ref{5.3.1} and \ref{5.3.2} we obtain the following corollary:

\begin{cor}
\label{5.3.3}
There exists a number $N \in \mathbb{Z}_{\ge 0}$ such that $\lambda-$semistable squid representations with Kronecker quiver dimension $(\alpha_{\infty} + N, \alpha_{\infty} + N +\alpha_0)$ and injective arrows $c_{ij}$ correspond to $\theta$-semistable parabolic bundles with rank $\alpha_0$, vector bundle degree $-\alpha_{\infty}-N$, weights $(\theta_{i1}, \dots, \theta_{iw_i})$, and parabolic degree $0$, under the equivalence of categories.
\end{cor}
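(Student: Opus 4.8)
The plan is to obtain the corollary directly by specializing Propositions \ref{5.3.1} and \ref{5.3.2} to parabolic degree $a = 0$, and observing that together they furnish both directions of the asserted correspondence. First I would put $a = 0$ in Proposition \ref{5.3.1}: the weight vector it produces then becomes $\lambda = (1 + \sum_l \theta_{l1},\, -\sum_l \theta_{l1},\, \theta_{ij+1} - \theta_{ij})$, which is exactly the $\lambda$ appearing in the statement of the corollary. So Proposition \ref{5.3.1} already gives one half of the correspondence: every $\theta$-semistable parabolic bundle of rank $\alpha_0$ and parabolic degree $0$ lying in the category $\mathcal{E}$ is carried, under the equivalence of categories of \cite{CB2004}, to a $\lambda$-semistable squid representation with injective arrows $c_{ij}$.

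For the reverse direction, and to produce the integer $N$, I would apply Proposition \ref{5.3.2} with $a = 0$: this yields an $N \in \mathbb{Z}_{\ge 0}$ such that a $\lambda$-semistable squid representation with Kronecker quiver dimension $(\alpha_\infty + N,\, \alpha_\infty + N + \alpha_0)$ and injective arrows $c_{ij}$ corresponds, under the same equivalence, to a $\theta$-semistable parabolic bundle of rank $\alpha_0$, vector bundle degree $-\alpha_\infty - N$, weights $(\theta_{i1}, \dots, \theta_{iw_i})$, and parabolic degree $0$. Because the functor of \cite{CB2004} (the special case of Theorem \ref{6.0.2} with $T$ a point and $N = 0$, precomposed with the twist by $\mathcal{O}(-N)$ used in the proof of Proposition \ref{5.3.2}) is an honest equivalence of categories, the assignments given by Propositions \ref{5.3.1} and \ref{5.3.2} are mutually inverse, and splicing them together produces the claimed bijective correspondence for this value of $N$.

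The one point that needs attention is numerical bookkeeping: one should check that the twist by $\mathcal{O}(-N)$ sends a $\theta$-semistable parabolic bundle of rank $\alpha_0$, degree $-\alpha_\infty - N$ and parabolic degree $0$ into the category $\mathcal{E}$, so that Proposition \ref{5.3.1} applies to it verbatim, and that the image of such a bundle under the equivalence has Kronecker quiver dimension precisely $(\alpha_\infty + N,\, \alpha_\infty + N + \alpha_0)$ — this is what forces the Kronecker dimension in Proposition \ref{5.3.2} to match this particular $N$. All of this is already contained in the proofs of Propositions \ref{5.3.1} and \ref{5.3.2}, so the corollary is a straightforward repackaging of those two results in the case $a = 0$, and this bookkeeping is the only (mild) obstacle.
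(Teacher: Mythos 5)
Your proposal is correct and follows essentially the same route as the paper, whose entire proof of Corollary \ref{5.3.3} is to combine Propositions \ref{5.3.1} and \ref{5.3.2} (specialized to parabolic degree $a=0$), with \ref{5.3.1} giving the parabolic-to-squid direction and \ref{5.3.2} supplying $N$ and the converse. Your additional remark about the $\mathcal{O}(-N)$ twist and the matching of the Kronecker dimension vector is just the bookkeeping already contained in the proofs of those two propositions, so nothing further is needed.
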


\section{Quivers and Parabolic Bundles}

\subsection{Outline}
\label{6.0}
In \cite{CB2004}, Crawley-Boevey provides an equivalence between the category of parabolic bundles of weight type $(D,w)$ over $\mathbb{P}^1$, with dual underlying bundle generated by global sections, and the category of Kronecker-preinjective squid representations with injective arrows $c_{ij}$ (see Section \ref{3.4}).  In this section, we will make similar statements concerning the category of families of parabolic bundles and the category of families of squid representations.  We will use these to prove the representability of several functors related to moduli spaces of parabolic parabolic bundles over $\mathbb{P}^1$.

Let $T$ be a scheme over an algebraically closed field $K$.  Let $\mathscr{V}$ be the category of families over $T$ of vector bundles $E$ over $\mathbb{P}^1$, such that $E^*$ is generated by global sections, and let $\mathscr{R}$ be the category of families over $T$ of preinjective Kronecker quiver representations.  We have:

\begin{thm}
\label{6.0.1}
The categories $\mathscr{V}$ and $\mathscr{R}$ are equivalent.
\end{thm}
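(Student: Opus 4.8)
The plan is to realize the equivalence through a relative version of the Beilinson monad on $T\times\mathbb{P}^1$, bootstrapping from the pointwise statement, which is Crawley-Boevey's theorem in \cite{CB2004}. Write $\pi\colon T\times\mathbb{P}^1\to T$ and $p\colon T\times\mathbb{P}^1\to\mathbb{P}^1$ for the two projections and let $x_0,x_1$ be the standard basis of $H^0(\mathbb{P}^1,\mathcal{O}(1))$. First I would define a functor $\Phi\colon\mathscr{V}\to\mathscr{R}$. A family $\mathcal{E}\in\mathscr{V}$ is a vector bundle on $T\times\mathbb{P}^1$, flat over $T$, whose restriction $E_t$ to each fiber $\{t\}\times\mathbb{P}^1$ has $E_t^{*}$ globally generated; I set $\Phi(\mathcal{E})$ to be the Kronecker representation with $V_0=(\pi_{*}\mathcal{E}^{*})^{\vee}$ at the vertex $0$, $V_\infty=(\pi_{*}(\mathcal{E}^{*}\otimes p^{*}\mathcal{O}(-1)))^{\vee}$ at $\infty$, and with $b_0,b_1\colon V_0\to V_\infty$ the transposes of the two maps $\pi_{*}(\mathcal{E}^{*}\otimes p^{*}\mathcal{O}(-1))\to\pi_{*}\mathcal{E}^{*}$ induced by multiplication by $x_0,x_1$. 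Cohomology and base change makes this legitimate: on each fiber $E_t^{*}\cong\bigoplus_j\mathcal{O}(a_j)$ with all $a_j\geq 0$, so $h^{1}(E_t^{*})=h^{1}(E_t^{*}(-1))=0$, while $h^{0}(E_t^{*})=\operatorname{rk}\mathcal{E}+\deg E_t^{*}$ and $h^{0}(E_t^{*}(-1))=\deg E_t^{*}$ are locally constant on $T$; hence $\pi_{*}\mathcal{E}^{*}$ and $\pi_{*}(\mathcal{E}^{*}\otimes p^{*}\mathcal{O}(-1))$ are locally free, commute with base change, have vanishing $R^{1}\pi_{*}$, and fiberwise reproduce the preinjective Kronecker module attached to $E_t$ in \cite{CB2004}.

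Next I would define a quasi-inverse $\Psi\colon\mathscr{R}\to\mathscr{V}$: to a family of preinjective Kronecker representations $(V_\infty,V_0,b_0,b_1)$ over $T$ assign $(\operatorname{coker}\theta)^{\vee}$, where
$$
\theta=x_0\otimes b_0^{\vee}+x_1\otimes b_1^{\vee}\colon p^{*}\mathcal{O}(-1)\otimes\pi^{*}V_\infty^{\vee}\longrightarrow\mathcal{O}\otimes\pi^{*}V_0^{\vee}.
$$
Fiberwise preinjectivity says, via \cite{CB2004}, precisely that $\theta|_{\{t\}\times\mathbb{P}^1}$ is injective with locally free cokernel whose dual is globally generated; since $\operatorname{coker}\theta$ is then fiberwise locally free of constant rank, a standard flatness argument shows it is a vector bundle on $T\times\mathbb{P}^1$ flat over $T$, so $\Psi(V_\infty,V_0,b_0,b_1)\in\mathscr{V}$, and $\theta$ is injective.

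It remains to produce natural isomorphisms $\Psi\Phi\cong\mathrm{id}_{\mathscr{V}}$ and $\Phi\Psi\cong\mathrm{id}_{\mathscr{R}}$. For the first, the counit $\mathcal{O}\otimes\pi^{*}\pi_{*}\mathcal{E}^{*}\to\mathcal{E}^{*}$ fits into a canonical relative Beilinson sequence
$$
0\to p^{*}\mathcal{O}(-1)\otimes\pi^{*}\pi_{*}(\mathcal{E}^{*}\otimes p^{*}\mathcal{O}(-1))\to\mathcal{O}\otimes\pi^{*}\pi_{*}\mathcal{E}^{*}\to\mathcal{E}^{*}\to 0
$$
which one checks is a complex (its composite vanishes on every fiber, hence vanishes), is fiberwise exact by \cite{CB2004}, and is therefore exact, the cokernel and middle term being $T$-flat; dualizing yields $\mathcal{E}\cong\Psi(\Phi(\mathcal{E}))$. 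For the second, applying $\pi_{*}$ to $0\to p^{*}\mathcal{O}(-1)\otimes\pi^{*}V_\infty^{\vee}\to\mathcal{O}\otimes\pi^{*}V_0^{\vee}\to\operatorname{coker}\theta\to 0$ and to its twist by $p^{*}\mathcal{O}(-1)$, and using $\pi_{*}\mathcal{O}=\mathcal{O}_T$, $\pi_{*}\mathcal{O}(-1)=\pi_{*}\mathcal{O}(-2)=R^{1}\pi_{*}\mathcal{O}(-1)=0$ and $R^{1}\pi_{*}\mathcal{O}(-2)\cong\mathcal{O}_T$, recovers $\pi_{*}\mathcal{E}^{*}\cong V_0^{\vee}$ and $\pi_{*}(\mathcal{E}^{*}\otimes p^{*}\mathcal{O}(-1))\cong V_\infty^{\vee}$ compatibly with the multiplication-by-$x_i$ maps, which is exactly $\Phi(\Psi(-))\cong\mathrm{id}$. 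Both constructions are visibly functorial in $T$.

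I expect the main obstacle to be the family-theoretic bookkeeping of the last paragraph rather than any new geometric input: one must verify that the relative Beilinson sequence is a complex and exact (not merely fiberwise), that $\operatorname{coker}\theta$ is $T$-flat and locally free, and that every identification in sight — the counit, the trace isomorphism $R^{1}\pi_{*}\mathcal{O}(-2)\cong\mathcal{O}_T$, and the resulting recovery of $V_\infty^{\vee}$ and $V_0^{\vee}$ — is natural in $\mathcal{E}$ (respectively in the Kronecker representation), so that $\Phi,\Psi$ are genuine functors and the unit and counit are natural transformations. The purely pointwise content — the Beilinson resolution of a bundle with globally generated dual, and the classification of preinjective Kronecker modules — is supplied wholesale by \cite{CB2004}.
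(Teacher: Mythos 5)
This is essentially the paper's own argument: its functor $R$ is your $\Phi$ (with $\mathcal{V}=p_*(E^*)^*$, $\mathcal{W}=p_*(E^*(-1))^*$ and the maps induced by the two generating sections of $\mathcal{O}(1)$), its quasi-inverse $V$ is your $\Psi$ up to dualizing (the paper takes $E=\ker\bigl(p^*\mathcal{V}\to p^*\mathcal{W}(1)\bigr)$ instead of $(\operatorname{coker}\theta)^{\vee}$), and both natural isomorphisms are verified by the same relative Euler-type sequence, cohomology and base change, and the projection-formula computation with $\mathcal{O}(-1)$ and $\mathcal{O}(-2)$ that you outline. The only adjustment is a sign convention: for the composite in your Beilinson sequence to vanish you need the antisymmetric (Koszul) combination of the two multiplication maps — the paper arranges this by using the generators $1$ and $-z$ of $\mathcal{O}(1)$ — whereas with $\theta=x_0\otimes b_0^{\vee}+x_1\otimes b_1^{\vee}$ and $b_i$ literally dual to multiplication by $x_i$ the composite is the symmetric pairing and does not vanish, so the indices must be swapped with a sign.
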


For fixed $\alpha = (\alpha_{\infty}, \alpha_{\infty} + \alpha_{0})$, this allows us to prove that the space of preinjective Kronecker quiver representations $KI(\alpha)$ represents a moduli functor for vector bundles over $\mathbb{P}^1$ together with some rigidity conditions.

Let $B$ be a vector bundle over $KI(\alpha)$.  We can define a flag bundle $Fl(B)_i$ over $KI(\alpha)$ of flags of type $(\alpha_0, \alpha_{i1}, \dots, \alpha_{iw_i})$ by gluing flag varieties (using the transitions functions of $B$).  Let $Fl(B)$ be the fibered product of $Fl(B)_i$ for $1 \le i \le k$ over $KI(\alpha)$.  We can prove that $Fl(B)$ represents a moduli functor for parabolic bundles over $\mathbb{P}^1$ together with some rigidity conditions.

Let $\mathscr{P}(D,w)$ be the category of families over $T$ of parabolic bundles $\mathbf{E}$ over $\mathbb{P}^1$ of weight type $(D,w)$, with $E^*$ generated by global sections.  Let $\mathscr{S}(D,w)$ be the category of families over $T$ of Kronecker-preinjective squid representations with injective arrows $c_{ij}$ (see section \ref{3.4}).  We also prove a theorem analogous to Theorem \ref{6.0.1} (cf. Lemma 5.5 in \cite{CB2004}):
\begin{thm}
\label{6.0.2}
The categories $\mathscr{P}(D,w)$ and $\mathscr{S}(D,w)$ are equivalent.
\end{thm}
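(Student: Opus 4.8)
The plan is to deduce Theorem~\ref{6.0.2} from Theorem~\ref{6.0.1} by adjoining the flag data to each side of that equivalence. Recall that the equivalence $\Phi\colon\mathscr{V}\to\mathscr{R}$ of Theorem~\ref{6.0.1} sends a family of vector bundles $E$ on $T\times\mathbb{P}^1$ (with $E^*$ generated by global sections fibrewise) to the family of Kronecker representations whose vertex-$\infty$ and vertex-$0$ bundles over $T$ are $\pi_*(E^*\otimes p^*\mathcal{O}(1))^\vee$ and $\pi_*(E^*)^\vee$, with the two arrows $b_0,b_1$ dual to multiplication by the standard sections of $\mathcal{O}(1)$. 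The geometric point I want to exploit, already visible in Section~\ref{4.2}, is that for each marked point $x_i=(\lambda_{i0}:\lambda_{i1})$ the kernel of $\lambda_{i0}b_0+\lambda_{i1}b_1$ acting on the vertex-$0$ bundle is canonically identified with the fibre bundle $E|_{T\times\{x_i\}}$; here Kronecker-preinjectivity (fibrewise surjectivity of $\lambda_{i0}b_0+\lambda_{i1}b_1$) is exactly what guarantees that this kernel is a subbundle of constant rank $\alpha_0$ and that its formation commutes with base change.

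Granting this, I would define the functor $\mathscr{P}(D,w)\to\mathscr{S}(D,w)$ by starting with a family $\mathbf{E}$, applying $\Phi$ to the underlying family of vector bundles, and using the identification above to view each step $E^{i,j}$ of the flag as a subbundle of the vertex-$0$ bundle lying inside $\ker(\lambda_{i0}b_0+\lambda_{i1}b_1)$. The successive inclusions $E^{i,w_i-1}\hookrightarrow\cdots\hookrightarrow E^{i,1}\hookrightarrow(\text{vertex }0)$ are then the squid arrows $c_{i,w_i-1},\dots,c_{i1}$: they are injective bundle maps, the relation $(\lambda_{i0}b_0+\lambda_{i1}b_1)c_{i1}=0$ holds because $c_{i1}$ factors through the kernel, and Kronecker-preinjectivity is inherited from $\Phi(E)$. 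In the opposite direction, from a family of Kronecker-preinjective squid representations with injective $c_{ij}$ one applies $\Phi^{-1}$ to the Kronecker part to recover a family of vector bundles $E$; the squid relation forces the image of $c_{i1}$ into $\ker(\lambda_{i0}b_0+\lambda_{i1}b_1)\cong E|_{T\times\{x_i\}}$, and the injectivity of the $c_{ij}$ together with the fixed dimension vector $\alpha$ makes the composites $c_{i1}c_{i2}\cdots c_{ij}$ realize a genuine flag of subbundles of $E|_{T\times\{x_i\}}$ of the prescribed ranks, that is, a parabolic structure. One then checks these two assignments are functorial in $T$ and mutually quasi-inverse; on morphisms this is formal, since a flag-preserving map of parabolic bundles induces, via $\Phi$, a map of Kronecker representations whose restriction to each $E^{i,j}$ provides the required intertwining with the $c_{ij}$, and conversely. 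By the family version of Theorem~\ref{6.0.1} everything reduces to the compatibility of the flag/leg dictionary with base change together with the pointwise statement, which is Lemma~5.5 of \cite{CB2004}.

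The main obstacle is precisely the bookkeeping that upgrades the flag--leg correspondence from fibres to families: one must verify that $\ker(\lambda_{i0}b_0+\lambda_{i1}b_1)$ is a subbundle and commutes with base change (which comes from preinjectivity), that a subbundle of $E|_{T\times\{x_i\}}$ is the same datum as an injective morphism from a locally free sheaf into the vertex-$0$ bundle whose composite with $\lambda_{i0}b_0+\lambda_{i1}b_1$ vanishes, and conversely that such an injective morphism automatically has locally free image of the expected rank so that the induced flag has constant dimension vector $\alpha$. Once these local-freeness and base-change points are in place, the rest follows formally from Theorem~\ref{6.0.1} applied to the underlying bundles and the functoriality of passing to induced maps on flags.
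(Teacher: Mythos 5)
Your proposal follows essentially the same route as the paper: both deduce the statement from Theorem \ref{6.0.1} by identifying $E|_{T\times\{x_i\}}$ with $\ker(\lambda_{i0}\Psi_0+\lambda_{i1}\Psi_1)$ (a rank count between two subbundles of the vertex-$0$ bundle), turning the flag inclusions into the injective legs $C_{ij}$ satisfying the squid relation, and conversely recovering the flag as the images of the composites $C_{i1}\cdots C_{ij}$, with functoriality and quasi-inverseness inherited from the Kronecker equivalence. The only blemish is a twist-direction slip in your recollection of Theorem \ref{6.0.1} (the vertex-$\infty$ bundle is $p_*(E^*(-1))^*$, i.e.\ a twist by $\mathcal{O}(-1)$, not $\mathcal{O}(1)$), which does not affect the argument.
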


Let $\alpha = (\alpha_{\infty}, \alpha_{0} + \alpha_{\infty}, \alpha_{ij})$.  Let $KS(D, w, \alpha)$ be the space of Kronecker-preinjective squid representations dimension vector $\alpha$, such that the maps corresponding to the arrows $c_{ij}$ are injective. Theorem \ref{6.0.2} allows us to show that $KS(D, w, \alpha)$ represents another moduli functor of parabolic bundles together with additional rigidity conditions.  Note that these results parallel the correspondence obtained in Section 5 of \cite{CB2004}.

As an application of the above results, we examine the case of parabolic structures on a trivial vector bundle over $\mathbb{P}^1$.  This lets us prove Theorem 1.2.2 as a consequence of the very good property for the moduli stack of parabolic bundles (Theorem 1.2.1) over $\mathbb{P}^1$ or of the very good property for the quotient stack associated to representations of a star-shaped quiver (Theorem \ref{3.6.1}).

\subsection{Moduli Functor: parabolic bundles and flag bundles}
\label{6.1}
In this and the next section, we will use $\langle \quad \rangle$ to denote the isomorphism class of the collection of enclosed objects.  All the schemes we consider from now on will be schemes of finite type. 
Let 
\begin{align*}
& p: T \times_{K} \mathbb{P}^1 \rightarrow T
& \pi: T \times_{K} \mathbb{P}^1 \rightarrow \mathbb{P}^1
\end{align*}
be the two natural projections.  Let $\mathscr{V}$ be the category of vector bundles $E$ over $T \times_{K} \mathbb{P}^1$ such that $E^*|_{\{x\}\times \mathbb{P}^1}$ is generated by global sections for all $x \in T$.  The morphisms of $\mathscr{V}$ are just vector bundle morphisms.  

Let $\mathcal{V}$ and $\mathcal{W}$ be vector bundle over $T$, and let $\Psi_0,\Psi_1$ be morphisms of vector bundles from $\mathcal{V}$ to $\mathcal{W}$ such that on every fiber over $x \in T$ all linear combinations $\lambda_0\Psi_0(x) + \lambda_1\Psi_1(x)$ for $(\lambda_0: \lambda_1) \in \mathbb{P}^1$ are surjective.  Let $\mathscr{R}$ be the category whose objects are four-tuples $(\mathcal{V}, \mathcal{W},\Psi_0,\Psi_1)$.  A morphism in $\mathscr{R}$ between $(\mathcal{V}, \mathcal{W},\Psi_0,\Psi_1)$ and $(\mathcal{V}^{'}, \mathcal{W}^{'},\Psi_0^{'},\Psi_1^{'})$ consists of a pair $(f,g)$ of vector bundle morphisms $f: \mathcal{V} \rightarrow \mathcal{V}^{'}$ and $g: \mathcal{W} \rightarrow \mathcal{W}^{'}$ such that $g \circ \Psi_i = \Psi_i^{'} \circ f$ for $i = 1,2$.  Note that the objects of $\mathscr{R}$ are families of preinjective Kronecker quiver representations but not necessarily in coordinate spaces.

We can now proceed with:
\begin{proof}[Proof of Theorem \ref{6.0.1}]
Let $E \in \mathscr{V}$ be a family of vector bundles over $\mathbb{P}^1$, which are parametrized by $T$.  Define two vector bundles over $T$ by 
\begin{align*}
& \mathcal{V} = p_*(E^*)^*\\
& \mathcal{W} = p_*(E^*(-1))^*,
\end{align*}
where $E(-1) = E \otimes \pi^*(\mathcal{O}(-1))$.  Note that these are indeed vector bundles, by the Cohomology and Base Change theorem (Theorem 12.11 in \cite{Ha1977}).  

Let $1$ and $-z$ be the generators of $\pi^*(\mathcal{O}(1))$ corresponding to the two natural global sections of $\mathcal{O}(1)$.  Since $E^* = E^*(-1) \otimes \pi^*(\mathcal{O}(1))$, we can write two inclusions:
\begin{align*}
& \psi_0: E^*(-1) \rightarrow E^*\\
& \psi_1: E^*(-1) \rightarrow E^*
\end{align*} 
corresponding to these generators.  Let us denote the morphisms induced by $\psi_0$ and $\psi_1$ from $\mathcal{V}$ to $\mathcal{W}$ by $\Psi_0$ and $\Psi_1$.  Note that $\psi_0, \psi_1$ are injections from $E^*(-1)$ to $E^*$, defined by the sections of $\mathcal{O}(1)$, so the morphisms they induce from $\mathcal{W}$ to $\mathcal{V}$ are injective as morphisms of vector bundles.  Moreover, their linear combinations are injective.  Reducing to the fiber, it follows that $\lambda_0\Psi_0(x) + \lambda_1\Psi_1(x)$ is surjective for $(\lambda_0:\lambda_1) \in \mathbb{P}^1$.  This means $(\mathcal{V}, \mathcal{W}, \Psi_0, \Psi_1)$ is a family of preinjective Kronecker quiver representations. 

Let $E_1$ and $E_2$ be objects of $\mathscr{V}$ corresponding to the families $(\mathcal{V}_1, \mathcal{W}_1,\Psi_{1,0},\Psi_{1,1})$ and $(\mathcal{V}_2, \mathcal{W}_2,\Psi_{2,0},\Psi_{2,1})$, respectively.  If $f: E_1 \rightarrow E_2$ is a morphism of objects of $\mathscr{V}$, then it induces morphisms $f_1: \mathcal{V}_1 \rightarrow \mathcal{V}_2$ and $f_2: \mathcal{W}_1 \rightarrow \mathcal{W}_2$, for the corresponding vector bundles over $T$.  Moreover, $f$ induces a morphism $f': E_1(-1) \rightarrow E_2(-1)$, so it follows that $\bar{f}\psi_{1,0} = \psi_{2,0}\bar{f'}$, where $\bar{f}$ and $\bar{f'}$ are induced morphisms on $E_1^*$ and $E_1^*(-1)$ and 
\begin{align*}
&\psi_{1,0}: E_1^*(-1) \rightarrow E_1^*,\\
&\psi_{2,0}: E_2^*(-1) \rightarrow E_2^*
\end{align*}
are inclusions described above corresponding to the generator $1$ of $\pi^*(\mathcal{O}(1))$.
Analogously, we have $\bar{f}\psi_{1,1} = \psi_{2,1}\bar{f'}$, where
\begin{align*}
&\psi_{1,1}: E_1^*(-1) \rightarrow E_1^*,\\
&\psi_{2,1}: E_2^*(-1) \rightarrow E_2^*
\end{align*}
are inclusions described above corresponding to the generator $z$ of $\pi^*(\mathcal{O}(1))$.  We therefore have that $(f_1,f_2)$ is a well-defined morphism of families of quiver representations.

We define the functor $R$ by:
\begin{align*}
&R(E) = (\mathcal{V}, \mathcal{W}, \Psi_0, \Psi_1)\\
&R(f) = (f_1,f_2).
\end{align*}

Conversely, consider the family of preinjective Kronecker quiver representations $(\mathcal{V}, \mathcal{W}, \Psi_0, \Psi_1)$. Since $\mathcal{V}$ and $\mathcal{W}$ are vector bundles over $T$ and $\Psi_0, \Psi_1$ are morphisms between them, then we can define two morphisms of vector bundles over $T \times \mathbb{P}^1$.  Namely:
$$
\phi_0,\phi_1: p^*(\mathcal{V}) \rightarrow p^*(\mathcal{W}),
$$
which are induced by $\Psi_0$ and $\Psi_1$.  We define a morphism:
\begin{align*}
& \phi: p^*(\mathcal{V}) \rightarrow p^*(\mathcal{W})(1) \\
& v \mapsto \phi_0(v)\otimes z - \phi_1(v) \otimes 1,
\end{align*}
where $p^*(\mathcal{W})(1) = p^*(\mathcal{W}) \otimes \pi^*(\mathcal{O}(1))$.  It follows from preinjectivity that $\phi$ is surjective and that $E  = \textrm{ker }\phi$ is a vector bundle.  Therefore we have the exact sequence:
$$
0 \rightarrow E \rightarrow p^*(\mathcal{V}) \rightarrow p^*(\mathcal{W})(1) \rightarrow 0,
$$ 
which can be dualized to give us 
$$
0 \rightarrow p^*(\mathcal{W}^*)(-1) \rightarrow p^*(\mathcal{V}^*)\rightarrow E^* \rightarrow 0. 
$$
If we restrict this to $\{x\} \times \mathbb{P}^1$, then we get a surjection from $p^*(\mathcal{V}^*)|_{\{x\} \times \mathbb{P}^1}$ to $E^*|_{\{x\} \times \mathbb{P}^1}$.  However, $p^*(\mathcal{V}^*)|_{\{x\} \times \mathbb{P}^1}$ is trivial and therefore $E^*|_{\{x\} \times \mathbb{P}^1}$ is generated by global sections for all $x \in T$.  

Let $(\mathcal{V}_1, \mathcal{W}_1,\Psi_{1,0},\Psi_{1,1})$ and $(\mathcal{V}_2, \mathcal{W}_2,\Psi_{2,0},\Psi_{2,1})$ be families of quiver representations corresponding to objects $E_1$ and $E_2$ of $\mathscr{V}$ respectively.  Let $(f_1, f_2)$ be a morphism of the families of quiver representations.  This means we have $f_1: \mathcal{V}_1 \rightarrow \mathcal{V}_2$ and $f_2: \mathcal{W}_1 \rightarrow \mathcal{W}_2$, which commute with $\Psi_{1,0}, \Psi_{1,1}$ and $\Psi_{1,1}, \Psi_{2,1}$, respectively.  It follows that there are induced morphisms $f: p^*(\mathcal{V}_1) \rightarrow p^*(\mathcal{V}_2)$ and $f': p^*(\mathcal{W}_1(1)) \rightarrow p^*(\mathcal{W}_2(1))$.  Let
\begin{align*}
&\phi^1: p^*(\mathcal{V}_1) \rightarrow p^*(\mathcal{W}_1)(1)\\
&\phi^2: p^*(\mathcal{V}_2) \rightarrow p^*(\mathcal{W}_2)(1),
\end{align*} 
be induced by $\Psi_{1,0}, \Psi_{1,1}$ and $\Psi_{1,1}, \Psi_{2,1}$, as above.  Since we have $f' \phi^1 = \phi^2 f$, then $f$ maps $\textrm{ker } \phi^1$ to $\textrm{ker } \phi^2$.  Therefore, $f: E_1 \rightarrow E_2$ is a well-defined morphism of vector bundles.

It follows that we can define a functor $V$ by:
\begin{align*}
&V(\mathcal{V}, \mathcal{W}, \Psi_0,\Psi_1) = E\\ 
&V(f_1,f_2) = f
\end{align*}
Let $E \in \textrm{Ob}(\mathscr{V})$.  Consider $VR(E)$.  It is part of the following short exact sequence:
$$
0 \rightarrow VR(E) \rightarrow p^*(p_*(E^*))^* \rightarrow p^*(p_*(E^*(-1)))^*(1) \rightarrow 0
$$
Similarly, 
$$
0 \rightarrow E \rightarrow p^*(p_*(E^*))^* \rightarrow p^*(p_*(E^*(-1)))^*(1) \rightarrow 0.
$$
Indeed, consider the morphism $E \rightarrow p^*(p_*(E^*))^*$ induced by the pairing between $E$ and $E^*$ and let $p^*(p_*(E^*))^* \rightarrow p^*(p_*(E^*(-1)))^*(1) \rightarrow 0$ be as before.  We have: 
$$p^*(p_*(E^*))|_{\{x\}\times \mathbb{P}^1} = (p_*E^*)_x\otimes \mathcal{O}_{\{x\}\times \mathbb{P}^1} = H^0(E^*|_{\{x\}\times \mathbb{P}^1})\otimes \mathcal{O}_{\{x\}\times \mathbb{P}^1}$$
by the Cohomology and Base Change theorem.  It follows that for each restriction the morphism $p^*(p_*(E^*))|_{\{x\}\times \mathbb{P}^1} \rightarrow E^*|_{\{x\}\times \mathbb{P}^1}$ is surjective.  Therefore, the morphism of the duals $p^*(p_*(E^*)) \rightarrow E^*$ is surjective, so $0 \rightarrow E \rightarrow p^*(p_*(E^*))^*$ is exact, and moreover, $E$ is a subbundle of $p^*(p_*(E^*))^*$.  Since the morphism $p^*(p_*(E^*))^* \rightarrow p^*(p_*(E^*(-1)))^*(1)$ is induced by the inclusion of $E^*(-1)$ into $E^*$, then we have that the image of $E$ lies in the kernel of this morphism.  The kernel and $E$ are vector subbundles of the same rank.  Therefore, the image of $E$ coincides with the kernel, so the sequence is exact.  It follows that $VR(E) \cong E$, and that the identity functor on $\mathscr{V}$ is naturally isomorphic to $VR$. 

Conversely, we have 
$$
RV(\mathcal{V},\mathcal{W},\Psi_0,\Psi_1) = (p_*(E^*), p_*(E^*(-1)), \theta_0, \theta_1),
$$
where $E$ comes from the exact sequence 
$$
0 \rightarrow E \rightarrow p^*V \rightarrow p^*(W)(1) \rightarrow 0.
$$
Note that by dualizing we obtain
$$
0 \rightarrow p^*(W^*)(-1) \rightarrow p^*(V^*) \rightarrow E^* \rightarrow 0.
$$

We can write the following long exact sequence for the direct image $p_*$:
$$
0 \rightarrow p_*(p^*(W^*)(-1)) \rightarrow p_*p^*(V^*) \rightarrow p_*(E^*) \rightarrow R^1p_*(p^*(W^*)(-1)) \rightarrow \cdots .
$$
It follows from the Projection Formula that 
\begin{align*}
& p_*(p^*(W^*)(-1)) = \mathcal{W}^* \otimes H^0(\mathbb{P}^1, \mathcal{O}(-1)) = 0 \\
& R^1p_*(p^*(W^*)(-1)) = \mathcal{W}^* \otimes H^1(\mathbb{P}^1, \mathcal{O}(-1)) = 0,
\end{align*}
so we have that $p_*(E^*)^* = p_*p^*(V^*)^*  = V$.  Similarly, using the Projection Formula, we obtain the long exact sequence
$$
0 \rightarrow p_*(p^*(W^*)(-2)) \rightarrow 0 \rightarrow p_*(E^*(-1)) \rightarrow R^1p_*(p^*(W^*)(-2)) \rightarrow 0
$$
from the short exact sequence
$$
0 \rightarrow p^*(W^*)(-2) \rightarrow p^*(V^*)(-1) \rightarrow E^*(-1) \rightarrow 0.
$$
It follows that 
$p_*(E^*(-1))^* = R^1p_*(p^*(W^*)(-2))^* = W \otimes H^1 (\mathbb{P}^1, \mathcal{O}(-2))^* = W$.  Furthermore, since $\theta_0, \theta_1$ are induced by $\Psi_0, \Psi_1$, then $RV(\mathcal{V},\mathcal{W},\Psi_0,\Psi_1)$ is isomorphic to $(p_*(E^*), p_*(E^*(-1)), \theta_0, \theta_1)$.  This defines a pair of mutually inverse natural transformations between the identity functor on $\mathscr{R}$ and $RV$.  It follows that the two functors are isomorphic.  Therefore, we have that $\mathscr{V}$ and $\mathscr{R}$ are equivalent.
\end{proof} 

Note in the subsequent definition of the moduli functor, and all of the following moduli functor definitions, we will define the functor on the objects of the category of schemes and assume that the functor is defined naturally on morphisms between schemes.
\begin{defn}
\label{6.1.1}
Let $E$ be as before but fix the degree and rank of each restriction $E|_{\{x\} \times \mathbb{P}^1}$ to be $d=-\alpha_{\infty}$ and $\alpha_0$, respectively.  Let us define a functor $F$, from the category of schemes over $K$ to the category of sets as $F(T) = \left\langle (E,s,t)\right\rangle$, where 
\begin{itemize}
\item $E$ \textrm{ is a vector bundle on } $T\times \mathbb{P}^1$,
\item $p_*(E^*)$ \textrm{ and } $p_*(E^*(-1))$ \textrm{ are trivial vector bundles},
\item $s: \mathcal{O}_T^{\alpha_0+\alpha_{\infty}} \simeq p_*(E^*)$,
\item $t: \mathcal{O}_T^{\alpha_{\infty}} \simeq p_*(E^*(-1))$.
\end{itemize}
\end{defn}

\begin{thm}
\label{6.1.2}
The moduli functor $F$ is represented by the space $KI(\alpha)$ of preinjective Kronecker quiver representations in the standard coordinate spaces.
\end{thm}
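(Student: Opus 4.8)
\emph{Strategy.} The plan is to read off representability from the equivalence of categories in Theorem \ref{6.0.1}, whose proof constructs everything in sight out of $p_*(E^*)$, $p_*(E^*(-1))$ and the dual of the structure sequence. For $\alpha=(\alpha_\infty,\alpha_\infty+\alpha_0)$ the space $KI(\alpha)$ is the open subvariety of the affine space $\textrm{Mat}(\alpha_\infty\times(\alpha_\infty+\alpha_0),K)^{\oplus 2}$ consisting of pairs $(b_0,b_1)$ for which $\lambda_0b_0+\lambda_1b_1$ is surjective for every $(\lambda_0:\lambda_1)\in\mathbb{P}^1$; this locus is open, since the set of $((\lambda_0:\lambda_1),(b_0,b_1))$ at which the rank of $\lambda_0b_0+\lambda_1b_1$ drops is closed in $\mathbb{P}^1\times\textrm{Mat}(\alpha_\infty\times(\alpha_\infty+\alpha_0),K)^{\oplus 2}$ and its image under the (proper) second projection is therefore closed. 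Hence a morphism $T\to KI(\alpha)$ is exactly a pair of $\mathcal{O}_T$-linear maps $\Psi_0,\Psi_1\colon\mathcal{O}_T^{\alpha_0+\alpha_\infty}\to\mathcal{O}_T^{\alpha_\infty}$ whose pencil is fiberwise surjective, that is, a family over $T$ of preinjective Kronecker quiver representations in the standard coordinate spaces. It suffices to produce a bijection between $F(T)$ and the set of such families, natural in $T$.

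\emph{From parabolic data to a Kronecker representation.} Given $(E,s,t)\in F(T)$, apply the functor $R$ of Theorem \ref{6.0.1} to get $R(E)=(\mathcal{V},\mathcal{W},\Psi_0,\Psi_1)$ with $\mathcal{V}=p_*(E^*)^*$ and $\mathcal{W}=p_*(E^*(-1))^*$. Dualizing the isomorphisms $s\colon\mathcal{O}_T^{\alpha_0+\alpha_\infty}\xrightarrow{\sim}p_*(E^*)$ and $t\colon\mathcal{O}_T^{\alpha_\infty}\xrightarrow{\sim}p_*(E^*(-1))$ trivializes $\mathcal{V}$ and $\mathcal{W}$, and under these trivializations $(\Psi_0,\Psi_1)$ is a pair $\mathcal{O}_T^{\alpha_0+\alpha_\infty}\to\mathcal{O}_T^{\alpha_\infty}$, fiberwise preinjective because $R$ always produces preinjective families. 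This yields a morphism $T\to KI(\alpha)$, depending only on the isomorphism class of $(E,s,t)$.

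\emph{The inverse, and bijectivity.} Conversely, a family $(\mathcal{O}_T^{\alpha_0+\alpha_\infty},\mathcal{O}_T^{\alpha_\infty},\Psi_0,\Psi_1)$ of preinjective Kronecker representations in coordinate spaces gives, via the functor $V$ of Theorem \ref{6.0.1}, a vector bundle $E$ fitting in $0\to E\to p^*\mathcal{O}_T^{\alpha_0+\alpha_\infty}\to p^*\mathcal{O}_T^{\alpha_\infty}(1)\to 0$ with $E^*$ fiberwise globally generated. Dualizing this sequence and applying $p_*$, just as in the proof of Theorem \ref{6.0.1} (using $H^0(\mathbb{P}^1,\mathcal{O}(-1))=H^1(\mathbb{P}^1,\mathcal{O}(-1))=0$, $H^0(\mathbb{P}^1,\mathcal{O}(-2))=0$ and $H^1(\mathbb{P}^1,\mathcal{O}(-2))^*\cong H^0(\mathbb{P}^1,\mathcal{O})$), produces canonical isomorphisms $p_*(E^*)\cong(\mathcal{O}_T^{\alpha_0+\alpha_\infty})^*\cong\mathcal{O}_T^{\alpha_0+\alpha_\infty}$ and $p_*(E^*(-1))\cong(\mathcal{O}_T^{\alpha_\infty})^*\cong\mathcal{O}_T^{\alpha_\infty}$; taking $s,t$ to be these gives an element of $F(T)$. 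The natural isomorphisms $VR\cong\textrm{id}_{\mathscr{V}}$ and $RV\cong\textrm{id}_{\mathscr{R}}$ of Theorem \ref{6.0.1} make the two assignments mutually inverse, and the trivializations are carried along correctly because $R$ and $V$ are defined verbatim in terms of $p_*(E^*)$, $p_*(E^*(-1))$ and the dualized sequence, on which those isomorphisms act as the identity. A brief check shows $(E,s,t)$ has no nontrivial automorphisms, so $F(T)$ is a set in bijection with $\textrm{Hom}(T,KI(\alpha))$.

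\emph{Naturality and the main point.} It remains to see the bijection commutes with pullback along any $g\colon T'\to T$, and the single substantive point is that the formation of $p_*(E^*)$ and $p_*(E^*(-1))$ commutes with arbitrary base change. I would get this, as above, from the dual structure sequence $0\to p^*\mathcal{W}^*(-1)\to p^*\mathcal{V}^*\to E^*\to 0$: both $p_*$ and $R^1p_*$ of $p^*\mathcal{W}^*(-1)$ vanish by the projection formula, so $p_*E^*\cong\mathcal{V}^*$ functorially in $T$, and one further twist treats $p_*(E^*(-1))$. Then all the identifications above are compatible with $g^*$, so the construction is a natural transformation with a natural inverse, and $F$ is represented by $KI(\alpha)$, which is an open (hence smooth, quasi-affine, possibly empty) subvariety of $\textrm{Mat}(\alpha_\infty\times(\alpha_\infty+\alpha_0),K)^{\oplus 2}$. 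I expect the only genuinely delicate bookkeeping to be keeping the two trivializations $s$ and $t$---and the various duals through which they are threaded---consistent across the equivalence of Theorem \ref{6.0.1}; once that is pinned down, the representability statement is formal.
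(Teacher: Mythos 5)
Your argument is correct and is essentially the paper's own proof: both pass the data $(E,s,t)$ through the functors $R$ and $V$ of Theorem \ref{6.0.1}, use $s,t$ to trivialize $\mathcal{V}=p_*(E^*)^*$ and $\mathcal{W}=p_*(E^*(-1))^*$, and conclude that the resulting maps $\eta_T,\rho_T$ are mutually inverse natural transformations between $F$ and the functor of points of $KI(\alpha)$. You merely spell out details the paper leaves implicit (openness of $KI(\alpha)$, compatibility with base change via the pushforward computations), so no further changes are needed.
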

\begin{proof}
Fix a test scheme $T$, with $F(T) = \{\textrm{iso. classes of } (E, s: \mathcal{O}^{\alpha_0} \cong p_*(E^*), t: \mathcal{O}^{\alpha_{\infty}} \cong p_*(E^*(-1))\}$.  The construction of the functor $R$ in the proof of Theorem \ref{6.0.1} determines a family $(\mathcal{V}, \mathcal{W},\Psi_0, \Psi_1)$ over $T$ of preinjective Kronecker quiver representations from the vector bundle $E$.  Indeed, $\mathcal{V} = p_*(E^*)^* $ and $\mathcal{W} = p_*(E^*(-1))^*$, so $s,t$ identify $\mathcal{V},\mathcal{W}$ with trivial vector bundles on $T$, and $\Psi_0, \Psi_1$ with morphisms of trivial vector bundles on $S$.  It is evident that the rank of $\mathcal{V}$ is $\alpha_0 + \alpha_{\infty}$ and the rank of $\mathcal{W}$ is $\alpha_{\infty}$.  In other words, an element of $F(T)$ determines a morphism $\varphi: T \rightarrow KI(\alpha)$.  Similarly, it follows from the construction of the functor $R$ that $\varphi: T \rightarrow KI(\alpha)$ determines an element of $F(T)$.  This defines a pair of morphisms:
\begin{align*}
& \eta_T:  F(T) \rightarrow \textrm{Hom}(T,KI(\alpha))\\
& \rho_T:  \textrm{Hom}(T,KI(\alpha)) \rightarrow F(T).
\end{align*}
These are natural transformation between the functors $F$ and $\underline{KI(\alpha)}$, the functor of points for $KI(\alpha)$.  It follows from Theorem \ref{6.0.1} that $\eta_T$ and $\rho_T$ are mutually inverse, so the functors are isomorphic.  Therefore, $F$ is represented by $KI(\alpha)$.
\end{proof}

We can prove a statement similar to Theorem \ref{6.1.2} for parabolic bundles. Indeed, let $D, w, \alpha_0, \alpha_{ij}$ be as before and let $\alpha = (\alpha_0,\alpha_{ij})$. 

\begin{defn}
\label{6.1.3}
Let $E$ be as in Definition \ref{6.1.1}.  Let us define a functor $F'$, from the category of schemes over $K$ to the category of sets as $F'(T) = \left\langle (E,E^{i,j},s,t)\right\rangle$, where 
\begin{itemize}
\item $E$ \textrm{ is a vector bundle on } $T\times \mathbb{P}^1$,
\item $p_*(E^*)$ \textrm{ and } $p_*(E^*(-1))$ \textrm{ are trivial vector bundles},
\item $s: \mathcal{O}_T^{\alpha_0+\alpha_{\infty}} \simeq p_*(E^*)$,
\item $t: \mathcal{O}_T^{\alpha_{\infty}} \simeq p_*(E^*(-1))$,
\item $E|_{T \times \{x_i\}} \supset E^{i,1}\supset \dots \supset E^{i,w_i-1} \supset E^{i,w_i} = 0$ are filtrations by vector subbundles of fixed ranks $\textrm{rk } E^{i,j} = \alpha_{ij}$.
\end{itemize}
\end{defn}

Let $B$ be the universal family of vector bundles over $\mathbb{P}^1$ given by $F(KI(\alpha))$, and let $B_i = B|_{KI(\alpha) \times \{x_i\}}$.  We can define the scheme $$Fl(B) = Fl(B)_1 \times_{KI(\alpha)} \cdots \times_{KI(\alpha)} Fl(B)_k, $$ 
where $Fl(B)_i$ 
is a flag bundle for flags of type $(\alpha_{ij})$ over $KI(\alpha)$.  That is, given a trivialization 
$\{U_l^i,\psi_l^i\}_l$ of $B_i$, we can construct a scheme $U_l^i \times \textrm{Fl}(\alpha)$ for each $l$, where $\textrm{Fl}(\alpha)$ is the space of flags of type $(\alpha_{ij})$ in the standard coordinate space $K^r$.  The transition functions for $B_i$ glue the schemes $U_l \times \textrm{Fl}(\alpha)$ into a scheme $Fl(B)_i$.  It follows that there is a morphism $Fl(B)_i \rightarrow KI(\alpha)$ for each $i$, such that the fiber at each point is a flag of type $(\alpha_{ij})$.  Note that this means there is a morphism $Fl(B) \rightarrow KI(\alpha)$, such that the fiber at each point is a collection of $k$ flags. 

\begin{thm}
\label{6.1.4}
The moduli functor $F'$ is represented by $Fl(B)$.
\end{thm}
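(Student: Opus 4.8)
The plan is to deduce the representability of $F'$ from that of $F$ (Theorem~\ref{6.1.2}) together with the universal property of the relative flag schemes $Fl(B)_i$. First I would record the natural transformation $F' \to F$ that forgets the flags: a $T$-point $(E,E^{i,j},s,t)$ of $F'$ is sent to the $T$-point $(E,s,t)$ of $F$. By Theorem~\ref{6.1.2} this triple corresponds to a unique morphism $\varphi \colon T \to KI(\alpha)$, and by the construction underlying that representability statement there is an isomorphism $E \cong \varphi^* B$ which, thanks to the rigidifying trivializations $s,t$ built into Definition~\ref{6.1.1}, is canonical; in particular it induces a well-defined isomorphism $E|_{T \times \{x_i\}} \cong \varphi^* B_i$ of rank-$\alpha_0$ vector bundles on $T$ (identifying $T \times \{x_i\}$ with $T$), for each $i$.

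Next I would use that the scheme $Fl(B)_i$ constructed above, obtained by gluing copies of $U_l^i \times \textrm{Fl}(\alpha)$ along the transition functions of $B_i$, represents the relative flag functor of $B_i$: for any scheme $Y$ with a morphism $g \colon Y \to KI(\alpha)$, the set of morphisms $Y \to Fl(B)_i$ lying over $g$ is naturally identified with the set of filtrations of $g^* B_i$ by vector subbundles of ranks $\alpha_{ij}$. This is immediate from the local description, since $Fl(B)_i|_{U_l^i} \cong U_l^i \times \textrm{Fl}(\alpha)$ and the ordinary flag variety $\textrm{Fl}(\alpha)$ has exactly this property, with the transition functions of $B_i$ gluing the local identifications. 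Applying this with $Y = T$ and $g = \varphi$, the flag $E|_{T \times \{x_i\}} \supset E^{i,1} \supset \cdots \supset E^{i,w_i} = 0$ transported through $E|_{T \times \{x_i\}} \cong \varphi^* B_i$ is precisely a morphism $T \to Fl(B)_i$ over $\varphi$. Combining these for $i = 1, \dots, k$ via the universal property of the fibered product over $KI(\alpha)$ yields a morphism $T \to Fl(B) = Fl(B)_1 \times_{KI(\alpha)} \cdots \times_{KI(\alpha)} Fl(B)_k$.

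Conversely, given a morphism $\psi \colon T \to Fl(B)$, composition with the projection $Fl(B) \to KI(\alpha)$ produces $\varphi \colon T \to KI(\alpha)$, hence by Theorem~\ref{6.1.2} a triple $(E,s,t) \in F(T)$ with $E \cong \varphi^* B$; the $i$-th component of $\psi$, read through the universal property of $Fl(B)_i$, then supplies a flag of subbundles of $E|_{T \times \{x_i\}}$ of the prescribed ranks, so $\psi$ determines an element of $F'(T)$. I would then verify that these two assignments are mutually inverse and natural in $T$: both properties reduce, on the one hand, to the corresponding properties of the isomorphism between $F$ and $\underline{KI(\alpha)}$ established in Theorem~\ref{6.1.2}, and, on the other hand, to the functoriality of the relative flag schemes just discussed. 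This gives the desired natural isomorphism $F' \cong \underline{Fl(B)}$.

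The only delicate point is the bookkeeping around canonicity: one must check that the identification $E|_{T \times \{x_i\}} \cong \varphi^* B_i$ is well defined on isomorphism classes of objects of $F'(T)$, which is exactly where the rigidity imposed by $s$ and $t$ in Definition~\ref{6.1.1} is essential — without it the isomorphism $E \cong \varphi^* B$ would be pinned down only up to automorphisms of $\varphi^* B$ and the passage to flags would fail to be functorial. Once that is in place, the argument is a purely formal combination of two representability statements and a patching argument, entirely parallel to the proof of Theorem~\ref{6.1.2}, with no further geometric obstacle.
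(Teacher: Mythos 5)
Your proposal is correct and follows essentially the same route as the paper: forget the flags to get a morphism $T \to KI(\alpha)$ via Theorem \ref{6.1.2}, identify $E|_{T\times\{x_i\}}$ with the pullback of $B_i$, use the (relative flag bundle) structure of $Fl(B)_i$ to turn the filtrations into morphisms $T \to Fl(B)_i$, and combine them over $KI(\alpha)$, with the converse obtained by composing with the projection $Fl(B)\to KI(\alpha)$ and reading off the flags. Your explicit appeal to the universal property of the glued flag schemes and to the canonicity pinned down by the rigidifications $s,t$ just makes precise what the paper's fiberwise argument leaves implicit.
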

\begin{proof}
Fix a test scheme $T$.  By Theorem \ref{6.1.2}, an element of $F'(T)$ defines a morphism $\epsilon: T \rightarrow KI(\alpha)$, such that the vector bundle $E$ in that element is the pullback of $B$ along $\epsilon$.  It follows that each $B_i$ pulls back to $E^i = E|_{T \times \{x_i\}}$.  Therefore, we have that the flag $(E^{i,w_i})_y \subset \cdots \subset (E^{i,1})_y \subset (E^i)_y$ in the fiber of $(E^i)_y$ is equal to the flag in the fiber of $Fl(B)$ at $\epsilon(y)$ for all $y \in T$.  This means, the morphism that sends each point $y \in T$ to the flag $(E^{i,w_i})_y \subset \cdots \subset (E^{i,1})_y \subset (E^i)_y$ is a well defined morphism $T \rightarrow Fl(B)_i$.  Thus, combining these morphisms for each $i$ together with $\epsilon$, we have that an element in $F'(T)$ defines a morphism $T \rightarrow Fl(B)$.  Conversely, given a morphism $T \rightarrow Fl(B)$, we can compose it with the morphism $Fl(B) \rightarrow KI(\alpha)$ to get a morphism $\epsilon: T \rightarrow KI(\alpha)$.  By Theorem \ref{6.1.2}, this defines an isomorphism class $$(E, s: \mathcal{O}^{\alpha_0 + \alpha_{\infty}} \cong p_*(E^*), t: \mathcal{O}^{\alpha_{\infty}} \cong p_*(E^*(-1)).$$  
Note that the individual morphisms $T \rightarrow Fl(B)_i$ define filtrations by vector bundles $E^{i,w_i} \subset \cdots \subset E^{i,1} \subset E^i = E|_{T \times \{x_i\}}$ over $T$, for each $i$.  Therefore, we get an element of $F'(T)$.  It follows by construction and Theorem \ref{6.1.2} that we have a pair of mutually inverse natural transformations between $F'(B)$ and $\underline{Fl(B)}$, the functor of points for $Fl(B)$.  Therefore, $F'$ is represented by $Fl(B)$. 
\end{proof}

We can see that the the points of $Fl(B)$ can be thought of as isomorphism classes of parabolic bundles over $\mathbb{P}^1$ with fixed weight type $(D,w)$, fixed dimension vector $\alpha$, with an underlying vector bundle of degree $d$, such that its dual is generated by global sections. 

\begin{defn}
\label{6.1.5}
Let $E$ be as in Definition \ref{6.1.1}, and let $N \in \mathbb{Z}_{\ge 0}$.  We can generalize $F'$ by defining the following functor from the category schemes over $K$ to the category sets: $F''(T) = \left\langle (E,E^{i,j},s,t)\right\rangle$, where 
\begin{itemize}
\item $E$ \textrm{ is a vector bundle on } $T\times \mathbb{P}^1$,
\item $p_*(E^*(N))$ \textrm{ and } $p_*(E^*(N-1))$ \textrm{ are trivial vector bundles},
\item $s: \mathcal{O}_T^{(N+1)\alpha_0 + \alpha_{\infty}} \simeq p_*(E^*(N))$,
\item $t: \mathcal{O}_T^{N\alpha_0 + \alpha_{\infty}} \simeq p_*(E^*(N-1))$,
\item $E|_{T \times \{x_i\}} \supset E^{i,1}\supset \dots \supset E^{i,w_i-1} \supset E^{i,w_i} = 0$ are filtrations by vector subbundles of fixed ranks $\textrm{rk } E^{i,j} = \alpha_{ij}$.
\end{itemize}
Here $E^*(N) = E^*\otimes \pi(\mathcal{O}(N))$.
\end{defn}

It is clear that analogues of Theorem \ref{6.1.2} and Theorem \ref{6.1.3} hold in this case.  Therefore, we obtain:
\begin{cor}
\label{6.1.6}
The functor $F''$ is representable.
\end{cor}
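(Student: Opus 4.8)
The plan is to reduce $F''$ to the functor $F'$, which is already known to be representable by Theorem~\ref{6.1.4}, via the twist $E \mapsto E(-N) := E \otimes \pi^*\mathcal{O}(-N)$. The starting observation is that for $N \in \mathbb{Z}_{\ge 0}$ one has $E^*(N) = (E(-N))^*$ and $E^*(N-1) = (E(-N))^*(-1)$, so the conditions in Definition~\ref{6.1.5} that $p_*(E^*(N))$ and $p_*(E^*(N-1))$ be trivial of ranks $(N+1)\alpha_0 + \alpha_\infty$ and $N\alpha_0 + \alpha_\infty$ are exactly the conditions that $p_*((E(-N))^*)$ and $p_*((E(-N))^*(-1))$ be trivial of those ranks. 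Comparing these ranks with $\chi(E^*(N)|_{\{x\}\times\mathbb{P}^1}) = \alpha_\infty + N\alpha_0 + \alpha_0$ and $\chi(E^*(N-1)|_{\{x\}\times\mathbb{P}^1}) = \alpha_\infty + N\alpha_0$, one sees that both restrictions have vanishing $H^1$ on every fiber, hence by Grothendieck's splitting theorem $(E(-N))^*|_{\{x\}\times\mathbb{P}^1}$ is generated by global sections for all $x \in T$. Thus $E(-N)$ is a bundle of exactly the type appearing in Definition~\ref{6.1.1}, now of fiberwise degree $d' = -\alpha_\infty - N\alpha_0$; I would set $\alpha'_\infty := \alpha_\infty + N\alpha_0$.

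Next I would check that the rest of the data of $F''$ transports transparently under this twist. Since $(E(-N))^* = E^*(N)$ and $(E(-N))^*(-1) = E^*(N-1)$, the pushforwards and the trivializations $s, t$ are literally unchanged. For the flags, $E(-N)|_{T\times\{x_i\}} = E|_{T\times\{x_i\}} \otimes L_i$ with $L_i := \pi^*\mathcal{O}(-N)|_{T\times\{x_i\}}$ an invertible $\mathcal{O}_T$-module; tensoring by an invertible sheaf is an exact autoequivalence preserving ranks and carrying subbundles to subbundles, so $E^{i,j} \mapsto E^{i,j}\otimes L_i$ is a rank-preserving bijection between filtrations of $E|_{T\times\{x_i\}}$ and of $E(-N)|_{T\times\{x_i\}}$, functorial in $T$. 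Hence $(E, E^{i,j}, s, t) \mapsto (E(-N), E^{i,j}\otimes L_i, s, t)$ defines a natural isomorphism between $F''$ (with parameters $D, w, \alpha, \alpha_\infty, N$) and $F'$ (with parameters $D, w, \alpha$ and degree $d' = -\alpha'_\infty$), with inverse the twist by $\pi^*\mathcal{O}(N)$.

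By Theorem~\ref{6.1.4}, the latter instance of $F'$ is represented by $Fl(B')$, where $B'$ is the universal bundle over $KI(\alpha')$ with $\alpha' = (\alpha'_\infty, \alpha_0 + \alpha'_\infty)$ and the flag type is still $(\alpha_{ij})$; composing with the natural isomorphism above shows $F''$ is represented by this same $Fl(B')$. The only points requiring care are bookkeeping: verifying that the rank conditions in Definition~\ref{6.1.5} are precisely the ones making $(E(-N))^*$ fiberwise globally generated of the right degree, and that the twist is harmless on the flag and trivialization data. I do not expect any genuine obstacle here, since representability has already been established for $F'$; alternatively, one could bypass the reduction entirely and rerun the proofs of Theorems~\ref{6.1.2} and~\ref{6.1.4} verbatim with $E^*(N)$ in place of $E^*$, applying the equivalence of Theorem~\ref{6.0.1} to $E(-N)$.
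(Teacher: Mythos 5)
Your core idea is correct and is, in substance, what the paper's one-line justification amounts to: the paper simply asserts that the analogues of Theorem \ref{6.1.2} and Theorem \ref{6.1.4} hold with $E^*(N)$ in place of $E^*$, which is exactly the ``verbatim rerun'' you offer as an alternative at the end. Your main route — twisting by $\pi^*\mathcal{O}(-N)$, noting $(E(-N))^* = E^*(N)$ and $(E(-N))^*(-1) = E^*(N-1)$, and so identifying $F''$ with the functor $F'$ for the shifted parameter $\alpha'_\infty = \alpha_\infty + N\alpha_0$ — is a slightly cleaner packaging: it makes Corollary \ref{6.1.6} literally a special case of Theorem \ref{6.1.4}, identifies the representing scheme as the flag bundle over $KI(\alpha')$ with $\alpha' = (\alpha_\infty + N\alpha_0,\ \alpha_0 + \alpha_\infty + N\alpha_0)$, and is consistent with how $F''$ is used later (the presentations by $KS(D,w,\alpha^N)$ in Sections 4.2 and 7.1). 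The transport of the trivializations (unchanged) and of the flags (tensoring by the invertible $\mathcal{O}_T$-module $\pi^*\mathcal{O}(-N)|_{T\times\{x_i\}}$) is fine.

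The one step that is not valid as written is the claim that triviality of $p_*(E^*(N))$ and $p_*(E^*(N-1))$ with ranks equal to the fiberwise Euler characteristics forces $H^1$ to vanish on every fiber, and hence fiberwise global generation of $E^*(N)$. That implication is false. For instance, over $T = \mathbb{A}^1$ take the rank-two family $V$ of extensions $0 \to \mathcal{O}(-1) \to V_t \to \mathcal{O}(3) \to 0$ with class $t\cdot e$, $e \neq 0$: then $p_*V \cong \mathcal{O}_T^4$ and $p_*(V(-1)) \cong \mathcal{O}_T^2$ (the connecting map $\mathcal{O}_T^3 \to \mathcal{O}_T$ is $t$ times a nonzero functional, so its kernel is free of rank $2$), both ranks equal to the fiberwise Euler characteristics, yet $V_0 \cong \mathcal{O}(-1)\oplus\mathcal{O}(3)$ is not globally generated, i.e.\ $H^1(V_0(-1)) \neq 0$. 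Taking $E$ with $E^*(N) = V$ (say $\alpha_0 = 2$, $N=1$, $\alpha_\infty = 0$) gives a family satisfying every bulleted condition of Definition \ref{6.1.5} whose twist $E(-N)$ does not lie in $\mathscr{V}$, so under a bullets-only reading of the definition your isomorphism $F'' \cong F'$ would break at $t=0$. The repair is not to derive this condition but to read it into the definition: the clause ``let $E$ be as in Definition \ref{6.1.1}'' is meant to carry over, with the twist, the fiberwise hypotheses — rank $\alpha_0$, degree $-\alpha_\infty$, and $E^*(N)|_{\{x\}\times\mathbb{P}^1}$ generated by global sections for all $x \in T$ — exactly as in the presentation of $\textrm{Bun}_{D,w,\alpha}^d$ in Section 4.2. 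With that hypothesis assumed, your $H^1$ paragraph can be deleted and the rest of your reduction goes through and establishes the corollary.
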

By introducing additional rigidity, we can define a moduli space of parabolic bundles over $\mathbb{P}^1$ in terms of the squid representations defined in Section \ref{3.4}.

\subsection{Moduli functor: parabolic bundles and squids}
\label{6.2}
Let $E, p, \pi, T$ be as in the previous section.  Let $(D,w)$ be a parabolic bundle weight type (see Section 1.2).  Let $\mathscr{P}(D,w)$ be the category of vector bundles $E$ over $T \times_{K} \mathbb{P}^1$ such that $E^*|_{\{x\}\times \mathbb{P}^1}$ is generated by global sections for all $x \in T$, together with filtrations 
$$E|_{T \times \{x_i\}} = E^{i,0} \supset E^{i,1} \supset \cdots \supset E^{i,w_i} = 0, $$
for $1 \le i \le k$.  The morphisms of $\mathscr{P}(D,w)$ are vector bundle morphisms such that map filtrations to each other.  We can think of $\mathscr{P}(D,w)$ as the category of families over $T$ of parabolic bundles of weight type $(D,w)$ over $\mathbb{P}^1$, such that the dual to the underlying bundle is generated by global sections.  

Let $\mathcal{V}$ and $\mathcal{W}$ be vector bundle over $T$, and let $\Psi_0,\Psi_1$ be morphisms of vector bundles from $\mathcal{V}$ to $\mathcal{W}$ such that on every fiber over $x \in T$ all linear combinations $\lambda_0\Psi_0(x) + \lambda_1\Psi_1(x)$ for $(\lambda_0: \lambda_1) \in \mathbb{P}^1$ are surjective.  Let $\mathcal{V}_{ij}$ be vector bundles over $T$, for $1 \le i \le k$ and $1 \le j \le w_i-1$.  Let $C_{ij}: \mathcal{V}_{ij} \rightarrow \mathcal{V}_{ij-1}$ be injective morphisms of vector bundles such that $(\lambda_{i0}\Psi_0(x) + \lambda_{i1}\Psi_1(x))C_{i1}(x) = 0$ in the fiber over each $x \in T$, where $x_i = (\lambda_{i0}:\lambda_{i1})$ and $\mathcal{V}_{i0} = \mathcal{V}$. 

Let $\mathscr{S}(D,w)$ be a category where objects are collections $(\mathcal{V}, \mathcal{W}, \mathcal{V}_{ij}, \Psi_0,\Psi_1, C_{ij})$ and morphisms between $(\mathcal{V}, \mathcal{W}, \mathcal{V}_{ij}, \Psi_0, \Psi_1, C_{ij})$ and $(\mathcal{V}^{'}, \mathcal{W}^{'},\mathcal{V}_{ij}^{'}, \Psi_0^{'},\Psi_1^{'}, C_{ij}^{'})$ consists of a collection $(f,g, h_{ij})$ of vector bundle morphisms 
\begin{itemize}
\item[] $f: \mathcal{V} \rightarrow \mathcal{V}^{'}$ 
\item[] $g: \mathcal{W} \rightarrow \mathcal{W}^{'}$
\item[] $h_{ij}: \mathcal{V}_{ij} \rightarrow \mathcal{V}_{ij}^{'}$.
\end{itemize}
 such that: 
\begin{itemize} 
\item[] $g \circ \Psi_0 = \Psi_0^{'} \circ f$
\item[] $g \circ \Psi_1 = \Psi_1^{'} \circ f$
\item[] $h_{ij-1} \circ C_{ij} = C_{ij}^{'} \circ h_{ij}$ for $1 \le i \le k$ and $2 \le j \le w_i-1$  
\item[] $f \circ C_{i1} = C_{i1}^{'} \circ h_{i1}$ for $1 \le i \le k$.  
\end{itemize} 
 Note that the objects of $\mathscr{S}(D,w)$ are families of Kronecker-preinjective squid representations but not necessarily in coordinate spaces.

\begin{proof}[Proof of Theorem \ref{6.0.2}]

Let $(E, E_{ij})$ be an object in $\mathscr{P}(D,w)$.  By Theorem \ref{6.0.1}, we can use $E$ to construct a family $(\mathcal{V}, \mathcal{W},\Psi_0, \Psi_1)$ over $T$ of preinjective Kronecker quiver representations. In this construction, $\mathcal{V} = p_*(E^*)^*$ and $\mathcal{W} = p_*(E^*)^*$.  Consider the induced morphism $p_*(E^*) \rightarrow E^*|_{T\times\{x_i\}}$.  This morphism is surjective on the fibers, as the fiber of $p_*(E^*)$ at $y \in T$ is $H^0(\{y\}\times \mathbb{P}^1, E^*|_{\{y\}\times \mathbb{P}^1})$ and the fiber of $E^*|_{T\times\{x_i\}}$ at $y$ is $E^*_{\{y\}\times \{x_i\}}\otimes K(\{y\}\times \{x_i\})$, where $K(\{y\}\times \{x_i\})$ is the residue field at the point $\{y\}\times \{x_i\}$.  Therefore, we have the exact sequence $0 \rightarrow E|_{T\times\{x_i\}} \rightarrow \mathcal{V}$, where $E|_{T\times\{x_i\}}$ is a vector subbundle of $\mathcal{V}$.  Since the morphisms $\Psi_0$ and $\Psi_1$ are induced by the two inclusion $E^*(-1) \rightarrow E^*$, we have that $E|_{T\times\{x_i\}}$ lies in the kernel of $\lambda_{i0} \Psi_0 + \lambda_{i1} \Psi_1$.  Since the kernel and
$E|_{T\times\{x_i\}}$ are vector bundle of the same rank, the two must coincide.  

It follows that the filtration given by $E^{i,j}$ defines vector bundles $\mathcal{V}_{ij} = E^{i,j}$ and maps $C_{ij}: E^{i,j} \rightarrow E^{i,j+1}$ for $1 \le i \le k$ and $1 \le j \le w_i-2$, such that $(\lambda_{i0}\Psi_0(x) + \lambda_{i1}\Psi_1(x))C_{i1}(x) = 0$ and the $C_{ij}$ are injective.

Let $f$ be a morphism between objects $(E_1,E^{i,j}_1)$ and $(E_2,E^{i,j}_2)$ in $\mathscr{P}(D,w)$.  By Theorem \ref{6.0.1}, we can define the morphism $(f_1,f_2)$ between the objects corresponding to $E_1$ and $E_2$ in $\mathscr{R}$.  The restriction of $f$ to $E^{i,j}_1$ clearly defines morphisms $h_{ij}: E^{i,j}_1 \rightarrow E^{i,j}_2$ such that $(f_1,f_2, h_{ij})$ is a morphism in $\mathscr{S}(D,w)$ between the objects defined above.

Now, let $(\mathcal{V}, \mathcal{W}, \mathcal{V}_{ij}, \Psi_0,\Psi_1, C_{ij})$ be an object in $\mathscr{S}(D,w)$.  By Theorem \ref{6.0.1}, we can use this object to construct a vector bundle $E$, with dual generated by global sections.  Furthermore, we can see from the above construction that the morphisms $C_{ij}: V_{ij} \rightarrow V_{ij-1}$ define a filtration by vector bundles for each $1 \le i \le k$
$$E|_{T\times\{x_i\}} = \textrm{ker} (\lambda_{i0}\Psi_0 + \lambda_{i1}\Psi_1) \supset \textrm{Im } C_{i1} \supset \cdots \supset \textrm{Im }C_{i1}C_{i2} \cdots C_{iw_i-1} \supset 0.$$
Therefore, we obtain an object $(E, E^{i,j})$ of $\mathscr{P}(D,w)$, where $E^{i,j} = \textrm{Im }C_{i1}C_{i2} \cdots C_{ij}$ and $E^{i,w_i} = 0$.

Given a morphism $(f_1, f_2, h_{ij})$ in $\mathscr{S}(D,w)$, we can easily see that the morphism $f$ defined in Theorem \ref{6.0.1} from $(f_1, f_2)$ is actually a morphism in $\mathscr{P}(D,w)$.

Recall the definition of the functors $R$ and $V$ from the proof of Theorem \ref{6.0.1}.  We define the functor $S$ by:
\begin{align*}
&S(E) = (\mathcal{V}, \mathcal{W}, \mathcal{V}_{ij}, \Psi_0, \Psi_1, C_{ij})\\
&S(f) = (f_1,f_2, h_{ij}),
\end{align*}
where $(\mathcal{V}, \mathcal{W}, \Psi_0, \Psi_1)$ are as in the definition of $R$.  Similarly, we can define the functor $P$ by:
\begin{align*}
&P(\mathcal{V}, \mathcal{W}, \mathcal{V}_{ij}, \Psi_0,\Psi_1, C_{ij}) = (E, E^{i,j})\\ 
&P(f_1,f_2, h_{ij}) = f,
\end{align*}
where $E$ and $f$ are as in the definition of $V$.  

Now, from the proof of Theorem \ref{6.0.1} and the construction of the functors $P,S$, we can easily see that the functors $S$ and $P$ are mutually inverse to each other.  Therefore, the categories $\mathscr{P}(D,w)$ and $\mathscr{S}(D,w)$ are equivalent.
\end{proof}

We can modify the definition of the functor $F'$ from the previous section in order to obtain a functor representable by certain Kronecker-preinjective squid representations.  
\begin{defn}
\label{6.2.1}
Let $E$ be as in Definition \ref{6.1.1}.  Define the functor $\tilde{F}(T)$, from the category of schemes over $K$ to the category of sets as $\tilde{F}(T) = \left\langle (E,E^{i,j},s,t,r_{ij})\right\rangle$, where 
\begin{itemize}
\item $E$ \textrm{ is a vector bundle on } $T\times \mathbb{P}^1$,
\item $p_*(E^*(N))$ \textrm{ and } $p_*(E^*(N-1))$ \textrm{ are trivial vector bundles},
\item $s: \mathcal{O}_T^{(N+1)\alpha_0 + \alpha_{\infty}} \simeq p_*(E^*(N))$,
\item $t: \mathcal{O}_T^{N\alpha_0 + \alpha_{\infty}} \simeq p_*(E^*(N-1))$,
\item $E|_{T \times \{x_i\}} \supset E^{i,1}\supset \dots \supset E^{i,w_i-1} \supset E^{i,w_i} = 0$ are filtrations by trivial vector subbundles of fixed ranks $\textrm{rk } E^{i,j} = \alpha_{ij}$,
\item $r_{ij}: \mathcal{O}_T^{\alpha_{ij}} \simeq E^{i,j}$.
\end{itemize}
Here, $E^*(N) = E^*\otimes \pi(\mathcal{O}(N))$.
\end{defn}

We have the following:
\begin{thm}
\label{6.2.2}
The functor $\tilde{F}$ is representable by the scheme $KS(D,w,\alpha)$.
\end{thm}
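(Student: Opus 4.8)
The plan is to follow the pattern established in the proofs of Theorem \ref{6.1.2} and Theorem \ref{6.1.4}, replacing the functor $R$ of Theorem \ref{6.0.1} by the functor $S$ of Theorem \ref{6.0.2} and incorporating the twist by $\mathcal{O}(N)$ exactly as in Corollary \ref{6.1.6}. The role of the extra rigidifying data $s$, $t$, $r_{ij}$ is to trivialize the vector bundles over $T$ produced by the equivalence of categories, so that a family of Kronecker-preinjective squid representations over $T$ becomes a family \emph{in standard coordinate spaces}, i.e.\ a morphism $T \to KS(D,w,\alpha)$ for the dimension vector $(N\alpha_0 + \alpha_\infty,\, (N+1)\alpha_0 + \alpha_\infty,\, \alpha_{ij})$ determined by $N$ and $\alpha$.

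In more detail, first I would fix a test scheme $T$ and take $(E, E^{i,j}, s, t, r_{ij}) \in \tilde F(T)$. Since $E^*|_{\{x\}\times\mathbb{P}^1}$ is globally generated and $N \ge 0$, the same holds for $E^*(N)$, and by the Cohomology and Base Change theorem $p_*(E^*(N))$ and $p_*(E^*(N-1))$ are locally free of ranks $(N+1)\alpha_0+\alpha_\infty$ and $N\alpha_0+\alpha_\infty$ and commute with base change; this is what makes the $N$-twisted version of Theorem \ref{6.0.2} go through verbatim, with $\mathcal{V} = p_*(E^*(N))^*$, $\mathcal{W} = p_*(E^*(N-1))^*$, $\mathcal{V}_{ij} = E^{i,j}$. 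Applying that equivalence produces morphisms $\Psi_0,\Psi_1\colon \mathcal{V}\to\mathcal{W}$ and injective $C_{ij}\colon \mathcal{V}_{ij}\to\mathcal{V}_{ij-1}$ satisfying the squid relations; injectivity of the $C_{ij}$ is forced by the $E^{i,j}$ being genuine subbundles forming the stated filtration, and the preinjectivity of the Kronecker part is forced by $E^*$ being globally generated, as in Theorem \ref{6.0.1}. The trivializations $s$, $t$, $r_{ij}$ then identify $\mathcal{V}$, $\mathcal{W}$, $\mathcal{V}_{ij}$ with $\mathcal{O}_T^{\bullet}$ and turn $\Psi_0,\Psi_1,C_{ij}$ into matrix-valued regular functions on $T$, i.e.\ into a $T$-point of $KS(D,w,\alpha)$; this defines $\eta_T\colon \tilde F(T)\to \mathrm{Hom}(T, KS(D,w,\alpha))$. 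Conversely, given a morphism $T\to KS(D,w,\alpha)$, I would pull back the universal squid representation, apply the functor $P$ of Theorem \ref{6.0.2} (in its $N$-twisted form) to obtain a vector bundle $E$ with filtrations $E^{i,j}$, and read off the canonical bases of the coordinate spaces as the trivializations $s$, $t$, $r_{ij}$; this yields $\rho_T$ in the opposite direction.

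Finally I would check that $\eta_T$ and $\rho_T$ are mutually inverse and natural in $T$. This reduces, exactly as in the proof of Theorem \ref{6.1.2}, to the natural isomorphisms $SP\cong\mathrm{id}$ and $PS\cong\mathrm{id}$ supplied by Theorem \ref{6.0.2}, together with the identifications that $p_*(E^*(N))^*$ (resp.\ $p_*(E^*(N-1))^*$, resp.\ $E^{i,j}$) recovers on the nose the coordinate space at the vertex $0$ (resp.\ $\infty$, resp.\ $[i,j]$) of the squid representation — these are the base-change computations carried out at the end of the proof of Theorem \ref{6.1.2}, now performed with $\mathcal{O}(N)$-twists. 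It then follows that $\tilde F$ is isomorphic to the functor of points of $KS(D,w,\alpha)$, so $KS(D,w,\alpha)$ represents $\tilde F$. The main obstacle is the verification that Theorem \ref{6.0.2} and its inverse survive the $\mathcal{O}(N)$-twist uniformly over the base: one needs the relevant sheaves on $T\times\mathbb{P}^1$ to be flat over $T$ so that formation of $p_*$ commutes with arbitrary base change, and one needs to see that injectivity of the arrows $C_{ij}$ is equivalent, over each point of $T$, to the $E^{i,j}$ being subbundles of the prescribed ranks rather than merely subsheaves. Everything else is bookkeeping parallel to Theorems \ref{6.1.2} and \ref{6.1.4}.
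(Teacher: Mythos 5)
Your proposal is correct and follows essentially the same route as the paper: both apply the equivalence of categories of Theorem \ref{6.0.2} (in its $\mathcal{O}(N)$-twisted form, as in Corollary \ref{6.1.6}), use the rigidifications $s$, $t$, $r_{ij}$ to turn the resulting family of squid representations into a morphism $T \to KS(D,w,\alpha)$, and check that the two natural transformations so obtained are mutually inverse via the base-change identifications from the proof of Theorem \ref{6.1.2}. Your write-up is in fact more explicit than the paper's about the flatness/base-change and injectivity points, which the paper leaves implicit.
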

\begin{proof}
Fix a test scheme $T$ and let $x_i = (\lambda_{i0}: \lambda_{i1})$.  By Theorem \ref{6.0.2}, $\tilde{F}(T)$ defines a family of elements of $KS(D,w,\alpha)$ over $T$.  Therefore, we have a morphism $T \rightarrow KS(D,w,\alpha)$.  Conversely, given a morphism $T \rightarrow KS(D,w,\alpha)$, by Theorem \ref{6.0.2} we have an element of $\tilde{F}(T)$.

We can now define a pair of natural transformations:
\begin{align*}
& \eta_T:  \tilde{F}(T) \rightarrow \textrm{Hom}(T,KS(\alpha))\\
& \rho_T:  \textrm{Hom}(T,KS(\alpha)) \rightarrow \tilde{F}(T),
\end{align*}
between the functor $\tilde{F}$ and the functor of points $\underline{KS(D,w,\alpha)}$ corresponding to $KS(D,w,\alpha)$.  It follows from construction and Theorem \ref{6.0.1} that $\eta_T$ and $\rho_T$ are mutual inverse.
Therefore, the functors are isomorphic, and $KS(D,w,\alpha)$ represents $\tilde{F}$.
\end{proof}

\subsection{The very good property for trivial bundles}
\label{6.3}
In this section, let $K = \mathbb{C}$. Let us consider an example of the moduli space $Fl(B)$ described in the previous section.  That is, for a fixed weight type $(D,w)$, set $\alpha_{\infty} = 0$ and $\alpha = (\alpha_0, \alpha_{ij})$.  Consider the corresponding moduli space $Fl(B)$, parameterizing parabolic bundles on $\mathbb{P}^1$ of weight type $(D,w)$, dimension vector $\alpha$, and trivial underlying vector bundle.  It is easy to see that the moduli space simplifies to the product of partial flag varieties $Fl(\alpha)$, described in section 1.2.  

There is a diagonal action by $\textrm{PGL}(\alpha_0,\mathbb{C})$ on $Fl(\alpha)$, so we may ask whether the quotient stack $\textrm{PGL}(\alpha_0,\mathbb{C}) \backslash Fl(\alpha)$ is very good.  Consider the diagonal $\textrm{GL}(\alpha_0,\mathbb{C})$-action on $Fl(\alpha)$ corresponding to this action.  It is easy to see $\textrm{PGL}(\alpha_0,\mathbb{C}) \backslash Fl(\alpha)$ is very good if and only if $\textrm{GL}(\alpha_0,\mathbb{C}) \backslash Fl(\alpha)$ is almost very good.  However, stabilizers of points under the $\textrm{GL}(\alpha_0,\mathbb{C})$ action clearly correspond to automorphism groups of the parabolic bundles represented by those points.  Applying Theorem 1.2.1, we obtain Theorem 1.2.2.

Below, we offer an alternative way of proving Theorem 1.2.2 by relating $Fl(\alpha)$ to quiver representations.  Indeed, recall from section \ref{3.4} that $\textrm{Rep}(Q^{st}_{D,w}, \alpha)$ is the space of star-shaped quiver representations.  Let $RI(Q^{st}_{D,w}, \alpha) \subset \textrm{Rep}(Q^{st}_{D,w}, \alpha)$ consist of representations for which the maps associated to $c_{ij}$ are injective.  The group $G(\alpha)$ acts on both
$RI(Q^{st}_{D,w}, \alpha)$ and  $\textrm{Rep}(Q^{st}_{D,w}, \alpha)$.

\begin{lmm}
\label{6.3.1}
If $G(\alpha) \backslash \textrm{Rep}(Q^{st}_{D,w}, \alpha)$ is very good, then $G(\alpha) \backslash RI(Q^{st}_{D,w}, \alpha)$ is very good.
\end{lmm}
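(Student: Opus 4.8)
The plan is to use the characterization of the very good property for quotient stacks in terms of the number of parameters (Definition \ref{2.3.1} and the discussion following Corollary \ref{2.2.3}), together with the elementary fact that this characterization is inherited by $G(\alpha)$-stable dense open subvarieties. Write $X = \textrm{Rep}(Q^{st}_{D,w},\alpha)$, $G = G(\alpha)$, and $X_n = \{x \in X \mid \dim G_x = n\}$; recall that $G\backslash X$ is very good if and only if $\dim_G X_n < \dim X - \dim G$ for all $n > 0$.

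First I would check that $RI(Q^{st}_{D,w},\alpha)$ is a nonempty, $G$-stable open subscheme of $X$. It is open because the locus where a fixed linear map $c_{ij}\colon \mathbb{C}^{\alpha_{ij}} \to \mathbb{C}^{\alpha_{ij-1}}$ fails to be injective is closed (vanishing of all maximal minors), and $RI$ is the intersection of the complements of these loci over all $i,j$; it is $G$-stable because injectivity of the $c_{ij}$ is preserved by change of basis at the vertices. It is nonempty because the standing inequalities $\alpha_0 \ge \alpha_{i1} \ge \cdots \ge \alpha_{iw_i}$ guarantee that a generic tuple of such maps is injective. Since $X$ is a vector space, hence irreducible, $RI$ is dense in $X$, so $\dim RI = \dim X$ and therefore $\dim(G\backslash RI) = \dim RI - \dim G = \dim X - \dim G = \dim(G\backslash X)$.

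Next, set $RI_n = RI \cap X_n$. Because $RI$ is $G$-stable, $RI_n$ is exactly the $n$-stabilizer stratum of $RI$, and $RI_n \subseteq X_n$. Directly from Definition \ref{2.3.1}, the number of parameters is monotone under inclusion of $G$-stable constructible subsets, so $\dim_G RI_n \le \dim_G X_n$. Combining this with the hypothesis that $G\backslash X$ is very good yields, for every $n > 0$,
\[
\dim_G RI_n \le \dim_G X_n < \dim X - \dim G = \dim RI - \dim G,
\]
which is precisely the criterion for $G(\alpha)\backslash RI(Q^{st}_{D,w},\alpha)$ to be very good. Equivalently, via Corollary \ref{2.2.3}: the inertia stack $\mathcal{I}_{G\backslash RI}$ is the open substack of $\mathcal{I}_{G\backslash X}$ lying over $RI$, so $\dim(\mathcal{I}_{G\backslash RI} - \mathcal{I}^0) \le \dim(\mathcal{I}_{G\backslash X} - \mathcal{I}^0) < \dim(G\backslash X) = \dim(G\backslash RI)$.

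There is no real obstacle here; the only points requiring any care are the nonemptiness of $RI$ — which uses the standing assumption on $\alpha$ — and the monotonicity of $\dim_G(-)$ under $G$-stable inclusions, which is immediate from the definition. The same reasoning will be reused to pass from $\textrm{Rep}(Q^{st}_{D,w},\alpha)$ to $RI(Q^{st}_{D,w},\alpha)$ and, through the identifications of Section \ref{6.1}, on to $Fl(\alpha)$ in the alternative proof of Theorem 1.2.2.
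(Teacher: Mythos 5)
Your proof is correct and takes essentially the same route as the paper: openness of $RI(Q^{st}_{D,w},\alpha)$ in the irreducible space $\textrm{Rep}(Q^{st}_{D,w},\alpha)$ gives equality of dimensions, and the inclusion of each stabilizer stratum of $RI(Q^{st}_{D,w},\alpha)$ into the corresponding stratum of $\textrm{Rep}(Q^{st}_{D,w},\alpha)$ transfers the codimension (equivalently, number-of-parameters) bound. The details you add (nonemptiness, $G(\alpha)$-stability, monotonicity of $\dim_G$ under $G$-stable inclusions) are exactly the points the paper's shorter proof leaves implicit.
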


\begin{proof}
We have that $RI(Q^{st}_{D,w}, \alpha) \subset \textrm{Rep}(Q^{st}_{D,w}, \alpha)$ is open, and therefore it follows that $\dimn RI(Q^{st}_{D,w}, \alpha) = \dimn \textrm{Rep}(Q^{st}_{D,w}, \alpha)$.  Furthermore, for all $d$, we have $$\{x \in RI(Q^{st}_{D,w}, \alpha)| \textrm{dim }G(\alpha)_x = d\} \subset \{y \in \textrm{Rep}(Q^{st}_{D,w}, \alpha)| \textrm{dim }G(\alpha)_y = d\}.$$  The statement of the lemma follows.
\end{proof}

\begin{lmm}
\label{6.3.2}
If $G(\alpha) \backslash RI(Q^{st}_{D,w}, \alpha)$ is very good, then $\textrm{PGL}(\alpha_0, \mathbb{C}) \backslash Fl(\alpha)$ is very good.
\end{lmm}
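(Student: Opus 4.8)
The plan is to use the principal bundle description of $Fl(\alpha)$ in terms of star-shaped quiver representations announced in Section 1.2, and then to observe that the two quotient stacks in the statement are in fact isomorphic.

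First I would make precise the morphism
\[
\pi\colon RI(Q^{st}_{D,w},\alpha)\longrightarrow Fl(\alpha)
\]
sending a representation with arrows $c_{ij}$ to the point of $Fl(\alpha)$ whose $i$-th flag has $j$-th term the image of the composite $c_{i1}c_{i2}\cdots c_{ij}$ inside $\mathbb{C}^{\alpha_0}$; injectivity of all the $c_{ij}$ guarantees these are subspaces of the prescribed dimensions $\alpha_{ij}$ forming genuine flags. I would then record the standard facts: $\pi$ is invariant under the change of basis action of $H(\alpha)=\prod_{i,j}\textrm{GL}(\alpha_{ij},\mathbb{C})$ at the leaf vertices and equivariant under the action of $\textrm{GL}(\alpha_0,\mathbb{C})$ at the central vertex $0$; the $H(\alpha)$ action on $RI(Q^{st}_{D,w},\alpha)$ is free, since an element of $H(\alpha)$ fixing a representation must fix $c_{i1}$ and hence be the identity at $[i,1]$ by injectivity, then fix $c_{i2}$ and hence be the identity at $[i,2]$, and so on up each arm; and, using the usual local sections of the flag bundles, $\pi$ is a Zariski-locally trivial principal $H(\alpha)$-bundle. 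In particular $Fl(\alpha)$ is the scheme quotient $RI(Q^{st}_{D,w},\alpha)/H(\alpha)$.

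Next I would spell out the group theory. By definition $G(\alpha)=\bigl(\textrm{GL}(\alpha_0,\mathbb{C})\times H(\alpha)\bigr)/\mathbb{C}^{*}$, with $\mathbb{C}^{*}$ embedded as the diagonal scalars, so $H(\alpha)$ (the image of the second factor) is a normal subgroup of $G(\alpha)$ and $G(\alpha)/H(\alpha)\cong\textrm{GL}(\alpha_0,\mathbb{C})/\mathbb{C}^{*}=\textrm{PGL}(\alpha_0,\mathbb{C})$, acting on $Fl(\alpha)$ by precisely the diagonal action of Section \ref{6.3}. Since $H(\alpha)\trianglelefteq G(\alpha)$ acts freely with scheme quotient $Fl(\alpha)$, forming the quotient stack first by $H(\alpha)$ and then by $G(\alpha)/H(\alpha)$ gives an isomorphism of algebraic stacks
\[
G(\alpha)\backslash RI(Q^{st}_{D,w},\alpha)\;\cong\;\textrm{PGL}(\alpha_0,\mathbb{C})\backslash Fl(\alpha).
\]
The very good property is formulated purely in terms of dimensions of automorphism groups and codimensions of the loci where those dimensions are constant, hence is preserved by such an isomorphism, and the lemma follows from the hypothesis.

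If one prefers to avoid the two-stage quotient, the same conclusion follows from the inertia stack criterion: because $\pi$ is a free $H(\alpha)$-torsor, the stabilizer $(\textrm{GL}(\alpha_0,\mathbb{C})\times H(\alpha))_x$ maps isomorphically onto $\textrm{GL}(\alpha_0,\mathbb{C})_{\pi(x)}$, so $\dimn G(\alpha)_x=\dimn\textrm{PGL}(\alpha_0,\mathbb{C})_{\pi(x)}$ for every $x$; consequently the locus of $RI(Q^{st}_{D,w},\alpha)$ on which this dimension equals $n$ is the $\pi$-preimage of the corresponding locus in $Fl(\alpha)$ and has the same codimension. Plugging this into the reformulation of the very good property for quotient stacks from Section 2.3 (Corollary \ref{2.2.3} together with the number of parameters of Definition \ref{2.3.1}) turns the two very good conditions into the same system of codimension inequalities. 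Either way, the only step that requires genuine verification is that $\pi$ is a principal $H(\alpha)$-bundle with quotient $Fl(\alpha)$; everything after that is formal.
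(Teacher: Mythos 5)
Your proposal is correct and follows essentially the same route as the paper: the paper's proof also rests on the morphism $\varphi\colon RI(Q^{st}_{D,w},\alpha)\to Fl(\alpha)$ given by images of the compositions $c_{i1}\cdots c_{ij}$, the free and fiberwise transitive $H(\alpha)$-action making $Fl(\alpha)$ a Zariski-locally trivial quotient, normality of $H(\alpha)$ in $G(\alpha)$ to identify stabilizer dimensions, and the resulting equality of codimensions of the strata $RI(Q^{st}_{D,w},\alpha)^m$ and $Fl(\alpha)^m$. Your first formulation via the isomorphism of quotient stacks $G(\alpha)\backslash RI(Q^{st}_{D,w},\alpha)\cong \textrm{PGL}(\alpha_0,\mathbb{C})\backslash Fl(\alpha)$ is just a cleaner packaging of that same codimension bookkeeping, and your second paragraph reproduces the paper's argument almost verbatim.
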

\begin{proof}
Note that there is an action of the subgroup $H(\alpha) = \prod_{\alpha_{i,j}} \textrm{GL}(\alpha_{i,j}) \subset G(\alpha)$ on $RI(Q^{st}_{D,w}, \alpha)$ induced by the action of $G(\alpha)$.  Furthermore, there is a morphism 
\begin{align*} & \varphi: RI(Q^{st}_{D,w}, \alpha) \rightarrow Fl(\alpha) \\  
	& \varphi(c_{ij}) = (\mathbb{C}^{\alpha_0} \supseteq \textrm{Im}(c_{i1}) \supseteq \cdots \supseteq \textrm{Im}(c_{i1} \cdots c_{iw_i})),	
\end{align*}
such that $H(\alpha)$ acts freely and transitively on the fibers (simply by changing the basis).  This means that the fibers of the morphism $\varphi$ are isomorphic to $H(\alpha)$, so they have dimension $\textrm{dim } H(\alpha)$.  Furthermore, the space $Fl(\alpha)$ is obtained as a quotient of $RI(Q^{st}_{D,w}, \alpha)$ by the action of $H(\alpha)$.  That is, we can pick open sets $U_i$ such that $Fl(\alpha) = \bigcup_i U_i$ and a morphism $p: RI(Q^{st}_{D,w}, \alpha) \rightarrow \textrm{Fl}(\alpha)$, with $p^{-1}(U_i) \cong U_i \times H(\alpha)$ ($p$ projects onto the first component).  Indeed, taking $U_i$ to be products of the standard coordinate charts on flag varieties and taking $p$ to be $\varphi$, these conditions are satisfied.  Since $H(\alpha)$ is a normal subgroup of $G(\alpha)$, then we obtain for $x \in Fl(\alpha)$ that $(G(\alpha)/H(\alpha))_{\varphi(x)} = G(\alpha)_x/H(\alpha)$.  Therefore, 
$$RI(Q^{st}_{D,w}, \alpha)^m := \{ y \in SI_{D,w}(\alpha)|\textrm{dim } G(\alpha)_y = m \}$$ 
maps to 
$$Fl(\alpha)^m := \{x \in Fl(\alpha)| \textrm{dim PGL}(\alpha_0, \mathbb{C})_x = m \}$$ 
under the morphism $\varphi$.  It follows from this that $$ \textrm{dim } RI(Q^{st}_{D,w}, \alpha)^m = \textrm{dim } H(\alpha) + \textrm{dim } Fl(\alpha)^m,$$ which implies that 
$$\textrm{dim } RI(Q^{st}_{D,w}, \alpha)^m + \alpha_0^2 = \textrm{dim } G(\alpha) + \textrm{dim } Fl(\alpha)^m$$
 or 
$$\textrm{dim } RI(Q^{st}_{D,w}, \alpha)^m + \textrm{dim }Fl(\alpha) = \textrm{dim }RI(Q^{st}_{D,w}, \alpha) + \textrm{dim }Fl(\alpha)^m.$$  
Since we get $\textrm{codim } Fl(\alpha)^m = \textrm{codim } RI(Q^{st}_{D,w}, \alpha)^m$ and $RI(Q^{st}_{D,w}, \alpha)$ is very good, then we obtain that 
$$\textrm{codim }Fl(\alpha)^m = \textrm{codim } RI(Q^{st}_{D,w}, \alpha)^m > m - 1 \textrm{ for all } m > 1.$$  
Thus, $Fl(\alpha)$ is very good. 
\end{proof}

\begin{proof}[Proof of Theorem 1.2.2]
We can see that the theorem follows from Lemma \ref{6.3.1}, Lemma \ref{6.3.2}, and Theorem \ref{3.6.1}.
\end{proof}

\section{Application to the Deligne-Simpson Problem}

\subsection{Outline}
\label{7.0}
In this section, we wish to relate the almost very good property for the moduli of parabolic bundles to the space of solutions to the Deligne-Simpson problem.  Let $C_1, \dots, C_k$ be semisimple conjugacy classes of $n$-dimensional vector space automorphisms and let $D = (x_1, \dots, x_k)$ be collection of points on $\mathbb{P}^1$.  We can interpret a solution to the Deligne-Simpson problem as a logarithmic connection $\nabla$ on a rank $n$ vector bundle over $\mathbb{P}^1$ with singularities in $D$, which satisfies
$$
\textrm{Res}_{x_i} \nabla \in C_i.
$$
Therefore, we can interpret the space of solutions to the Deligne-Simpson problem as the moduli stack $\textrm{Conn}_{D,w,\alpha,\zeta}(\mathbb{P}^1)$ of such connections.  We can provide a presentation for this stack in terms of the fiber of a moment map on the cotangent bundle to certain squid representations (see Section \ref{3.4}).  This allows us to apply Theorem \ref{2.4.2} (and subsequent remarks) in order to prove Theorem 1.4.1.  That is, if the moduli stack of parabolic bundles $\textrm{Bun}_{D,w,\alpha}(\mathbb{P}^1)$ is almost very good, then $\textrm{Conn}_{D,w,\alpha,\zeta}(\mathbb{P}^1)$ is a nonempty, irreducible, locally complete intersection of dimension $2p(\alpha) - 1$.

Now, let $C_1, \dots, C_k$ be semismiple conjugacy classes of $n \times n$ complex matrices.  We define $ADS(C_1, \dots, C_k) \subset C_1 \times \cdots \times C_k$ as the subvariety consisting of solutions to the additive Deligne-Simpson problem.  For semisimple conjugacy classes, $C_1 \times \cdots \times C_k$ is an affine bundle over the cotangent bundle to the product of flag varieties $Fl(\alpha)$.  We use the very good property for the quotient stack $\textrm{PGL}(\alpha_0, \mathbb{C}) \backslash Fl(\alpha)$ to show that $ADS(C_1, \dots, C_k)$ is a nonempty, irreducible, complete intersection of dimension $2\dimn Fl(\alpha) - \alpha_0^2+1$, which proves Theorem 1.4.3.

If we let $C_1, \dots, C_k$ be semisimple conjugacy classes of $n \times n$ invertible complex matrices instead, we can can consider $MDS(C_1, \dots, C_k) \subset C_1 \times \cdots \times C_k$, the subvariety of solutions to the multiplicative Deligne-Simpson problem.  The Riemann-Hilbert correspondence gives an analytic isomorphism between $MDS(C_1, \dots, C_k)$ and a moduli space of logarithmic connections on $\mathbb{P}^1$ (analogous to \cite{In2013} or \cite{IIS2006}).  This allows us to transfer the properties obtained for $\textrm{Conn}_{D,w,\zeta}(\mathbb{P}^1)$ in Theorem 1.4.1 to $MDS(C_1, \dots, C_k)$.  This means that $MDS(C_1, \dots, C_k)$ is a nonempty, irreducible, complete intersection of dimension $2p(\alpha) + \alpha_0^2-1$ if the moduli stack $\textrm{Bun}_{D,w,\alpha}(\mathbb{P}^1)$ is almost very good, which proves Theorem 1.4.5. 

\subsection{Logarithmic Connections and Squid Representations}
\label{7.1}
As before, let $X$ be a smooth connected complex projective curve.  Let $D \subset X$ be a divisor on $X$, and let $j: X-D \rightarrow X$ be the inclusion.  Let $\Omega^1_X(\textrm{log } D)$ be the subsheaf of $j_*\Omega^1_{X-D}$ with sections that have poles of order at most $1$ along $D$.  We call this the sheaf of \textit{logarithmic 1-forms}.

\begin{rmk}
\label{7.1.1}
Note that the definition of a logarithmic differential form $\omega$ for varieties of higher dimension requires that both $\omega$ and $d\omega$ have poles of order at most $1$ along $D$.  However, since there are no higher order differential forms on a curve, the above definition is sufficient.
\end{rmk}

We can define the following:
\begin{defn}
\label{7.1.2}
Let $E$ be a vector bundle on $X$.  A logarithmic connection 
$$
\nabla: E \rightarrow E\otimes \Omega^1_X(\textrm{log } D)
$$
is a $\mathbb{C}$-linear morphism of sheaves that satisfies the Leibnitz rule
$$
\nabla (fs) = s\otimes df + f\nabla(s),
$$
where $f$ is a section of $\mathcal{O}_X$ and $s$ is a section of $E$.  Note that $\nabla$ has residues 
$$
\textrm{Res}_{x_i} \nabla \in \textrm{End}(E_{x_i}),
$$
for $x_i \in D$.
\end{defn}

From now on, let $X = \mathbb{P}^1$, let $D = (x_1, \dots, x_k)$ be a collection of points of $\mathbb{P}^1$, and let $w = (w_1, \dots, w_k)$ be a collection of positive integers.  

For a parabolic bundle $\mathbf{E}$ of weight type $(D,w)$ over $X$ we say that a logarithmic connection $\nabla: E \rightarrow E \otimes \Omega^1_X(\textrm{log } D)$ is a \textit{$\zeta$-parabolic connection} on $\mathbf{E}$ if
$$
(\textrm{Res}_{x_i} \nabla - \zeta_{ij}\cdot \textrm{Id})(E_{ij-1}) \subset E_{ij},
$$
where $E_{ij}$ are the subspaces of the flag in the fiber $E_{x_i} = E_{i0}$.

Recall from Section \ref{6.2} that $KS(D,w,\alpha)$ parametrizes parabolic bundles over $\mathbb{P}^1$ together with some rigidity conditions.  Let $\mu_{G(\alpha)}^{-1}(\theta^N)$ be the fiber of the moment map described in Section \ref{3.5} over 
$$ 
\theta^N = (N+1 + \sum_{1 \le i \le k} \zeta_{i1}, - N -\sum_{1 \le i \le k} \zeta_{i1} , \zeta_{i1} - \zeta_{i2}, \dots, \zeta_{iw_i-1}) \in \textrm{Mat}(\alpha^N)_0.
$$
Note this is well-defined for $\zeta$ coming from a parabolic connection with vector bundle of degree $-\alpha_{\infty}$, since we can compute:
\begin{align*}
\textrm{tr}(\theta^N) & = (N+1 + \sum_{1 \le i \le k} \zeta_{i1})(\alpha_{\infty} + N\alpha_0) + (- N - \sum_{1 \le i \le k} \zeta_{i1})(\alpha_{\infty} + (N+1)\alpha_0) \\ 
& + \sum_{1 \le i \le k} \sum_{1 \le j \le w_k-1} \alpha_{ij}(\zeta_{ij} - \zeta_{ij+1}) = \alpha_{\infty} - \alpha_0 \sum_{1 \le i \le k} \zeta_{i1} \\ & + \sum_{1 \le i \le k} \sum_{1 \le j \le w_k-1} \alpha_{ij}(\zeta_{ij} - \zeta_{ij+1})
 = \alpha_{\infty} - \sum_{i=1}^k \sum_{j=1}^{w_i} \zeta_{ij}(\alpha_{ij-1} - \alpha_{ij}) = 0,
\end{align*}  
by Remark \ref{7.1.7}.

In the following definition, we keep to the notation of Definition \ref{6.2.1}. The projections, $\pi$ and $p$ were defined at the beginning of Section \ref{6.1}.
\begin{defn}
\label{7.1.3}
Let us define a functor $L_{\zeta}(T)$, from the category of schemes over $\mathbb{C}$ to the category of sets as $L_{\zeta}(T) = \left\langle (E,E^{i,j},s,t,r_{ij}, \nabla)\right\rangle$, where 
\begin{itemize}
\item $E$ \textrm{ is a vector bundle on } $T\times \mathbb{P}^1$,
\item $p_*(E^*(N))$ \textrm{ and } $p_*(E^*(N-1))$ \textrm{ are trivial vector bundles},
\item $s: \mathcal{O}_T^{(N+1)\alpha_0 + \alpha_{\infty}} \simeq p_*(E^*(N))$,
\item $t: \mathcal{O}_T^{N\alpha_0 + \alpha_{\infty}} \simeq p_*(E^*(N-1))$,
\item $E|_{T \times \{x_i\}} \supset E^{i,1}\supset \dots \supset E^{i,w_i-1} \supset E^{i,w_i} = 0$ are filtrations by trivial vector subbundles of fixed ranks $\textrm{rk } E^{i,j} = \alpha_{ij}$,
\item $r_{ij}: \mathcal{O}_T^{\alpha_{ij}} \simeq E^{i,j}$,
\item $\nabla: E \rightarrow E \otimes \pi^* \Omega_{\mathbb{P}^1}^1(\textrm{log } D)$ is a $\mathbb{C}$-linear morphism of sheaves,
\item $\nabla (fs) = s\otimes df + f\nabla(s)$ for $s$ a section of $E$ and $f$ a section of $\pi^*(\mathcal{O}_{\mathbb{P}^1})\subset \mathcal{O}_{T \times \mathbb{P}^1}$,
\item $(\textrm{Res}_{x_i} \nabla - \zeta_{ij} \cdot \textrm{Id})(E^{i,j-1}) \subset E^{i,j}$, where $E^{i,0} = E|_{T \times \{x_i\}}$, and $\textrm{Res}_{x_i} \nabla := \nabla|_{T\times \{x_i\}}$.
\end{itemize}
Here, $E^*(N) = E^*\otimes \pi^*(\mathcal{O}(N))$.
\end{defn}

\begin{thm}
\label{7.1.4}
The functor $L_{\zeta}$ is represented by $\mu_{G(\alpha)}^{-1}(\theta^N)$.
\end{thm}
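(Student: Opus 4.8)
The plan is to upgrade the equivalence of Theorem \ref{6.2.2} by a connection: an object of $L_\zeta(T)$ is an object of $\tilde F(T)$ together with a $\zeta$-parabolic connection, and I want to match this extra datum with a cotangent vector whose moment-map value is $\theta^N$. Concretely, forgetting $\nabla$ gives a morphism of functors $L_\zeta\to\tilde F$, which by Theorem \ref{6.2.2} is the functor of points of $KS(D,w,\alpha)$; on the other side, composing the closed embedding $\mu_{G(\alpha)}^{-1}(\theta^N)\hookrightarrow T^*KS(D,w,\alpha)$ with the cotangent projection $T^*KS(D,w,\alpha)\to KS(D,w,\alpha)$ of Section \ref{3.5} gives $\underline{\mu_{G(\alpha)}^{-1}(\theta^N)}\to\underline{KS(D,w,\alpha)}$. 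So it suffices to compare fibres: fix a test scheme $T$ and $u=(E,E^{i,j},s,t,r_{ij})\in\tilde F(T)$, let $X=(b_0,b_1,c_{ij})$ be the family of squid representations corresponding to $u$ under Theorem \ref{6.2.2}, and show that the set of $\nabla$ making $(u,\nabla)\in L_\zeta(T)$ is in bijection, naturally in $T$, with the set of cotangent vectors $\hat X=(\hat b_0,\hat b_1,\hat c_{ij})$ at $X$ with $\mu_{G(\alpha)}(X,\hat X)=\theta^N$.

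To build that bijection I would realize both sides as torsors under the same sheaf and identify the defining affine conditions. The set of relative logarithmic connections $\nabla\colon E\to E\otimes\pi^*\Omega^1_{\mathbb P^1}(\log D)$ is, where nonempty, a torsor under $p_*\!\left(\mathscr{E}nd(E)\otimes\pi^*\Omega^1_{\mathbb P^1}(\log D)\right)$, and imposing the residue conditions $(\textrm{Res}_{x_i}\nabla-\zeta_{ij})(E^{i,j-1})\subset E^{i,j}$ cuts this down to a torsor under the subsheaf whose residue at $x_i$ carries $E^{i,j-1}$ into $E^{i,j}$. Now I transport this through the Kronecker/squid presentation of Theorem \ref{6.0.1} and Theorem \ref{6.0.2}: using the exact sequence $0\to p^*\mathcal W^*(-1)\to p^*\mathcal V^*\to E^*\to 0$ twisted by $\pi^*\mathcal O(N)$, and the trivializations $s,t$ identifying $\mathcal V^*$ and $\mathcal W^*$ with the standard coordinate spaces, a connection $\nabla$ on $E$ produces linear maps $\hat b_0,\hat b_1$ of those coordinate spaces (the ``doubled'' Kronecker arrows of Section \ref{3.5}), and the residue data of $\nabla$ along $D$ produces the maps $\hat c_{ij}$; conversely such maps reconstruct $\nabla$. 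Since $\mathbb P^1$ carries no nonzero $2$-forms every logarithmic connection is automatically flat, so no integrability condition appears, and the only surviving constraints — that $\nabla$ be $\mathbb C$-linear, satisfy Leibniz against the fixed global trivializations, and have at worst a simple pole along $D$ — are exactly the vertex relations in $\mu_{G(\alpha)}(X,\hat X)=\theta^N$: the relation at a vertex $[i,j]$ is the residue normalization $(\textrm{Res}_{x_i}\nabla-\zeta_{ij})(E^{i,j-1})\subset E^{i,j}$, while the relations at the vertices $0$ and $\infty$ encode compatibility of $\nabla$ with $E^*(N)=\textrm{coker}\bigl(p^*\mathcal W^*(N-1)\to p^*\mathcal V^*(N)\bigr)$ together with the Fuchs identity $\sum_i\textrm{tr}\,\textrm{Res}_{x_i}\nabla=-\deg E$. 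The particular shift $\theta^N$ is forced by this bookkeeping, and the computation $\textrm{tr}(\theta^N)=0$ already carried out above is precisely this Fuchs identity, so $\theta^N$ indeed lies in $\textrm{Mat}(\alpha)_0\cong\textrm{Lie}(G(\alpha))^*$ and the moment-map fibre is nonempty exactly when a $\zeta$-parabolic connection exists. All constructions visibly commute with base change $T'\to T$, giving the natural isomorphism $L_\zeta\cong\underline{\mu_{G(\alpha)}^{-1}(\theta^N)}$.

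The main obstacle is the middle step: tracking how a connection, which is not $\mathcal O$-linear, descends through the Kronecker presentation to honest linear maps between finite-dimensional coordinate spaces, and verifying that the Leibniz rule plus the simple-pole condition match the quiver relations at $0$ and $\infty$ exactly, signs and the distinction between $b_l$ and $\hat b_l$ included. This is a relative, parabolic version of the classical identification of moduli of logarithmic connections with twisted cotangent bundles of moduli of bundles, and in quiver language of the correspondence between representations of the deformed preprojective algebra of the squid and solutions of the additive Deligne--Simpson problem of Crawley-Boevey \cite{CB2003}, \cite{CB2004}; once that fibrewise correspondence is in place, the functoriality and the reduction in the first paragraph are routine. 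Along the way I would also record, since it is needed in Section \ref{7.0}, that $\mu_{G(\alpha)}^{-1}(\theta^N)$ is $G(\alpha)$-stable and that the Kronecker-preinjectivity and injectivity-of-$c_{ij}$ conditions built into $KS(D,w,\alpha)$ correspond under the equivalence to $E^*$ being globally generated and the flags being genuine — both immediate from Theorem \ref{6.0.2}.
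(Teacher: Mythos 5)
Your overall strategy is the same as the paper's (transport the connection through the Kronecker/squid presentation of Theorems \ref{6.0.1}--\ref{6.0.2} to produce the doubled arrows $\hat b_0,\hat b_1,\hat c_{ij}$ and match the residue conditions with the vertex equations of $\mu_{G(\alpha)}$), but the proposal stops exactly where the actual proof begins. The step you flag as ``the main obstacle'' and then declare routine is the entire content of the argument: one must write down, from $\nabla$, the maps $\tilde B = p_*(\nabla_{d/dz})$ and $\tilde B'=p_*(\nabla_{z\,d/dz})$, extend the residue maps $\tilde C_{i1}$ from $\ker(\lambda_{i0}\Psi_0+\lambda_{i1}\Psi_1)$ to all of $p_*(E^*(N))^*$, and then define
$\hat B_0^* = N\cdot\mathrm{Id} - \tilde B' - \sum_i \frac{x_i}{z-x_i}(\hat C_{i1}^*C_{i1}^*+\zeta_{i1}\cdot\mathrm{Id})$, $\hat B_1^* = -\tilde B - \sum_i \frac{1}{z-x_i}(\hat C_{i1}^*C_{i1}^*+\zeta_{i1}\cdot\mathrm{Id})$,
and verify, using the Leibniz commutators $[\tilde B,\Psi_0^*]=0$, $[\tilde B,\Psi_1^*]=-\mathrm{Id}$, $\tilde B'=-\Psi_1^*\tilde B$, $C_{i1}^*(x_i\Psi_0^*+\Psi_1^*)=0$, that the three vertex equations give exactly the constants $N+1+\sum_i\zeta_{i1}$, $-N-\sum_i\zeta_{i1}$, $\zeta_{ij}-\zeta_{ij+1}$. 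None of this, nor its inverse (reconstructing $\nabla$ from $\hat B_1$ via Leibniz and global generation of $E^*(N)$), appears in your write-up, so the claimed bijection of fibres is asserted rather than proved.

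Two specific points are also glossed over in a way that matters. First, the extension of the residue map off the kernel is not unique; different choices change $\hat B_0,\hat B_1$ by terms of the form $\sum_i C_{i1}A_i(\lambda_{i0}\Psi_0+\lambda_{i1}\Psi_1)$, and well-definedness of your ``cotangent vector'' requires matching this ambiguity against the quotient presentation of $T^*KS(D,w,\alpha)$ given in Section \ref{3.5}; your torsor framing never confronts this. Second, the Fuchs/trace identity is not one of the vertex relations at $0$ or $\infty$: it is the condition $\mathrm{tr}(\theta^N)=0$ needed for $\theta^N\in\mathrm{Mat}(\alpha)_0\cong\mathrm{Lie}(G(\alpha))^*$ (computed before the theorem), whereas the equations at $0$ and $\infty$ come out of the Leibniz commutators above; conflating the two leaves the claim that ``the only surviving constraints are exactly the vertex relations'' unsupported. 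Finally, saying the moment-map fibre is nonempty exactly when a $\zeta$-parabolic connection exists is a consequence of the bijection you have not yet constructed, not an input; the torsor comparison alone (two torsors under identified sheaves) does not produce a canonical isomorphism without exhibiting the explicit, base-change-compatible map in at least one direction.
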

\begin{proof}
Let $L_{\zeta}(T) = (E,E^{i,j},s,t,r_{ij}, \nabla)$.
We know that by Theorem \ref{6.2.2} the functor $\tilde{F}$ is representable by the variety $KS(D,w,\alpha)$.  Let $\tilde{F}(T) = (E,E^{i,j},s,t,r_{ij})$.  Note that the natural pairing with the vector field $\frac{d}{dz}$ on $\mathbb{P}^1$ defines the $\mathbb{C}$-linear morphism
$$
\nabla_{\frac{d}{dz}}^*: E^* \rightarrow E^*(D),
$$
satisfying the Leibniz rule, where $E^*(D) = E^*\otimes \pi^*\mathcal{O}(D)$ (we regard $D$ as the divisor $x_1 + \cdots + x_k$).  Further note that this morphism uniquely determines $\nabla$.  We have that $\nabla_{\frac{d}{dz}}$ induces the morphism $E^*(N) \rightarrow E^*(N)(D)$.  In fact, it induces a morphism $B: E^*(N) \rightarrow E^*(N-1)(D)$.  From $B$ we obtain a $\mathbb{C}$-linear morphism 
$$
\tilde{B}: p_*(E^*(N)) \rightarrow p_*(E^*(N-1)(D)).
$$  
Similarly, from $\nabla_{z\frac{d}{dz}}: E \rightarrow E(D)$, we obtain 
$$
\tilde{B}': p_*(E^*(N)) \rightarrow p_*(E^*(N)(D)).
$$ 
Let $\Psi_0, \Psi_1: p_*(E^*(N))^* \rightarrow p_*(E^*(N-1))^*$ be the morphisms induced by the two inclusions $E^*(N-1) \hookrightarrow E^*(N)$ (corresponding to multiplication by the two global sections $1$ and $-z$ of $\pi^*(\mathcal{O}(1))$).  By the proof of Theorem \ref{6.0.2}, $\textrm{ker}(\lambda_{0i} \Psi_0 + \lambda_{1i} \Psi_1) \simeq E|_{T \times \{x_i\}}$.  Therefore, $\textrm{Res}_{x_i} \nabla$ defines the maps
\begin{align*}
& \tilde{C}_{i1}:=(\textrm{Res}_{x_i} \nabla - \zeta_{i1} \cdot \textrm{Id}): \textrm{ker}(\lambda_{0i} \Psi_0 + \lambda_{1i} \Psi_1) \rightarrow E^{i,1}\\
&\hat{C}_{ij}:= (\textrm{Res}_{x_i} \nabla - \zeta_{ij} \cdot \textrm{Id})|_{E^{i,j-1}}: E^{i,j-1} \rightarrow E^{i,j} \textrm{ for } 1 \le i \le k  \textrm{ and } 2 \le j \le w_i-1.
\end{align*}
We can extend $\tilde{C}_{i1}$ to $p_*(E^*(N))^*$.  Note that any two such extensions differ by a morphism that sends $\textrm{ker}(\lambda_{0i} \Psi_0 + \lambda_{1i} \Psi_1)$ to $0$.  Therefore, it has the form $A_i (\lambda_{0i} \Psi_0 + \lambda_{1i} \Psi_1)$ for some $A_i: p_*(E^*(N-1))^* \rightarrow E^{i,1}$.  Fix such an extension $\hat{C}_{i1}$ for each $1 \le i \le k$.  We can now define two morphisms of vector bundles: $\hat{B}_0, \hat{B}_1: p_*(E^*(N-1))^*\rightarrow p_*(E^*(N))^*$ in the following way: 
\begin{align*}
& \hat{B}_0^* = N\cdot \textrm{Id} - \tilde{B}' - \sum_{1 \le i \le k} \frac{x_i}{z-x_i}(\hat{C}_{i1}^*C_{i1}^*+\zeta_{i1}\cdot \textrm{Id})\\
& \hat{B}_1^* = -\tilde{B} - \sum_{1 \le i \le k} \frac{1}{z-x_i}(\hat{C}_{i1}^*C_{i1}^* + \zeta_{i1}\cdot \textrm{Id}),
\end{align*}
where $C_{ij}: E^{i,j} \rightarrow E^{i,j+1}$ are as defined in Theorem \ref{6.0.2}, and $z$ is the standard coordinate on $\mathbb{P}^1$.  Note that $\hat{B}_0, \hat{B}_1$ are well-defined by the construction of $\hat{C}_{ij}$.  

We can see that $\hat{B}_1$ (respectively $\hat{B}_0$) depends on the choice of extension in the construction of $\hat{C}_{i1}$.  However, any two such choices differ by $A_i (\lambda_{0i} \Psi_0 + \lambda_{1i} \Psi_1)$, so any two $\hat{B}_1$ (respectively $\hat{B}_0$) obtained in this way differ by 
$$
\sum_{1 \le i \le k} C_{i1}A_i (\lambda_{0i} \Psi_0 + \lambda_{1i} \Psi_1).
$$
By the Leibniz rule we have $[\tilde{B}, \Psi_0^*] = 0$ and $[\tilde{B}, \Psi_1^*] =  -\textrm{Id}$.  Also, note $\tilde{B}' = -\Psi_1^*\tilde{B}$ and $C_{i1}^*(x_i \Psi_0^* + \Psi_1^*) = 0$.  Therefore, we have:
\begin{align*}
&(\Psi_0\hat{B}_0 + \Psi_1\hat{B}_1)^* = (N \cdot \textrm{Id} - \tilde{B}')\Psi_0^* - \tilde{B}\Psi_1^* - \sum_{1 \le i \le k}(\frac{1}{z-x_i}\hat{C}_{i1}^*C_{i1}^*)(x_i\Psi_0^* + \Psi_1^*) \\ 
& - \sum_{1 \le i \le k} \frac{x_i}{z-x_i}\zeta_{i1}\cdot \textrm{Id} + \sum_{1 \le i \le k} \frac{z}{z-x_i}\zeta_{i1}\cdot \textrm{Id}\\
& = (N \cdot \textrm{Id} - \tilde{B}')\Psi_0^* - \tilde{B}\Psi_1^* + \sum_{1 \le i \le k} \zeta_{i1}\cdot \textrm{Id}\\ & =  (N+1 + \sum_{1 \le i \le k} \zeta_{i1}\cdot \textrm{Id}), 
\end{align*}
and
\begin{align*}
&\sum_{1 \le i \le k} \hat{C}_{i1}^* C_{i1}^*  - (\hat{B}_0\Psi_0 + \hat{B}_1\Psi_1)^* = \sum_{1 \le i \le k} \hat{C}_{i1}^* C_{i1}^* - \Psi_0^*(N\cdot \textrm{Id} - \tilde{B}'\\ &- \sum_{1 \le i \le k} \frac{x_i}{z-x_i}(\hat{C}_{i1}^*C_{i1}^*+\zeta_{i1}\cdot \textrm{Id})) + \Psi_1^*(\tilde{B} + \sum_{1 \le i \le k} \frac{1}{z-x_i}(\hat{C}_{i1}^*C_{i1}^* + \zeta_{i1}\cdot \textrm{Id}))\\
& =  \sum_{1 \le i \le k} \hat{C}_{i1}^* C_{i1}^* - \sum_{1 \le i \le k} (\hat{C}_{i1}^*C_{i1}^*+\zeta_{i1}\cdot \textrm{Id})- \Psi_0^*(N \cdot \textrm{Id} - \tilde{B}') + \Psi_1^*\tilde{B} \\ & = (- N -\sum_{1 \le i \le k} \zeta_{i1}) \cdot \textrm{Id}. 
\end{align*}
Furthermore, we have: 
\begin{align*}
C_{ij+1}\hat{C}_{ij+1} - \hat{C}_{ij}C_{ij}  = (\zeta_{ij} - \zeta_{ij+1}) \cdot \textrm{Id} \textrm{, where } 1 \le i \le k \textrm{ and } 1 \le j \le w_{i}-1.
\end{align*}
Since $\hat{B}_0, \hat{B}_1, \hat{C}_{ij}$ vary algebraically with the points of $T$, then the family $L_{\zeta}(T)$ defines a morphism $f: T \rightarrow \mu_{G(\alpha)}^{-1}(\theta^N)$ by construction.

Conversely, given a morphism $f: T \rightarrow \mu_{G(\alpha)}^{-1}(\theta^N)$, we get the corresponding morphism $T \rightarrow KS(D,w,\alpha)$.  Therefore, from the proof of Theorem \ref{6.0.2} (see Section \ref{6.2}) we get a collection $\tilde{F}(T) = (E,E^{i,j},s,t,r_{ij})$.  Moreover, $f$ defines the family $(\mathcal{V}, \mathcal{W}, \mathcal{V}_{ij}, \Psi_0,\Psi_1, C_{ij})$ of elements of $KS(D,w,\alpha)$, as well as families of morphisms $\hat{B}_0, \hat{B}_1: \mathcal{W} \rightarrow \mathcal{V}$ and $\hat{C}_{ij}: \mathcal{V}_{ij} \rightarrow \mathcal{V}_{ij+1}$.  Note that by the proof of Theorem \ref{6.0.1} (see Section \ref{6.1}) we have that $\mathcal{V} \simeq p_*(E^*(N))^*$ and $\mathcal{W} \simeq p_*(E^*(N-1))^*$.  From the construction of $\hat{B}_1$ above, we obtain:
$$
\tilde{B}: p_*(E^*(N)) \rightarrow p_*(E^*(N-1)(D)) \hookrightarrow p_*(E^*(N)(D)).
$$ 
Since $E^*(N)$ is generated by global sections, we can use the Leibniz rule to extend $\tilde{B}$ to a $\mathbb{C}$-linear morphism of vector bundles $B: E^*(N) \rightarrow E^*(N-1)(D)$ that satisfies
$$
B(fs) = s\otimes \frac{df}{dz} + f\nabla(s),
$$
for $s$ a section of $E^*(N)$ and $f$ a section of $\pi^*(\mathcal{O}_{\mathbb{P}^1})$.  We can further obtain a $\mathbb{C}$-linear morphism $\nabla_{\frac{d}{dz}}: E \rightarrow E(D)$ that satisfies the Leibniz rule.  This is the same as defining the $\mathbb{C}$-linear morphism
$$\nabla: E \rightarrow E \otimes \pi^* \Omega_{\mathbb{P}^1}^1(\textrm{log } D),$$
which satisfies the Leibniz rule.

Note that we have
$$\hat{C}_{i1}|_{\textrm{ker}(\lambda_{0i} \Psi_0 + \lambda_{1i} \Psi_1)} =  \nabla|_{T\times \{x_i\}} - \zeta_{i1} \cdot \textrm{Id}.$$ 
By Theorem \ref{6.0.2} we have $\mathcal{V}_{ij} = E^{i,j}$.  Therefore, $C_{ij+1}\hat{C}_{ij+1} - \hat{C}_{ij}C_{ij}  = (\zeta_{ij} - \zeta_{ij+1}) \cdot \textrm{Id}$ implies that
$$
(\textrm{Res}_{x_i} \nabla - \zeta_{ij} \cdot \textrm{Id})(E^{i,j-1}) \subset E^{i,j}.
$$
Thus $f: T \rightarrow \mu_{G(\alpha)}^{-1}(\theta^N)$ defines the family $L_{\zeta}(T) = (E,E^{i,j},s,t,r_{ij}, \nabla)$.

The above constructions define a pair of natural transformations:
\begin{align*}
& \eta_T:  L_{\zeta}(T) \rightarrow \textrm{Hom}(T, \mu_{G(\alpha)}^{-1}(\theta^N))\\
& \rho_T:  \textrm{Hom}(T, \mu_{G(\alpha)}^{-1}(\theta^N)) \rightarrow L_{\zeta}(T),
\end{align*}
between the functor $L_{\zeta}$ and the functor of points $\underline{\mu_{G(\alpha)}^{-1}(\theta^N)}$ corresponding to $\mu_{G(\alpha)}^{-1}(\theta^N)$.  It follows by construction and Theorem \ref{6.0.2} that $\eta_T$ and $\rho_T$ are mutual inverse.  Therefore, the functors are isomorphic, and $\mu_{G(\alpha)}^{-1}(\theta^N)$ represents $L_{\zeta}$.

\end{proof}

\begin{rmk}
\label{7.1.5}
We can follow the proof of Theorem \ref{7.1.4} in order to obtain that $\mu_{G(\alpha)}^{-1}(0)$ represents the functor 
$H(T) = \left\langle (E,E^{i,j},s,t,r_{ij}, \Phi)\right\rangle,$
where $E,E^{i,j},s,t,r_{ij}$ are as in Definition \ref{7.1.3} and $\Phi$ is a section of $\mathscr{E}nd(\pi^* \Omega^1_X(\textrm{log } D))$.  That is, $\mu_{G(\alpha)}^{-1}(0)$ is a moduli space parameterizing parabolic Higgs bundles over $\mathbb{P}^1$ together with rigidity.

It is easy to see that $\mu_{G(\alpha)}^{-1}(0)$ acts on $\mu_{G(\alpha)}^{-1}(\theta^N)$ by translation.  In fact, $\mu_{G(\alpha)}^{-1}(\theta^N)$ is a $\mu_{G(\alpha)}^{-1}(0)$-torsor.  This is natural, as parabolic Higgs bundles constitute the cotangent stack to the moduli stack of parabolic bundles, and $\zeta$-parabolic connections constitute the twisted cotangent stack to the moduli stack of parabolic bundles.
\end{rmk}

Assuming the conventions from Section \ref{4.2}, we have the following definition:
\begin{defn}
\label{7.1.6}
 The stack of $\zeta$-parabolic connections on parabolic bundles of weight type $(D,w)$ and of dimension type $\alpha$ over $X$ is a functor that associates to a test scheme $T$ the groupoid $\textrm{Conn}_{D,w,\alpha,\zeta}(T) = \left\langle (E, E^{i,j}, \nabla)_{1\le i \le k} \right\rangle$, where 
\begin{itemize}
\item $E$ is a vector bundle on $T\times X$,
\item $E|_{T \times \{x_i\}} \supset E^{i,1}\supset \dots \supset E^{i,w_i-1} \supset E^{i,w_i} = 0$
is a filtration by vector bundles,
\item $\textrm{rk}(E) = \alpha_0$ and $\textrm{rk}(E^{i,j}) = \alpha_{ij}$,
\item $\nabla: E \rightarrow E \otimes \pi^* \Omega_{\mathbb{P}^1}^1(\textrm{log } D)$ is a $\mathbb{C}$-linear morphism of sheaves,
\item $\nabla (fs) = s\otimes df + f\nabla(s)$ for $s$ a section of $E$ and $f$ a section of $\pi^*(\mathcal{O}_{\mathbb{P}^1})\subset \mathcal{O}_{T \times \mathbb{P}^1}$,
\item $(\textrm{Res}_{x_i} \nabla - \zeta_{ij} \cdot \textrm{Id})(E^{i,j-1}) \subset E^{i,j}$, where $E^{i,0} = E|_{T \times \{x_i\}}$, and $\textrm{Res}_{x_i} \nabla := \nabla|_{T\times \{x_i\}}$.
\end{itemize}
\end{defn}

\begin{rmk}
\label{7.1.7}
Note that if a $\zeta$-parabolic connection $\nabla$ exists on a parabolic bundle $\mathbf{E}$ of weight type $(D,w)$ and dimension vector $\alpha$ over $X$, then
$$
\sum_{i=1}^k \textrm{tr} (\textrm{Res}_{x_i} \nabla) = \sum_{i=1}^k \sum_{j=1}^{w_i} \zeta_{ij}(\alpha_{ij-1} - \alpha_{ij})  = - \textrm{deg } E.
$$
Therefore, fixing $\zeta$ automatically fixes $d = \textrm{deg } E$.
\end{rmk}

Set $d = - \alpha_{\infty}$, $\alpha^N = (\alpha_{\infty} + N, \alpha_{\infty} + \alpha_0 + N, \alpha_{ij})$, and $\theta^N$ as in Theorem \ref{7.1.4}.

By Theorem \ref{7.1.4}, we have that $U = \coprod_{N \in \mathbb{Z}_{\ge 0}} \mu_{G(\alpha)}^{-1}(\theta^N)$ is a presentation for the algebraic stack $\textrm{Conn}_{D,w,\alpha,\zeta}(X)$.  In fact, there exists an $N \in \mathbb{Z}_{\ge 0}$ such that $\mu_{G(\alpha)}^{-1}(\theta^N)$ is a presentation for $\textrm{Conn}_{D,w,\alpha,\zeta}(X)$.  Indeed, if a $\zeta$-parabolic connection exists on parabolic bundle $\mathbf{E}$, then the width (the difference between the maximal and minimal line bundle degrees in the Grothedieck Theorem decomposition of $E$) of $E$ is bounded (this follows, for example, from Theorem 7.1 in \cite{CB2004} and Lemma 1 in \cite{CB2010}).  Therefore, for a fixed $\zeta$, there is a single $N$ such that $E^*(N)$ is generated by global sections.  This implies the statement we need.

\begin{proof}[Proof of Theorem 1.4.1]
Let $\alpha_{\infty} = -d = \sum_{i=1}^k \sum_{j=1}^{w_i} \zeta_{ij}(\alpha_{ij-1} - \alpha_{ij})$.  Note that the stack $\textrm{Bun}_{D,w,\alpha}^d(X)$ admits the presentation $U = \coprod_{N \in \mathbb{Z}_{\ge 0}} KS(D,w,\alpha^N)$, where $\alpha^N = (\alpha_{\infty} + N, \alpha_{\infty} + \alpha_0 + N, \alpha_{ij})$.  Since $KS(D,w,\alpha^N)$ is irreducible for each $N$ and the fibers are products of general linear groups, then $\textrm{Bun}_{D,w,\alpha}^d(X)$ is irreducible.  It follows that the irreducible components of $\textrm{Bun}_{D,w,\alpha}(X)$ are the $\textrm{Bun}_{D,w,\alpha}^d(X)$.

Let $\theta^N$ be as in Theorem \ref{7.1.4}.  Fix $N \ge 0$ such that $\mu_{G(\alpha)}^{-1}(\theta^N)$ is a presentation for $\textrm{Conn}_{D,w,\alpha,\zeta}(X)$.  If $\textrm{Bun}_{D,w,\alpha}(X)$ is almost very good, then $\textrm{Bun}_{D,w,\alpha}^d(X)$ is almost very good for each $d$.  Consequently, we have that the quotient stack $G(\alpha^N) \backslash KS(D,w,\alpha^N)$ is very good.  

By Corollary \ref{2.4.3}, we have that $\mu_{G(\alpha)}^{-1}(\theta^N)$ is nonempty, irreducible, complete intersection of dimension 
$$\dimn  2(G(\alpha) + p(\alpha)) - \dimn G(\alpha) = 2p(\alpha) + \dimn G(\alpha).$$  
It follows that $\textrm{Conn}_{D,w,\alpha,\zeta}(X)$ is a nonempty, irreducible, locally complete intersection of dimension $2p(\alpha) - 1$.  
\end{proof}

We also get:
\begin{proof}[Proof of Corollary 1.4.2]
This instantly follows from Theorems 1.2.1 and 1.4.1.
\end{proof}

\begin{rmk}
\label{7.1.8}
Let $C_1, \dots, C_k$ be semisimple conjugacy classes of endomorphisms of $E_{x_1}, \dots, E_{x_k}$, respectively.  We may interpret $\textrm{Conn}_{D,w,\alpha,\zeta}(X)$ as the moduli stack of solutions to the Deligne-Simpson problem.  

Indeed, a solution of the Deligne-Simpson problem is a connection $\nabla$ on a vector bundle $E$ over $\mathbb{P}^1$, with regular singularities in $D$ such that $\textrm{Res}_{x_i} \nabla \in C_i$ for all $x_i \in D$.  This determines a dimension vector $\alpha = (\alpha_0, \alpha_{ij})$, where $\alpha_0 = \textrm{rk }E$ and $\alpha_{ij} = \textrm{rk }(\textrm{Res}_{x_i} \nabla - \zeta_{ij} \cdot \textrm{Id})$ is the dimension of the direct sum of the first $w_i - j$ eigenspaces of $C_i$ ordered from least to greatest, and a vector of eigenvalues $\zeta$ (accounting for multiplicity). Therefore, $\nabla$ is a $\zeta$-parabolic connection on a parabolic bundle with underlying vector bundle $E$, weight type $(D,w)$, and dimension type $\alpha$.

Conversely, any parabolic $\zeta$-connection in $\textrm{Conn}_{D,w,\alpha,\zeta}(X)$ has residues lying in the conjugacy classes $C_i$ with eigenvalues in $\zeta$ (accounting for multiplicity), and eigenspaces ordered from least to greatest of dimensions $\alpha_{ij}$.
\end{rmk}

\begin{rmk}
\label{7.1.9}
Note that the above remark is a special case of Theorem 2.1 in \cite{CB2004}.  In general, this theorem implies that a regular singular connection $\nabla$ on $\mathbb{P}^1$ is a $\zeta$-parabolic connection if and only if its residues lie in the closures of conjugacy classes defined by $\zeta$.

Therefore, if we relax the conditions in the statement of the Deligne-Simpson problem to allow solutions to lie in conjugacy class closures (rather than the conjugacy classes themselves), we may interpret $\textrm{Conn}_{D,w,\alpha,\zeta}(X)$ as the moduli stack of solutions. 
\end{rmk}

\subsection{The very good property and the additive Deligne-Simpson problem}
\label{7.2}
Recall from the Introduction that the additive Deligne-Simpson problem asks whether there exist
matrices $A_1, \dots, A_k $ in prescribed conjugacy classes $C_1, \dots, C_k$ such that $A_1 + \cdots + A_k = 0$. 

\begin{defn}
\label{7.2.1}
Let $C_1, \dots, C_k$ be conjugacy classes of matrices in $\mathscr{gl}_n(\mathbb{C})$.  We denote by 
$$ADS(C_1, \dots, C_k) := \{(A_1, \dots, A_k) \in C_1 \times \cdots \times C_k| A_1 + \cdots + A_k = 0 \}$$
the algebraic subvariety of solutions of the additive Deligne-Simpson problem in $C_1 \times \cdots \times C_k$ .
\end{defn}

We are now ready to prove that the very good property for the quotient stack $\textrm{PGL}(\alpha_0, \mathbb{C}) \backslash Fl(\alpha)$ implies that $ADS(C_1, \cdots, C_k)$ is nonempty, irreducible, and a complete intersection of dimension $2\dimn Fl(\alpha) - \alpha_0^2+1$, as long as $\textrm{tr}(A_1 + \cdots + A_k) = 0$ for $A_i \in C_i$.  
\begin{proof}[Proof of Theorem 1.4.3]

By Lemma \ref{6.3.2}, $G(\alpha) \backslash RI(Q_{D,w}^{st}, \alpha)$ is very good.  Therefore, by Corollary \ref{2.4.3}, we have that $\mu_{G(\alpha)}^{-1}(\theta^N)$ is a nonempty, irreducible, complete intersection of dimension $2\dimn RI(Q_{D,w}^{st}, \alpha) - \dimn G(\alpha)$.

From the proof of Lemma \ref{6.3.2} we have that $Fl(\alpha)$ is the locally trivial quotient of $RI(Q_{D,w}^{st}, \alpha)$ by the group $H(\alpha)$.  Moreover, by Theorem \ref{7.1.4} we see that the locally trivial quotient $\mu_{G(\alpha)}^{-1}(\theta^N)/ H(\alpha)$ is isomorphic to $ADS(C_1, \dots, C_k)$.  It follows that $ADS(C_1, \dots, C_k)$ is a nonempty, irreducible, complete intersection of dimension
\begin{align*}
&2\dimn RI(Q_{D,w}^{st}, \alpha) - \dimn G(\alpha) - H(\alpha) = 2(G(\alpha) + p(\alpha)) - \dimn G(\alpha) - H(\alpha)\\
&= 2 p(\alpha) + \alpha_0^2-1 = 2\dimn Fl(\alpha) - \alpha_0^2+1.
\end{align*}

\end{proof}

\begin{rmk}
\label{7.2.2}
Note that the dimension formula $\dimn ADS(C_1, \dots, C_k) = 2p(\alpha) + \alpha_0^2 - 1$ is similar to the formula given in Theorem 1.2 of \cite{CB2001}.
\end{rmk}

\begin{rmk}
\label{7.2.3}
By Remark \ref{2.4.4}, to prove that $ADS(C_1, \dots, C_k)$ is a nonempty, equidimensional complete intersection (of dimension $2\dimn Fl(\alpha) - \alpha_0^2+1$), it suffices to show that $\textrm{PGL}(\alpha_0) \backslash Fl(\alpha)$ is good, rather than very good.  The very good property is only used in order to prove that there is only one irreducible component.
\end{rmk}

If $\delta(\alpha) > 0$ and we assume that the eigenvalues of $C_1, \dots, C_k$ are ordered as in Section \ref{7.1}, then we obtain that $\alpha$ is in the fundamental region.  
\begin{proof}[Proof of Corollary 1.4.4]
This follows from Theorem 1.2.2 and Theorem 1.4.3. 
\end{proof}

\subsection{The very good property and the multiplicative Deligne-Simpson problem}
\label{7.3}
The multiplicative Deligne-Simpson asks whether there exist matrices $A_1, \dots, A_k $ in prescribed conjugacy classes $C_1, \dots, C_k$ such that $A_1 \cdots  A_k = \textrm{Id}$. 

\begin{defn}
\label{7.3.1}
Let $C_1, \dots, C_k$ be conjugacy classes of matrices in $\textrm{GL}(n, \mathbb{C})$.  We denote by 
$$MDS(C_1, \dots, C_k) := \{(A_1, \dots, A_k) \in C_1 \times \cdots \times C_k| A_1 \cdot A_2 \cdots  A_k = \textrm{Id} \}$$
the algebraic subvariety of solutions of the multiplicative Deligne-Simpson problem in $C_1 \times \cdots \times C_k$ .
\end{defn}

Instead of using the moduli space of $\zeta$-parabolic connections defined in Section \ref{7.1}, we will introduce a different moduli space, representing the following functor: 
\begin{defn}
\label{7.3.2}
Let $E$ be as in Definition \ref{7.1.3}, and let $y \in \mathbb{P}^1$.  Let us define a functor $\tilde{L}_{\zeta}(T)$, from the category of schemes over $\mathbb{C}$ to the category of sets as $\tilde{L}_{\zeta}(T) = \left\langle (E,E^{i,j},r,\nabla)\right\rangle$, where 
\begin{itemize}
\item $E$ \textrm{ is a vector bundle on } $T\times \mathbb{P}^1$,
\item $E|_{T \times \{y\}}$ \textrm{ is a trivial vector bundles},
\item $r: E|_{T \times \{y\}} \simeq \mathcal{O}_T^{\alpha_0}$,
\item $E|_{T \times \{x_i\}} \supset E^{i,1}\supset \dots \supset E^{i,w_i-1} \supset E^{i,w_i} = 0$ are filtrations by vector subbundles of fixed ranks $\textrm{rk } E^{i,j} = \alpha_{ij}$,
\item $\nabla: E \rightarrow E \otimes \pi^* \Omega_{\mathbb{P}^1}^1(\textrm{log } D)$ is a $\mathbb{C}$-linear morphism of sheaves,
\item $\nabla (fs) = s\otimes df + f\nabla(s)$ for $s$ a section of $E$ and $f$ a section of $\pi^*(\mathcal{O}_{\mathbb{P}^1})\subset \mathcal{O}_{T \times \mathbb{P}^1}$,
\item $(\textrm{Res}_{x_i} \nabla - \zeta_{ij} \cdot \textrm{Id})(E^{i,j-1}) \subset E^{i,j}$, where $E^{i,0} = E|_{T \times \{x_i\}}$, and $\textrm{Res}_{x_i} \nabla := \nabla|_{T\times \{x_i\}}$.
\end{itemize}
\end{defn}
 
Similar to Theorem 6.13 in \cite{Si1995} and Section 4 in \cite{Si1994}, it follows that the functor $\tilde{L}_{\zeta}$ is representable by a quasiprojective scheme.  We will denote this scheme by $R_{DR}(D, w, y, \alpha, \zeta)$.
 
We need one more concept, in order for the Riemann-Hilbert correspondence to establish a well-defined analytic isomorphism between $R_{DR}(D, w, y, \alpha, \zeta)$ and the space $MDS(C_1, \dots, C_k)$.  A \textit{transversal} to $\mathbb{Z}$ in $\mathbb{C}$ is a subset $T \subset \mathbb{C}$ such that $t \mapsto \exp(-2\pi\sqrt{-1}t)$ bijectively maps $T$ to $\mathbb{C}^*$ (see e.g. \cite{CB2004}).  We will henceforth denote $T = (T_1, \dots, T_k)$ is a collection of transversals.

Assume that $C_1, \dots, C_k$ are semisimple.  Let $\tau = (\tau_{ij})$ be the vector of eigenvalues (counting multiplicity) for the conjugacy classes $C_1, \dots, C_k$. Fix a collection of transversals $T$, and let $\zeta$ be defined by $\tau_{ij} = \exp(-2\pi\sqrt{-1}\zeta_{ij})$ such that $\zeta_{ij} \in T_i$. The multiplicities of the eigenvalues $\tau$ define a dimension vector $\alpha$ as in Remark \ref{7.1.8}.  Fix some $D = (x_1, \dots, x_k)$ and $y \in \mathbb{P}^1$ such that $y \notin D$.

\begin{thm}
\label{7.3.3}
The Riemann-Hilbert correspondence establishes an isomorphism of analytic spaces between $R_{DR}(D, w, y, \alpha, \zeta)$ and $MDS(C_1, \dots, C_k)$. 
\end{thm}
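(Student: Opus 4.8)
The plan is to build on the classical Riemann--Hilbert correspondence for regular singular connections on $\mathbb{P}^1$, following the analytic-isomorphism arguments in \cite{IIS2006}, \cite{In2013}, and \cite{Si1994}, and to adapt them to the parabolic setting with the transversal bookkeeping from \cite{CB2004}. First I would describe, set-theoretically, the map in both directions. Given a point of $R_{DR}(D,w,y,\alpha,\zeta)$, that is a $\zeta$-parabolic connection $\nabla$ on a parabolic bundle $\mathbf{E}$ together with a trivialization $r: E|_{\{y\}} \simeq \mathbb{C}^{\alpha_0}$, I would take the monodromy representation of $\nabla$ on $\mathbb{P}^1 - D$ based at $y$, using $r$ to identify the fiber at $y$ with $\mathbb{C}^{\alpha_0}$. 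This produces matrices $A_1, \dots, A_k$ (the monodromies around the loops $\gamma_i$ encircling $x_i$, with a fixed choice of generators satisfying $\gamma_1 \cdots \gamma_k = 1$) with $A_1 \cdots A_k = \mathrm{Id}$. The key point, which I would spell out, is that because $C_i$ is semisimple and $\zeta_{ij} \in T_i$ lies in the chosen transversal, the local monodromy around $x_i$ is $\exp(-2\pi\sqrt{-1}\,\mathrm{Res}_{x_i}\nabla)$, and the $\zeta$-parabolic condition forces $\mathrm{Res}_{x_i}\nabla$ to have eigenvalue $\zeta_{ij}$ with multiplicity $\alpha_{ij-1}-\alpha_{ij}$; hence $A_i$ has eigenvalue $\tau_{ij} = \exp(-2\pi\sqrt{-1}\,\zeta_{ij})$ with the same multiplicity, so $A_i \in C_i$. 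This gives a point of $MDS(C_1,\dots,C_k)$.

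Next I would construct the inverse. Given $(A_1,\dots,A_k) \in MDS(C_1,\dots,C_k)$, the classical Riemann--Hilbert correspondence (Deligne, \cite{De1970}) produces a rank-$\alpha_0$ vector bundle $E$ on $\mathbb{P}^1$ with a regular singular connection $\nabla$ whose monodromy is the given representation; the transversal condition $\zeta_{ij} \in T_i$ selects a canonical extension across each $x_i$ and pins down $\mathrm{Res}_{x_i}\nabla$ to be the semisimple endomorphism with eigenvalues $\zeta_{ij}$, rather than any other lift. From the eigenspace decomposition of $\mathrm{Res}_{x_i}\nabla$ I would build the parabolic flag $E^{i,j}$ (the sum of eigenspaces for $\zeta_{i1},\dots,\zeta_{i,w_i-j}$, matching the ordering convention of Remark \ref{7.1.8}), so that $(\mathrm{Res}_{x_i}\nabla - \zeta_{ij}\cdot\mathrm{Id})(E^{i,j-1}) \subset E^{i,j}$ holds by construction, making $\nabla$ a $\zeta$-parabolic connection. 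The trivialization $r$ at $y$ comes from the basis of $\mathbb{C}^{\alpha_0}$ used to write the $A_i$ as matrices; here one uses $y \notin D$ so the fiber $E|_{\{y\}}$ is canonically the underlying space of the representation. I would then check that these two constructions are mutually inverse as maps of sets, which is essentially the uniqueness in the Riemann--Hilbert correspondence together with uniqueness of the transversal-adapted extension.

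The substantive part is upgrading this set-theoretic bijection to an isomorphism of analytic spaces, and this is where I expect the main obstacle to lie. The strategy is the standard one: both $R_{DR}$ and $MDS$ carry natural structures of complex analytic spaces (the former as the analytification of the quasiprojective scheme representing $\tilde{L}_\zeta$, the latter as a closed analytic subvariety of $C_1 \times \cdots \times C_k$), and I would exhibit the correspondence as holomorphic in families by showing that the monodromy of a holomorphically varying family of connections varies holomorphically — this is the content of the analytic dependence of solutions of linear ODEs on parameters, and is exactly the argument used in \cite{Si1994} and \cite{IIS2006}. Concretely, for a family over a base $S$ one integrates $\nabla$ along the fixed loops $\gamma_i$ and obtains a holomorphic $S$-point of $C_1 \times \cdots \times C_k$; conversely, the inverse construction (canonical extension plus connection from a family of representations) is holomorphic by the analytic Riemann--Hilbert theorem with parameters. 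Since both maps are holomorphic and mutually inverse on points, they are inverse isomorphisms of analytic spaces. The delicate bookkeeping — making sure the parabolic flags, the trivialization at $y$, and the transversal normalization all vary holomorphically and match up — is the part that requires care rather than new ideas, so I would organize it by reducing, via Theorem \ref{7.1.4} and Theorem \ref{6.0.2}, to the already-established family versions of the parabolic-bundle/squid dictionary and then invoking the classical analytic Riemann--Hilbert statement on the complement of $D$.
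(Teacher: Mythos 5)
Your proposal follows essentially the same route as the paper: send $(\mathbf{E},r,\nabla)$ to its monodromy matrices at the base point $y$, use the transversal condition to identify the local monodromy with $\exp(-2\pi\sqrt{-1}\,\mathrm{Res}_{x_i}\nabla)$ so that it lands in $C_i$, invert via Deligne's Riemann--Hilbert correspondence with the transversal-normalized extension, and obtain analyticity in both directions from the analytic dependence of solutions/monodromy on parameters. The paper's proof is just a terser version of this argument (citing Lemma 6.2 of \cite{CB2004} for the residue--monodromy comparison), so your plan is correct and matches it.
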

\begin{proof}
Let $(\mathbf{E}, r, \nabla) \in R_{DR}(D, w, y, \alpha, \zeta)$ be a triple consisting of a parabolic bundle $\mathbf{E}$, a $\zeta$-parabolic connection on $\mathbf{E}$, and a trivialization $r$ of the fiber $E_y$.  We have the following map:
\begin{align*}
& RH: R_{DR}(D, w, y, \alpha, \zeta) \rightarrow MDS(C_1, \dots, C_k) \\
&(\mathbf{E}, r, \nabla) \mapsto (\rho_y(a_1), \dots,\rho_y(a_k)),
\end{align*}
where $\rho_y: \pi_1(\mathbb{P}^1 - D, y) \rightarrow E_y \simeq \mathbb{C}^{\alpha_0}$ is the monodromy representation defined by the pair $(\mathbf{E}, \nabla)$ under the Riemann-Hilbert correspondence, and $a_1, \dots, a_k$ are the loops at base point $y$ around the punctures $x_i$.  This map is well-defined.  

Indeed, $\pi_1(\mathbb{P}^1 - D, y)$ is the group freely generated by the loops $a_i$, satisfying the relation $a_1 \cdots a_k = 1$.  Therefore, for the corresponding monodromy operators satisfy $\rho_y(a_1) \cdots \rho_y(a_k) = \textrm{Id}$.  Furthermore, it is a well-known fact (see e.g. Lemma 6.2 in \cite{CB2004}) that $\rho_y(a_i)$ is conjugate to $\exp(-2\pi\sqrt{-1}\textrm{Res}_{x_i} \nabla)$ if $\nabla$ is a $\zeta$-parabolic connection with $\zeta$ as defined above.  Therefore, by construction, $\rho_y(a_i) \in C_i$.  Since $\sum_{ij} \zeta_{ij} = -\textrm{deg } E$ is an integer, then $\prod_{ij} \tau_{ij} = 1$.  If the pair $(\mathbf{E}, \nabla)$ is defined by complex analytic parameters, then the local system corresponding to this pair, and the monodromy operators $\rho_{a_i}$ depend analytically on these parameters.  It follows that $RH$ is analytic. 

The Riemann-Hilbert correspondence provides the map $RH$ with a well-defined inverse, sending the $k$-tuple of monodromy operators $(\rho_y(a_1), \dots,\rho_y(a_k))$ to the corresponding triple $(\mathbf{E}, r, \nabla)$.  As above, we can see that the inverse is complex analytic.  Therefore, $RH$ is an analytic isomorphism between $R_{DR}(D, w, y, \alpha, \zeta)$ and $MDS(C_1, \dots, C_k)$.
\end{proof}

\begin{proof}[Proof of Theorem 1.4.5]
There is a smooth, representable morphism 
$$R_{DR}(D, w, y, \alpha, \zeta) \rightarrow \textrm{Conn}_{D,w,\alpha,\zeta}(X),$$ 
defined by forgetting the rigidity condition on $R_{DR}(D, w, y, \alpha, \zeta)$.  It is therefore easy to see that $R_{DR}(D, w, y, \alpha, \zeta)$ is an irreducible, complete intersection of dimension $2p(\alpha) + \alpha_0^2 - 1$.  By Theorem \ref{7.3.3} there is an analytic isomorphism between $R_{DR}(D, w, y, \alpha, \zeta)$ and $MDS(C_1, \dots, C_k)$.  It follows that $MDS(C_1, \dots, C_k)$ is a complete intersection of dimension $2p(\alpha) + \alpha_0^2 - 1$.  Since the smooth locus of $R_{DR}(D, w, y, \alpha, \zeta)$ is irreducible, it is connected.  Therefore, the smooth locus of $MDS(C_1, \dots, C_k)$ is also connected.  Thus, $MDS(C_1, \dots, C_k)$ is irreducible.
\end{proof}

As before, if we assume an appropriate ordering on the eigenvalues of $C_1, \dots, C_k$, then $\alpha$ is automatically in the fundamental region. 
\begin{proof}[Proof of Corollary 1.4.6]
This follows immediately from Theorems 1.2.1 and 1.4.5.
\end{proof}

\bibliographystyle{hacm}
\bibliography{ds}{}

\def\cydot{\leavevmode\raise.4ex\hbox{.}}
\begin{thebibliography}{10}

\bibitem{Ar2010}
{\sc Arinkin, D.}
\newblock Rigid irregular connections on {$\mathbb{P}\sp 1$}.
\newblock {\em Compos. Math. 146}, 5 (2010), 1323--1338.

\bibitem{BD1991}
{\sc Beilinson, A., and Drinfeld, V.}
\newblock Quantization of {H}itchin's {I}ntegrable {S}ystem and {H}ecke
  {E}igensheaves.
\newblock \url{www.math.uchicago.edu/ mitya/langlands/hitchin/BD-hitchin.pdf},
  1991.

\bibitem{Bo2008}
{\sc Boalch, P.}
\newblock Irregular connections and {K}ac-{M}oody root systems.
\newblock arXiv:0806.1050.

\bibitem{CB1993}
{\sc Crawley-Boevey, W.}
\newblock Geometry of {R}epresentations of {A}lgebras.
\newblock \url{http://www.maths.leeds.ac.uk/~pmtwc/geomreps.pdf}, 1993.

\bibitem{CB2001}
{\sc Crawley-Boevey, W.}
\newblock Geometry of the moment map for representations of quivers.
\newblock {\em Compositio Math. 126}, 3 (2001), 257--293.

\bibitem{CB2003}
{\sc Crawley-Boevey, W.}
\newblock On matrices in prescribed conjugacy classes with no common invariant
  subspace and sum zero.
\newblock {\em Duke Math. J. 118}, 2 (2003), 339--352.

\bibitem{CB2004}
{\sc Crawley-Boevey, W.}
\newblock Indecomposable parabolic bundles and the existence of matrices in
  prescribed conjugacy class closures with product equal to the identity.
\newblock {\em Publ. Math. Inst. Hautes \'Etudes Sci.}, 100 (2004), 171--207.

\bibitem{CB2010}
{\sc Crawley-Boevey, W.}
\newblock Kac's theorem for weighted projective lines.
\newblock {\em J. Eur. Math. Soc. (JEMS) 12}, 6 (2010), 1331--1345.

\bibitem{CB2013}
{\sc Crawley-Boevey, W.}
\newblock personal communication, 2013.

\bibitem{CBS2006}
{\sc Crawley-Boevey, W., and Shaw, P.}
\newblock Multiplicative preprojective algebras, middle convolution and the
  {D}eligne-{S}impson problem.
\newblock {\em Adv. Math. 201}, 1 (2006), 180--208.

\bibitem{De1970}
{\sc Deligne, P.}
\newblock {\em \'{E}quations diff\'erentielles \`a points singuliers
  r\'eguliers}.
\newblock Lecture Notes in Mathematics, Vol. 163. Springer-Verlag, Berlin-New
  York, 1970.

\bibitem{F1993}
{\sc Faltings, G.}
\newblock Stable {$G$}-bundles and projective connections.
\newblock {\em J. Algebraic Geom. 2}, 3 (1993), 507--568.

\bibitem{GP2007}
{\sc Garc{\'{\i}}a-Prada, O., Gothen, P.~B., and Mu{\~n}oz, V.}
\newblock Betti numbers of the moduli space of rank 3 parabolic {H}iggs
  bundles.
\newblock {\em Mem. Amer. Math. Soc. 187}, 879 (2007), viii+80.

\bibitem{Gi2001}
{\sc Ginzburg, V.}
\newblock The global nilpotent variety is {L}agrangian.
\newblock {\em Duke Math. J. 109}, 3 (2001), 511--519.

\bibitem{Ha1977}
{\sc Hartshorne, R.}
\newblock {\em Algebraic geometry}.
\newblock Springer-Verlag, New York-Heidelberg, 1977.
\newblock Graduate Texts in Mathematics, No. 52.

\bibitem{Hi2013}
{\sc Hiroe, K.}
\newblock Linear differential equations on the {R}iemann sphere and
  representations of quivers.
\newblock arXiv:1307.7438.

\bibitem{In2013}
{\sc Inaba, M.-A.}
\newblock Moduli of parabolic connections on curves and the {R}iemann-{H}ilbert
  correspondence.
\newblock {\em J. Algebraic Geom. 22}, 3 (2013), 407--480.

\bibitem{IIS2006}
{\sc Inaba, M.-A., Iwasaki, K., and Saito, M.-H.}
\newblock Moduli of stable parabolic connections, {R}iemann-{H}ilbert
  correspondence and geometry of {P}ainlev\'e equation of type {VI}. {I}.
\newblock {\em Publ. Res. Inst. Math. Sci. 42}, 4 (2006), 987--1089.

\bibitem{Kac1980}
{\sc Kac, V.~G.}
\newblock Infinite root systems, representations of graphs and invariant
  theory.
\newblock {\em Invent. Math. 56}, 1 (1980), 57--92.

\bibitem{Kac1982}
{\sc Kac, V.~G.}
\newblock Infinite root systems, representations of graphs and invariant
  theory. {II}.
\newblock {\em J. Algebra 78}, 1 (1982), 141--162.

\bibitem{Kac1990}
{\sc Kac, V.~G.}
\newblock {\em Infinite-dimensional {L}ie algebras}, third~ed.
\newblock Cambridge University Press, Cambridge, 1990.

\bibitem{Katz1996}
{\sc Katz, N.~M.}
\newblock {\em Rigid local systems}, vol.~139 of {\em Annals of Mathematics
  Studies}.
\newblock Princeton University Press, Princeton, NJ, 1996.

\bibitem{King1994}
{\sc King, A.~D.}
\newblock Moduli of representations of finite-dimensional algebras.
\newblock {\em Quart. J. Math. Oxford Ser. (2) 45}, 180 (1994), 515--530.

\bibitem{Ko1999}
{\sc Kostov, V.~P.}
\newblock On the {D}eligne-{S}impson problem.
\newblock {\em C. R. Acad. Sci. Paris S\'er. I Math. 329}, 8 (1999), 657--662.

\bibitem{Ko2001}
{\sc Kostov, V.~P.}
\newblock The {D}eligne-{S}impson problem for zero index of rigidity.
\newblock In {\em Perspectives of complex analysis, differential geometry and
  mathematical physics ({S}t. {K}onstantin, 2000)}. World Sci. Publ., River
  Edge, NJ, 2001, pp.~1--35.

\bibitem{Ko2002}
{\sc Kostov, V.~P.}
\newblock On the {D}eligne-{S}impson problem.
\newblock {\em Tr. Mat. Inst. Steklova 238}, Monodromiya v Zadachakh Algebr.
  Geom. i Differ. Uravn. (2002), 158--195.

\bibitem{Ko2003}
{\sc Kostov, V.~P.}
\newblock On some aspects of the {D}eligne-{S}impson problem.
\newblock {\em J. Dynam. Control Systems 9}, 3 (2003), 393--436.

\bibitem{Ko22004}
{\sc Kostov, V.~P.}
\newblock The {D}eligne-{S}impson problem---a survey.
\newblock {\em J. Algebra 281}, 1 (2004), 83--108.

\bibitem{Ko12004}
{\sc Kostov, V.~P.}
\newblock On the {D}eligne-{S}impson problem and its weak version.
\newblock {\em Bull. Sci. Math. 128}, 2 (2004), 105--125.

\bibitem{Ko2005}
{\sc Kostov, V.~P.}
\newblock The connectedness of some varieties and the {D}eligne-{S}impson
  problem.
\newblock {\em J. Dyn. Control Syst. 11}, 1 (2005), 125--155.

\bibitem{La1987}
{\sc Laumon, G.}
\newblock Correspondance de {L}anglands g\'eom\'etrique pour les corps de
  fonctions.
\newblock {\em Duke Math. J. 54}, 2 (1987), 309--359.

\bibitem{La1988}
{\sc Laumon, G.}
\newblock Un analogue global du c\^one nilpotent.
\newblock {\em Duke Math. J. 57}, 2 (1988), 647--671.

\bibitem{LMB2000}
{\sc Laumon, G., and Moret-Bailly, L.}
\newblock {\em Champs alg\'ebriques}, vol.~39 of {\em Ergebnisse der Mathematik
  und ihrer Grenzgebiete. 3. Folge. A Series of Modern Surveys in Mathematics
  [Results in Mathematics and Related Areas. 3rd Series. A Series of Modern
  Surveys in Mathematics]}.
\newblock Springer-Verlag, Berlin, 2000.

\bibitem{MS1980}
{\sc Mehta, V.~B., and Seshadri, C.~S.}
\newblock Moduli of vector bundles on curves with parabolic structures.
\newblock {\em Math. Ann. 248}, 3 (1980), 205--239.

\bibitem{Se1977}
{\sc Seshadri, C.~S.}
\newblock Moduli of vector bundles on curves with parabolic structures.
\newblock {\em Bull. Amer. Math. Soc. 83}, 1 (1977), 124--126.

\bibitem{Si1991}
{\sc Simpson, C.~T.}
\newblock Products of matrices.
\newblock In {\em Differential geometry, global analysis, and topology
  ({H}alifax, {NS}, 1990)}, vol.~12 of {\em CMS Conf. Proc.} Amer. Math. Soc.,
  Providence, RI, 1991, pp.~157--185.

\bibitem{Si1994}
{\sc Simpson, C.~T.}
\newblock Moduli of representations of the fundamental group of a smooth
  projective variety. {I}.
\newblock {\em Inst. Hautes \'Etudes Sci. Publ. Math.}, 79 (1994), 47--129.

\bibitem{Si1995}
{\sc Simpson, C.~T.}
\newblock Moduli of representations of the fundamental group of a smooth
  projective variety. {II}.
\newblock {\em Inst. Hautes \'Etudes Sci. Publ. Math.}, 80 (1995), 5--79
  (1995).

\bibitem{Si2009}
{\sc Simpson, C.~T.}
\newblock Katz's middle convolution algorithm.
\newblock {\em Pure Appl. Math. Q. 5}, 2, Special Issue: In honor of Friedrich
  Hirzebruch. Part 1 (2009), 781--852.

\bibitem{Su2007}
{\sc Sugiyama, K.-i.}
\newblock A quantization of the {H}itchin hamiltonian system and the
  {B}eilinson-{D}rinfeld isomorphism.
\newblock arXiv:0708.2957.

\bibitem{Sta2014}
{\sc {The Stacks Project Authors}}.
\newblock Stacks {P}roject.
\newblock \url{http://stacks.math.columbia.edu/}, 2014.

\bibitem{We1994}
{\sc Weibel, C.~A.}
\newblock {\em An introduction to homological algebra}, vol.~38 of {\em
  Cambridge Studies in Advanced Mathematics}.
\newblock Cambridge University Press, Cambridge, 1994.

\bibitem{Y2008}
{\sc Yamakawa, D.}
\newblock Geometry of multiplicative preprojective algebra.
\newblock {\em Int. Math. Res. Pap. IMRP\/} (2008), Art. ID rpn008, 77pp.

\end{thebibliography}

\end{document}